\theoremstyle{plain}
\newtheorem{thm}{Theorem}[section]
\newtheorem{lemma}[thm]{Lemma}
\newtheorem{prop}[thm]{Proposition}
\newtheorem{cor}[thm]{Corollary}
\newtheorem{conj}[thm]{Conjecture}
\theoremstyle{definition}
\newtheorem{df}[thm]{Definition}
\theoremstyle{remark}
\newtheorem{rk}[thm]{Remark}
\newtheorem{ex}[thm]{Example}
\numberwithin{equation}{section}
\def\a{{\alpha}}
\def\l{{\lambda}}
\def\NN{\mathbb{N}}
\def\QQ{\mathbb{Q}}
\def\CC{\mathbb{C}}
\def\GG{\mathbb{G}}
\def\ZZ{\mathbb{Z}}
\def\PP{\mathbb{P}}
\def\aen{\mathfrak{a}}
\def\gen{\mathfrak{g}}
\def\hen{\mathfrak{h}}
\def\len{\mathfrak{l}}
\def\sen{\mathfrak{s}}
\def\ten{\mathfrak{t}}
\def\Hen{\mathfrak{H}}
\def\Sen{\mathfrak{S}}
\def\Ac{\mathcal{A}}
\def\Bc{\mathcal{B}}
\def\Cc{\mathcal{C}}
\def\Fc{\mathcal{F}}
\def\Oc{\mathcal{O}}
\def\Pc{\mathcal{P}}
\def\Rc{\mathcal{R}}
\def\Sc{\mathcal{S}}
\def\Sym{\mathbf\Lambda}
\def\oneb{\mathbf{1}}
\def\sb{\mathbf{s}}
\def\Ab{\mathbf{A}}
\def\Bb{\mathbf{B}}
\def\Fb{\mathbf{F}}
\def\Hb{\mathbf{H}}
\def\Ub{\mathbf{U}}
\def\Vb{\mathbf{V}}
\def\Sb{\mathbf{S}}
\def\Tb{\mathbf{T}}
\def\homo{\operatorname{\it \mathscr{H}\kern-.25em om}}
\def\ext{\operatorname{\it \mathscr{E}\kern-.25em xt}}
\def\edo{\operatorname{\it \mathscr{E}\kern-.25em nd}}
\def\der{\operatorname{\it \mathscr{D}\kern-.25em er}}
\def\Wt{\operatorname{Wt}\nolimits}
\def\SL{\operatorname{SL}\nolimits}
\def\Hom{\operatorname{Hom}\nolimits}
\def\ch{\operatorname{ch}\nolimits}
\def\can{\operatorname{can}\nolimits}
\def\End{\operatorname{End}\nolimits}
\def\G{\operatorname{G}\nolimits}
\def\spe{\operatorname{spe}\nolimits}
\def\top{\operatorname{top}\nolimits}
\def\soc{\operatorname{soc}\nolimits}
\def\ad{\operatorname{ad}\nolimits}
\def\KZ{\operatorname{KZ}\nolimits}
\def\Fr{\operatorname{Fr}\nolimits}
\def\Supp{\operatorname{Supp}\nolimits}
\def\Ext{\operatorname{Ext}\nolimits}
\def\RHom{\operatorname{RHom}\nolimits}
\def\REnd{\operatorname{REnd}\nolimits}
\def\Res{\operatorname{Res}\nolimits}
\def\Ind{\operatorname{Ind}\nolimits}
\def\can{\operatorname{can}\nolimits}
\def\ch{\operatorname{ch}\nolimits}
\def\Im{\operatorname{Im}\nolimits}
\def\Irr{\operatorname{Irr}\nolimits}
\def\CIrr{\operatorname{PI}\nolimits}
\def\Rep{\operatorname{Rep}\nolimits}
\def\Hom{{\rm{Hom}}}
\def\gr{{\rm{gr}}}
\def\dim{{\rm{dim}}}
\def\la{{\langle}}
\def\o{{\omega}}
\def\g{{\gamma}}
\def\eps{{\epsilon}}
\def\l{{\lambda}}
\author{P. Shan, E. Vasserot}
\address{P.S. : Universit\'e Paris 7, UMR CNRS 7586, F-75013 Paris, E.V. : Universit\'e Paris 7, UMR CNRS 7586, F-75013 Paris,}
\email{shan@math.jussieu.fr, vasserot@math.jussieu.fr}
\thanks{2000{\it Mathematics Subject Classification.} }
\title
[Heisenberg and Cherednik]
{Heisenberg algebras and rational double affine Hecke algebras}
\begin{document}
\begin{abstract}
In this paper we categorify the Heisenberg action on the Fock space via the
category $\Oc$ of cyclotomic rational double affine Hecke algebras.
This permits us to relate the filtration by the support on the Grothendieck
group of $\Oc$ to a representation theoretic grading defined using the
Heisenberg action. This implies a recent conjecture of Etingof.
\end{abstract}

\maketitle

\setcounter{tocdepth}{2}

\tableofcontents

\section{Introduction and notation}

\subsection{Introduction}
In this paper we study a relationship between the representation theory of
certain rational double affine Hecke algebras (=RDAHA) and the representation theory
of affine Kac-Moody algebras.
Such connection is not new and appears already at several places in the literature.
A first occurrence is Suzuki's functor \cite{Su} which maps the Kazhdan-Lusztig category of modules
over the affine Kac-Moody algebra $\widehat{\sen\len}_n$ at a negative level to the representation category of the RDAHA of
${\sen\len}_m$. A second one is a cyclotomic version of  Suzuki's functor \cite{VV} which maps a
more general version of the
parabolic category $\Oc$ of $\widehat{\sen\len}_n$ at a negative level to the representation category of the cyclotomic RDAHA. A third one comes from the relationship between the
cyclotomic RDAHA and quiver varieties, see e.g., \cite{Go}, and from the relationship between
quiver varieties and affine Kac-Moody algebras.
Finally, a fourth one, which is closer to our study, comes from the relationship in \cite{S} between the
Grothendieck ring of cyclotomic RDAHA and the level $\ell$ Fock space
$\Fc_{m,\ell}$ of $\widehat{\sen\len}_m$.
 In this paper we focus on a recent conjecture of Etingof
\cite{E} which relates the support of the objects of the category $\Oc$ of
$H(\Gamma_n)$, the RDAHA associated with
the complex reflection group $\Gamma_n=\Sen_n\ltimes(\ZZ_\ell)^n$,
to a representation theoretic grading of
the Fock space $\Fc_\ell=\Fc_{\ell,1}$.
These conjectures yield in particular an explicit
formula for the number of finite dimensional
$H(\Gamma_n)$-modules. This was not known so far.
The appearance of the Fock space $\Fc_\ell$ is not a hazard.
It is due to the following two facts, already noticed in \cite{E}.
First, by level-rank duality, the $\widehat{\sen\len}_m$-module $\Fc_{m,\ell}$
carries a  level $m$ action on $\widehat{\gen\len}_\ell$.
It carries also a level $1$ action on $\widehat{\gen\len}_\ell$, under which it is identified with $\Fc_\ell$.
Next, the category $\Oc$ of
the algebras $H(\Gamma_n)$ with $n\geqslant 1$ categorifies  $\Fc_{m,\ell}$ by \cite{S}.
Our proof consists precisely to
interpret the support of the $H(\Gamma_n)$-modules in terms of the $\widehat{\sen\len}_m$-action
on $\Fc_{m,\ell}$, and then to interpret
this in terms of the $\widehat{\sen\len}_\ell$ action on $\Fc_\ell$.
An important ingredient is a categorification (in a weak sense) of the action of the Heisenberg algebra
on $\Fc_\ell$ and $\Fc_{m,\ell}$. The categorification of the Heisenberg algebra has recently been studied by several authors. We'll come back to this in another publication.

\subsection{Organisation}
The organisation of the paper is the following.

Section 2 is a reminder on rational DAHA. We recall some basic facts concerning
parabolic induction/restriction functors. In particular we describe their
behavior on the support of the modules.

Section 3 contains basic notations for complex reflection groups, for the
cyclotomic rational DAHA $H(\Gamma_n)$ and for affine Lie algebras.
In particular we introduce the category $\Oc(\Gamma_n)$ of
$H(\Gamma_n)$-modules, the functor $\KZ$, Rouquier's equivalence from
$\Oc(\Sen_n)$ to the module category of the $\zeta$-Schur algebra.
Next we recall the categorification of the Fock space representation
of $\widehat{\sen\len}_m$ in \cite{S}, and we describe the filtration by the support
on $\Oc(\Gamma_n)$.

Section 4 is more combinatorial.
We recall several constructions related to 
Fock spaces and symmetric polynomials.
In particular we give a relation between symmetric polynomials and the
representation ring of the group $\Gamma_n$, and we describe several
representations on the level $\ell$ Fock space
(of Heisenberg algebras and of affine Kac-Moody algebras).

Section 5 is devoted to the categorification of the Heisenberg action on the
Fock space, using $\Oc(\Gamma_n)$. Then we introduce a particular class
of simple objects in $\Oc(\Gamma_n)$, called the primitive modules,
and we compute the endomorphism algebra of some modules
induced from primitive modules. Finally we introduce the operators $\tilde a_\l$
which are analogues for the Heisenberg algebra of the
Kashiwara's operators $\tilde e_q$, $\tilde f_q$
associated with Kac-Moody algebras.

Section 6 contains the main results of the paper.
Using our previous constructions we compare the filtration by the support
on $\Oc(\Gamma_n)$ with a representation-theoretic grading on the Fock space.
This confirms a conjecture of Etingof, yieldding in particular the number of
finite dimensional simple objects in $\Oc(\Gamma_n)$
for integral $\ell$-charge
(this corresponds to some rational values of the parameters of $H(\Gamma_n)$).

Finally there are two appendices containing basic facts on Hecke algebras,
Schur algebras, quantum groups, quantum Frobenius homomorphism
and on the universal R-matrix.

\subsection{Notation}
Now we introduce some general notation.
Let $\Ac$ be a $\CC$-category, i.e., a $\CC$-linear additive category.
We'll write $Z(\Ac)$ for the center of $\Ac$, a $\CC$-algebra.
Let $\Irr(\Ac)$ be the set of isomorphism classes of simple objects
of  $\Ac$. If $\Ac=\Rep(\Ab)$, the
category of all finite-dimensional representations of a $\CC$-algebra $\Ab$,
we abbreviate $$\Irr(\Ab)=\Irr(\Rep(\Ab)).$$
For an Abelian or triangulated category
let $K(\Ac)$ denote its Grothendieck group.
We abbreviate $K(\Ab)=K(\Rep(\Ab))$. We set
$$[\Ac]=K(\Ac)\otimes\CC.$$ For an object $M$ of $\Ac$ we write $[M]$
for the class of $M$ in $[\Ac]$.
For an Abelian category $\Ac$
let $D^b(\Ac)$ denote its bounded derived category.
We abbreviate $D^b(\Ab)=D^b(\Rep(\Ab))$.
The symbol $H^*(\PP^{m-1})$
will denote both the complex
$$\langle m\rangle=\bigoplus_{i=0}^{m-1}\CC[-2i]\in
D^b(\CC)$$ and the integer $m$ in $K(\CC)=\ZZ$.
Given two Abelian $\CC$-categories $\Ac$, $\Bc$ which are Artinian (i.e.,
objects are of finite length and Hom's are finite dimensional) we define
the tensor product (over $\CC$)
$$\otimes:\Ac\times\Bc\to\Ac\otimes\Bc$$
as in \cite[sec.~5.1, prop.~5.13]{De}.
Recall that for $\Ac=\Rep(\Ab)$ and $\Bc=\Rep(\Bb)$ we have
$\Ac\otimes\Bc=\Rep(\Ab\otimes\Bb)$.
Given a category $\Ac$ and objects $A,A'\in\Ac$,
we write $\Hom_\Ac(A,A')$ for the collection of morphisms $A\to A'$.
Given categories $\Ac$, $\Bc$ and functors $F,F':\Ac\to\Bc$
we write $\Hom(F,F')$ for the collection of morphisms $F\to F'$.
We denote the identity morphism
$A\to A$ by $\oneb_A$ and the identity morphism
$F\to F$ by $\oneb_F$.
Given a category $\Cc$ and a functor $G:\Bc\to\Cc$
let $G\circ F$ be the composed functor $\Ac\to\Cc$.
For a functor $G':\Bc\to\Cc$ and morphisms of functors
$\phi\in\Hom(F,F')$, $\psi\in\Hom(G,G')$ we write
$\psi\phi$ for the morphism of functors $G\circ F\to G'\circ F'$
given by
$$(\psi\phi)(A)=\psi(F'(A))\circ G(\phi(A))\in
\Hom_\Cc\bigl(G(F(A)), G'(F'(A))\bigr),\quad A\in\Ac.$$

\subsection{Acknowledgments}
We are grateful to I. Losev for a careful reading of a preliminary draft of
the paper.

\section{Reminder on rational DAHA's}

\subsection{The category $\Oc(W)$}
Let  $W$ be any complex reflection group.
Let $\hen$ be the reflection representation of $W$.
Let $S$ be the set of pseudo-reflections in $W$.
Let $c:S\to\CC$ be a map that is constant on the
$W$-conjugacy classes.
The rational DAHA attached to $W$ with parameter $c$ is the
quotient $H(W)$ of the smash product of $\CC W$ and the tensor algebra of
$\hen\oplus\hen^*$ by the relations
$$[x, x' ] = 0, \quad [y, y' ] = 0, \quad
[y, x] = \langle x, y \rangle-\sum_{s\in S}c_s\langle\alpha_s,y\rangle
\langle x,\check\alpha_s\rangle s,$$
for all $x, x'\in\hen^*$, $y, y'\in\hen$.
Here $\langle\bullet,\bullet\rangle$
is the canonical pairing between $\hen^*$ and $\hen$,
the element $\alpha_s$ is a generator of
$\Im(s|_{\hen^*}-1)$ and $\check\alpha_s$ is the generator of
$\Im(s|_{\hen}-1)$
such that $\langle\alpha_s , \check\alpha_s\rangle = 2$.
Let $R_x$, $R_y$ be the subalgebras generated by
$\hen^*$ and $\hen$ respectively.
We may abbreviate
$$\CC[\hen]=R_x,\quad\CC[\hen^*]=R_y.$$
The category $\Oc$ of $H(W)$ is the full subcategory
$\Oc(W)$ of the category of
$H(W)$-modules consisting of objects that are finitely generated as
$\CC[\hen]$-modules and $\hen$-locally nilpotent.
We recall from \cite[sec.~ 3]{GGOR} the following properties
of $\Oc(W)$. It is a quasi-hereditary category.
The standard modules are labeled
by the set $\Irr(\CC W)$ of isomorphism classes of irreducible $W$-modules.
Let $\Delta_\chi$ be the standard module associated with the module
$\chi\in\Irr(\CC W)$. It is the induced module
$$\Delta_{\chi}=\Ind^{H(W)}_{W\ltimes R_y}(\chi).$$
Here $\chi$ is regarded as a $W\ltimes R_y$-module such that $\hen^\ast\subset R_y$ acts by zero.
Let $L_{\chi}$, $P_{\chi}$
denote the top and the projective cover of $\Delta_{\chi}$.

\begin{rk}
\label{rk:2.1}
The definitions above still make sense if $\hen$ is any
faithful finite dimensional $\CC W$-module.
To avoid any confusion we may write
$$\Oc(W,\hen)=\Oc(W),\quad
H(W,\hen)=H(W).$$
\end{rk}

\subsection{The stratification of $\hen$}
\label{sec:support1}
Let $W$ be a complex reflection group.
Let $\hen$ be the reflection representation of $W$.
For a parabolic subgroup $W'\subset W$ let $X_{W'}^\circ$
be the set of points of $\hen$ whose stabilizer in $W$ is conjugate (in $W$) to $W'$. By a theorem of Steinberg, the sets
$X_{W',\hen}^\circ$, when $W'$ runs over a set of representatives
of the $W$-conjugacy classes of parabolic subgroups of $W$,
form a stratification of $\hen$
by smooth locally closed subsets,
see also \cite[sec.~6]{G} and the references there.
Let $X_{W'}$ be the closure of $X_{W'}^\circ$ in $\hen$.
To avoid any confusion we may write
$X_{W',\hen}^\circ=X_{W'}^\circ$
and $X_{W',\hen}=X_{W'}$. The set $X_{W',\hen}$ consists of points of $\hen$ whose stabilizer is $W$-conjugate to $W'$. We have
$$X_{W',\hen}=\bigsqcup X_{W'',\hen}^\circ,$$
where the union is over a set of representatives
of the $W$-conjugacy classes of the parabolic subgroups $W''$ of $W$
which contain $W'$.
Further, the quotient $X_{W',\hen}/W$ is an
irreducible closed subset of $\hen/W$.

\subsection{Induction and restriction functors on $\Oc(W)$}
Fix an element $b\in\hen$. Let $W_b\subset W$ be the stabilizer of $b$, and
$$\pi_b:\hen\to\hen/\hen^{W_b}$$ be the obvious projection onto
the reflection representation of $W_b$.
The parabolic induction/restriction functor relatively to the point $b$
is a functor \cite{BE}
$$\Ind_b:\Oc(W_b,\hen/\hen^{W_b})\to\Oc(W,\hen),\quad
\Res_b:\Oc(W,\hen)\to\Oc(W_b,\hen/\hen^{W_b}).$$
Since the functors $\Ind_b$, $\Res_b$ do not
depend on $b$ up to isomorphism, see
\cite[sec.~3.7]{BE}, we may write
$${}^\Oc\!\Ind^{W}_{W_b}=\Ind_b,\quad{}^\Oc\!\Res^{W}_{W_b}=\Res_b$$
if it does not create any confusion.
The {\it support} of a module $M$ in $\Oc(W,\hen)$ is the support
of $M$ regarded as a $\CC[\hen]$-module.
It is a closed subset
$\Supp(M)\subset\hen.$
For any simple module $L$ in $\Oc(W,\hen)$ we have
$\Supp(L)=X_{W',\hen}$ for some parabolic subgroup $W'\subset W$. For $b\in X^\circ_{W',\hen}$ the module $\Res_b(L)$ is a nonzero finite dimensional module.
See \cite[sec.~3.8]{BE}. The support of a module is the union of the supports of all its constituents. So the support of any module in $\Oc(W,\hen)$ is a union of $X_{W',\hen}$'s.
Let us consider the behavior of the support under restriction.

\begin{prop}\label{prop:ressup}
Let $W'\subset W$ be a parabolic subgroup.
Let $\hen'$ be the reflection representation of $W'$.
Let $X\subset\hen$ be the support of a module $M$ in $\Oc(W,\hen)$.
Let $X'\subset\hen'$ be the support of the module
$M'={}^\Oc\!\Res^W_{W'}(M)$.

(a) We have $M'\neq 0$ if and only if $X_{W',\hen}\subset X$.

(b) Assume that 
$X=X_{W'',\hen}$
with $W''\subset W$ a parabolic subgroup. If $M'\neq 0$ then
$W''$ is $W$-conjugate to a subgroup of $W'$ and we have
$$X'=
\bigcup_{W_1}X_{W_1,\hen'}=
\bigsqcup_{W_1}X_{W_1,\hen'}^\circ,$$
where $W_1$ runs over a set of representatives of the $W'$-conjugacy classes
of parabolic subgroups of $W'$ containing a subgroup $W$-conjugated to $W''$.

\end{prop}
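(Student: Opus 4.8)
The plan is to reduce everything to the known facts about supports of simple modules recalled before the statement, together with the behavior of $\Res_b$ on simples. The key geometric input is the description of $\Supp(L)$ for a simple $L$ as a single closed stratum $X_{W',\hen}$, the transitivity of restriction functors, and the fact — recalled from \cite{BE} — that for $b\in X^\circ_{W',\hen}$ the module $\Res_b(L)$ is nonzero and finite dimensional precisely when $b$ lies in $\Supp(L)$, i.e.\ when $X_{W',\hen}\subset\Supp(L)$.

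For part (a), I would first treat the case $M=L$ simple with $\Supp(L)=X_{W'',\hen}$. Pick $b\in X^\circ_{W',\hen}$ so that $W_b$ is $W$-conjugate to $W'$; then ${}^\Oc\!\Res^W_{W'}(L)=\Res_b(L)$ is nonzero if and only if $b\in\Supp(L)=X_{W'',\hen}$, and by the stratification description of $X_{W'',\hen}$ (it is the union of the $X^\circ_{W_1,\hen}$ with $W_1\supset W''$ up to conjugacy) this happens if and only if $W''$ is $W$-conjugate to a subgroup of $W'$, i.e.\ if and only if $X_{W',\hen}\subset X_{W'',\hen}$. For a general $M$, the support is the union of the supports of its constituents and $\Res$ is exact, so $M'\neq 0$ iff some constituent $L$ of $M$ has $\Res(L)\neq 0$ iff some constituent has $X_{W',\hen}\subset\Supp(L)$ iff $X_{W',\hen}\subset\Supp(M)=X$. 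This gives (a).

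For part (b), assume $X=X_{W'',\hen}$ and $M'\neq 0$; by (a) we get $X_{W',\hen}\subset X_{W'',\hen}$, hence $W''$ is $W$-conjugate to a subgroup of $W'$. To compute $X'=\Supp(M')$ I would again pass to simple constituents: $X'$ is the union of the $\Supp({}^\Oc\!\Res^W_{W'}(L))$ over the constituents $L$ of $M$. For such an $L$ one has $\Supp(L)=X_{W_0,\hen}$ for some parabolic $W_0\subset W$ with $X_{W',\hen}\subset X_{W_0,\hen}$ (when $\Res L\neq 0$), and I claim $\Supp({}^\Oc\!\Res^W_{W'}(L))=\bigsqcup_{W_1}X^\circ_{W_1,\hen'}$ where $W_1$ runs over $W'$-conjugacy classes of parabolics of $W'$ containing a $W$-conjugate of $W_0$. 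The containment $\subseteq$ follows from part (a) applied \emph{inside} $\Oc(W',\hen')$: for $b'\in X^\circ_{W_1,\hen'}$, using transitivity of restriction ${}^\Oc\!\Res^{W'}_{W_1}\circ{}^\Oc\!\Res^W_{W'}\cong{}^\Oc\!\Res^W_{W_1}$, nonvanishing of ${}^\Oc\!\Res^{W'}_{W_1}(\Res L)$ forces ${}^\Oc\!\Res^W_{W_1}(L)\neq 0$, hence $X_{W_1,\hen}\subset\Supp(L)=X_{W_0,\hen}$, i.e.\ $W_1$ contains a $W$-conjugate of $W_0$. The reverse containment $\supseteq$ is the point where one must know that restriction does not lose support beyond what dimension/transitivity forces; I would derive it from the same transitivity identity in the other direction, or cite the relevant statement of \cite{BE,S} on the support of $\Res_b(L)$. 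Since the smallest such $W_0$ among constituents of $M$ is (a conjugate of) $W''$, taking the union over all constituents yields exactly the $W_1$'s containing a $W$-conjugate of $W''$, which is the asserted formula; the two expressions for $X'$ in the statement agree by the general stratification identity $X_{W_1,\hen'}=\bigsqcup X^\circ_{W_2,\hen'}$ over $W_2\supset W_1$.

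The main obstacle I anticipate is the reverse inclusion $\supseteq$ in part (b): showing that every stratum $X_{W_1,\hen'}$ with $W_1$ containing a conjugate of $W''$ actually appears in $\Supp(M')$, rather than just being \emph{allowed} by the transitivity constraint. This requires either a precise statement that $\Res_b$ sends a simple with support $X_{W',\hen}$ to a module whose support is exactly the ``expected'' union of strata, or an argument that the bottom stratum (the finite-dimensional part, $W_1$ conjugate to $W''$) is nonzero — which follows from finite-dimensionality and nonvanishing of the iterated restriction — combined with the fact that $\Supp(M')$ is automatically a union of $X_{W_1,\hen'}$'s that is closed under the stratification order. Everything else is bookkeeping with the stratification of $\hen$ and $\hen'$ and the functoriality of ${}^\Oc\!\Ind$, ${}^\Oc\!\Res$.
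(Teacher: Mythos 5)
Your proposal follows the same route as the paper: part (a) comes from the fact that $\Res_b(M)\neq 0$ if and only if $b\in\Supp(M)$, and part (b) from combining part (a) with the transitivity of restriction. For part (a) your reduction to simple constituents is harmless but unnecessary; the paper applies the criterion directly to $M$.

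The one place where you stop short is the reverse inclusion in part (b), which you flag as ``the main obstacle.'' It is not an obstacle: the point is that every step in your own chain is an equivalence, provided you apply part (a) to the module $M'$ \emph{itself} (as an object of $\Oc(W',\hen')$) rather than constituent by constituent. Since $X'=\Supp(M')$ is a union of strata, for any parabolic $W_1\subset W'$ one has
$$X_{W_1,\hen'}\subset X'
\iff {}^\Oc\!\Res^{W'}_{W_1}(M')\neq 0
\iff {}^\Oc\!\Res^{W}_{W_1}(M)\neq 0
\iff X_{W_1,\hen}\subset X_{W'',\hen},$$
where the outer equivalences are part (a) (for $M'$ and for $M$ respectively) and the middle one is transitivity of ${}^\Oc\!\Res$. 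Reading this from right to left gives exactly the inclusion $\supseteq$ you were worried about: if $W_1$ contains a $W$-conjugate of $W''$ then $X_{W_1,\hen}\subset X_{W'',\hen}$, hence $X_{W_1,\hen'}\subset X'$. This makes the constituent-by-constituent bookkeeping, the ``smallest $W_0$'' argument, and the appeal to the finite-dimensional bottom stratum all unnecessary. With that observation your proof closes and coincides with the paper's.
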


\begin{proof}
Part $(a)$ is immediate from the definition of the restriction,
because  for $b\in\hen$ it implies that
$\Res_b(M)\neq 0$ if and only if $b\in X.$
Now we prove $(b)$. For a parabolic subgroup $W_1\subset W'$ we have
$$\aligned
X_{W_1,\hen'}\subset X'
&\iff {}^\Oc\!\Res^{W'}_{W_1}(M')\neq 0\cr
&\iff {}^\Oc\!\Res^{W}_{W_1}(M)\neq 0\cr
&\iff X_{W_1,\hen}\subset X_{W'',\hen}.
\endaligned$$
Here the first and third equivalence follow from $(a)$,
while the second one follows from the
transitivity of the restriction functor \cite[cor.~2.5]{S}.
Therefore
$X_{W_1,\hen'}^\circ\subset X'$ if and only if
$X_{W_1,\hen'}\subset X'$ if and only if
$W_1$ contains a subgroup $W$-conjugate to $W''$.

\end{proof}

\begin{rk}
For any closed point
$b$ of a scheme $X$ we denote by $X_b^\wedge$ the completion of
$X$ at $b$ (a formal scheme).
Assume that $M'={}^\Oc\!\Res^W_{W'}(M)$ is non zero.
Let $\pi$ be the canonical projection $\hen\to\hen'=\hen/\hen^{W'}$.
For $b\in X_{W',\hen}^\circ$
the definition of the restriction functor yields the following formula
$$0\in\pi^{-1}(X'),\quad
X^\wedge_b=b+\pi^{-1}(X')^\wedge_0.$$
\end{rk}

\vskip3mm

Next, we consider the behavior of the support under induction.
Before this we need the following two lemmas.
The $\CC$-vector space $[\Oc(W)]$
is spanned by the set
$\{[\Delta_\chi];\chi\in\Irr(\CC W)\}$.
Thus there is a unique $\CC$-linear isomorphism
\begin{equation}\label{grothW}
\spe:[\Rep(\CC W)]\to[\Oc(W)],\quad
[\chi]\mapsto[\Delta_\chi].
\end{equation}
The parabolic induction/restriction functor
is exact. We'll need the following lemma \cite{BE}.

\begin{lemma} \label{lem:indresgroth}
Let $W'\subset W$ be a parabolic subgroup.
Let $\hen'$ be the reflection representations of $W'$.
Under the isomorphism (\ref{grothW}) the maps
$${}^\Oc\!\Ind^{W}_{W'}:[\Oc(W',\hen')]\to [\Oc(W,\hen)],\quad
{}^\Oc\!\Res^{W}_{W'}:[\Oc(W,\hen)]\to [\Oc(W',\hen')]$$
coincide with the induction and restriction
$$\Ind^{W}_{W'}:[\Rep(\CC W')]\to [\Rep(\CC W)],\quad
\Res^{W}_{W'}:[\Rep(\CC W)]\to [\Rep(\CC W')].$$
\end{lemma}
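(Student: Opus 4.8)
The plan is to reduce the statement to the identification of the parabolic induction/restriction functors on the level of standard modules, combined with the fact that $\spe$ is an isomorphism sending $[\chi]$ to $[\Delta_\chi]$. Since $[\Oc(W,\hen)]$ is spanned by the classes of standard modules, and $[\Oc(W',\hen')]$ likewise, it suffices to compute $\bigl[{}^\Oc\!\Ind^W_{W'}(\Delta_\chi)\bigr]$ for $\chi\in\Irr(\CC W')$ and $\bigl[{}^\Oc\!\Res^W_{W'}(\Delta_\psi)\bigr]$ for $\psi\in\Irr(\CC W)$, and to check that these agree respectively with $\spe\bigl(\Ind^W_{W'}[\chi]\bigr)$ and $\spe\bigl(\Res^W_{W'}[\psi]\bigr)$. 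The two functors are exact (stated in the excerpt), so they descend to the Grothendieck groups, and by biadjointness (one of them suffices) it is enough to treat one of the two, say restriction.

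First I would recall the construction of ${}^\Oc\!\Res^W_{W'}$ from \cite{BE}: after the Bezrukavnikov--Etingof isomorphism identifying the completion of $H(W,\hen)$ at a point $b\in X^\circ_{W',\hen}$ with a matrix algebra over the completion of $H(W',\hen')$, the functor $\Res_b$ is obtained by taking the $b$-adic completion of a module, applying this isomorphism, and extracting the appropriate locally nilpotent part; the upshot is that at the level of the fibres the functor is essentially ``restrict the $\CC W$-structure to $\CC W'$ near $b$''. Concretely, on a standard module $\Delta_\psi=\CC[\hen]\otimes\psi$ one computes directly from this description that ${}^\Oc\!\Res^W_{W'}(\Delta_\psi)$ has a filtration whose associated graded is $\bigoplus_\chi \Delta_\chi^{\oplus [\Res^W_{W'}\psi:\chi]}$, i.e. $\bigl[{}^\Oc\!\Res^W_{W'}(\Delta_\psi)\bigr]=\sum_\chi [\Res^W_{W'}\psi:\chi]\,[\Delta_\chi]=\spe\bigl(\Res^W_{W'}[\psi]\bigr)$. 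This is exactly the computation carried out in \cite{BE} (and reproved in \cite[sec.~2]{S}); I would invoke it, sketching the filtration argument: filter $\CC[\hen]$ as a $\CC[\hen']=\CC[\hen/\hen^{W'}]$-module compatibly with the $W'$-action, or alternatively use that $\Res_b$ commutes with the ``forget the Hecke structure down to $\CC W$, keep $\CC[\hen]$'' functor up to the obvious identifications.

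Having settled restriction, the claim for induction follows formally: ${}^\Oc\!\Ind^W_{W'}$ and ${}^\Oc\!\Res^W_{W'}$ are biadjoint \cite{BE}, hence the induced maps on Grothendieck groups are transpose to each other with respect to the Euler form; since $\Ind^W_{W'}$ and $\Res^W_{W'}$ on $[\Rep(\CC W)]$, $[\Rep(\CC W')]$ are likewise adjoint for the corresponding pairing, and $\spe$ intertwines these pairings (the standard modules $\Delta_\chi$ and the simples $L_\chi$ being dual bases matches the orthogonality of irreducible characters after a unitriangular change of basis), the equality for restriction forces the equality for induction. Alternatively, and perhaps more cleanly, one checks $\bigl[{}^\Oc\!\Ind^W_{W'}(\Delta_\chi)\bigr]=\spe\bigl(\Ind^W_{W'}[\chi]\bigr)$ directly: ${}^\Oc\!\Ind^W_{W'}(\Delta_\chi)$ again admits a standard filtration, and one identifies the multiplicities using that induction of the corresponding ``polynomial representation with $\CC W'$-coefficient $\chi$'' up to $W$ produces $\CC[\hen]\otimes_{\CC W'}(\CC W\otimes\chi)$ modulo lower terms. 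The main obstacle is making the filtration argument for ${}^\Oc\!\Res^W_{W'}(\Delta_\psi)$ genuinely precise — i.e. controlling the passage through the $b$-adic completion and the matrix-algebra identification so that no information is lost on $K$-theory — but since this is precisely the content of the results of \cite{BE} that the excerpt permits us to assume, the proof reduces to citing \cite[sec.~3.7,~3.8]{BE} for the functorial description and \cite[cor.~2.5]{S} for transitivity, and then reading off the multiplicities.
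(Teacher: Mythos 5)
Your proposal is correct and matches the paper's treatment: the paper gives no proof of this lemma, simply citing \cite{BE}, and the content you sketch (the standard filtration of ${}^\Oc\!\Res^W_{W'}(\Delta_\psi)$ with multiplicities $[\Res^W_{W'}\psi:\chi]$, plus biadjointness or the analogous filtration for induction) is exactly the result of \cite{BE} being invoked. No gap.
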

\noindent
We'll also need the following version of the
Mackey induction/restriction theorem.
First, observe that for any parabolic subgroup $W'\subset W$ and any $x\in W$
there is a canonical $\CC$-algebra isomorphism
$$\varphi_x:\ H(W')\to H(x^{-1}W'x),\ w\mapsto x^{-1}wx,\ f\mapsto x^{-1}fx,\ f'\mapsto x^{-1}f'x,$$
for $w\in W'$, $f\in R_x$, $f'\in R_y$.
It yields an exact functor
$$\Oc(W')\to\Oc(x^{-1}W'x),\quad M\mapsto {}^xM,$$
where ${}^xM$ is the $H(x^{-1}W'x)$-module obtained by twisting the
$H(W')$-action on $M$ by $\varphi_x$.

\begin{lemma}
\label{lem:mackey}
Let $W',W''\subset W$ be parabolic subgroups.
Let $\hen'$, $\hen''$ be the reflection representations of $W'$, $W''$.
For  $M\in\Oc(W',\hen')$ we have the following formula in $[\Oc(W'',\hen'')]$
\begin{equation}\label{mackey}
{}^\Oc\!\Res^{W}_{W''}\circ\,{}^\Oc\!\Ind^{W}_{W'}([M])=\sum_x
{}^\Oc\!\Ind^{W''}_{W''\cap x^{-1}W'x}\circ\,
{}^x\bigl({}^\Oc\!\Res^{W'}_{xW''x^{-1}\cap W'}([M])\bigr),
\end{equation}
where $x$ runs over a set of representatives of the cosets
in $W'\setminus W /W''$.
\end{lemma}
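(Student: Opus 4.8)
The plan is to reduce the Mackey formula at the level of Grothendieck groups to the classical Mackey formula for finite groups, using Lemma \ref{lem:indresgroth}. By that lemma, under the isomorphisms $\spe$ of (\ref{grothW}) for $W'$, $W''$ and the intermediate parabolic subgroups, the functors ${}^\Oc\!\Ind$, ${}^\Oc\!\Res$ and the twist $M\mapsto {}^xM$ all become, respectively, the ordinary induction $\Ind$, restriction $\Res$ and conjugation functors on the representation rings $[\Rep(\CC W')]$, $[\Rep(\CC W'')]$, etc. So it suffices to check that $\spe$ is compatible with conjugation — i.e. that $\varphi_x$ sends $\Delta_\chi$ to $\Delta_{{}^x\chi}$, which is clear since $\varphi_x$ carries $W'\ltimes R_y$ isomorphically onto $(x^{-1}W'x)\ltimes R_y$ and the defining induced-module description $\Delta_\chi=\Ind^{H(W')}_{W'\ltimes R_y}(\chi)$ is natural in this isomorphism. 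Once this is in place, (\ref{mackey}) is the image under $\spe$ of the identity
$$\Res^{W}_{W''}\circ\,\Ind^{W}_{W'}([\chi])=\sum_x
\Ind^{W''}_{W''\cap x^{-1}W'x}\circ\,
{}^x\bigl(\Res^{W'}_{xW''x^{-1}\cap W'}([\chi])\bigr)$$
in $[\Rep(\CC W'')]$, where $x$ ranges over $W'\backslash W/W''$, and this is the classical Mackey decomposition theorem applied to the $\CC W'$-module $\chi$.

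First I would record the observation about $\varphi_x$ and the standard modules: since $\varphi_x$ is a $\CC$-algebra isomorphism $H(W')\xrightarrow{\sim}H(x^{-1}W'x)$ restricting to the obvious isomorphism $W'\ltimes R_y\xrightarrow{\sim}(x^{-1}W'x)\ltimes R_y$ and sending $\hen^*$ to $\hen^*$, the induced functor $M\mapsto{}^xM$ sends $\Delta_\chi^{W'}$ to $\Delta_{{}^x\chi}^{x^{-1}W'x}$, hence commutes with $\spe$ up to the conjugation isomorphism on representation rings. Next I would note that every parabolic subgroup occurring in (\ref{mackey}) — namely $W''\cap x^{-1}W'x$, $xW''x^{-1}\cap W'$ — is indeed a parabolic subgroup of the relevant ambient group (intersections of parabolics with conjugates of parabolics are parabolic), so that all the categories $\Oc(\bullet)$ and the functors between them are defined and Lemma \ref{lem:indresgroth} applies to each of them. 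Then I would transport the whole right-hand side of (\ref{mackey}) through the chain of isomorphisms $\spe$ and invoke Lemma \ref{lem:indresgroth} termwise, turning it into the right-hand side of the group-theoretic Mackey formula above; the left-hand side is handled the same way. Finally, since $\spe$ is an isomorphism and the group-theoretic identity holds, (\ref{mackey}) follows.

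I do not expect a serious obstacle here: everything is formal once Lemma \ref{lem:indresgroth} and the naturality of $\spe$ with respect to conjugation are granted, and the classical Mackey theorem is standard. The one point deserving a sentence of care is the indexing and the bookkeeping of conjugations — making sure the double cosets $W'\backslash W/W''$, the subgroups $W''\cap x^{-1}W'x$ and $xW''x^{-1}\cap W'$, and the twist ${}^x(-)$ match up exactly as in the finite-group statement, so that the two formulas are literally the image of one another under $\spe$. A secondary point is to confirm that Lemma \ref{lem:indresgroth} is being used in the right generality (for the pair $W''\cap x^{-1}W'x\subset W''$ on the induction side, and for $xW''x^{-1}\cap W'\subset W'$ on the restriction side, together with the conjugation $\varphi_x$ linking them); but since that lemma is stated for an arbitrary parabolic subgroup, there is no difficulty.
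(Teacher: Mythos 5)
Your proposal is correct and is essentially the paper's own proof, which simply invokes Lemma \ref{lem:indresgroth} together with the classical Mackey decomposition for the triple $W$, $W'$, $W''$; the extra points you record (compatibility of $\spe$ with the conjugation functor ${}^x(-)$, and that the intersections $W''\cap x^{-1}W'x$ and $xW''x^{-1}\cap W'$ are again parabolic) are exactly the routine verifications the paper leaves implicit.
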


\begin{proof} Use Lemma \ref{lem:indresgroth} and the usual Mackey induction/restriction theorem
associated with the triplet of groups $W$, $W'$, $W''$.
\end{proof}

\begin{rk}\label{rk:rk00}
For a future use, note that the left hand side of (\ref{mackey}) is zero if and only if each term in the sum of the right hand side is zero, because each of these terms is the class of a module in $\Oc(W'',\hen'')$.
\end{rk}

\noindent Now, we can prove the following proposition.

\begin{prop}\label{prop:indsup}
Let $W''\subset W'\subset W$ be a parabolic subgroups. Let $\hen'$ be the reflection representation
of $W'$. For a simple module $L\in\Oc(W',\hen')$
with $\Supp(L)=X_{W'',\hen'}$, we have
$${}^\Oc\!\Ind^{W}_{W'}(L)\neq 0,\quad
\Supp\bigl({}^\Oc\!\Ind^{W}_{W'}(L)\bigr)=X_{W'',\hen}.$$
\end{prop}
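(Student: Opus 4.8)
The plan is to deduce the two assertions from the support results already established for induction and restriction, using transitivity of the restriction functor and the Mackey formula of Lemma~\ref{lem:mackey}. First I would prove that $N:={}^\Oc\!\Ind^{W}_{W'}(L)\neq 0$. Since $L$ is simple with $\Supp(L)=X_{W'',\hen'}$ and $W''\subset W'$, Proposition~\ref{prop:ressup}(a) gives ${}^\Oc\!\Res^{W'}_{W''}(L)\neq 0$; in fact by \cite[sec.~3.8]{BE} this restriction is a nonzero finite dimensional module, so $[{}^\Oc\!\Res^{W'}_{W''}(L)]\neq 0$ in $[\Oc(W'',\hen'')]$. Applying the Mackey formula (\ref{mackey}) with the roles $W''\subset W'\subset W$ (taking the coset representative $x=1$, which is allowed since $W''\subset W'$) shows that ${}^\Oc\!\Res^{W}_{W''}([N])$ has a nonzero summand, hence is nonzero in the Grothendieck group; a fortiori $[N]\neq 0$ and so $N\neq 0$. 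Alternatively, one can invoke Lemma~\ref{lem:indresgroth} together with the faithfulness of $\spe$ to see that $[N]=\spe(\Ind^W_{W'}[L])\neq 0$ because ordinary induction of group representations is injective on Grothendieck groups.

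Next I would identify $\Supp(N)$. Since $\Supp(N)$ is a union of subvarieties $X_{W_0,\hen}$, it suffices to determine, for each parabolic $W_0\subset W$, whether $X_{W_0,\hen}\subset\Supp(N)$, and by Proposition~\ref{prop:ressup}(a) this is equivalent to ${}^\Oc\!\Res^{W}_{W_0}(N)\neq 0$. By Remark~\ref{rk:rk00}, the left side of the Mackey formula (\ref{mackey}) for the triple $W_0,W',W$ is nonzero if and only if at least one term
$${}^\Oc\!\Ind^{W_0}_{W_0\cap x^{-1}W'x}\circ\,{}^x\bigl({}^\Oc\!\Res^{W'}_{xW_0x^{-1}\cap W'}([L])\bigr)$$
is nonzero. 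Each such term is an induction of a module twisted by $\varphi_x$, and by the $n=1$ case (i.e. the statement that ${}^\Oc\!\Ind$ of a nonzero simple is nonzero, which we just proved, applied inductively or taken as known for group algebras via Lemma~\ref{lem:indresgroth}) together with Proposition~\ref{prop:ressup}(a), such a term is nonzero exactly when ${}^\Oc\!\Res^{W'}_{xW_0x^{-1}\cap W'}(L)\neq 0$, i.e. exactly when $X_{xW_0x^{-1}\cap W',\hen'}\subset X_{W'',\hen'}$, which by the stratification description in Section~\ref{sec:support1} means $xW_0x^{-1}\cap W'$ contains a $W'$-conjugate of $W''$. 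Chasing through, $X_{W_0,\hen}\subset\Supp(N)$ holds iff some $W$-conjugate of $W_0$ is contained in a subgroup of $W'$ containing $W''$, equivalently iff $W_0$ is $W$-conjugate to a subgroup of a parabolic of $W'$ containing $W''$; but every parabolic of $W'$ containing $W''$ is a parabolic of $W$ and contains $W''$, so this says precisely $X_{W_0,\hen}\subseteq X_{W'',\hen}$ by the closure description $X_{W'',\hen}=\bigsqcup X_{W_0,\hen}^\circ$ over parabolics containing $W''$. Hence $\Supp(N)=X_{W'',\hen}$.

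A cleaner route to the support statement, which I would actually present, avoids the full Mackey bookkeeping: the inclusion $\Supp(N)\subseteq X_{W'',\hen}$ follows because the classes $[\Delta_\chi]$ of standard modules with $\Supp(\Delta_\chi)\subseteq X_{W'',\hen}$ — namely those $\chi$ induced from $W''$ — span a subspace of $[\Oc(W)]$ stable under ${}^\Oc\!\Ind^W_{W'}$ (by Lemma~\ref{lem:indresgroth} and transitivity of ordinary induction, $\Ind^W_{W'}\Ind^{W'}_{W''}=\Ind^W_{W''}$), and $[L]$ lies in the corresponding subspace for $W'$; since support is upper semicontinuous and detected on the Grothendieck group, $\Supp(N)\subseteq X_{W'',\hen}$. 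For the reverse inclusion, take $b\in X^\circ_{W'',\hen}$; then $W_b$ is $W$-conjugate to $W''\subseteq W'$, and one computes ${}^\Oc\!\Res^W_{W''}(N)$ via Mackey, the $x=1$ term contributing ${}^\Oc\!\Ind^{W''}_{W''}({}^\Oc\!\Res^{W'}_{W''}(L))={}^\Oc\!\Res^{W'}_{W''}(L)\neq 0$, and no cancellation can occur by Remark~\ref{rk:rk00}; hence ${}^\Oc\!\Res^W_{W''}(N)\neq 0$ and so $X_{W'',\hen}\subseteq\Supp(N)$ by Proposition~\ref{prop:ressup}(a). The main obstacle is the bookkeeping in the Mackey computation — ensuring that the various terms indexed by double cosets $x$ cannot produce cancellations in the Grothendieck group — which is precisely what Remark~\ref{rk:rk00} is designed to handle, since each term is a genuine module class and a sum of such is zero only if every term vanishes.
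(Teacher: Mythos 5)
Your ``Mackey bookkeeping'' argument is, in substance, the paper's own proof: nonvanishing and the inclusion $X_{W'',\hen}\subseteq\Supp(N)$ come from the $x=1$ term of (\ref{mackey}) together with Remark \ref{rk:rk00}, and the opposite inclusion comes from the fact that for a parabolic $W_0$ not containing a $W$-conjugate of $W''$ every Mackey term vanishes, because $\Supp(L)=X_{W'',\hen'}$ forces ${}^\Oc\!\Res^{W'}_{xW_0x^{-1}\cap W'}(L)=0$ by Proposition \ref{prop:ressup}. One direction is garbled in your ``chasing through'': a nonzero Mackey term forces $xW_0x^{-1}\cap W'$ to \emph{contain} a $W'$-conjugate of $W''$, hence $W_0$ to contain a $W$-conjugate of $W''$, hence $X_{W_0,\hen}\subseteq X_{W'',\hen}$. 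Your formulation ``some $W$-conjugate of $W_0$ is contained in a parabolic of $W'$ containing $W''$'' reverses the containment and does not yield $X_{W_0,\hen}\subseteq X_{W'',\hen}$.

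The genuine gap is in the ``cleaner route'' you say you would actually present for $\Supp(N)\subseteq X_{W'',\hen}$. Its premise is false: a standard module is free as a $\CC[\hen]$-module ($\Delta_\chi=\Ind^{H(W)}_{W\ltimes R_y}(\chi)\cong\CC[\hen]\otimes\chi$), so $\Supp(\Delta_\chi)=\hen$ for \emph{every} $\chi$; there are no standard modules with support in $X_{W'',\hen}$ unless $W''=1$. Consequently the inference ``$[N]$ lies in the span of $\{[\Delta_\chi]\,;\,\chi\ \text{induced from}\ W''\}$, hence $\Supp(N)\subseteq X_{W'',\hen}$'' cannot work: $\Delta_\chi$ itself lies in that span and has full support. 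Identifying that span with the span of classes of simples supported in $X_{W'',\hen}$ is not a formality --- comparing these two subspaces is essentially the content of Section 6, where even then only their dimensions are shown to agree. So for the upper bound you must fall back on the Mackey contrapositive. A smaller problem: your ``alternative'' proof of nonvanishing via injectivity of $\Ind^W_{W'}$ on Grothendieck groups fails, since induction from a parabolic subgroup need not be injective on virtual characters (e.g.\ $\Ind^{\Sen_4}_{\Sen_2\times\Sen_2}(1\otimes\mathrm{sgn})=\Ind^{\Sen_4}_{\Sen_2\times\Sen_2}(\mathrm{sgn}\otimes 1)$, and $\spe^{-1}[L]$ is only a virtual character); your primary argument for nonvanishing is fine.
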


\begin{proof}
First, note that ${}^\Oc\!\Ind^{W}_{W'}(L)\neq 0$ by
Lemma \ref{lem:mackey}, because
$${}^\Oc\!\Res^{W}_{W'}\circ\,{}^\Oc\!\Ind^{W}_{W'}([L])=[L]+[M]$$
for some $M\in\Oc(W',\hen')$ and $[L]\neq 0$.
Therefore ${}^\Oc\!\Ind_{W'}^W(L)\neq 0.$
We abbreviate $M={}^\Oc\!\Ind^{W}_{W'}(L)$.
To compute the support of $M$ 
we first check that
$$X_{W'',\hen}\subset\Supp(M).$$  By Proposition \ref{prop:ressup} we have
$$\aligned
X_{W'',\hen}\subset\Supp(M)
&\iff X_{W'',\hen}^\circ\subset\Supp(M)\cr
&\iff {}^\Oc\!\Res^{W}_{W''}(M)\neq 0.
\endaligned$$
By Remark \ref{rk:rk00} the last equality holds if and only if
$${}^\Oc\!\Res^{W'}_{xW''x^{-1}\cap W'}(L)\neq 0$$
for some $x\in W$. This identity is indeed true for $x=1$ because $W''\subset W'$ and
$$X_{W'',\hen'}=\Supp(L)\Rightarrow {}^\Oc\!\Res^{W'}_{W''}(L)\neq 0.$$
Next we prove the inclusion
$$\Supp(M)\subset X_{W'',\hen}.$$
Any point $b$ of $\hen\setminus X_{W'',\hen}$ is contained in the set $X_{W''',\hen}^\circ$
for some parabolic subgroup $W'''\subset W$ such that $W''$ is not conjugate to a subgroup of $W'''$ :
it suffices to set $W'''=W_b$.
We must check that for such a subgroup $W'''\subset W$ we have
$$X^\circ_{W''',\hen}\not\subset\Supp(M).$$
By Proposition \ref{prop:ressup}
it is enough to check that
$${}^\Oc\!\Res^{W}_{W'''}(M)= 0.$$
Now, by Lemma \ref{lem:mackey} we have the following
formula in $[\Oc(W''',\hen)]$
$${}^\Oc\!\Res^{W}_{W'''}([M])=\sum_x
{}^\Oc\!\Ind^{W'''}_{W'''\cap x^{-1}W'x}\circ\,
{}^x\bigl({}^\Oc\!\Res^{W'}_{xW'''x^{-1}\cap W'}([L])\bigr).$$
Here $x$ runs over a set of representatives
of the cosets in $W'\setminus W /W'''$.
Since $W''$ is not conjugate to a subgroup of $W'''$ it is a fortiori
not conjugate to a subgroup of $xW'''x^{-1}\cap W'$, i.e., we have
$$X_{xW'''x^{-1}\cap W',\hen'}^\circ\cap X_{W'',\hen'}=\emptyset.$$
Therefore Proposition \ref{prop:ressup} yields
$${}^\Oc\!\Res^{W'}_{xW'''x^{-1}\cap W'}(L)=0,$$
because $\Supp(L)=X_{W'',\hen'}.$
This implies that
$${}^\Oc\!\Res^{W}_{W'''}([M])=0.$$
Hence we have also
$${}^\Oc\!\Res^{W}_{W'''}(M)=0.$$
We are done.
\end{proof}

\vskip3mm

\section{The cyclotomic rational DAHA}

\subsection{Combinatorics}
For a sequence $\l=(\l_1,\l_2,\dots)$ of integers $\geqslant 0$ we set
$|\l|=\l_1+\l_2+\cdots$.
Let
$$\Lambda(\ell,n)=\{
\l=(\lambda_1,\lambda_2,\dots\lambda_\ell)\in\NN^\ell\,;\,|\l|=n\}.$$
It is the set of {\it compositions} of $n$ with $\ell$ parts.
Let $\Pc_n$ be the set of {\it partitions} of $n$,
i.e., the set of non-increasing sequences $\l$ of integers
$> 0$ with sum $|\l|=n$. We write $\l'$ for the
transposed partition and $l(\l)$ for its length, i.e., for the
number of parts in $\l$. We write also
\begin{equation}\label{zlambda}
z_\lambda=\prod_{i\geqslant 1}i^{m_i}\,m_i!,\end{equation}
where $m_i$ is the number of parts of $\lambda$ equal to $i$.
Given a positive integer $m$ and a partition $\l$ we write also
$$m\l=(m\l_1,m\l_2,\dots).$$
To any partition we associate a {\it Young diagram},
which is a collection of rows
of square boxes with $\l_i$ boxes in the $i$-th row, $i=1,\dots,l(\l)$.
A box in a Young diagram is called a {\it node}.
The coordinate of the $j$-th box in the $i$-th row
is the pair of integers $(i,j)$.
The {\it content} of the node of coordinate $(i,j)$ is the integer $j-i$.
Let the set $\Pc_0$ consist of a single element, the unique partition of zero,
which we denote by 0.
Let $\Pc=\bigsqcup_{n\geqslant 0}\Pc_n$ be the set of all partitions.
We'll abbreviate $\ZZ_\ell=\ZZ/\ell\ZZ$.
Let $\Pc^\ell$ be the set of {\it $\ell$-partitions}, i.e., the set of all
partition valued functions on $\ZZ_\ell$.
Let $\Pc^\ell_n$ be the subset
of $\ell$-tuples $\l=(\l(p))$ of partitions with
$|\l|=\sum_p|\l(p)|=n$. 
Let $\Gamma$ be the group of the $\ell$-th roots of 1 in $\CC^\times$.
We define the sets $\Pc^\Gamma$, $\Pc_n^\Gamma$ of
partition valued functions on $\Gamma$
in the same way.

\subsection{The complex reflection group $\Gamma_n$}
\label{sec:3.2}
Fix non negative integers $\ell$, $n$. Unless specified otherwise
we'll always assume that $\ell,n\neq 0$.
Let $\frak S_n$ be the
symmetric group on $n$ letters and $\Gamma_{n}$ be the semi-direct product
$\frak S_n\ltimes\Gamma^n$, where $\Gamma^n$ is the Cartesian product of $n$
copies of $\Gamma$.
We write also $\frak S_0=\Gamma^0=\Gamma_0=\{1\}$.
For $\g\in\Gamma$ let $\g_i\in\Gamma^n$ be the element with $\g$ at the
$i$-th place and with 1 at the other ones.
Let $s_{ij}$ be the transposition $(i,j)$ in $\Sen_n$.
We'll abbreviate $s_i=s_{i,i+1}$. Write
$s_{ij}^{\gamma}=s_{ij}\gamma_i\gamma_j^{-1}$ for $\gamma\in\Gamma$,
$i\neq j$. For $p\in\ZZ_\ell$ let
$\chi_p:\Gamma\to\CC^\times$ be the character $\gamma\mapsto\gamma^p$.
The assignment $p\mapsto\chi_p$
identifies $\ZZ_\ell$ with the group of characters of $\Gamma$.
The group $\Gamma_n$ is a complex reflection group.
For $\ell>1$
it acts on the vector space $\hen=\CC^n$ via the reflection representation.
For $\ell=1$
the reflection representation is given by the permutation  of coordinates
on the hyperplane
$$\CC_0^n=\{x_1+\dots+x_n=0\}\subset\CC^n.$$
We'll be interested in the following subgroups of $\Gamma_n$.

\begin{itemize}
\item To a composition
$\nu$ of $n$ we associate the set
$$I=\{1,2,\dots,n-1\}\setminus\{\nu_1,\nu_1+\nu_2,\dots\}.$$
Let
$\Gamma_{\nu}=\frak S_\nu\ltimes\Gamma^n$, where $\frak S_\nu=\frak S_I$ is
the subgroup of $\frak S_n$ generated
by the simple reflections $s_{i,i+1}$ with $i\in I$.

\item For integers $m,n\geqslant 0$ and a composition $\nu$ we set
$\Gamma_{n,\nu}=\Gamma_n\times \frak S_\nu$.
If $\nu=(m^j)$ for some integer $j\geqslant 0$
we abbreviate
$\Gamma_{n,(m^j)}=\Gamma_{n,\nu}$.
We write also $\Gamma_{n,m}=\Gamma_n\times\frak S_m$.
Any parabolic subgroup of $\Gamma_n$ is conjugate to $\Gamma_{l,\nu}$ for some
$l,\nu$ with $l+|\nu|\leqslant n$.
\end{itemize}

\subsection{Definition of the cyclotomic rational DAHA}
Fix a basis $(x,y)$ of $\CC^2$. Let $x_i$,
$y_i$ denote the elements $x,y$ respectively in the $i$-th summand
of $(\CC^2)^{\oplus n}$. The group $\Gamma_n$ acts on $(\CC^2)^{\oplus n}$
such that for distinct $i$, $j$, $k$ we have
$$\g_i(x_i)=\g^{-1} x_i,
\quad \g_i(x_j)=x_j, \quad \g_i(y_i)=\g y_i, \quad
\g_i(y_j)=y_j,$$
$$s_{ij}(x_i)=x_j,
\quad s_{ij}(y_i)=y_j, \quad s_{ij}(x_k)=x_k, \quad
s_{ij}(y_k)=y_k.$$ Fix $k\in\CC$ and $c_\g\in\CC$ for each $\g\in\Gamma$.
Since $\Gamma_n$ is a complex reflection group with the reflection
representation $\hen$ equal to
$(\CC^2)^{\oplus n}$, see above, we can define the algebra
$H(W)=H(W,\hen)$ for $W=\Gamma_n$.
We'll call $H(\Gamma_{n})$ the {\it cyclotomic rational DAHA}.
It is the quotient
of the smash product of $\CC \Gamma_{n}$ and the tensor algebra of
$(\CC^2)^{\oplus n}$ by the relations
$$[y_i,x_i]=-k\sum_{j\neq i}\sum_{\gamma\in\Gamma}s_{ij}^{\gamma}
-\sum_{\gamma\in\Gamma}c_\gamma\gamma_i,\quad c_1=-1,$$
$$[y_i,x_j]=k\sum_{\g\in\Gamma}s_{ij}^{\gamma}
\quad \text{if}\ i\neq j,$$
$$[x_i,x_j]=[y_i,y_j]=0.$$
Let $R_x$, $R_y$ be the subalgebras generated by
$x_1,x_2,\dots,x_n$ and $y_1,y_2,\dots,y_n$ respectively.
We'll write $\hen$, $\hen^*$ for the maximal spectrum of $R_x$, $R_y$.
The $\CC$-vector space $\hen$ is identified with $\CC^n$ in the obvious way.
We'll use another presentation where the
parameters are $h$, $h_p$ with $p\in\ZZ_\ell$ where
$k=-h$
and $-c_\g=\sum_{p\in\ZZ_\ell}\gamma^{-p}h_{p}$.
Note that $1=\sum_ph_p$.

\subsection
{The Lie algebras $\widehat{\sen\len}_\ell$ and $\widetilde{\sen\len}_\ell$}
Given complex numbers $h_p$, $p\in\ZZ_\ell$, with $\sum_ph_p=1$,
it is convenient to consider the following level 1 weight
\begin{equation}\label{lambda}
\Lambda=\sum_ph_p\,\omega_p.
\end{equation} Here the
$\omega_p$'s are the fundamental weights of the affine Lie algebra
$$\widehat{\sen\len}_\ell=
(\sen\len_\ell\otimes\CC[\varpi,\varpi^{-1}])\oplus\CC\oneb,$$
where $\oneb$ is a central element and the Lie bracket is given by
\begin{equation}\label{affbracket}
[x\otimes\varpi^r,y\otimes\varpi^s]=
[x,y]\otimes\varpi^{r+s}+r(x,y)\delta_{r,-s}\oneb,\quad
(x,y)=\tau(xy^t),\end{equation}
where $y\mapsto y^t$ is the transposition and $\tau$ is the trace.
The affine Lie algebra $\widehat{\sen\len}_\ell$ is generated by the symbols
$e_p$, $f_p$, $p=0,\dots,\ell-1,$ satisfying the Serre relations.
For $p\neq 0$ we have
$$e_p=e_{p,p+1}\otimes 1,\quad
e_0=e_{\ell,1}\otimes\varpi,\quad
f_p=e_{p+1,p}\otimes 1,\quad
f_0=e_{1,\ell}\otimes\varpi^{-1},$$
where $e_{p,q}$ is the usual elementary matrix in $\sen\len_\ell$.
We'll also use the extended affine Lie algebras
$\widetilde{\sen\len}_\ell$,
obtained by adding to
$\widehat{\sen\len}_\ell$
the 1-dimensional vector space spanned by the scaling element
$D$ such that
$[D,x\otimes\varpi^r]=r\,x\otimes\varpi^r$
and $[D,\oneb]=0$.
Let $\delta$ denote the dual of $D$, i.e., the smallest
positive imaginary root.
We equip the space of linear forms on the Cartan subalgebra of $\widetilde{\sen\len}_\ell$
with the pairing such that
$$\la\omega_p,\omega_q\rangle=\min(p,q)-pq/\ell,\quad
\langle\omega_p,\delta\rangle=1,\quad
\langle\delta,\delta\rangle=0.$$
Let $U(\widehat{\sen\len}_\ell)$ be the enveloping algebra of $\widehat{\sen\len}_\ell$, and
let $U^-(\widehat{\sen\len}_\ell)$ be the
subalgebra generated by the elements $f_p$ with $p=0,\dots,\ell-1.$
For $r\geqslant 0$ we write $U^-(\widehat{\sen\len}_\ell)_r$
for the subspace of $U^-(\widehat{\sen\len}_\ell)$
spanned by the monomials whose weight is the sum of $r$ negative simple roots.

\subsection{Representations of $\frak S_n$, $\Gamma_{n}$}
\label{sec:rep}
The set of isomorphism classes of irreducible
$\frak S_n$-modules is
$$\Irr(\CC \frak S_n)=\{\bar L_\l;\l\in\Pc_n\},$$
see \cite[sec.~I.9]{M}.
The set of isomorphism classes of irreducible
$\Gamma_n$-modules is
$$ \Irr(\CC \Gamma_{n})=\{\bar L_\l;\l\in\Pc_n^\ell\},$$
where $\bar L_\l$ is defined as follows.
Write $\l=(\l(p))$. The tuple of
positive integers $\nu_\l=(|\l(p)|)$ is a composition in
$\Lambda(\ell,n)$. Let
$$\bar L_{\l(p)}(\chi_{p-1})^{\otimes|\lambda(p)|}\in
\Irr(\CC\Gamma_{{|\l(p)|}})$$
be the tensor product of the
$\frak S_{|\l(p)|}$-module $\bar L_{\l(p)}$ and the one-dimensional
$\Gamma^{|\l(p)|}$-module $(\chi_{p-1})^{\otimes|\l(p)|}$.
The $\Gamma_{n}$-module
$\bar L_\l$ is given by
\begin{equation}
\label{labelsimple}\bar L_\l=\Ind_{\Gamma_{\nu_\l}}^{\Gamma_{n}}\bigl(
\bar L_{\l(1)}\chi_\ell^{\otimes|\l(1)|} \otimes
\bar L_{\l(2)}\chi_1^{\otimes|\l(2)|} \otimes\cdots\otimes
\bar L_{\l(\ell)}\chi_{\ell-1}^{\otimes|\l(\ell)|} \bigr).
\end{equation}

\subsection{The category $\Oc(\Gamma_{n})$}
Consider the $\CC$-algebra $H(\Gamma_n)$ with the parameter $\Lambda$ in
(\ref{lambda}). The category $\Oc$ of $H(\Gamma_n)$ is
the quasi-hereditary category $\Oc(\Gamma_{n})$.
The standard modules  are the induced modules
$$\Delta_{\l}=\Ind^{H(\Gamma_{n})}_{\Gamma_{n}\ltimes R_y}(\bar L_\l),\quad
\l\in\Pc_n^\ell.$$
Here $\bar L_\l$ is viewed as a $\Gamma_{n}\ltimes R_y$-module such
that $y_1,\dots y_n$ act trivially. Let $L_{\l}$, $P_{\l}$
denote the top and the projective cover of $\Delta_{\l}$.
Recall the $\CC$-linear isomorphism
\begin{equation}\label{groth}
\spe:[\Rep(\CC\Gamma_{n})]\to[\Oc(\Gamma_{n})],\quad
[\bar L_\l]\mapsto[\Delta_\l].
\end{equation}
To avoid cumbersome notation for induction/restriction functors
in $$\Oc(\Gamma)=\bigoplus_{n\geqslant 0}\Oc(\Gamma_n)$$ we'll abbreviate
\begin{equation}
\label{abrevO}
\begin{aligned}
&{}^\Oc\!\Ind_{n}={}^\Oc\!\Ind_{\Gamma_{n-1}}^{\Gamma_{n}},
&{}^\Oc\!\Res_{n}={}^\Oc\!\Res^{\Gamma_{n}}_{\Gamma_{n-1}},\cr
&{}^\Oc\!\Ind_{n,(m^r)}={}^\Oc\!\Ind_{\Gamma_{n,(m^r)}}^{\Gamma_{n+mr}},
&{}^\Oc\!\Res_{n,(m^r)}={}^\Oc\!\Res^{\Gamma_{n+mr}}_{\Gamma_{n,(m^r)}},\cr
&{}^\Oc\!\Ind_{n,mr}={}^\Oc\!\Ind_{\Gamma_{n,mr}}^{\Gamma_{n+mr}},
&{}^\Oc\!\Res_{n,mr}={}^\Oc\!\Res^{\Gamma_{n+mr}}_{\Gamma_{n,mr}}.
\end{aligned}
\end{equation}
We write also
$$\gathered
{}^\Oc\!\Ind_{(m^r)}={}^\Oc\!\Ind_{\Sen_m^r}^{\Sen_{mr}}:\Oc(\Sen_m^r)\to\Oc(\Sen_{mr}),\cr
{}^\Oc\!\Res_{(m^r)}={}^\Oc\!\Res_{\Sen_m^r}^{\Sen_{mr}}:\Oc(\Sen_{mr})\to\Oc(\Sen_m^r).
\endgathered$$

\subsection{The functor KZ}
\label{sec:KZ}
For $\zeta\in\CC^\times$ and $v_1,v_2,\dots,v_\ell\in\CC^\times$ let
$\Hb_\zeta(n,\ell)$ be the cyclotomic Hecke algebra associated with
$\Gamma_n$ and the parameters $\zeta,v_1,\dots,v_\ell$, see
Section \ref{app:cyclohecke}.
We'll abbreviate $\Hb(\Gamma_n)=\Hb_\zeta(n,\ell)$.
Assume that
$$\zeta=\exp(2i\pi h),\quad
v_p=v_1\exp\bigl(-2i\pi(h_1+h_2+\cdots+h_{p-1})\bigr).
$$
Then the KZ-functor \cite{GGOR} is a quotient functor
$$\KZ:\Oc(\Gamma_n)\to\Rep(\Hb(\Gamma_n)).$$
Since $\KZ$ is a quotient functor,
it admits a right adjoint functor
$$S:\Rep(\Hb(\Gamma_n))\to\Oc(\Gamma_n)$$
such that $\KZ\circ\, S=\oneb$.
By \cite[thm.~5.3]{GGOR}, for each projective module $Q\in\Oc(\Gamma_n)$
the canonical adjunction morphism $\oneb\to S\circ\KZ$ yields an isomorphism
\begin{equation}\label{isomproj}
Q\to S(\KZ(Q)).
\end{equation}

\subsection{The functor $R$}
\label{sec:induction}
Let $\Hb_\zeta(m)$ be the Hecke $\CC$-algebra of $GL_m$, see
Section \ref{app:cyclohecke}.
Let $\Sb_\zeta(m)$ be the $\zeta$-Schur $\CC$-algebra,
see Appendix \ref{app:B}.
The module categories of
$\Sb_\zeta(m)$, $\Hb_\zeta(m)$
are related through the Schur functor
\begin{equation*}
\Phi^*:\Rep(\Sb_\zeta(m))\to\Rep(\Hb_\zeta(m)).\end{equation*}
Set
$$\Lambda(m)_+=\Lambda(m,m)\cap\ZZ^m_+,
\quad
\ZZ^m_+=\{\l=(\l_1,\l_2,\dots,\l_m)\in\ZZ^m\,;\,
\l_1\geqslant \l_2\geqslant\cdots\geqslant \l_m\}.$$
The category $\Rep(\Sb_\zeta(m))$ is
quasi-hereditary with respect to the dominance order,
the standard objects being the modules $\Delta^S_\l$
with $\l\in\Lambda(m)_+$.
The comultiplication $\Delta$ yields a bifunctor (\ref{tensorschur})
$$\dot\otimes:
\Rep(\Sb_\zeta(m))\otimes\Rep(\Sb_\zeta(m'))\to\Rep(\Sb_\zeta(m+m')).$$
Now, assume that $h$ is a negative rational number with denominator $d$ and
let $\zeta\in\CC^\times$ be a primitive $d$-th root of 1.
Recall that $h$ is the parameter of the $\CC$-algebra $H(\frak S_m)$.
If $h\notin 1/2+\ZZ$ then
Rouquier's functor \cite{R} is an equivalence of quasi-hereditary categories
$$R:\Oc(\frak S_{m})\to\Rep(\Sb_\zeta(m)),\quad \Delta_\l\mapsto \Delta^S_\l,$$
such that
$\KZ=\Phi^*\circ R$. For $m=m'+m''$ we have a canonical equivalence of categories
$\Oc(\frak S_{m'})\otimes\Oc(\frak S_{m''})=
\Oc(\frak S_{m'}\times\frak S_{m''})$
and the induction yields a bifunctor
\begin{equation}\label{ind}
{}^\Oc\!\Ind_{m',m''}:\Oc(\frak S_{m'})\otimes\Oc(\frak S_{m''})
\to\Oc(\frak S_{m}).
\end{equation}
We'll abbreviate
$$\Oc(\frak S)=\bigoplus_{n\geqslant 0}\Oc(\frak S_n),\quad
\Rep(\Sb_\zeta)=\bigoplus_{n\geqslant 0}\Rep(\Sb_\zeta(n)).$$

\begin{prop}\label{prop:tensorproduct}
For $h\notin 1/2+\ZZ$
the functor $R$ is tensor equivalence
$\Oc(\frak S)\to\Rep(\Sb_\zeta).$
\end{prop}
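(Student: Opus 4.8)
The aim is to promote the equivalence $R$ to a monoidal functor: for each decomposition $m=m'+m''$ we must produce a natural isomorphism of bifunctors
$$
\theta_{m',m''}\colon\quad R\circ{}^\Oc\!\Ind_{m',m''}\ \cong\ \dot\otimes\circ(R\otimes R)\colon\quad\Oc(\frak S_{m'})\otimes\Oc(\frak S_{m''})\longrightarrow\Rep(\Sb_\zeta(m)),
$$
and check that the family $(\theta_{m',m''})$ is compatible with the associativity and unit constraints. Equivalently, transporting $\dot\otimes$ to $\Oc(\frak S)$ by the equivalence $R$, it suffices to prove that the two exact bifunctors
$$
F_1={}^\Oc\!\Ind_{m',m''},\qquad F_2=R^{-1}\circ\dot\otimes\circ(R\otimes R)\colon\quad\Oc(\frak S_{m'}\times\frak S_{m''})\longrightarrow\Oc(\frak S_{m})
$$
are isomorphic, coherently with associativity. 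Both are exact since ${}^\Oc\!\Ind_{m',m''}$, $\dot\otimes$ and $R$ are.

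The plan is first to pass to the Hecke algebras, where the statement is classical, and then to descend. Let $\Ind^{\Hb}_{m',m''}\colon\Rep(\Hb_\zeta(m'))\otimes\Rep(\Hb_\zeta(m''))\to\Rep(\Hb_\zeta(m))$ be the parabolic (``merging'') induction for type $A$ Hecke algebras. Two compatibilities are needed: \emph{(i)} the Schur functor intertwines $\dot\otimes$ with $\Ind^{\Hb}$, i.e. $\Phi^*\circ\dot\otimes\cong\Ind^{\Hb}_{m',m''}\circ(\Phi^*\otimes\Phi^*)$, which reflects the fact that the comultiplication of the $q$-Schur algebras is dual to the multiplication of the polynomial quantum matrix coalgebra, see Appendix \ref{app:B}; \emph{(ii)} the functor $\KZ$ intertwines ${}^\Oc\!\Ind_{m',m''}$ with $\Ind^{\Hb}_{m',m''}$, i.e. $\KZ\circ{}^\Oc\!\Ind_{m',m''}\cong\Ind^{\Hb}_{m',m''}\circ(\KZ\otimes\KZ)$, which is the compatibility of the KZ functor with parabolic induction, cf. \cite{BE}, \cite{S}. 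Since $\KZ=\Phi^*\circ R$, feeding \emph{(ii)} through $R$ and then \emph{(i)} backwards yields a natural isomorphism $\KZ\circ F_1\cong\KZ\circ F_2$.

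It remains to lift this isomorphism from $\Rep(\Hb_\zeta(m))$ to $\Oc(\frak S_m)$. The functors ${}^\Oc\!\Ind_{m',m''}$ and $\dot\otimes$ preserve modules having a standard filtration (by the $\Delta_{\l'}\otimes\Delta_{\l''}$ on the $\Oc$-side, by the $\Delta^S$'s on the Schur side, the latter being a standard property of $q$-Schur algebras, Appendix \ref{app:B}; the former following from \cite{BE}), and $R$ is an equivalence of highest weight categories sending standards to standards; hence $F_1$ and $F_2$ send objects with a $\Delta_{\l'}\otimes\Delta_{\l''}$-filtration — in particular projective objects — to $\Delta$-filtered objects of $\Oc(\frak S_m)$. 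On the other hand, under the hypothesis $h\notin 1/2+\ZZ$, which is exactly what makes $R$ an equivalence, the pair $(\Oc(\frak S_m),\KZ)$ is a $0$-faithful highest weight cover of $\Rep(\Hb_\zeta(m))$, that is, $\KZ$ is fully faithful on the subcategory of $\Delta$-filtered modules (\cite{GGOR}, \cite{R}). Therefore the natural isomorphism $\KZ\circ F_1\cong\KZ\circ F_2$ lifts, over $\Delta$-filtered inputs, to a natural isomorphism $F_1\cong F_2$; restricting to projective inputs and then extending along projective presentations — a right exact functor on a category with enough projectives being determined, naturally, by its restriction to projectives — gives the desired $F_1\cong F_2$, and applying $R$ produces $\theta_{m',m''}$.

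Finally, the coherence of $(\theta_{m',m''})$ reduces, by the same full-faithfulness of $\KZ$ on $\Delta$-filtered modules, to the corresponding coherence for the functors $\Ind^{\Hb}$, which holds by transitivity of parabolic induction of Hecke algebras; the unit axiom is trivial, $\Oc(\frak S_0)$, $\Rep(\Sb_\zeta(0))$ and $\Rep(\Hb_\zeta(0))$ all being $\Rep(\CC)$ with $R$ the identity. \textbf{The main difficulty} is ingredient \emph{(ii)}: one needs the compatibility of $\KZ$ with parabolic induction as a genuine natural isomorphism of functors that is coherent with the associativity constraints, rather than an identification merely on objects or in the Grothendieck group. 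Once \emph{(ii)} is available in this form, the descent through $\KZ$ and $R$ — using only that $\KZ$ is a $0$-faithful cover and that induction preserves standard filtrations — is formal.
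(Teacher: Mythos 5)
Your proposal is correct and follows essentially the same route as the paper: both pass to the Hecke side via $\KZ=\Phi^*\circ R$, combine the compatibility of $\Phi^*$ with $\dot\otimes$ (Corollary \ref{cor:tensor*}) and of $\KZ$ with parabolic induction, lift the resulting isomorphism on projective objects, and extend by exactness using that $\Oc(\frak S_m)$ has enough projectives. The only cosmetic difference is that the paper descends using full faithfulness of $\Phi^*$ on projectives (the double centralizer property) rather than the $0$-faithfulness of $\KZ$ on $\Delta$-filtered modules, and it leaves the coherence of the associativity constraints implicit.
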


\begin{proof}
We must check that $R$ identifies
the tensor product $\dot\otimes$
with the induction (\ref{ind}).
First, fix two projective objects
$X\in\Oc(\frak S_{m'})$ and $Y\in\Oc(\frak S_{m''})$.
We have
$$\aligned
\Phi^*\bigl(R(X)\dot\otimes R(Y)\bigr)&=
{}^\Hb\Ind_{m',m''}
\bigl(\Phi^* R(X)\otimes \Phi^* R(Y)\bigr)
\cr
&=
{}^\Hb\Ind_{m',m''}
\bigl(\KZ(X)\otimes\KZ(Y)\bigr)
\cr
&=
\KZ\bigl({}^\Oc\!\Ind_{m',m''}(X\otimes Y)\bigr)
\cr
&=
\Phi^* R\bigl({}^\Oc\!\Ind_{m',m''}(X\otimes Y)\bigr).
\endaligned$$
The first equality follows from Corollary \ref{cor:tensor*},
the second one and the fourth one
come from $\KZ=\Phi^*\circ R$, and the third one is the commutation of
$\KZ$ and the induction functors, see \cite{S}.
Since the modules
$R(X)\dot\otimes R(Y)$
and
$R\bigl({}^\Oc\!\Ind_{m',m''}(X\otimes Y)\bigr)$
are projective,
and since $\Phi^*$ is fully faithful on projectives we get that
$$R(X)\dot\otimes R(Y)=R\bigl({}^\Oc\!\Ind_{m',m''}(X\otimes Y)\bigr).$$
Now, since the functors
(\ref{tensorschur}), (\ref{ind})
are exact
and coincide on projective objects,
and since the category
$\Oc(\frak S_m)$ has enough projectives,
the proposition is proved.
\end{proof}

\subsection{The categorification of $\widetilde{\sen\len}_m$}
\label{sec:categ}
Recall that  $Z(\Oc(\Gamma_n))$ is the center of the category $\Oc(\Gamma_n)$.
Let $D_n(z)$ be the polynomial in
$Z(\Oc(\Gamma_n))[z]$ defined in \cite[sec.~4.2]{S}.
For any $a\in\CC(z)$ the projection to the generalized eigenspace of $D_n(z)$
with the eigenvalue $a$ yields an exact endofunctor $Q_{n,a}$ of
$\Oc(\Gamma_n)$.
Next, consider the point
$$b_{n}=(0,0,\dots,0,1)\in\hen,\quad\hen=\CC^n.$$
The induction and the restriction relatively to $b_{n}$ yield functors
\begin{equation*}
\begin{split}
{}^\Oc\!\Ind_{n}:
\Oc(\Gamma_{n-1})\to\Oc(\Gamma_{n}),
\quad
{}^\Oc\!\Res_{n}:\Oc(\Gamma_{n})\to\Oc(\Gamma_{n-1}).
\end{split}
\end{equation*}

\begin{df}\cite[sec.~4.2]{S}
The {\it $q$-restiction} and the {\it $q$-induction} functors
$$e_q:\Oc(\Gamma_n)\to\Oc(\Gamma_{n-1}),\quad
f_q:\Oc(\Gamma_{n-1})\to\Oc(\Gamma_n),\quad
q=0,1,\dots,m-1$$
are given by
$$\gathered
e_q=\bigoplus_{a\in\CC(z)}Q_{n-1,a/(z-\zeta^q)}\circ{}^\Oc\!\Res_n\circ\, Q_{n,a},
\cr
f_q=\bigoplus_{a\in\CC(z)}Q_{n,a(z-\zeta^q)}\circ{}^\Oc\!\Ind_n\circ\, Q_{n-1,a}.
\endgathered$$
\end{df}

\noindent
We'll abbreviate
$$E=e_0\oplus e_1\oplus\dots\oplus e_{m-1},\quad
F=f_0\oplus f_1\oplus\dots\oplus f_{m-1}.$$
Following \cite[sec.~6.3]{S},
for $L\in\Irr(\Oc(\Gamma))$ we set
$$\tilde e_q(L)=\top(e_q(L)),
\quad
\tilde f_q(L)=\soc(f_q(L)),
\quad\tilde e_q(0)=\tilde f_q(0)=0.$$
Now, for each $n$ we choose the parameters of
$H(\Gamma_n)$ in the following way
\begin{equation}\label{h}
h=-1/m,\quad h_p=(s_{p+1}-s_p)/m,
\quad s_p\in\ZZ, \quad p\neq 0.\end{equation}
The following hypothesis is important for the rest of the paper :

{\centerline{\it from now on we'll always assume that $m>1$.}}

\noindent
The $\CC$-vector space $[\Oc(\Gamma)]$ is canonically isomorphic to
the {\it level $\ell$ Fock space
$\Fc_{m,\ell}^{(s)}$
associated with
the $\ell$-charge $s=(s_p)$},
see (\ref{chain}) below
for details. The latter is equipped with an integrable
representation of $\widetilde{\sen\len}_m$ of level $\ell$,
see Section \ref{sec:fockl} below.

\begin{prop}\label{prop:EF}
(a) The functors $e_q$, $f_q$ are exact and biadjoint.

(b) We have $E={}^\Oc\!\Res_n$ and $F={}^\Oc\!\Ind_n.$

(c) For $M\in\Oc(\Gamma_n)$ we have $E(M)=0$ (resp.~$F(M)=0$)
iff $E(L)=0$ (resp.~$F(L)=0$) for any constituent $L$ of $M$.

(d)
The operators $e_q$, $f_q$ equip $[\Oc(\Gamma)]$ with a representation of $\widehat{\sen\len}_m$
which is isomorphic via the map (\ref{chain}) to $\Fc^{(s)}_{m,\ell}$.

(e)
The tuple $(\Irr(\Oc(\Gamma)),\tilde e_q,\tilde f_q)$ has a crystal structure.
In particular, for $L,L'\in\Irr(\Oc(\Gamma))$
we have $\tilde e_q(L), \tilde f_q(L)\in\Irr(\Oc(\Gamma))\sqcup\{0\}$,
and $\tilde e_q(L)=L'$  if and only if  $\tilde f_q(L')=L$.

\end{prop}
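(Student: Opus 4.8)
The plan is to recognize that parts (a)--(d) are essentially recollections from \cite{S}, so the main work is to assemble them and then deduce (e) from the abstract theory of crystals attached to $\fraksl_m$-categorifications. For part (a), the exactness of $e_q$ and $f_q$ follows because ${}^\Oc\!\Res_n$, ${}^\Oc\!\Ind_n$ are exact (Section~\ref{sec:support1}) and each $Q_{n,a}$ is an exact projection onto a generalized eigenspace of the central operator $D_n(z)$; biadjointness follows from the biadjointness of $({}^\Oc\!\Ind_n,{}^\Oc\!\Res_n)$ established in \cite{BE}, combined with the fact that the $Q_{n,a}$ are projections onto blocks and hence self-biadjoint, so the adjunctions pass to the summands after matching eigenvalues $a\leftrightarrow a/(z-\zeta^q)$. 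For part (b), one observes that $D_n(z)$ acts locally finitely, so $\bigoplus_a Q_{n,a}=\oneb$ on $\Oc(\Gamma_n)$; then for a fixed $a$ the restriction ${}^\Oc\!\Res_n$ sends the $a$-eigenspace into $\bigoplus_{q}\bigoplus_{a'} Q_{n-1,a'}$ with $a'(z-\zeta^q)=a$ for exactly one pair $(q,a')$ when $a$ has a pole, by the explicit relation between $D_{n-1}(z)$ and $D_n(z)$ in \cite[sec.~4.2]{S}; summing over $q$ recovers all of ${}^\Oc\!\Res_n$, and dually for $F$ and ${}^\Oc\!\Ind_n$.

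For part (c): since $E={}^\Oc\!\Res_n$ is exact, $E(M)=0$ forces $E$ to kill every constituent of $M$, and conversely; the same for $F$. (One could alternatively invoke Remark~\ref{rk:rk00}, but exactness is more direct here.) Part (d) is exactly the categorification theorem of \cite{S}: the operators $[e_q],[f_q]$ on $[\Oc(\Gamma)]$ satisfy the $\widehat{\sen\len}_m$-Serre relations and the identification with $\Fc^{(s)}_{m,\ell}$ is the content of the isomorphism (\ref{chain}); I would simply cite this, noting that the parameter choice (\ref{h}) is precisely the one under which \cite{S} proves the statement.

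The only part requiring genuine argument is (e). The plan is to apply the general machinery of Chuang--Rouquier / Kashiwara crystals for $\fraksl_m$-categorifications: parts (a)--(d) say that $(\Oc(\Gamma), E, F, [e_q],[f_q])$ is an $\fraksl_m$-categorification in the sense of \cite[sec.~5]{S} (equivalently Chuang--Rouquier), and for any such categorification the maps $\tilde e_q=\top\circ e_q$, $\tilde f_q=\soc\circ f_q$ on the set of simple objects define a crystal: this is proved in \cite[sec.~5--6]{S} (following Grojnowski and Chuang--Rouquier). Concretely, I would check that $f_q(L)$ has simple socle and head isomorphic to $\tilde f_q(L)$, that $e_q(L)$ has simple head $\tilde e_q(L)$, and that $\tilde e_q\tilde f_q(L)=L$ whenever $\tilde f_q(L)\neq 0$ --- all of which rest on the $\fraksl_2$-categorification theory applied to the $q$-th string, using that $e_q f_q$ and $f_q e_q$ differ by the identity functor twisted by $H^*(\PP^{\bullet})$-type multiplicities, and that the divided-power functors $e_q^{(r)}$, $f_q^{(r)}$ act with the expected multiplicities on simples. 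The main obstacle is therefore not any new computation but correctly invoking the $\fraksl_2$-categorification lemmas (simplicity of socle/head of $f_qL$, the inverse-bijection property) in this cyclotomic setting; since \cite{S} already carries this out for $\Oc(\Gamma)$ with the present conventions, the proof reduces to the citation together with the verification that hypotheses (a)--(d) match those required there.
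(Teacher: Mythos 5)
Your proposal is correct and follows essentially the same route as the paper, which simply cites \cite{S}: parts (a),(b) from \cite[prop.~4.4]{S}, part (d) from \cite[cor.~4.5]{S}, part (e) from \cite[thm.~6.3]{S}, and part (c) as an immediate consequence of exactness. The extra detail you supply (the eigenspace decomposition of $D_n(z)$ for (b), the $\fraksl_2$-categorification machinery for (e)) is a faithful unpacking of what those citations contain, not a different argument.
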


\begin{proof}
Parts $(a),(b)$ follows from \cite[prop.~4.4]{S},
part $(e)$ is contained in \cite[thm.~6.3]{S},
part $(c)$ is obvious,
and part $(d)$ is \cite[cor.~4.5]{S}.
\end{proof}

\vskip3mm

\subsection{The filtration of $[\Oc(\Gamma_n)]$ by the support}
\label{sec:stratum}
Fix a positive integer $n$.
Assume that $\ell\geqslant 2$.
In this section we consider the tautological action of
$\Gamma_n$ on $\CC^n$.
For an integer $l\geqslant 0$ and a composition $\nu$
such that $l+|\nu|\leqslant n$ we abbreviate
$X^\circ_{l,\nu}=X^\circ_{W,\hen}$ and
$X_{l,\nu}={X}_{W,\hen}$
where $W=\Gamma_{l,\nu}$.
If $\nu=(m^j)$ for some integer $j\geqslant 0$ such that $l+jm\leqslant n$
we write
$$X^\circ_{l,j}=X^\circ_{l,\nu},\quad {X}_{l,j}={X}_{l,\nu}.$$
Therefore $X_{l,j}$ is the set of the
points in $\CC^n$ with $l$ coordinates equal to zero
and $j$ collections of $m$ coordinates which differ
from each other by $\ell$-th roots of one.
To avoid confusions we may write $X_{l,j,\CC^n}=X_{l,j}$.
Unless specified otherwise, for $l,j,m,n$ as above we'll set
\begin{equation}\label{i} i=n-l-jm. \end{equation}

\vskip3mm

\begin{df}
For $i,j\geqslant 0$
we set
$$\Irr(\Oc(\Gamma_n))_{i,j}=
\{L\in\Irr(\Oc(\Gamma_n))\,;\,\Supp(L)=X_{l,j}\}.$$
\end{df}

\vskip3mm

\begin{df}
For $i,j\geqslant 0$
let $F_{i,j}(\Gamma_n)$
be the $\CC$-vector subspace of $[\Oc(\Gamma_n)]$
spanned by the classes of the modules
whose support is contained in $X_{l,j}$, with $l$ as in (\ref{i}).  If $i<0$ or $j<0$ we write
$F_{i,j}(\Gamma_n)=0.$
\end{df}

\vskip3mm

\begin{df}
We define a partial order on the set of pairs of nonnegative integers
$(i,j)$ such that $i+jm\leqslant n$ given by
$(i',j')\leqslant (i,j)$ if and only if $X_{l',j'}\subset X_{l,j}$,
where $l=n-i-jm$ and $l'=n-i'-j'm$.
\end{df}

\vskip3mm

\noindent
Since the support of a module is the union of the supports
of all its constituents,
the $\CC$-vector space $F_{i,j}(\Gamma_n)$ is
spanned by the classes of the modules in
$\Irr(\Oc(\Gamma_n))$ whose support is contained in $X_{l,j}$, or, equivalently
$F_{i,j}(\Gamma_n)$ is spanned by the classes of the modules in
$$\bigcup_{(i',j')\leqslant (i,j)}\Irr(\Oc(\Gamma_n))_{i',j'}.$$

\begin{rk}
\label{rk:support2}
We have
$\bigcup_{i,j}F_{i,j}(\Gamma_n)=[\Oc(\Gamma_n)].$
Indeed, for $L\in\Irr(\Oc(\Gamma_n))$ we have
$\Supp(L)=X_{l,\nu}$ for some $l,\nu$,
see Section \ref{sec:support1}.
For $b\in X_{l,\nu}^\circ$
the $H(\Gamma_{l,\nu})$-module
$\Res_b(L)$ is finite dimensional.
Thus, since the parameter $h$ of
$H(\Gamma_{l,\nu})$
is equal to $-1/m$
the parts of $\nu$ are all equal to $m$.
Hence we have $\Supp(L)=X_{l,j}$ for some  $l,j$ as above.
\end{rk}

\vskip3mm

\noindent
The subspaces $F_{i,j}(\Gamma_n)$
give a filtration of $[\Oc(\Gamma_n)]$.
Consider the associated graded $\CC$-vector space
$$\gr(\Gamma_n)=\bigoplus_{i,j}\gr_{i,j}(\Gamma_n).$$
Note that 
the $\CC$-vector spaces $F_{i,j}(\Gamma_n)$
and $\gr_{i,j}(\Gamma_n)$
differ slightly from the corresponding objects, denoted by
$\Fb_{i,j}K_0$ and $\gr_{i,j}^\Fb K_0$, in
\cite[sec.~6.5]{E}.The images by the canonical projection
$F_{i,j}(\Gamma_n)\to\gr_{i,j}(\Gamma_n)$
of the classes of the  modules in
$\Irr(\Oc(\Gamma_n))_{i,j}$
form a basis of the $\CC$-vector space $\gr_{i,j}(\Gamma_n)$.
So we may regard $\gr_{i,j}(\Gamma_n)$ as the subspace of
$[\Oc(\Gamma_n)]$
spanned by
$\Irr(\Oc(\Gamma_n))_{i,j}$.
We'll abbreviate
$$\gathered
F_{i,\bullet}(\Gamma_n)=\sum_jF_{i,j}(\Gamma_n),\quad
F_{\bullet,j}(\Gamma_n)=\sum_iF_{i,j}(\Gamma_n),\cr
\gr_{i,\bullet}(\Gamma_n)=\bigoplus_j\gr_{i,j}(\Gamma_n),\quad
\gr_{\bullet,j}(\Gamma_n)=\bigoplus_i\gr_{i,j}(\Gamma_n).
\endgathered$$
Now, let us study the filtration of $[\Oc(\Gamma_n)]$ in details.
The subgroup $\Gamma_{l,(m^j)}$ of $\Gamma_n$
is contained in the subgroups $\Gamma_{l+1,(m^j)}$,
$\Gamma_{l,(m^{j+1})}$ and $\Gamma_{l+m,(m^{j-1})}$
(up to conjugation by an element of $\Gamma_n$) whenever such subgroups exist. Thus we have
the inclusions
$$\gathered
X_{l+1,j},\,X_{l,j+1},\,X_{l+m,j-1}\subset X_{l,j},\cr
F_{i-1,j}(\Gamma_n),\, F_{i-m,j+1}(\Gamma_n),\,F_{i,j-1}(\Gamma_n)
\subset F_{i,j}(\Gamma_n).
\endgathered$$

\begin{prop}\label{prop:order}
(a) We have
$$X_{l',j'}\subsetneq X_{l,j}\iff
X_{l',j'}\subset X_{l+1,j}\cup X_{l,j+1}\cup X_{l+m,j-1}.$$
(b) We have an isomorphism of $\CC$-vector spaces
$$\gr_{i,j}(\Gamma_n)=F_{i,j}(\Gamma_n)/
\bigl(F_{i-1,j}(\Gamma_n)+F_{i-m,j+1}(\Gamma_n)+F_{i,j-1}(\Gamma_n)\bigr).$$
\end{prop}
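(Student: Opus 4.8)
The plan is to prove (a) first, since (b) is a formal consequence. For (a), the implication $\Leftarrow$ is immediate: each of $X_{l+1,j}$, $X_{l,j+1}$, $X_{l+m,j-1}$ is strictly contained in $X_{l,j}$ by the inclusions of subgroups noted just before the proposition (the containments are strict because the ambient stratum corresponds to a strictly smaller parabolic), so anything contained in their union is strictly contained in $X_{l,j}$. For the reverse implication, suppose $X_{l',j'}\subsetneq X_{l,j}$. Using the description of $X_{l,j}$ as a union of strata $X_{W'',\hen}^\circ$ over parabolic subgroups $W''\supset\Gamma_{l,(m^j)}$ (Section~\ref{sec:support1}), the containment forces $\Gamma_{l',(m^{j'})}$ to contain a conjugate of $\Gamma_{l,(m^j)}$, and strictness forces the containment to be proper. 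The key combinatorial point is then that any parabolic subgroup of $\Gamma_n$ of the form $\Gamma_{l',(m^{j'})}$ that \emph{properly} contains a conjugate of $\Gamma_{l,(m^j)}$ must in fact contain a conjugate of at least one of $\Gamma_{l+1,(m^j)}$, $\Gamma_{l,(m^{j+1})}$, $\Gamma_{l+m,(m^{j-1})}$. This is because enlarging the parabolic by one "step" in the poset of parabolic subgroups of $\Gamma_n$ corresponds to one of three elementary moves: absorb one more trivial coordinate into the $\Gamma$-part (giving $\Gamma_{l+1,(m^j)}$), add one more $\Sen_m$-block (giving $\Gamma_{l,(m^{j+1})}$), or merge one $\Sen_m$-block into the $\Gamma_l$-part to form $\Gamma_{l+m}$ (giving $\Gamma_{l+m,(m^{j-1})}$); translating back to closed strata gives the claimed inclusion into the union.

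\textbf{Deducing (b).} Given (a), I would argue as follows. By the discussion preceding the proposition, $\gr_{i,j}(\Gamma_n)$ may be identified with the subspace of $[\Oc(\Gamma_n)]$ spanned by $\Irr(\Oc(\Gamma_n))_{i,j}$, i.e.\ the classes of simples whose support is \emph{exactly} $X_{l,j}$. On the other hand, $F_{i,j}(\Gamma_n)$ is spanned by the classes of simples whose support is contained in $X_{l,j}$, and $F_{i-1,j}(\Gamma_n)+F_{i-m,j+1}(\Gamma_n)+F_{i,j-1}(\Gamma_n)$ is spanned by the classes of simples whose support is contained in $X_{l+1,j}\cup X_{l,j+1}\cup X_{l+m,j-1}$. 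Since the classes of all simples form a basis of $[\Oc(\Gamma_n)]$, the quotient $F_{i,j}(\Gamma_n)/(F_{i-1,j}(\Gamma_n)+F_{i-m,j+1}(\Gamma_n)+F_{i,j-1}(\Gamma_n))$ has as a basis the images of those $[L]$ with $\Supp(L)\subset X_{l,j}$ but $\Supp(L)\not\subset X_{l+1,j}\cup X_{l,j+1}\cup X_{l+m,j-1}$. By part (a), the latter condition on a simple $L$ with $\Supp(L)=X_{l',j'}\subset X_{l,j}$ says precisely that $X_{l',j'}$ is \emph{not} strictly contained in $X_{l,j}$, i.e.\ $X_{l',j'}=X_{l,j}$, i.e.\ $L\in\Irr(\Oc(\Gamma_n))_{i,j}$. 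Hence the images of $\Irr(\Oc(\Gamma_n))_{i,j}$ form a basis of the quotient, which gives the desired isomorphism with $\gr_{i,j}(\Gamma_n)$.

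\textbf{Main obstacle.} The only nonformal ingredient is the combinatorial "one-step" claim in (a): that a proper containment of $\Gamma$-conjugacy classes of parabolics of the special shapes $\Gamma_{l,(m^j)}$ can always be witnessed by one of the three minimal enlargements. I would handle this by working directly with the explicit description: a conjugate of $\Gamma_{l,(m^j)}$ in $\Gamma_n$ is determined (up to $\Gamma_n$-conjugacy) by a choice of $l$ "zero" coordinates together with an unordered collection of $j$ disjoint $m$-element blocks among the remaining coordinates, and the partial order on such data is generated by the three moves described above (this is essentially the combinatorics of the poset of set partitions refining a fixed type, together with the extra zero-block). Everything else is bookkeeping with the stratification $X_{l,j}=\bigsqcup X^\circ_{l'',j''}$ recalled in Section~\ref{sec:support1} and with the fact that simple classes form a basis of $[\Oc(\Gamma_n)]$.
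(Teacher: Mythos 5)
Your proof is correct and follows essentially the same route as the paper: the combinatorial ``one-step'' claim you defer to bookkeeping is exactly what the paper verifies, via the explicit numerical criterion $X_{l',j'}\subset X_{l,j}\iff i-i'\geqslant\max\bigl(0,(j'-j)m\bigr)$ and a three-case analysis on $l'$ relative to $l$ and $l+m$ (landing in $X_{l,j+1}$, $X_{l+1,j}$, $X_{l+m,j-1}$ respectively). Part (b) is likewise treated in the paper as a formal consequence of (a) and the definition of the filtration, just as you argue.
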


\begin{proof}
First we prove $(a)$. Recall that $X_{l,j}$ is the set of the points in $\CC^n$ with $l$ coordinates equal to zero
and $j$ collections of $m$ coordinates which differ
from each other by $\ell$-th roots of one.
Therefore we have
\begin{equation}\label{bizarre}X_{l',j'}\subset
X_{l,j}\iff i-i'\geqslant \max\bigl(0,(j'-j)m\bigr).
\end{equation}
In particular this inclusion implies that $l'\geqslant l$.
We must prove that
$$X_{l',j'}\subsetneq X_{l,j}\Rightarrow X_{l',j'}\subset X_{l+1,j}\cup X_{l,j+1}\cup X_{l+m,j-1}.$$

First, assume that $l'=l$.
Since $X_{l',j'}\subsetneq X_{l,j}$ we have  $i> i'$. Then (\ref{i}) implies that
$i-i'=(j'-j)m$, hence that $j'>j$ and $i-i'\geqslant m$.
So $i-i'\geqslant\max(m,(j'-j)m)$,
and (\ref{bizarre})
implies that $X_{l',j'}\subset X_{l,j+1}$.

Next, assume that $l+m>l'>l$. Since $X_{l',j'}\subset X_{l,j}$ we have  $i\geqslant i'$.
Further (\ref{i}) implies that
$i-i'>(j'-j)m$ and $i'-i>(j-j'-1)m$.
Thus $i\geqslant i'$ implies indeed that $i>i'$ and $j'\geqslant j$.
So $i-1-i'\geqslant\max(0,(j'-j)m)$,
and (\ref{bizarre}) implies that $X_{l',j'}\subset X_{l+1,j}$.

Finally, assume that $l'\geqslant l+m$.
Since $X_{l',j'}\subset X_{l,j}$ we have  $i\geqslant i'$.
Further (\ref{i}) implies that
$i-i'\geqslant(j'-j+1)m$.
So $i-i'\geqslant\max(0,(j'-j+1)m)$,
and (\ref{bizarre}) implies that $X_{l',j'}\subset X_{l+m,j-1}$.

Part $(b)$ is a consequence of $(a)$ and of the definition of
the filtration on $[\Oc(\Gamma_n)]$.
\end{proof}

\vskip3mm

\begin{rk}
The sets $X_{l+1,j}$, $X_{l,j+1}$, $X_{l+m,j-1}$ do not contain each other. Indeed, the variety $X_{l,j}$ has the dimension $i+j$.
Thus the codimension of $X_{l+1,j}$, $X_{l,j+1}$, $X_{l+m,j-1}$ in
$X_{l,j}$ are $1,m-1,1$ respectively. However,
since a point in $X_{l,j+1}^\circ$ has only $l$ coordinates equal to 0, we have
$X_{l,j+1}\not\subset X_{l+1,j}$ and
$X_{l,j+1}\not\subset X_{l+m,j-1}.$
\end{rk}

\vskip3mm

\begin{rk} We have $F_{\bullet, 0}(\Gamma_n)=[\Oc(\Gamma_n)]$, because
$(i,j)\leqslant (i+jm, 0)$.
\end{rk}

\vskip3mm

\begin{rk}
\label{rk:support0j}
We have $(i',j')\leqslant (0,j)$ if and only if $i'=0$ and $j'\leqslant j$.
\end{rk}

\vskip3mm

\begin{rk}
\label{rk:support3}
Consider the set
$$F_{i,j}(\Gamma_n)^\circ=F_{i,j}(\Gamma_n)\setminus
\bigl(F_{i-1,j}(\Gamma_n)+F_{i-m,j+1}(\Gamma_n)+F_{i,j-1}(\Gamma_n)\bigr).$$
For $L\in\Irr(\Oc(\Gamma_n))$, by Proposition \ref{prop:order} and Remark \ref{rk:support2} we have
$$\aligned
\  [L] \in F_{i,j}(\Gamma_n)^\circ
&\iff\Supp(L)=X_{l,j}\cr
&\iff L\in\Irr(\Oc(\Gamma_n))_{i,j}.
\endaligned$$
\end{rk}

\vskip3mm

\begin{rk}\label{rk:fdim}
A representation is finite dimensional if and only if its support is zero.
Thus $\Irr(\Oc(\Gamma_n))_{0,0}$
is the set of isomorphism classes of finite dimensional modules in $\Oc(\Gamma_n)$.
Note that $(0,0)\leqslant (i,j)$ for all $(i,j)$.
\end{rk}

\vskip3mm

\begin{rk}\label{rk:3.14} If $\ell=1$ then, by Remark \ref{rk:2.1}
and Section \ref{sec:3.2} we have
$\Oc(\Sen_n)=\Oc(\Sen_n,\CC_0^n)$.
For an integer $j\geqslant 0$  we set $X_j=X_{\Sen_m^j,\CC_0^n}$, i.e.,
$X_j$ is the set of the points in $\CC^n_0$ with $j$ collections of
$m$ equal coordinates. Then, we set $i=n-jm$ and the results of this
section extend in the obvious way. In particular, we have
$$X_{j'}\subset X_j\iff j'\geqslant j,
\quad X_{j'}\subsetneq X_j\iff X_{j'}\subset X_{j+1}.$$
\end{rk}

\vskip3mm

\begin{rk}
\label{rk:support4}
For $\l\in\Pc_r$, $r\geqslant 1$, the support of the module
$L_{m\l}\in\Irr(\Oc(\frak S_{mr}))$ is
$$\Supp(L_{m\l})=X_{\frak S_m^r,\CC^{mr}_0}.$$
Indeed, formula (\ref{formind}) below and Proposition
\ref{prop:indsup} imply that
$$\Supp(L_{m\l})\subset
\Supp\bigl({}^\Oc\!\Ind_{(m^r)}
(L_{(m)}^{\otimes r})\bigr)
=X_{\frak S_m^r,\CC^{mr}_0}.$$
Next, by Remark \ref{rk:support2} there is  $j=0,1,\dots,r$ such that
$$\Supp(L_{m\l})=X_{\frak S_m^j,\CC^{mr}_0}.$$
Finally the inclusion
$X_{\frak S_m^j,\CC^{mr}_0}\subset X_{\frak S_m^r,\CC^{mr}_0}$
implies that $j=r$ by Remark \ref{rk:3.14}.
Note that this equality follows also from the work of Wilcox \cite{W}.
\end{rk}

\subsection{The action of $E$, $F$ on the filtration}
Let $E$, $F$ denote the $\CC$-linear operators
on $[\Oc(\Gamma)]$ induced by the exact functors $E$, $F$.
Recall that the parameters of $H(\Gamma)$ are chosen as in
(\ref{h}).

\begin{prop}
\label{prop:voiciunlemme}
Let $L\in \Irr(\Oc(\Gamma_n))_{i,j}$ and $l=n-i-mj$.

(a) We have
$\Supp(F(L))=X_{l,j,\CC^{n+1}}$.

(b) We have $E(L)=0$ iff $i=0$. We have
$\Supp(E(L))=X_{l,j,\CC^{n-1}}$ if $i>0$.
\end{prop}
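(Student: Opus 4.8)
The plan is to reduce everything to Proposition \ref{prop:indsup} and Proposition \ref{prop:ressup} via Proposition \ref{prop:EF}(b), which identifies $F$ with ${}^\Oc\!\Ind_n$ and $E$ with ${}^\Oc\!\Res_n$. Recall that ${}^\Oc\!\Ind_n$ and ${}^\Oc\!\Res_n$ are the parabolic induction/restriction functors relative to the point $b_n=(0,\dots,0,1)\in\CC^n$, whose stabilizer in $\Gamma_n$ is the parabolic subgroup $\Gamma_{n-1}$ (acting on the first $n-1$ coordinates). For part $(a)$, I would first observe that since $\Supp(L)=X_{l,j,\CC^n}$ with $L$ simple, and since $X_{l,j,\CC^n}=X_{\Gamma_{l,(m^j)},\CC^n}$ by definition, the module $L$ lies in $\Oc(\Gamma_n)$ and, viewing $\Gamma_n$ as the parabolic subgroup $\Gamma_n\subset\Gamma_{n+1}$ stabilizing $b_{n+1}$, Proposition \ref{prop:indsup} applies with $W=\Gamma_{n+1}$, $W'=\Gamma_n$, $W''=\Gamma_{l,(m^j)}$ (a parabolic subgroup of $\Gamma_n\subset\Gamma_{n+1}$). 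It gives directly ${}^\Oc\!\Ind_{n+1}(L)={}^\Oc\!\Ind^{\Gamma_{n+1}}_{\Gamma_n}(L)\neq 0$ and $\Supp\bigl({}^\Oc\!\Ind_{n+1}(L)\bigr)=X_{\Gamma_{l,(m^j)},\CC^{n+1}}=X_{l,j,\CC^{n+1}}$, which is exactly $\Supp(F(L))=X_{l,j,\CC^{n+1}}$.

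For part $(b)$, I would use Proposition \ref{prop:ressup}(a): with $W=\Gamma_n$, $W'=\Gamma_{n-1}$ (the stabilizer of $b_n$), $X=\Supp(L)=X_{l,j,\CC^n}$, we have $E(L)={}^\Oc\!\Res_n(L)\neq 0$ if and only if $X_{\Gamma_{n-1},\CC^n}\subset X_{l,j,\CC^n}$. Now $X_{\Gamma_{n-1},\CC^n}$ is the set of points of $\CC^n$ with exactly one coordinate equal to zero (up to the $\Gamma_n$-action), i.e.\ $X_{1,0,\CC^n}$ in the notation of Section \ref{sec:stratum}; by the criterion (\ref{bizarre}) this is contained in $X_{l,j,\CC^n}$ precisely when $i=n-l-jm\geqslant 1$, i.e.\ when $i>0$. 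This proves the first assertion of $(b)$. For the support computation when $i>0$, I would apply Proposition \ref{prop:ressup}(b) with $W''=\Gamma_{l,(m^j)}$, $W'=\Gamma_{n-1}$: it says $\Supp(E(L))=\bigsqcup_{W_1}X^\circ_{W_1,\CC^{n-1}}$ where $W_1$ ranges over representatives of $\Gamma_{n-1}$-conjugacy classes of parabolic subgroups of $\Gamma_{n-1}$ containing a $\Gamma_n$-conjugate of $\Gamma_{l,(m^j)}$. The point is that the minimal such $W_1$, up to $\Gamma_{n-1}$-conjugacy, is again $\Gamma_{l,(m^j)}$ itself (viewed inside $\Gamma_{n-1}$, which is possible since $l+jm\leqslant n-1$ when $i>0$), and the union of all the $X^\circ_{W_1,\CC^{n-1}}$ over parabolics $W_1\supset\Gamma_{l,(m^j)}$ reassembles, by the formula $X_{W'',\hen}=\bigsqcup X^\circ_{W''',\hen}$ from Section \ref{sec:support1}, into the closed set $X_{\Gamma_{l,(m^j)},\CC^{n-1}}=X_{l,j,\CC^{n-1}}$.

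The main obstacle I anticipate is the bookkeeping in part $(b)$'s support computation: one must check carefully that every parabolic subgroup of $\Gamma_n$ containing $\Gamma_{l,(m^j)}$ and contained (up to conjugacy) in $\Gamma_{n-1}$ is $\Gamma_{n-1}$-conjugate to a parabolic subgroup of the form appearing in the stratification of $\CC^{n-1}$, and that the resulting union of open strata is exactly the closed stratum $X_{l,j,\CC^{n-1}}$ and nothing more. This is really a statement about how the lattice of parabolic subgroups of $\Gamma_{n-1}$ sits inside that of $\Gamma_n$, combined with the classification "any parabolic of $\Gamma_n$ is conjugate to $\Gamma_{l,\nu}$" from Section \ref{sec:3.2} and Remark \ref{rk:support2} (which forces all parts of the relevant compositions $\nu$ to equal $m$, since the parameter $h=-1/m$). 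Once one knows the support of $E(L)$ is of the form $X_{l',j',\CC^{n-1}}$, the containment $X_{l',j',\CC^{n-1}}\subset X_{l,j,\CC^{n-1}}$ together with the fact that $\Res$ at a point $b\in X^\circ_{l,j}$ is finite-dimensional and nonzero pins down $(l',j')=(l,j)$, exactly as in the argument of Remark \ref{rk:support4}. I would present the argument in this order: (i) invoke Proposition \ref{prop:EF}(b); (ii) do $(a)$ via Proposition \ref{prop:indsup}; (iii) do the vanishing half of $(b)$ via Proposition \ref{prop:ressup}(a) and (\ref{bizarre}); (iv) do the support half of $(b)$ via Proposition \ref{prop:ressup}(b), closing with the finite-dimensionality pinch.
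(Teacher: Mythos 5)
Your overall strategy is the same as the paper's: identify $E$, $F$ with ${}^\Oc\!\Res_n$, ${}^\Oc\!\Ind_n$ via Proposition \ref{prop:EF}(b) and feed everything into Propositions \ref{prop:indsup} and \ref{prop:ressup}. Part (a) and the vanishing criterion in (b) are correct and match the paper (modulo one mislabel: $X_{\Gamma_{n-1},\CC^n}$ is the set of points with $n-1$ coordinates equal to zero, i.e.\ $X_{n-1,0,\CC^n}$ in the $X_{l,j}$ notation, not $X_{1,0,\CC^n}$; your application of (\ref{bizarre}) with $i'=1$, $j'=0$ is nevertheless the right computation and does give the criterion $i\geqslant 1$).

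The genuine gap is exactly where you locate it: the assertion that the minimal parabolic subgroups $W_1\subset\Gamma_{n-1}$ containing a $\Gamma_n$-conjugate of $\Gamma_{l,(m^j)}$ are precisely the $\Gamma_{n-1}$-conjugates of $\Gamma_{l,(m^j)}$ is the entire content of the support half of (b), and you assert it rather than prove it. The subtlety is that two parabolic subgroups of $\Gamma_{n-1}$ may be conjugate in $\Gamma_n$ without obviously being conjugate in $\Gamma_{n-1}$. The paper settles this by a concrete coordinate argument: write $W=(\Gamma_{n-1})_{b'}$, pass to $b=(b',z)$ with $z$ generic so that $(\Gamma_n)_b=W$, observe that any $g\in\Gamma_n$ carrying $b$ to the normal form $0^l(m)^j*^i$ must send the generic coordinate $z$ into the free packet $*^i$, and correct $g$ by a transposition so as to land in $\Gamma_{n-1}$. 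Your second suggested completion (the ``pinch'') can in fact be made rigorous and bypasses the conjugacy question altogether: the chain of equivalences in the proof of Proposition \ref{prop:ressup}(b) shows, for any parabolic $\Gamma_{l',(m^{j'})}\subset\Gamma_{n-1}$, that $X_{l',j',\CC^{n-1}}\subset\Supp(E(L))$ if and only if $X_{l',j',\CC^{n}}\subset X_{l,j,\CC^{n}}$, and by (\ref{bizarre}) the latter is literally the same inequality as $X_{l',j',\CC^{n-1}}\subset X_{l,j,\CC^{n-1}}$, since both quantities $i-i'$ are unchanged in passing from $\CC^n$ to $\CC^{n-1}$; combined with Remark \ref{rk:support2} (every constituent of $E(L)$ has support of the form $X_{l',j',\CC^{n-1}}$, because the parameter of $H(\Gamma_{n-1})$ is still $-1/m$) this yields $\Supp(E(L))=X_{l,j,\CC^{n-1}}$. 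One of these two completions must actually be written out; as it stands your proof of (b) stops just short of the one nontrivial step.
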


\begin{proof}
Recall that
$$\gathered
\Supp(L)=X_{l,j}=X_{l,j,\CC^n},\quad
E(L)={}^\Oc\!\Res_n(L),\quad F(L)={}^\Oc\!\Ind_n(L).
\endgathered$$
Thus by Proposition \ref{prop:ressup} we have
$E(L)=0$ iff $b_n\notin X_{l,j}$.
Since $m>1$ the definition of the stratum
$X_{l,j}$ in Section \ref{sec:stratum} shows that $b_n\notin X_{l,j}$
iff $i=0$. Now, assume that $i>0$.
Then $l+mj\leqslant n-1$, and Proposition \ref{prop:ressup} yields
$$\Supp(E(L))=\bigcup_WX_{W,\CC^{n-1}},$$
where $W$ runs over the parabolic subgroups of $\Gamma_{n-1}$ which are $\Gamma_n$-conjugate to
$\Gamma_{l,(m^j)}$ (inside the group $\Gamma_n$).
We claim that a subgroup $W\subset\Gamma_{n-1}$ as above is
$\Gamma_{n-1}$-conjugate to
$\Gamma_{l,(m^j)}$ (inside the group $\Gamma_{n-1}$).
Therefore, we have
$$\Supp(E(L))=X_{l,j,\CC^{n-1}}.$$
Indeed, fix $b'\in \CC^{n-1}$ such that $W=(\Gamma_{n-1})_{b'}$. For
$b=(b',z)$ with $z\in\CC$ generic we have $(\Gamma_n)_b=W$,
where $W$ is regarded as a subgroup of $\Gamma_n$ via the obvious inclusion $\Gamma_{n-1}\subset \Gamma_n$. Since $W$ is $\Gamma_n$-conjugate to
$\Gamma_{l,(m^j)}$, there is an element $g\in\Gamma_n$ such that
the first $l$ coordinates of $g(b)$ are 0, the next $mj$ ones consist of $j$
collections of $m$ coordinates which differ from each other by $\ell$-th roots of one, and the last $i$ coordinates of $g(b)$ are in generic position. We'll abbreviate
$$g(b)\in 0^l(m)^j*^i.$$ Since $z$ is generic it is taken by $g$ to one of the
coordinates of $g(b)$ in the
packet $*^i$. Composing $g$ by an appropriate reflection in $\Sen_n$ we get an element
$g'\in\Gamma_{n-1}$ such that
$$g'(b)=(g'(b'),z)\in 0^l(m)^j*^{i}.$$ Thus we have also
$$g'(b')\in 0^l(m)^j*^{i-1}.$$
This implies the claim.
Hence, we have $$\Supp(E(L))=X_{l,j,\CC^{n-1}}.$$
Finally, since
$\Supp(L)=X_{l,j,\CC^n}$, Proposition \ref{prop:indsup} implies that
$$\Supp(F(L))=X_{l,j,\CC^{n+1}}.$$
\end{proof}

\begin{cor}
\label{cor:kerE}
(a) We have
$E(F_{i,j}(\Gamma_n))\subset F_{i-1,j}(\Gamma_{n-1})$.
If $i\neq 0$ we have also
$E(F_{i,j}(\Gamma_n)^\circ)\subset F_{i-1,j}(\Gamma_{n-1})^\circ$.

(b)
For $M\in\Oc(\Gamma_n)$ with
$[M]\in F_{i,j}(\Gamma_n)^\circ$ we have $E([M])=0$ iff $i=0$.

(c) We have
$F(F_{i,j}(\Gamma_n))\subset F_{i+1,j}(\Gamma_{n+1})$
and
$F(F_{i,j}(\Gamma_n)^\circ)\subset F_{i+1,j}(\Gamma_{n+1})^\circ$.
\end{cor}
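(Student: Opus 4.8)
The plan is to derive all three parts from Proposition~\ref{prop:voiciunlemme}, using the description of the filtration in Section~\ref{sec:stratum} together with the identities $E={}^\Oc\!\Res_n$ and $F={}^\Oc\!\Ind_n$ from Proposition~\ref{prop:EF}(b). Recall that $F_{i,j}(\Gamma_n)$ is spanned by the classes $[L]$ of the simple modules $L\in\Irr(\Oc(\Gamma_n))_{i',j'}$ with $(i',j')\leqslant(i,j)$, and that by (\ref{bizarre}) the relation $(i',j')\leqslant(i,j)$ is equivalent to $i-i'\geqslant\max\bigl(0,(j'-j)m\bigr)$, a condition not involving $n$. Hence $(i',j')\leqslant(i,j)$ holds in the poset attached to $\Gamma_n$ if and only if $(i'-1,j')\leqslant(i-1,j)$ holds in the poset attached to $\Gamma_{n-1}$, and if and only if $(i'+1,j')\leqslant(i+1,j)$ holds in the poset attached to $\Gamma_{n+1}$, whenever these shifted pairs are legitimate.

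For the first inclusion in~(a) I would take $L\in\Irr(\Oc(\Gamma_n))_{i',j'}$ with $(i',j')\leqslant(i,j)$. If $i'=0$ then $E(L)=0$ by Proposition~\ref{prop:voiciunlemme}(b). If $i'>0$ the same proposition gives $\Supp(E(L))=X_{l',j',\CC^{n-1}}$ with $l'=n-i'-mj'=(n-1)-(i'-1)-mj'$, so every constituent $L''$ of $E(L)$ has support contained in this stratum, whence $[L'']\in F_{i'-1,j'}(\Gamma_{n-1})\subset F_{i-1,j}(\Gamma_{n-1})$ by the remark above. Since $E$ is exact, $E([L])=[E(L)]$ is a combination of such classes and therefore lies in $F_{i-1,j}(\Gamma_{n-1})$; summing over the spanning set gives the inclusion, the case $i=0$ being the trivial one $0\subset 0$ since then $i'=0$ is forced by Remark~\ref{rk:support0j}. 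The first inclusion in~(c) is obtained in exactly the same way, using $\Supp(F(L))=X_{l',j',\CC^{n+1}}$ from Proposition~\ref{prop:voiciunlemme}(a) and $l'=(n+1)-(i'+1)-mj'$.

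The strictness statements in~(a) and~(c), and part~(b) for $i\neq 0$, rest on the fact that the class of a module expands with nonnegative coefficients on the basis of simples, so that no cancellation can occur. Given $M\in\Oc(\Gamma_n)$ with $[M]\in F_{i,j}(\Gamma_n)^\circ$, Proposition~\ref{prop:order}(a) shows that the subspace removed in the definition of $F_{i,j}(\Gamma_n)^\circ$ is spanned by the classes of the simples whose support is strictly contained in $X_{l,j,\CC^n}$; hence $M$ has a constituent $L_0\in\Irr(\Oc(\Gamma_n))_{i,j}$ with multiplicity $[M:L_0]>0$. Suppose $i\neq 0$. Then $\Supp(E(L_0))=X_{l,j,\CC^{n-1}}$ by Proposition~\ref{prop:voiciunlemme}(b), and this support is the union of the supports of the constituents of $E(L_0)$; each such constituent being simple, its support is of the form $X_{W,\CC^{n-1}}$ for a parabolic subgroup $W\subset\Gamma_{n-1}$ and is contained in $X_{l,j,\CC^{n-1}}$. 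At least one of them must equal $X_{l,j,\CC^{n-1}}$, since otherwise their union would miss the dense open stratum $X_{l,j,\CC^{n-1}}^\circ$; so there is a constituent $L_0''\in\Irr(\Oc(\Gamma_{n-1}))_{i-1,j}$. Exactness of $E$ then gives $[E(M):L_0'']=\sum_L[M:L]\,[E(L):L_0'']\geqslant[M:L_0]\,[E(L_0):L_0'']>0$, so $E([M])$ is not in the span of the classes of simples with support strictly inside $X_{l,j,\CC^{n-1}}$, which combined with~(a) yields $E([M])\in F_{i-1,j}(\Gamma_{n-1})^\circ$. The same argument with $F(L_0)$ in place of $E(L_0)$, using $\Supp(F(L_0))=X_{l,j,\CC^{n+1}}$ (valid for every $i$), gives the strictness statement in~(c).

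Finally, for part~(b): if $i=0$ then every constituent of $M$ lies in some $\Irr(\Oc(\Gamma_n))_{0,j'}$ and is killed by $E$ by Proposition~\ref{prop:voiciunlemme}(b), so $E([M])=0$; if $i\neq 0$, the inclusion $E([M])\in F_{i-1,j}(\Gamma_{n-1})^\circ$ just obtained forces $E([M])\neq 0$, because $0$ lies in the subspace removed in the definition of $F_{i-1,j}(\Gamma_{n-1})^\circ$. I expect the only delicate point to be the assertion that $E(L_0)$, resp.\ $F(L_0)$, has a constituent of full support: this is not a dimension count but a statement about parabolic subgroups, and it is precisely where one uses that the stratification of $\hen$ is indexed by the conjugacy classes of stabilizers, as recalled in Section~\ref{sec:support1}.
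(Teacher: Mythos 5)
Your proof is correct and follows the same route as the paper's: the paper's argument is a terse citation of Proposition \ref{prop:voiciunlemme} together with Remarks \ref{rk:support0j} and \ref{rk:support3}, and you have simply made explicit the details it leaves implicit (the shift-invariance of the partial order under $(i,i')\mapsto(i\pm 1,i'\pm 1)$, the existence of a constituent of $E(L_0)$ with full support, and the nonnegativity of multiplicities that rules out cancellation). Note only that, like the paper itself, your argument for the strictness statements establishes them for classes of modules rather than for arbitrary elements of the set $F_{i,j}(\Gamma_n)^\circ$, which is in any case the form in which the corollary is used later.
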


\begin{proof}
First, let $L\in\Irr(\Oc(\Gamma_n))$ with
$[L]\in F_{i,j}(\Gamma_n)$. Thus
$L\in\Irr(\Oc(\Gamma))_{i',j'}$ with $(i',j')\leqslant (i,j)$.
Proposition \ref{prop:voiciunlemme} yields
$$\Supp(F(L))=X_{l',j',\CC^{n+1}},\quad
\Supp\bigl(E(L)\bigr)=X_{l',j',\CC^{n-1}}\ \text{if}\ i'\neq 0.$$
Hence we have
$F([L])\in F_{i+1,j}(\Gamma_{n+1})$
and
$E([L])\in F_{i-1,j}(\Gamma_{n-1}).$
Part $(b)$ follows from Proposition \ref{prop:voiciunlemme} and
Remarks \ref{rk:support0j}, \ref{rk:support3}.
Part $(c)$ follows from Proposition \ref{prop:voiciunlemme}
and Remark \ref{rk:support3}.
The second part of $(a)$ follows from Proposition \ref{prop:voiciunlemme} and Remark \ref{rk:support3}.
\end{proof}

\vskip3mm

\begin{cor}\label{cor:keretilde}
Let $L\in\Irr(\Oc(\Gamma_n))_{i,j}$.

(a)
If $\tilde e_q(L)\neq 0$ then $\tilde e_q(L)\in\Irr(\Oc(\Gamma_{n-1}))_{i-1,j}$.

(b)
If $\tilde f_q(L)\neq 0$ then $\tilde f_q(L)\in\Irr(\Oc(\Gamma_{n+1}))_{i+1,j}$.
\end{cor}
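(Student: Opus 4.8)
The plan is to deduce this from the crystal structure on $(\Irr(\Oc(\Gamma)),\tilde e_q,\tilde f_q)$ (Proposition \ref{prop:EF}(e)) together with the support estimates already obtained for $E$ and $F$. Recall that $\tilde e_q(L)=\top(e_q(L))$ is a simple constituent of $e_q(L)$, and since $E=\bigoplus_q e_q={}^\Oc\!\Res_n$ by Proposition \ref{prop:EF}(b), the module $\tilde e_q(L)$ is a constituent of $E(L)={}^\Oc\!\Res_n(L)$. Similarly $\tilde f_q(L)=\soc(f_q(L))$ is a constituent of $F(L)={}^\Oc\!\Ind_n(L)$.

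For part (a), suppose $\tilde e_q(L)\neq 0$. Then $E(L)\neq 0$, so by Proposition \ref{prop:voiciunlemme}(b) we have $i>0$ and $\Supp(E(L))=X_{l,j,\CC^{n-1}}$. Since $\tilde e_q(L)$ is a constituent of $E(L)$, its support is contained in $X_{l,j,\CC^{n-1}}$, so writing $\Supp(\tilde e_q(L))=X_{l'',j''}$ with the associated pair $(i'',j'')$, we get $(i'',j'')\leqslant (i-1,j)$. To get equality I would argue by contradiction: if $(i'',j'')<(i-1,j)$, then $\tilde f_q(\tilde e_q(L))=L$ by the crystal axioms (Proposition \ref{prop:EF}(e)), but then by part (b) applied to $\tilde e_q(L)\in\Irr(\Oc(\Gamma_{n-1}))_{i'',j''}$ we would have $L=\tilde f_q(\tilde e_q(L))\in\Irr(\Oc(\Gamma_n))_{i''+1,j''}$, forcing $(i,j)=(i''+1,j'')\leqslant(i-1,j)$, a contradiction. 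Hence $(i'',j'')=(i-1,j)$, i.e.\ $\tilde e_q(L)\in\Irr(\Oc(\Gamma_{n-1}))_{i-1,j}$.

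For part (b), suppose $\tilde f_q(L)\neq 0$. By Proposition \ref{prop:voiciunlemme}(a), $\Supp(F(L))=X_{l,j,\CC^{n+1}}$, and since $\tilde f_q(L)$ is a constituent of $F(L)$, its support is contained in $X_{l,j,\CC^{n+1}}$; write $\Supp(\tilde f_q(L))=X_{l''',j'''}$ with pair $(i''',j''')\leqslant(i+1,j)$. Again by the crystal axioms $\tilde e_q(\tilde f_q(L))=L$, so part (a) applied to $\tilde f_q(L)$ gives $L=\tilde e_q(\tilde f_q(L))\in\Irr(\Oc(\Gamma_n))_{i'''-1,j'''}$, whence $(i,j)=(i'''-1,j''')$. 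This already forces $i'''=i+1$ and $j'''=j$, giving $\tilde f_q(L)\in\Irr(\Oc(\Gamma_{n+1}))_{i+1,j}$.

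The only delicate point is the interplay of the two parts: the proof of (a) uses (b) for a smaller $n$, and the proof of (b) uses (a) for a smaller $n$ (via $\tilde f_q(L)\in\Oc(\Gamma_{n+1})$ but $\tilde e_q$ brings it back to $\Oc(\Gamma_n)$). So I would set up a simultaneous induction on $n$: for the base case $n$ small both statements are vacuous or immediate, and at stage $n$ I prove (a) using the instance of (b) at stage $n-1$, then prove (b) using (a) at stage $n$ (which has just been established). I expect the main obstacle to be nothing computational but rather getting this mutual induction bookkeeping clean, together with making sure that ``$\tilde e_q(L)$ is a constituent of $E(L)$'' is correctly invoked so that the support containment passes to the simple subquotient, which is exactly Proposition \ref{prop:ressup}(a) combined with the fact (stated in Section \ref{sec:support1}) that the support of a module is the union of the supports of its constituents.
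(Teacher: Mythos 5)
Your overall strategy---combine the crystal identities $\tilde f_q(\tilde e_q(L))=L$, $\tilde e_q(\tilde f_q(L))=L$ with the support computations for $E$ and $F$---is exactly the paper's, and every individual support estimate you quote is correct. But the mutual induction as you have set it up is circular. Your proof of (a) for $L\in\Irr(\Oc(\Gamma_n))_{i,j}$ invokes (b) for $L'=\tilde e_q(L)\in\Oc(\Gamma_{n-1})$; your proof of (b) for a module in $\Oc(\Gamma_{n-1})$ invokes (a) for its image under $\tilde f_q$, which lives in $\Oc(\Gamma_{n})$ --- and in the case at hand that image is the original $L$, so you are back where you started. Your bookkeeping sentence ``prove (b) at stage $n$ using (a) at stage $n$'' does not describe what the proof actually does: (b) at stage $n$ applies (a) to $\tilde f_q(L)\in\Oc(\Gamma_{n+1})$, i.e.\ to the instance of (a) at stage $n+1$, which in your scheme is only available after (b) at stage $n$. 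The two appeals are exact inverses of each other, so no reindexing of the induction will break the loop. (There is also a small slip in the displayed contradiction: from $(i'',j'')<(i-1,j)$ and $(i,j)=(i''+1,j'')$ the contradiction is simply $(i'',j'')=(i-1,j)$, not the inequality $(i''+1,j'')\leqslant(i-1,j)$ you wrote.)

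The repair is short and removes the induction entirely: never cite the other part, and instead use Proposition \ref{prop:voiciunlemme} in both directions. For (a), with $L'=\tilde e_q(L)\in\Irr(\Oc(\Gamma_{n-1}))_{i'',j''}$, you already have $(i'',j'')\leqslant(i-1,j)$ from $\Supp(E(L))$; now apply Proposition \ref{prop:voiciunlemme}(a) directly to $L'$ to get $\Supp(F(L'))=X_{l'',j'',\CC^{n}}$, and since $L=\tilde f_q(L')$ is a constituent of $F(L')$ this gives $(i,j)\leqslant(i''+1,j'')$, i.e.\ $(i''+1,j'')\geqslant(i,j)$; as the order only depends on the differences of the entries, this is the reverse of the first inequality, and antisymmetry of the partial order forces $(i'',j'')=(i-1,j)$. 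Part (b) is symmetric, using Proposition \ref{prop:voiciunlemme}(b) applied to $\tilde f_q(L)$ (note $E(\tilde f_q(L))\neq 0$ since it has $L$ as a constituent, so the relevant index is positive). This is in substance the paper's argument, which phrases the two opposite containments via Corollary \ref{cor:kerE} and then rules out the strictly smaller strata using Proposition \ref{prop:order} and (\ref{bizarre}).
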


\begin{proof}
Set $L'=\tilde e_q(L)$. Assume that $L'\neq 0$. By Proposition \ref{prop:EF}
we have
$$L'\in\Irr(\Oc(\Gamma_{n-1})),\quad
\tilde f_q(L')=L.$$ Next, since $L\in\Irr(\Oc(\Gamma))_{i,j}$ and
since $\tilde e_q(L)$ is a constituent of $E(L)$, we have
$[L']\in F_{i-1,j}(\Gamma_{n-1})$
by Corollary \ref{cor:kerE}. We must prove that
$[L']\in F_{i-1,j}(\Gamma_{n-1})^\circ$. If this is false then we have
$[L']\in F_{i',j'}(\Gamma_{n-1})$ with
$$(i',j')=(i-2,j),\,(i-m-1,j+1),\,(i-1,j-1).$$
Thus, since $\tilde f_q(L')$ is a constituent of $F(L')$,
by Corollary \ref{cor:kerE} we have
\begin{equation}\label{number}[L]\in \gr_{i,j}(\Gamma_{n})\cap F_{i'+1,j'}(\Gamma_{n}).
\end{equation}
Therefore (\ref{bizarre}) yields
$i'+1\geqslant i$, so $i'=i-1$ and $j'=j-1$.
So, applying (\ref{bizarre}) once again we get a contradiction with (\ref{number}).
This proves $(a)$. The proof of $(b)$ is similar.
\end{proof}

\begin{cor}\label{cor:kere}
(a) For $x\in[\Oc(\Gamma)]$ we have
$$\bigl(e_q(x)=0,\ \forall q=0,1,\dots, m-1\bigr) \iff
x\in F_{0,\bullet}(\Gamma).$$

(b) For $M\in\Oc(\Gamma)$ we have
$$
E(M)=0
\iff E([M])=0\iff [M] \in F_{0,\bullet}(\Gamma).
$$

(c)
The space $F_{0,\bullet}(\Gamma)$
is spanned by the set
$$\aligned
\{[L]\,;\,L\in\Irr(\Oc(\Gamma))_{0,\bullet}\}
&=\{[L]\,;\,L\in\Irr(\Oc(\Gamma)),\,E(L)=0\}\cr
&=\{[L]\,;\,L\in\Irr(\Oc(\Gamma)),\,\tilde e_q(L)=0,\ \forall q=0,1,\dots, m-1\}.
\endaligned$$
\end{cor}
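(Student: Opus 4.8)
The plan is to reduce all three parts to statements about the single operator $E=e_0\oplus e_1\oplus\cdots\oplus e_{m-1}$ on $[\Oc(\Gamma_n)]$. Since $E=\bigoplus_qe_q$, for any $x$ one has $e_q(x)=0$ for all $q$ if and only if $E(x)=0$; and since $\tilde e_q(L)=\top(e_q(L))$, one has $\tilde e_q(L)=0$ if and only if $e_q(L)=0$. Decomposing $[\Oc(\Gamma)]=\bigoplus_n[\Oc(\Gamma_n)]$, it suffices to argue in a fixed $[\Oc(\Gamma_n)]$. Granting this, parts $(b)$ and $(c)$ will be essentially formal consequences of $(a)$ together with Proposition \ref{prop:voiciunlemme}, Proposition \ref{prop:EF}$(c)$ and the spanning description of the $F_{i,j}$; the real content is $(a)$, more precisely its forward implication, which amounts to showing that $E$ is injective modulo $F_{0,\bullet}(\Gamma_n)$.

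For $(a)$, the implication $\Leftarrow$ is immediate from Corollary \ref{cor:kerE}$(a)$: $E(F_{0,j}(\Gamma_n))\subset F_{-1,j}(\Gamma_{n-1})=0$. For $\Rightarrow$ I would first note that Corollary \ref{cor:kerE}$(a)$ makes $E$ a filtered operator: it sends $F_{i,j}(\Gamma_n)$ into $F_{i-1,j}(\Gamma_{n-1})$, and applying that inclusion three times it sends $F_{i-1,j}+F_{i-m,j+1}+F_{i,j-1}$ into $F_{i-2,j}+F_{i-1-m,j+1}+F_{i-1,j-1}$; hence $E$ induces a linear map $\bar E_{i,j}\colon\gr_{i,j}(\Gamma_n)\to\gr_{i-1,j}(\Gamma_{n-1})$ for $i\geqslant1$, and the second half of Corollary \ref{cor:kerE}$(a)$, together with Remark \ref{rk:support3} which lets one regard $\gr_{i,j}$ as the span of $\Irr(\Oc(\Gamma_n))_{i,j}$, shows $\bar E_{i,j}$ is injective. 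Now take $x\notin F_{0,\bullet}(\Gamma_n)$ and write $x=\sum_Lc_L[L]$ with $L\in\Irr(\Oc(\Gamma_n))_{i_L,j_L}$. Since $F_{0,\bullet}(\Gamma_n)$ is spanned by the $[L]$ with $i_L=0$ (Remark \ref{rk:support0j}), the integer $i_0=\max\{i_L:c_L\neq0\}$ is $\geqslant1$; set $j_0=\max\{j_L:c_L\neq0,\ i_L=i_0\}$. By Proposition \ref{prop:voiciunlemme}$(b)$ every simple occurring in $E([L])$ with $c_L\neq0$ lies in $\Irr(\Oc)_{i',j'}$ with $(i',j')\leqslant(i_L-1,j_L)$, and using $i_L\leqslant i_0$ and (\ref{bizarre}) one checks that $(i_0-1,j_0)$ can satisfy this only when $(i_L,j_L)=(i_0,j_0)$. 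Hence the component of $E(x)$ along $\Irr(\Oc(\Gamma_{n-1}))_{i_0-1,j_0}$ equals $\bar E_{i_0,j_0}$ applied to the nonzero element $\sum_{i_L=i_0,\,j_L=j_0}c_L[L]$ of $\gr_{i_0,j_0}(\Gamma_n)$, so it is nonzero and $E(x)\neq0$.

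For $(b)$: $E(M)=0$ gives $E([M])=[E(M)]=0$ by exactness; conversely, writing $[M]=\sum_Lc_L[L]$ with $c_L\in\ZZ_{\geqslant0}$ over the constituents $L$ of $M$, the class $E([M])=\sum_Lc_L[E(L)]$ is a nonnegative combination of classes of simples, so its vanishing forces $E(L)=0$ for every constituent $L$, whence $E(M)=0$ by Proposition \ref{prop:EF}$(c)$; and $E([M])=0\iff[M]\in F_{0,\bullet}(\Gamma)$ is $(a)$ with $x=[M]$. For $(c)$: by Remark \ref{rk:support0j} and the fact that $F_{i,j}(\Gamma_n)$ is spanned by the classes of the modules in $\bigcup_{(i',j')\leqslant(i,j)}\Irr(\Oc(\Gamma_n))_{i',j'}$, the space $F_{0,j}(\Gamma_n)$ is spanned by $\bigcup_{j'\leqslant j}\Irr(\Oc(\Gamma_n))_{0,j'}$, so $F_{0,\bullet}(\Gamma)$ is spanned by $\{[L]:L\in\Irr(\Oc(\Gamma))_{0,\bullet}\}$; finally Proposition \ref{prop:voiciunlemme}$(b)$ gives $E(L)=0\iff L\in\Irr(\Oc(\Gamma))_{0,\bullet}$ for simple $L$, and $E(L)=0\iff e_q(L)=0$ for all $q\iff\tilde e_q(L)=0$ for all $q$, which yields the two displayed equalities.

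The step I expect to be the main obstacle is the forward implication of $(a)$: it is not enough that $E$ be merely compatible with the filtration, one needs the precise description of $\Supp(E(L))$ from Proposition \ref{prop:voiciunlemme}$(b)$ to see that $E$ induces \emph{injective} maps on the relevant graded pieces, and then a careful extraction of the leading term of $x$ with respect to the doubly-indexed filtration (maximizing $i$, then $j$) since $E$ does not act diagonally on the associated graded and may lower the index $j$. The remaining steps are formal consequences of this, of Corollary \ref{cor:kerE}, of Proposition \ref{prop:EF}, and of positivity of composition multiplicities.
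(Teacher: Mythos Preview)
Your arguments for $(b)$ and $(c)$, and for the backward implication of $(a)$, are correct and agree with the paper. The gap is in the forward implication of $(a)$: you need that $\bar E_{i_0,j_0}$ is injective, i.e., that for a nonzero element $y=\sum_{L\in\Irr(\Oc(\Gamma_n))_{i_0,j_0}}c_L[L]$ the $\Irr(\Oc(\Gamma_{n-1}))_{i_0-1,j_0}$-component of $E(y)$ is nonzero. You deduce this from the second half of Corollary~\ref{cor:kerE}$(a)$, read as a statement about arbitrary elements of $F_{i_0,j_0}(\Gamma_n)^\circ$. But the paper's proof of that clause (``follows from Proposition~\ref{prop:voiciunlemme} and Remark~\ref{rk:support3}'') only treats classes of \emph{simple} modules; for a genuine $\CC$-linear combination, nothing in Proposition~\ref{prop:voiciunlemme} prevents the $\Irr(\Oc(\Gamma_{n-1}))_{i_0-1,j_0}$-components of the various $[E(L)]$ from cancelling. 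Knowing each $E([L])\in F_{i_0-1,j_0}(\Gamma_{n-1})^\circ$ individually does not give injectivity of the associated graded map. Your combinatorial isolation of the contributions from $(i_L,j_L)=(i_0,j_0)$ is correct, but that final step is unsupported.

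The paper's own proof avoids the filtration altogether. It quotes \cite[lem.~6.1, prop.~6.2]{S}, a categorification fact asserting that for each $q$ one has $e_q(x)=0$ if and only if $x_L=0$ for every simple $L$ with $e_q([L])\neq0$; equivalently, the classes $e_q([L])$ for such $L$ are linearly independent. This immediately gives that $\{x:e_q(x)=0\ \forall q\}$ is the span of $\{[L]:E(L)=0\}$, which Proposition~\ref{prop:voiciunlemme}$(b)$ identifies with $F_{0,\bullet}(\Gamma)$. That linear-independence input is precisely what your argument is missing; once you invoke it, the leading-term extraction becomes unnecessary.
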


\begin{proof}
For $x\in[\Oc(\Gamma)]$ we write $x=\sum_Lx_L[L]$
where $L$ runs over the set $\Irr(\Oc(\Gamma))$. By
\cite[lem.~6.1, prop.~6.2]{S}, for each $q$ we have
$$e_q(x)=0 \iff x_L=0\ \text{if}\ e_q([L])\neq 0.$$
Thus the $\CC$-vector space
$$\bigl\{x\in[\Oc(\Gamma)]\,;\,e_q(x)=0,\ \forall q=0,1,\dots, m-1\bigr\}$$
is spanned by the classes of the simples modules $L$ such that
$e_q([L])=0$ for all $q=0,1,\dots, m-1$.
Then, apply Corollary \ref{cor:kerE}.
This proves $(a)$. Parts $(b)$, $(c)$ are obvious.
Note that 
$$\tilde e_q(L)=0,\ \forall q \iff e_q(L)=0,\ \forall q,$$
because a non zero finitely generated module has a non zero top.
\end{proof}

\vskip3mm

\section{The Fock space}


From now on we'll abbreviate
$$R(\frak S)=\bigoplus_{n\geqslant 0}[\Rep(\CC \frak S_n)],\quad
R(\Gamma)=\bigoplus_{n\geqslant 0}[\Rep(\CC \Gamma_n)].$$

\vskip3mm

\subsection{The Hopf $\CC$-algebra $\Sym$}
\label{sec:4.1}
This section and the following one are reminders on symmetric functions and the Heisenberg algebra.
First, recall that the $\CC$-vector space $R(\frak S)$
is identified with the $\CC$-vector space of symmetric functions
$$\Sym=\CC[x_1,x_2,\dots]^{\frak S_\infty}$$
via the {\it characteristic map}
\cite[chap.~I]{M}
$$\ch:R(\frak S)\to \Sym.$$
The map $\ch$ intertwines the induction/restriction in $R(\frak S)$
with the multiplication/comultiplication in $\Sym$.
It takes the class of the simple module $\bar L_\l$
to the Schur function $S_\l$
for each $\l\in\Pc$. The power sum polynomials are given by
$$P_\l=P_{\l_1}P_{\l_2}\dots,\quad
P_r=\sum_ix_i^r,\quad P_0=1,\quad\l\in\Pc,\quad r>0.$$
We equip the $\CC$-vector space $\Sym$ with the level $1$ action of
$\widehat{\sen\len}_m$ given by
\begin{equation}\label{ef1}e_q(S_\l)=\sum_\nu S_\nu,\quad
f_q(S_\l)=\sum_\mu S_\mu,
\quad q=0,\dots,m-1,
\end{equation}
where $\nu$ (resp. $\mu$) runs through all partitions obtained from
$\l\in\Pc$ by removing (resp. adding) a node of content $q$ mod $m$.
We equip $\Sym$ with the symmetric bilinear form
such that the Schur functions form an orthonormal basis.
The operators $e_q$, $f_q$ are adjoint to each other for this pairing.

\vskip3mm

\subsection{The Heisenberg algebra}
\label{sec:heisenberg}
The {\it Heisenberg algebra} is the Lie algebra
$\Hen$ spanned by the elements $\oneb$ and
$b_r$, $b'_r$, $r>0$, satisfying the following relations
$$[b'_r,b'_{s}]=[b_r,b_{s}]=0,\quad
[b'_r,b_{s}]=r\oneb\delta_{r,s},\quad r,s>0.$$
Let $U(\Hen$) be the enveloping algebra of $\Hen$, and
let $U^-(\Hen)\subset U(\Hen)$ be the
subalgebra generated by the elements $b_r$ with $r>0$.
Write $U^-(\Hen)_r$ for the subspace of
$U^-(\Hen)$ spanned by the monomials
$b_{r_1}b_{r_2}\cdots$ with $\sum_ir_i=r$.
For $\l\in\Pc$ and $f\in \Sym$ we consider the following elements in $U(\Hen)$
$$\gathered
b_\l=b_{\l_1}b_{\l_2}\cdots,\quad
b'_{\l}=b'_{\l_1}b'_{\l_2}\cdots,\cr
b_f=\sum_{\l\in\Pc}z_\l^{-1}\langle P_\l,f\rangle\,b_{\l},\quad
b'_f=\sum_{\l\in\Pc}z_\l^{-1}\langle P_\l,f\rangle\,b'_{\l},
\endgathered$$
where $z_\lambda$ is as in (\ref{zlambda}).
For any integer $\ell$ we can equip $\Sym$ with the level $\ell$ action of $\Hen$ such that
$b_{r}$ acts by multiplication by $\ell P_{r}$ and $b'_r$ acts by
$r\partial/\partial_{P_{r}}$ for $r>0$.
The operators $b_r$, $b'_{r}$ are adjoint to each other for the pairing
on $\Sym$ introduced in Section \ref{sec:4.1}.
Further, 
they commute with the $\widehat{\sen\len}_m$-action in (\ref{ef1}), see e.g.,
\cite{U}.
We write $V^\Hen_\ell=\Sym$ regarded as a level $\ell$ module of $\Hen$.
Consider the Casimir operator
\begin{equation}
\label{casimir0}
\partial={1\over\ell}\sum_{r\geqslant 1}b_{r}b'_{r}.
\end{equation}
To avoid any confusion we may call it also the 
{\it level $\ell$ Casimir operator}.
This formal sum defines a
diagonalisable $\CC$-linear operator on $V^\Hen_\ell$ such that
$$[\partial,b_r]=rb_r,\quad [\partial,b'_r]=-rb'_r.$$
Below, we'll equip $\Sym$ with the $\Hen$-action of level 1, 
i.e., we'll identify $\Sym=V^\Hen_1$,
unless mentioning explicitly the contrary.

\vskip3mm

\subsection{The Lie algebras $\widehat{\gen\len}_m$ and $\widetilde{\gen\len}_m$}
\label{sec:extremal}
We define the Lie algebra $\widehat{\gen\len}_m$ in the same way as $\widehat{\sen\len}_m$,
with $\gen\len_m$ instead of $\sen\len_m$.
We'll also use the extended affine Lie algebra
$\widetilde{\gen\len}_m,$
obtained by adding to
$\widehat{\gen\len}_m$
the 1-dimensional vector space spanned by the scaling element
$D$ such that
$[D,x\otimes\varpi^r]=r\,x\otimes\varpi^r$
and $[D,\oneb]=0$.
The Lie algebra
\begin{equation}\label{prodlie}
\bigl(\widehat{\sen\len}_m\times\Hen\bigr)/
\bigl(m(\oneb,0)-(0,\oneb)\bigr).
\end{equation}
is isomorphic to $\widehat{\gen\len}_m$ via the obvious map, which
takes the element $b'_r$ to $\sum_{p=1}^m e_{pp}\otimes\varpi^r$
and the element $b_r$ to $\sum_{p=1}^m e_{pp}\otimes\varpi^{-r}$
for each $r>0$. Unless specified otherwise,
by a $\widehat{\gen\len}_m$-module we'll always mean
a module over the Lie algebra (\ref{prodlie}), i.e., a
$\widehat{\sen\len}_m$-module
with a compatible $\Hen$-action.
Similarly, a $\widetilde{\gen\len}_m$-module we'll always mean a $\widehat{\gen\len}_m$-module
with a scaling operator $D$ such that
$$[D,x\otimes\varpi^r]=r\,x\otimes\varpi^r,\quad
[D,b_r]=-rb_r,\quad [D,b'_r]=rb'_r.$$
By a dominant integral weight
of  $\widehat{\gen\len}_m$, $\widetilde{\gen\len}_m$  we'll always mean
a dominant integral weight of  $\widehat{\sen\len}_m$, $\widetilde{\sen\len}_m$.
We denote the sets of such weights by
$P^{\widehat{\gen\len}_m}_+$, $P^{\widetilde{\gen\len}_m}_+$ or by
$P^{\widehat{\sen\len}_m}_+$, $P^{\widetilde{\sen\len}_m}_+$.
For $\l\in P^{\widetilde{\sen\len}_m}_+$ let $V_{\l}^{\widetilde{\sen\len}_m}$
and $V_{\l}^{\widetilde{\gen\len}_m}$
be the irreducible integrable modules over $\widetilde{\sen\len}_m$, $\widetilde{\gen\len}_m$
with the highest weight $\l$.
As a $\widetilde{\gen\len}_m$-module we have
$$V_{\omega_0}^{\widetilde{\gen\len}_m}=
V_{\omega_0}^{\widetilde{\sen\len}_m}\otimes V^\Hen_m.$$
Let $Q^{\sen\len_m}$, $P^{\sen\len_m}$ be the root lattice and weight lattice
of $\sen\len_m$. The weights of the module $V_{\o_0}^{\tilde{\sen\len}_m}$
are  all the weights of the form
$$\mu=\o_0+\beta-{1\over 2}\langle\beta,\beta\rangle\delta-i\delta,\quad
\beta\in Q^{\sen\len_m},\quad i\geqslant 0.$$
Among those, the {\it extremal weights} are the weights for which $i=0$.
The set of the extremal weights  coincide with the set of the
{\it maximal weights}, i.e., with the set of the weights
$\mu$ such that $\mu+\delta$ is not a weight of $V_{\o_0}^{\tilde{\sen\len}_m}$.
A weight $\mu$ of $V_{\o_0}^{\tilde{\sen\len}_m}$ is extremal if and only if
$$\langle\mu,\mu\rangle=0.$$
Note also that we have $\langle\mu,\mu\rangle=-2i$
if and only if $\mu+i\delta$ is an extremal weight.
See e.g., \cite[sec.~20.3, 20.5]{C} for details.
Now, let $T_m$ be the standard maximal torus in
$\SL_m$, and let $\ten_m$ be its Lie algebra. Let  $\widehat\Sen_m$ be the affine symmetric group.
It is the semidirect product
$\Sen_m\ltimes Q^{\sen\len_m}$. Note that $Q^{\sen\len_m}$ is the group of cocharacters of $T_m$.
We'll regard it as a lattice in $\ten_m$ in the usual way, and we'll
identify $\ten_m$ with $\ten_m^*$ via the standard invariant pairing
on $\ten_m$. The $\widehat\Sen_m$-action on
$\ten^*_m\oplus\CC\o_0\oplus\CC\delta$, see
e.g., \cite[sec.~13.1]{Ku}, is such that
the element $\beta$ in $Q^{\sen\len_m}$ acts via the operator
\begin{equation}\label{formule78997}
\xi_\beta:\ \mu\mapsto\mu+\mu(\oneb)\beta-\bigl(\langle\mu,\beta\rangle+{1\over 2}\langle\beta,\beta\rangle
\mu(\oneb)\bigr)\delta.
\end{equation}
In particular, we have
$$\xi_\beta(\o_0)=\o_0+\beta-{1\over 2}\langle\beta,\beta\rangle\delta.$$
We'll use the same notation for the $\widehat\Sen_m$-action on
$\ten^*_m\oplus\CC\o_0\oplus\CC\delta$
and on
$\ten^*_m\oplus\CC\o_0$, hoping it will not create any confusion.
Therefore, for $\l\in\ten^*_m\oplus\CC\o_0$ the symbol
$\xi_\beta(\l)$ will denote both the weight (\ref{formule78997}) and the weight
$\mu+\mu(\oneb)\beta$.
We can view the cocharacter $\beta\in Q^{\sen\len_m}$ as a group-scheme
homomorphism $\GG_m\to T_m$.
Thus the image $\beta(\varpi)$ of the element $\varpi\in K$
lies in $T_m(K)$.
For any $\widetilde{\sen\len}_m$-module $V$ let
$V[\mu]$, $\mu\in P^{\widetilde{\sen\len}_m},$
be the corresponding weight subspace in $V$.
Since the coadjoint action of
$\beta(\varpi)$ on
$\ten^*_m\oplus\CC\o_0\oplus\CC\delta$ is given by $\xi_\beta^{-1}$,
see e.g., \cite{PS},
we have also
\begin{equation}\label{formule78998}
\beta(V[\mu])=V[\xi_{\beta}^{-1}(\mu)]
\end{equation}
if $V$ is integrable.

\subsection{The Hopf $\CC$-algebra $\Sym_\Gamma$}
Now, let us consider the Hopf $\CC$-algebras $R(\Gamma)$.
Once again, the multiplication/comultiplication on
$R(\Gamma)$ is given by the induction/restriction.
We equip $R(\Gamma)$ with the symmetric $\CC$-bilinear form
given by
$$\langle f,g\rangle={|\Gamma_n|}^{-1}\sum_{x\in\Gamma_n}f(x)g(x^{-1}),
\quad f,g\in[\Rep(\CC \Gamma_n)].$$
Here we regard $f,g$ as characters of $\CC\Gamma_n.$
This bilinear form is a Hopf pairing.
Next, we consider the Hopf $\CC$-algebra
$\Sym_\Gamma=\Sym^{\otimes\Gamma}$.
We'll use the following elements in $\Sym_\Gamma$
$$f^\gamma=1\otimes\cdots \otimes 1\otimes f\otimes 1\otimes\dots\otimes 1,\quad
f\in \Sym,\quad\g\in\Gamma,$$
with $f$ at the $\gamma$-th place.
We abbreviate
$$P_\mu^\gamma=(P_\mu)^{\gamma},\quad
P_\l=\prod_{\gamma\in\Gamma}P_{\l(\gamma)}^\gamma,\quad
\mu\in\Pc,\quad\l\in\Pc^\Gamma.$$
The comultiplication in $\Sym_\Gamma$ is characterized by
$$\Delta(P_r^\g)=
P_r^\g\otimes 1+
1\otimes P_r^\g,\quad r>0,\quad \g\in\Gamma.$$
Following
\cite[chap.~I, app.~B, (7.1)]{M} we write
$$P_{r,p}=\ell^{-1}\sum_{\gamma\in\Gamma}\g^{p}P_r^\gamma,
\quad r\geqslant 0,\quad p\in\ZZ_\ell.$$
We equip $\Sym_\Gamma$ with the Hopf pairing such that
$$\langle P_{r,p},P_{s,q}\rangle=r\delta_{p,q}\delta_{r,s},\quad
r,s>0,\quad p,q\in\ZZ_\ell.$$
We may regard $P_{r,p}$, $r>0$, as the $r$-th
power sum of a new sequence of variables
$x_{i,p}$, $i>0$. We define the
following elements in $\Sym_\Gamma$
\begin{equation}\label{SG}S_{\mu,p}=S_\mu(x_{i,p}),\quad
S_\l=\prod_{p\in\ZZ_\ell}S_{\l(p),p},\quad\mu\in\Pc,\quad\l\in\Pc^\ell.
\end{equation}
The Hopf $\CC$-algebras
$R(\Gamma)$
and $\Sym_\Gamma$
are identified via the characteristic map
\cite[chap.~I, app.~B, (6.2)]{M}
$$\ch:R(\Gamma)\to\Sym_\Gamma.$$
This map intertwines the induction in
$R(\Gamma)$ with the multiplication in $\Sym_\Gamma$
by \cite[chap.~I, app.~B, (6.3)]{M}.
By \cite[chap.~I, app.~B, (9.4)]{M} and (\ref{labelsimple})
we have
\begin{equation}\label{ch}
\ch(\bar L_\l)=S_{\tau\l},\quad \l\in\Pc^\ell,\end{equation}
where $\tau$ is the permutation of $\Pc^\ell$ such that
$(\tau\l)(p)=\l(p+1)$ for each $p\in\ZZ_\ell$.
For $\l\in\Pc^\Gamma$ we write
$$z_\l=\prod_{\gamma\in\Gamma}z_{\l(\gamma)}\ell^{\,l(\l(\gamma))},$$
where $z_{\lambda(\gamma)}$ is as in (\ref{zlambda}),
and we define $\bar\l\in\Pc^\Gamma$
by $\bar\l(\gamma)=\l(\gamma^{-1})$.
Then we have
\begin{equation}\label{pairing}
\begin{split}
\langle S_\l,S_{\mu}\rangle=\delta_{\l,\mu},\quad \l,\mu\in\Pc^\ell,\cr
\langle P_\l,P_{\bar\mu}\rangle=\delta_{\l,\mu}z_\l,\quad \l,\mu\in\Pc^\Gamma.
\end{split}
\end{equation}
The first equality is proved as in
\cite[chap.~I, app.~B, (7.4)]{M}, while the second one is
\cite[chap.~I, app.~B, (5.3')]{M}.
By (\ref{ch}), (\ref{pairing})
the map $\ch$ is an isometry.
Thus it intertwines the restriction in
$R(\Gamma)$ with the comultiplication in $\Sym_\Gamma$.

\begin{prop} \label{GammaS}
(a)
The restriction $\Rep(\CC\Gamma_n)\to\Rep(\CC \frak S_n)$
yields the $\CC$-algebra homomorphism
$\Res_{\frak S}^\Gamma:\Sym_\Gamma\to\Sym$
such that $S_\l\mapsto\prod_pS_{\l(p)}$,
$P_{r,p}\mapsto P_r$.

(b)
The induction $\Rep(\CC \frak S_n)\to\Rep(\CC\Gamma_n)$
yields the $\CC$-algebra homomorphism
$\Ind_{\frak S}^\Gamma:\Sym\to \Sym_\Gamma$
such that $P_r\mapsto P_r^1=\sum_{p\in\ZZ_\ell}P_{r,p}.$
\end{prop}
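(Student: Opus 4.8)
The plan is to establish first that $\Res_{\Sen}^\Gamma$ and $\Ind_{\Sen}^\Gamma$ are $\CC$-algebra homomorphisms, and then to read off their effect on the algebra generators $S_\l,P_{r,p}$ of $\Sym_\Gamma$ and on $P_r\in\Sym$. The single structural input I would use throughout is that $\Gamma_n=\Sen_n\ltimes\Gamma^n$ is a semidirect product, so that $\Sen_n$ complements the normal subgroup $\Gamma^n$. For $n=n'+n''$ this gives $\Sen_n\cap(\Gamma_{n'}\times\Gamma_{n''})=\Sen_{n'}\times\Sen_{n''}$ and $\Sen_n\cdot(\Gamma_{n'}\times\Gamma_{n''})=\Gamma_n$, so the double coset set $(\Gamma_{n'}\times\Gamma_{n''})\backslash\Gamma_n/\Sen_n$ is a single point and the Mackey formula collapses to
$$\Res^{\Gamma_n}_{\Sen_n}\circ\Ind^{\Gamma_n}_{\Gamma_{n'}\times\Gamma_{n''}}=\Ind^{\Sen_n}_{\Sen_{n'}\times\Sen_{n''}}\circ\Res^{\Gamma_{n'}\times\Gamma_{n''}}_{\Sen_{n'}\times\Sen_{n''}}.$$
This says precisely that $\Res_{\Sen}^\Gamma$ respects the products of $R(\Gamma)$ and $R(\Sen)$; transitivity of induction (through $\Sen_{n'}\times\Sen_{n''}\subset\Sen_n$ and $\subset\Gamma_{n'}\times\Gamma_{n''}$) does the same for $\Ind_{\Sen}^\Gamma$, and the symmetric single-double-coset identity $\Res^{\Gamma_n}_{\Gamma_{n'}\times\Gamma_{n''}}\circ\Ind^{\Gamma_n}_{\Sen_n}=(\Ind^{\Gamma_{n'}}_{\Sen_{n'}}\otimes\Ind^{\Gamma_{n''}}_{\Sen_{n''}})\circ\Res^{\Sen_n}_{\Sen_{n'}\times\Sen_{n''}}$ shows in addition that $\Ind_{\Sen}^\Gamma$ is a homomorphism of Hopf algebras. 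Via the characteristic maps these become $\CC$-algebra maps $\Sym_\Gamma\to\Sym$ and $\Sym\to\Sym_\Gamma$.

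For part (a) I would apply the Mackey formula above to the induced module $\bar L_\l=\Ind^{\Gamma_n}_{\Gamma_{\nu_\l}}(\bar L_{\l(1)}\chi_\ell^{\otimes|\l(1)|}\otimes\cdots)$ of (\ref{labelsimple}): here again $\Sen_n\cap\Gamma_{\nu_\l}=\Sen_{\nu_\l}$ and $\Sen_n\cdot\Gamma_{\nu_\l}=\Gamma_n$, the one-dimensional $\Gamma$-characters become trivial on $\Sen_{\nu_\l}$, and one gets $\Res^{\Gamma_n}_{\Sen_n}\bar L_\l=\Ind^{\Sen_n}_{\Sen_{\nu_\l}}(\bar L_{\l(1)}\otimes\cdots\otimes\bar L_{\l(\ell)})$. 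Applying $\ch$, using that it takes $\bar L_\mu$ to $S_\mu$ in $R(\Sen)$ and intertwines induction with multiplication, and combining with (\ref{ch}), this yields $\Res_{\Sen}^\Gamma(S_\l)=\prod_pS_{\l(p)}$ (the cyclic shift $\tau$ drops out since the product runs over all of $\ZZ_\ell$). Specialising to an $\l$ concentrated at a single index $p$, this says that $\Res_{\Sen}^\Gamma$ restricted to the $p$-th tensor factor $\CC[x_{i,p}:i>0]^{\Sen_\infty}$ of $\Sym_\Gamma$ is the tautological isomorphism onto $\Sym$ forgetting the label, $S_\mu(x_{\bullet,p})\mapsto S_\mu$; in particular it carries $P_{r,p}=P_r(x_{\bullet,p})$ to $P_r$, which is the remaining assertion of (a).

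For part (b), since $\Ind_{\Sen}^\Gamma$ is an algebra map and $\Sym=\CC[P_1,P_2,\dots]$, it suffices to compute $\Ind_{\Sen}^\Gamma(P_r)$. Being a Hopf algebra homomorphism, $\Ind_{\Sen}^\Gamma$ sends the primitive $P_r$ to a degree-$r$ primitive of $\Sym_\Gamma=\Sym^{\otimes\Gamma}$, and the space of those is $\bigoplus_{\gamma\in\Gamma}\CC P_r^\gamma=\bigoplus_{p\in\ZZ_\ell}\CC P_{r,p}$, so $\Ind_{\Sen}^\Gamma(P_r)=\sum_pc_pP_{r,p}$. By Frobenius reciprocity $\Ind_{\Sen}^\Gamma$ and $\Res_{\Sen}^\Gamma$ are mutually adjoint for the Hopf pairings (both characteristic maps being isometries, cf. (\ref{ch}), (\ref{pairing})), so using (a),
$$c_q\,r=\langle\Ind_{\Sen}^\Gamma(P_r),P_{r,q}\rangle=\langle P_r,\Res_{\Sen}^\Gamma(P_{r,q})\rangle=\langle P_r,P_r\rangle=z_{(r)}=r$$
for every $q$, whence $c_q=1$ and $\Ind_{\Sen}^\Gamma(P_r)=\sum_pP_{r,p}=P_r^1$. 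A more computational alternative avoiding the pairing: $\Ind_{\Sen}^\Gamma(S_{(r)})=\ch\bigl(\Ind^{\Gamma_r}_{\Sen_r}(\text{triv})\bigr)=\ch(\CC[\Gamma^r])$, and decomposing $\CC[\Gamma^r]=\CC[\Gamma]^{\otimes r}$ under $\Gamma_r$ via $\CC[\Gamma]=\bigoplus_p\chi_p$ shows that $\Ind_{\Sen}^\Gamma$ sends $\sum_rS_{(r)}t^r$ to $\prod_p(\sum_kS_{(k),p}t^k)$; a logarithmic-derivative computation using Newton's identities inside each tensor factor then recovers $\Ind_{\Sen}^\Gamma(P_r)=\sum_pP_{r,p}$.

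The only point requiring genuine care is the twisted power sums $P_{r,p}$: they are not the characteristic of any honest $\Gamma_n$-module, so their image cannot be obtained from a bare Mackey computation, and one has to route through the tensor-factor structure of $\Sym_\Gamma$ for (a) and through primitivity together with the Hopf pairing (or through Newton's identities) for (b). Everything else — the collapse of Mackey to a single double coset and the vanishing of the characters of $\Gamma$ on $\Sen_n$ — is immediate once the semidirect product decomposition $\Gamma_n=\Sen_n\ltimes\Gamma^n$ is used.
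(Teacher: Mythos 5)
Your argument is correct, and it differs from the paper's in both places where an actual computation is required, so a comparison is worthwhile. For (a) the paper leaves the statement about the $S_\l$ to the reader and obtains $\Res_{\frak S}^\Gamma(P_{r,p})=P_r$ from the explicit class function $\sigma_{r,p}$ with $\ch(\sigma_{r,p})=P_{r,p}$ of \cite{FJW}; you instead prove the Schur-function formula by the one-double-coset Mackey identity coming from $\Gamma_n=\frak S_n\ltimes\Gamma^n$ (which also supplies the multiplicativity of $\Res_{\frak S}^\Gamma$ asserted in the statement) and then deduce $P_{r,p}\mapsto P_r$ by pure linear algebra inside the $p$-th tensor factor, since $P_{r,p}$ and $P_r$ have the same expansion in the bases $\{S_{\mu,p}\}$ and $\{S_\mu\}$. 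This is self-contained and avoids the external lemma. For (b) both proofs rest on the adjointness of $\Ind_{\frak S}^\Gamma$ and $\Res_{\frak S}^\Gamma$ for the Hopf pairings via Frobenius reciprocity and the isometry property of $\ch$; the paper pins down $\Ind_{\frak S}^\Gamma(P_r)$ as a multiple of $P_r^1$ by pairing against every $P_\l$, $\l\in\Pc^\Gamma$, and invoking the orthogonality (\ref{pairing}), whereas you first argue that $\Ind_{\frak S}^\Gamma$ is a morphism of Hopf algebras (again by a one-double-coset Mackey computation) and hence carries the primitive $P_r$ into $\bigoplus_p\CC P_{r,p}$, after which pairing against the $P_{r,q}$ alone and using (a) fixes the coefficients. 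The one step you should spell out is the identification of the degree-$r$ primitives of $\Sym_\Gamma=\Sym^{\otimes\Gamma}$ with $\bigoplus_\gamma\CC P_r^\gamma=\bigoplus_p\CC P_{r,p}$: this is standard for connected graded Hopf algebras over $\CC$ (primitives of a tensor product, plus the fact that the primitives of $\Sym$ are spanned by the power sums), but it is carrying real weight in your argument. Your generating-function alternative via $\Ind^{\Gamma_r}_{\frak S_r}(\mathrm{triv})=\CC[\Gamma^r]$ is also valid and is essentially Macdonald's treatment.
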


\begin{proof}
The first part of $(a)$ is easy by 
Section \ref{sec:rep}, and it is left to the reader. 
For the second one, observe that
$$
\ch(\sigma_{r,p})=P_{r,p},\quad
r>0,$$
where $\sigma_{r,p}$ is the class function on $\Gamma_r$ wich takes the
value $r(\gamma_1\gamma_2\cdots\gamma_r)^p$ on pairs
$(w,(\gamma_1,\gamma_2,\dots,\gamma_r))$ such that
$w$ is a $r$-cycle,
and 0 elsewhere,
see \cite[lem.~5.1]{FJW}.
Now we concentrate on $(b)$. Note that
$$\Res^\Gamma_{\frak S}(P_0^\gamma)=1,\quad
\Res^\Gamma_{\frak S}(P_r^\gamma)=\ell\delta_{\gamma,1}P_r,\quad r>0.$$
Therefore, for $\l\in\Pc^\Gamma$ we have
$$\Res^\Gamma_{\frak S}(P_{\l})
=\prod_{\gamma\in\Gamma}\Res^\Gamma_{\frak S}(P_{\l(\gamma)}^\gamma)
=\begin{cases}
\ell^{\,l(\l(1))}P_{\l(1)}&\text{if}\ \lambda(\gamma)=\emptyset\
\text{for}\ \g\neq 1,\cr
0&\text{else}.
\end{cases}$$
If $f, g\in[\Rep(\CC\Gamma_n)]$ are the characters of finite dimensional $\Gamma_n$-modules $V$, $W$,
then $\langle f,g\rangle$ is the dimension of the space of $\CC\Gamma_n$-linear maps $V\to W$.
Hence, by Frobenius reciprocity the operator $\Ind_{\frak S}^\Gamma$ is adjoint to the operator
$\Res_{\frak S}^\Gamma$. Thus,
$$\langle\Ind_{\frak S}^\Gamma(P_r),P_\l\rangle
=\begin{cases}
r\ell^{\,l(\l(1))}\delta_{\l(1),(r)}&\text{if}\ \lambda(\gamma)=\emptyset\ \text{for}\ \g\neq 1,\cr
0&\text{else}.
\end{cases}$$
This implies that
$\Ind_{\frak S}^\Gamma(P_r)=aP_r^1$ for some $a$.
To determine $a$ let $\l$ be such that
$\lambda(\gamma)=\emptyset$ if $\gamma\neq 1$ and $\lambda(1)=(r)$.
Then we have
$$P_\l=P_r^1,\quad\langle P_\l,P_\l\rangle=r\ell.$$
This implies that $a=1.$
\end{proof}

\vskip3mm

\begin{rk}
Let $f\mapsto\bar f$ be the $\CC$-antilinear involution of
$\Sym_\Gamma$ which fixes the
$P_\l$'s with $\l\in\Pc^\Gamma$,
see \cite[chap.~I, app.~B, (5.2)]{M}.
For $\l\in\Pc^\ell$ let $\bar\l$ be the $\ell$-partition
given by $\bar\l(p)=\l(-p)$.
We have
$$\bar P_{r,p}=P_{r,-p},\quad \bar S_\l=S_{\bar\l},\quad
r>0,\quad p\in\ZZ_\ell,\quad\l\in\Pc^\ell.$$
\end{rk}

\vskip3mm

\begin{rk}
Setting $\ell=1$ in $\Sym_\Gamma$ we get the standard Hopf algebra structure and
the Hopf pairing of $\Sym$.
\end{rk}

\vskip3mm

\begin{rk}
We have
\cite[chap.~I, app.~B, (7.1')]{M}
$$P_r^\gamma=\sum_{p\in\ZZ_\ell}\gamma^{-p}P_{r,p},\quad r\geqslant 0,\quad
P_0^\gamma=1,\quad P_{0,p}=\delta_{0,p}.$$
\end{rk}

\vspace{3mm}

\subsection{The level 1 Fock space}\label{sec:fock1}
Fix once for all a basis $(\eps_1,\dots,\eps_m)$ of $\CC^m$.
The {\it level 1 Fock space of $\widehat{\sen\len}_m$} is the space
$\Fc_{m}$
of semi-infinite wedges of the $\CC$-vector space
$V_m=\CC^m\otimes\CC[t,t^{-1}]$.
More precisely, we have
$${\textstyle \Fc_m=
\bigoplus_{d\in\ZZ}\Fc^{(d)}_m},$$
where $\Fc^{(d)}_m$
is the subspace spanned by the semi-infinite wedges of 
{\it charge $d$}, i.e., the semi-infinite wedges of
the form
\begin{equation}
\label{wedge}
u_{i_1}\wedge u_{i_2}\wedge\cdots,
\quad i_1>i_2>\dots,
\quad u_{i-jm}=\eps_i\otimes t^j,
\end{equation}
where $i_k=d-k+1$ if $k\gg 0$.
We write
\begin{equation}\label{standardbasis}|\l,d\rangle=u_{i_1}\wedge u_{i_2}\wedge\cdots,\quad
\l\in\Pc,\quad i_k=\l_k+d-k+1,
\quad k>0.
\end{equation}
The elements $|\l,d\rangle$ with $\l\in\Pc$
form a basis of $\Fc^{(d)}_m.$
We equip $\Fc^{(d)}_m$ with the $\CC$-bilinear symmetric 
form such that this basis is orthonormal.

The Fock space $\Fc_m^{(d)}$ is equipped with a level one representation of
$\widehat{\sen\len}_m$ in the following way. 
First, the $\CC$-vector space $V_m$
is given the level 0 action of $\widehat{\sen\len}_m$ induced by the
homomorphism
\begin{equation}\label{eval}
\widehat{\sen\len}_m\to\sen\len_m\otimes\CC[t,t^{-1}],\quad \oneb\mapsto 0,
\quad x\otimes\varpi\mapsto x\otimes t
\end{equation}
and the obvious actions of
$\sen\len_m$ and $\CC[t,t^{-1}]$ on $V_m$.
Then, taking semi-infinite wedges, this action yields a level 1 action of 
$\widehat{\sen\len}_m$ on $\Fc^{(d)}_m$, 
see e.g., \cite{U}.

Next, observe that the multiplication by $t^r$, $r>0$, 
yields an endomorphism of $V_m$.
Taking semi-infinite wedges it yields a linear operator $b_{r}$ 
on $\Fc^{(d)}_m$.
Let $b'_r$ be the adjoint of $b_r$.
Then $b'_r$, $b_{r}$ define a level $m$ action of  $\Hen$ on  $\Fc^{(d)}_m$.
The $\widehat{\sen\len}_m$-action and the $\Hen$-action on
$\Fc^{(d)}_m$ glue together, yielding a level 1  representation of
$\widehat{\gen\len}_m$ on $\Fc^{(d)}_m$, see \cite{U} again.

As a $\widehat{\gen\len}_m$-module we have a canonical isomorphism
$$\Fc^{(d)}_m=V_{\omega_{d\,\text{mod}\,m}}^{\widehat{\gen\len}_m}.$$
It identifies the symmetric bilinear form of
$\Fc^{(d)}_m$ with the Shapovalov form on 
$V_{\omega_{d\,\text{mod}\,m}}^{\widehat{\gen\len}_m},$ i.e., 
with the unique (up to a scalar) symmetric bilinear form 
such that the adjoint of 
$b_{r}$, $e_{q}$ are $b'_r$, $f_q$ respectively.

\begin{rk}
The $\CC$-linear isomorphism
\begin{equation}\label{focksym}
{\textstyle \Fc^{(d)}_m\to \Sym,\quad
|\l,d\rangle\mapsto S_\l,\quad\l\in\Pc}
\end{equation}
takes the operators
$b'_r$, $b_{r}$, $e_{q}$, $f_{q}$ on the left hand side to the operators
$b'_{mr}$, $b_{mr}$, $e_{q-d}$, $f_{q-d}$ on the right hand side.
\end{rk}

\subsection{The level $\ell$ Fock space}\label{sec:fockl}
Fix a basis $(\eps_1,\dots,\eps_m)$ of $\CC^m$ and a basis
$(\dot\eps_1,\dots,\dot\eps_\ell)$ of $\CC^\ell$.
The {\it level $\ell$ Fock space of $\widehat{\sen\len}_m$} is the $\CC$-vector space
$${\textstyle\Fc_{m,\ell}=
\bigoplus_{d\in\ZZ}\Fc^{(d)}_{m,\ell}}$$
of semi-infinite wedges of the $\CC$-vector space $V_{m,\ell}=\CC^m\otimes\CC^\ell\otimes\CC[z,z^{-1}]$.
The latter are defined as in
(\ref{wedge})
with
\begin{equation}
\label{u}
u_{i+(j-1)m-km\ell}=\eps_i\otimes\dot\eps_j\otimes z^k.
\end{equation}
Here $i=1,\dots,m$, $j=1,\dots,\ell$, and $k\in\ZZ$.
We define basis elements $|\l,d\rangle$, with $\l\in\Pc$,
of $\Fc^{(d)}_{m,\ell}$ as in (\ref{standardbasis}), using the semi-infinite wedges above.
We equip $\Fc^{(d)}_{m,\ell}$ with the $\CC$-bilinear symmetric 
form such that the basis elements $|\l,d\rangle$ are orthonormal.
This yields a $\CC$-linear isomorphism
\begin{equation}\label{focksymell}
{\textstyle \Fc^{(d)}_{m,\ell}\to \Sym,\quad
|\l,d\rangle\mapsto S_\l},\quad\l\in\Pc.
\end{equation}
We equip the $\CC$-vector space $\Fc^{(d)}_{m,\ell}$
with the following actions, see \cite{U} for details :
\begin{itemize}

\item The level $m\ell$ action of $\Hen$ such that
$b'_r$, $b_r$ is taken to the operator
$b'_{m\ell r}$, $b_{m\ell r}$ on $\Sym$ under the
isomorphism (\ref{focksymell}) for $r>0$.

\item The level $\ell$ action of $\widehat{\sen\len}_m$
defined as follows : equip the $\CC[z,z^{-1}]$-module $V_{m,\ell}$
with the level 0 action of $\widehat{\sen\len}_m$ given by the
evaluation homomorphism (\ref{eval}) and the obvious actions of
$\sen\len_m$ and $\CC[z,z^{-1}]$ on $V_{m,\ell}$.
Taking semi-infinite wedges we get a level $\ell$ action of $\widehat{\sen\len}_m$
on $\Fc^{(d)}_{m,\ell}$.

\item The level $m$ action of $\widehat{\sen\len}_\ell$ which is defined
as above by exchanging the role of $m$ and $\ell$.
\end{itemize}

\noindent The actions of $\Hen$, $\widehat{\sen\len}_m$ and
$\widehat{\sen\len}_\ell$ commute with each other.
We call {\it $\ell$-charge of weight $d$} an $\ell$-tuple of integers
$s=(s_p)$ such that $d=\sum_ps_p$.
Set
\begin{equation}\label{weight}\hat\g(s,m)=(m-s_1+s_\ell)\,\omega_0+
\sum_{p=1}^{\ell-1}(s_p-s_{p+1})\omega_p.
\end{equation}
The Fock space associated with the $\ell$-charge $s$ is the subspace
\begin{equation}\label{coco}\Fc_{m,\ell}^{(s)}=\Fc^{(d)}_{m,\ell}[\hat\g(s,m)]
\end{equation}
consisting of the elements of weight $\hat\g(s,m)$
with respect to the $\widehat{\sen\len}_\ell$-action.
It is an $\widehat{\sen\len}_m\times\Hen$-submodule of $\Fc^{(d)}_{m,\ell}$.
Consider the basis elements $|\l,s\rangle$, $\l\in\Pc^\ell$, of
$\Fc^{(s)}_{m,\ell}$ defined in \cite[sec.~4.1]{U}.
The representation of $\widehat{\sen\len}_m$
on $\Fc^{(s)}_{m,\ell}$ can be characterized in the following way, see e.g.,
\cite{JMMO}, \cite{U},
\begin{equation}\label{ef}e_q|\l,s\rangle=\sum_\nu |\nu,s\rangle,\quad
f_q|\l,s\rangle=\sum_\mu |\mu,s\rangle,
\end{equation}
where $\nu$ (resp.~$\mu$) runs through all $\ell$-partitions obtained
by removing (resp. adding) a node
of coordinate $(i,j)$ in the
$p$-th partition of $\l$ such that $q=s_p+j-i$ modulo $m$.
Consider the $\CC$-vector space isomorphism
\begin{equation}\label{focksyml}
\Sym_\Gamma\to\Fc_{m,\ell}^{(s)},\quad
S_{\tau\l}\mapsto |\l,s\rangle,\quad
\l\in\Pc^\ell.
\end{equation}
By \cite[sec.~4.1]{U} we have an equality of sets
\begin{equation}\label{rk:4.7}
\{|\l,s\rangle\,;\,\l\in\Pc^\ell,\,s=(s_p)\in\ZZ^\ell,\,\sum_ps_p=d\}=
\{|\l,d\rangle\,;\,\l\in\Pc\}.
\end{equation}
Thus the elements $|\l,s\rangle$ form an orthonormal basis
of $\Fc^{(d)}_{m,\ell}$ and the map (\ref{focksyml}) preserves the pairings
by (\ref{pairing}).
The representation of $\Hen$
on $\Fc^{(s)}_{m,\ell}$ can be characterized in the following way.

\begin{prop}
\label{prop:fockl}
The operators $b'_r$, $b_{r}$, $r>0$,
on $\Fc^{(s)}_{m,\ell}$
are adjoint to each other.
Further $b_{r}$ acts  as the multiplication by the element
$P_{mr}^1=\sum_{p}P_{mr,p}$ of $\Sym_\Gamma$ under the isomorphism
(\ref{focksyml}).
\end{prop}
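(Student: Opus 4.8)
The plan is to prove the two assertions in turn. The adjointness of $b_r$ and $b'_r$ on $\Fc^{(s)}_{m,\ell}$ should be formal once the definitions are traced through, while the identification of $b_r$ with multiplication by $P^1_{mr}$ is the bosonic half of the level-rank correspondence and will be reduced to results of \cite{U}.

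For the first point, I would begin by remarking that the symmetric form on $\Fc^{(s)}_{m,\ell}$ is the restriction of the form on $\Fc^{(d)}_{m,\ell}$: by (\ref{rk:4.7}) the orthonormal basis $\{|\l,s\rangle\,;\,\l\in\Pc^\ell\}$ of $\Fc^{(s)}_{m,\ell}$ is a subset of the orthonormal basis $\{|\l,d\rangle\,;\,\l\in\Pc\}$ of $\Fc^{(d)}_{m,\ell}$. On $\Fc^{(d)}_{m,\ell}$ the operators $b_r$, $b'_r$ correspond under (\ref{focksymell}) to $b_{m\ell r}$, $b'_{m\ell r}$ on $\Sym=V^\Hen_1$, and these are mutually adjoint for the Schur-orthonormal pairing by Section \ref{sec:heisenberg}. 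Finally the $\Hen$-action commutes with the $\widehat{\sen\len}_\ell$-action, hence preserves every $\widehat{\sen\len}_\ell$-weight subspace, in particular $\Fc^{(s)}_{m,\ell}=\Fc^{(d)}_{m,\ell}[\hat\g(s,m)]$. So for $v,w$ in $\Fc^{(s)}_{m,\ell}$ the identity $\langle b_rv,w\rangle=\langle v,b'_rw\rangle$, valid in $\Fc^{(d)}_{m,\ell}$, already takes place inside $\Fc^{(s)}_{m,\ell}$ with the restricted form; this gives the adjointness.

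For the second point, recall that under (\ref{focksymell}) the operator $b_r$ on $\Fc^{(d)}_{m,\ell}$ is multiplication by $P_{m\ell r}$ on $\Sym$ (as in Section \ref{sec:fock1}, this is also the semi-infinite wedge of multiplication by $z^r$ on $V_{m,\ell}$). Composing the isomorphism (\ref{focksyml}), the inclusion $\Fc^{(s)}_{m,\ell}\subset\Fc^{(d)}_{m,\ell}$ and (\ref{focksymell}) yields an isometric embedding $\Sym_\Gamma\hookrightarrow\Sym$ taking each $S_{\tau\l}$ to the Schur function indexed by the wedge $|\l,s\rangle=|\mu,d\rangle$ of \cite[sec.~4.1]{U}. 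Since $b_r$ commutes with $\widehat{\sen\len}_\ell$, multiplication by $P_{m\ell r}$ preserves the image of this embedding, and it then suffices to check that it corresponds under (\ref{focksyml}) to multiplication by $P^1_{mr}=\sum_pP_{mr,p}$ on $\Sym_\Gamma$. This is exactly the description of the bosonic operators on Uglov's Fock space in \cite{U}; the levels are consistent, since by Section \ref{sec:heisenberg} and the Hopf pairing $\langle P_{mr,p},P_{mr,q}\rangle=mr\,\delta_{p,q}$ the bracket $[b'_r,b_r]$ acts by the scalar $m\ell r$ in both presentations. One can also phrase the target operator through Proposition \ref{GammaS}(b): $P^1_{mr}=\Ind_{\frak S}^\Gamma(P_{mr})$, so on $R(\Gamma)=\Sym_\Gamma$ it is the induction product with the class function on $\Sen_{mr}$ that equals $mr$ on $mr$-cycles and $0$ otherwise.

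The genuinely non-formal step, and the one I expect to require the most care, is this last matching of the wedge operator ``multiply by $z^r$, then take semi-infinite wedges'' with the power-sum multiplication $\sum_pP_{mr,p}$ on $\Sym_\Gamma$. Carried out directly it is an abacus/Maya-diagram computation along the Uglov correspondence (\ref{rk:4.7}), tracking how $r$-ribbons of the $\ell$-partitions behave; more conceptually it is the comparison of the two bosonic operators attached to the two descriptions of $\Fc^{(d)}_{m,\ell}$. Taking the relevant statements of \cite{U} as input, what remains is the routine unwinding of (\ref{focksyml}), (\ref{ch}) and the pairing (\ref{pairing}).
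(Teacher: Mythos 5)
Your proof is correct, but it is organized differently from the paper's. For the adjointness you give a direct argument (the form on $\Fc^{(s)}_{m,\ell}$ is the restriction of the form on $\Fc^{(d)}_{m,\ell}$ by (\ref{rk:4.7}), the operators $b_r,b'_r$ correspond under (\ref{focksymell}) to the mutually adjoint operators $b_{m\ell r},b'_{m\ell r}$ on $\Sym$, and they preserve the $\widehat{\sen\len}_\ell$-weight space $\Fc^{(s)}_{m,\ell}$ since $\Hen$ commutes with $\widehat{\sen\len}_\ell$); the paper simply cites \cite[prop.~5.8]{U}, so your version is the more self-contained of the two. For the second claim the paper does not pass through the ambient space $\Fc^{(d)}_{m,\ell}\cong\Sym$ as you do: it uses the $\CC$-linear map $\Fc^{(s)}_{m,\ell}\to\bigotimes_{p\in\ZZ_\ell}\Fc^{(s_p)}_m$, $|\l,s\rangle\mapsto\bigotimes_p|\l(p),s_p\rangle$, which by \cite[sec.~4.1, 4.3, (25)]{U} intertwines $b_r$ with $\sum_p 1\otimes\cdots\otimes b_r\otimes\cdots\otimes 1$; the level-one statement of Section \ref{sec:fock1} (via (\ref{focksym}), $b_r$ on $\Fc^{(s_p)}_m$ is multiplication by $P_{mr}$ in the $p$-th set of variables, i.e.\ by $P_{mr,p}$) then gives $P^1_{mr}=\sum_pP_{mr,p}$ immediately. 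Your route instead compares multiplication by $P_{m\ell r}$ on $\Sym$ with multiplication by $\sum_pP_{mr,p}$ on $\Sym_\Gamma$ under the Uglov bijection $|\l,s\rangle=|\mu,d\rangle$, which, as you acknowledge, is a genuine ribbon/abacus computation if done by hand. The content is the same — both arguments ultimately rest on Uglov's identification — but the paper's factorization reduces the hard step to a cleanly quotable formula of \cite{U} and to the already-established level-one case, whereas your reduction leaves the combinatorial matching as the load-bearing citation; if you keep your organization, you should pin down precisely which statement of \cite{U} (essentially the compatibility of the boson with the $\ell$-fold tensor decomposition, i.e.\ the same formulas the paper quotes) delivers that matching.
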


\begin{proof}
The first claim is \cite[prop.~5.8]{U}. To prove the second one,
observe that the formulas in \cite[sec.~4.1, 4.3 and (25)]{U} imply that
the $\CC$-linear map
$$\Fc^{(s)}_{m,\ell}\to\bigotimes_{p\in\ZZ_\ell}\Fc_m^{(s_p)},\quad
|\l,s\rangle\mapsto\bigotimes_{p\in\ZZ_\ell}|\l(p),s_p\rangle,$$
intertwines the operator $b_r$ on the left hand side and the operator
$$b_r\otimes 1\otimes\cdots\otimes 1+
1\otimes b_r\otimes 1\otimes\cdots\otimes 1+\cdots+
1\otimes\cdots\otimes 1\otimes b_r$$
on the right hand side.
Thus the proposition follows from the definition of
the $\Hen$-action on $\Fc_m$ in Section \ref{sec:fock1}
and from the definition of
the $\Hen$-action on $\Sym$ in Section \ref{sec:heisenberg}.
\end{proof}

\vskip3mm

\begin{rk}\label{rk:scaling}
The $\widehat{\sen\len}_m$-action on $\Fc^{(s)}_{m,\ell}$ can be extended to
an $\widetilde{\sen\len}_m$-action such that
the weight of $|\l,s\rangle$ is
$$-\Delta(s,m)\delta+\sum_{p=1}^\ell\o_{s_p}-\sum_{q=0}^{m-1}n_q(\l)\a_q,$$
see \cite[sec.~4.2]{U}.
Here $n_q(\l)$ is the number of $q$-nodes in $\l$, i.e., it is the sum over all $p$'s of the number of
nodes of coordinate $(i,j)$ in
the $p$-th partition of $\l$ such that $s_p+j-i=q\,\text{mod}\, m$.
We have also used the notation
$$\Delta(s,m)={1\over 2}\sum_{p=1}^\ell\langle\o_{s_p\,\text{mod}\,m},\,\o_{s_p\,\text{mod}\,m}\rangle
+{1\over 2}\sum_{p=1}^\ell s_p(s_p/m-1).$$In particular, we have
$$D(|\l,s\rangle)= -(\Delta(s,m)+n_0(\l))\,|\l,s\rangle.$$
\end{rk}

\vskip3mm

\subsection{Comparison of the $\widetilde{\gen\len}_\ell$-modules
$\Fc_{m,\ell}^{(0)}$ and $V_{\omega_0}^{\widetilde{\gen\len}_\ell}$}
\label{sec:fock}
The Fock space
$\Fc_{m,\ell}$
can be equipped with a level 1 representation of
$\widetilde{\gen\len}_\ell$
in the following way. The assignment
$$z\mapsto t^m,\quad\epsilon_i\mapsto t^{1-i},\quad
i=1,2,\dots,m,$$
yields a $\CC$-linear isomorphism
\begin{equation}\label{isomspace}\gathered
V_{m,\ell}=
\CC^m\otimes\CC^\ell\otimes\CC[z,z^{-1}]\to
\CC^\ell\otimes\CC[t,t^{-1}]=V_\ell,\cr
u_{i+(j-1)m-km\ell}\mapsto u_{j+(i-1)\ell-km\ell},
\endgathered
\end{equation}
see (\ref{wedge}), (\ref{u}).
Taking semi-infinite wedges, it yields a $\CC$-linear isomorphism
\begin{equation}\label{isomfock}\Fc_{m,\ell}\to\Fc_\ell.\end{equation}
Pulling back the representation of $\widetilde{\gen\len}_\ell$ on
$\Fc_{\ell}$ in Section \ref{sec:fock1}
and Remark \ref{rk:scaling}
by (\ref{isomfock}) we get a level 1 action of $\widetilde{\gen\len}_\ell$
on $\Fc_{m,\ell}$ such that :

\begin{itemize}

\item For $d\in\ZZ$ the level 1 representation of
$\widetilde{\gen\len}_\ell$ on $\Fc_{m,\ell}$ yields an isomorphism
\begin{equation}\label{coucou}\Fc_{m,\ell}^{(d)}=V_{\omega_{d\,\text{mod}\,\ell}}^{\widetilde{\gen\len}_\ell}.
\end{equation}

\item The level $m$-action of $\widehat{\gen\len}_\ell$ in $\Fc_{m,\ell}$ given in Section \ref{sec:fockl}
can be recovered from the level 1 action by composing it with the Lie algebra
homomorphism
\begin{equation}\label{multm}
\widehat{\gen\len}_\ell\to\widehat{\gen\len}_\ell,\quad
x\otimes\varpi^{r}\mapsto x\otimes\varpi^{mr},\,
\quad\oneb\mapsto m\oneb.
\end{equation}

\item
Pulling back the level $\ell$ representation of $\Hen$ on
$\Fc_{\ell}$ in Section \ref{sec:fock1}
by (\ref{isomfock}) we get a level $\ell$ action of $\Hen$
on $\Fc_{m,\ell}$.
The level $m\ell$-action of $\Hen$ in $\Fc_{m,\ell}$ given in Section
\ref{sec:fockl} can be recovered from the latter by composing it
with the Lie algebra homomorphism
\begin{equation}\label{twoheisenberg}
b_r\mapsto b_{mr},\quad
b'_r\mapsto b'_{mr},\quad
\oneb\mapsto m\oneb.
\end{equation}
Hence, the action of the level $m\ell$
Casimir operator, i.e., the operator
obtained by replacing $\ell$ by $m\ell$ in (\ref{casimir0}),
associated with the representation of $\Hen$ on $\Fc_{m,\ell}$
is the same as the action of the $m$-th Casimir operator 
\begin{equation}\label{mcasimir}
\partial_m={1\over m\ell}\sum_{r\geqslant 1}b_{mr}b'_{mr}
\end{equation}
associated with the
level $\ell$ representation of $\Hen$ on $\Fc_{m,\ell}$.

\item  To a partition $\l$ we associate
an {\it $\ell$-quotient $\l^*$}, an {\it $\ell$-core $\l^c$} and a
{\it content polynomial $c_\l(X)$} as in \cite[chap.~I]{M}.
In \cite[sec.~2.1]{LMi} a bijection $\tau$
is given from the set of $\ell$-cores to the set of
$\ell$-charges of weight 0.
By \cite[rem.~4.2$(i)$]{U} the inverse of the map (\ref{isomfock})
is such that
\begin{equation*}
\Fc_\ell^{(0)}\to\Fc_{m,\ell}^{(0)},\quad
|\l,0\rangle\mapsto |\l^*,\tau(\l^c)\rangle.
\end{equation*}
Now, the same argument as in \cite[ex.~I.11]{M} shows that
\begin{equation}\label{macdo0}
c_\l(X)=c_{\l^c}(X)\prod_{p=0}^{\ell-1}(X+p)^{|\l^*|}\
\text{mod}\ \ell.
\end{equation}
Further, by Remark \ref{rk:scaling} the scaling element $D$
of the level 1 representation of $\widetilde{\gen\len}_\ell$
on $\Fc_{m,\ell}^{(0)}$ is given by
\begin{equation}\label{macdo00}
D(|\l,0\rangle)=-n_0(\l)\,|\l,0\rangle,
\end{equation}
where $n_0(\l)$ is the number of 0-nodes in $\l$.
Thus we have the following relation
\begin{equation}\label{macdo1}[D,f_p]=-f_p,\quad\forall f_p\in\widehat{\sen\len}_m.
\end{equation}
Further we have
\begin{equation}\label{macdo2}
D(|\l,0\rangle)=-\bigl(n_0(\l^c)+|\l^*|\bigr)\,|\l,0\rangle.
\end{equation}
\end{itemize}

\vskip3mm

\begin{rk} We finish this section by several remarks concerning the Fock space
that we'll not use in the rest of the paper.
First, there is a tautological $\CC$-linear isomorphism
$\CC^m\otimes\CC^\ell=\CC^{m\ell}$. It yields $\CC$-linear isomorphisms
$V_{m,\ell}\to V_{m\ell}$ and
$\Fc_{m,\ell}\to\Fc_{m\ell}.$ 
Recall that $\Fc_{m\ell}$ 
is equipped with a level 1 action of
$\widehat{\sen\len}_{m\ell}$, and that 
$\Fc_{m,\ell}$ is equipped with a 
level $(\ell,m)$-action of 
$\widehat{\sen\len}_{m}\times\widehat{\sen\len}_{\ell}$.
Now, there is a well-known Lie algebra inclusion
$$(\widehat{\sen\len}_{m}\times\widehat{\sen\len}_{\ell})/(m(\oneb,0)-\ell(0,\oneb))
\subset\widehat{\sen\len}_{m\ell},\quad (\oneb,0)\mapsto \ell\oneb
,\quad (0,\oneb)\mapsto m\oneb.$$
This inclusion intertwines the 
$\widehat{\sen\len}_{m}\times\widehat{\sen\len}_{\ell}$-action on
$\Fc_{m,\ell}$ and the $\widehat{\sen\len}_{m\ell}$-action on 
$\Fc_{m,\ell}=\Fc_{m\ell}$.
Further, we want to compare 
the $\widehat{\sen\len}_{m\ell}$-action on $\Fc_{m,\ell}$ with the
level one $\widehat{\sen\len}_{\ell}$-action on $\Fc_{m,\ell}$
given in the begining of this section.
The $\CC$-linear isomorphisms
(\ref{isomspace}) and (\ref{isomfock})
yield a $\CC$-linear isomorphism
\begin{equation}\label{psi}
\Fc_{\ell}\to\Fc_{m,\ell}=\Fc_{m\ell}.
\end{equation}
The right hand side is equipped with a level 1 action of
$\widehat{\sen\len}_{m\ell}$, and the left hand side with a level 1 action of
$\widehat{\sen\len}_{\ell}$. Consider the following elements in
${\sen\len}_m\otimes\CC[\varpi,\varpi^{-1}]$
$$\gathered
x(i+km)=
\sum_{j=1}^{m-i}e_{j,i+j}\otimes \varpi^k+
\sum_{j=m-i+1}^me_{j,i+j-m}\otimes \varpi^{k+1},\cr
1\leqslant i\leqslant m,\quad k\in\ZZ.\endgathered$$
For $x\in{\sen\len}_m\otimes\CC[\varpi,\varpi^{-1}]$ and
$p,q=1,2,\dots,\ell$ we define the element
$x^{(p,q)}\in{\sen\len}_{m\ell}\otimes\CC[\varpi,\varpi^{-1}]$
by
$$x^{(p,q)}=\sum_{i,j=1}^me_{i+(p-1)m,\,j+(q-1)m}\otimes a_{i,j}
\quad\text{for}\quad
x=\sum_{i,j=1}^me_{i,j}\otimes a_{i,j}.$$
The following claim 
is proved by a direct computation which is left to the reader.

\begin{prop}\label{lem:lemmegrecque}
(a) There is a  Lie algebra inclusion
$\widehat{\sen\len}_\ell\subset\widehat{\sen\len}_{m\ell}$ given by
$$\oneb\mapsto\oneb,\quad
e_{p,q}\otimes\varpi^{r}\mapsto
x(r)^{(p,q)},\quad
p,q=1,2,\dots,\ell,\quad r\in\ZZ.$$


(b) The map (\ref{psi}) intertwines
the $\widehat{\sen\len}_\ell$-action on $\Fc_\ell$
and the $\widehat{\sen\len}_{m\ell}$-action on $\Fc_{m\ell}$.

\end{prop}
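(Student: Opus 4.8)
The plan is to reduce everything to one elementary ``folding'' identity. Identify $\CC^m\otimes\CC[\varpi,\varpi^{-1}]$ with $\CC[t,t^{-1}]$ by $\eps_i\otimes\varpi^k\mapsto t^{1-i+mk}$, i.e.\ by $\varpi\mapsto t^m$, $\eps_i\mapsto t^{1-i}$ as in (\ref{isomspace}). First I would check, directly on this basis, that multiplication by $t^r$ on $\CC[t,t^{-1}]$ is, under this identification, exactly the endomorphism $x(r)$ of $\CC^m\otimes\CC[\varpi,\varpi^{-1}]$; writing $r=i+km$ with $1\leqslant i\leqslant m$, this is a one-line bookkeeping distinguishing $i<m$ (where $x(r)$ is a single off-diagonal shift) from $i=m$ (where $x(r)=\Id_m\otimes\varpi^{k+1}$). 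An immediate consequence is that $r\mapsto x(r)$ is an algebra homomorphism $\CC[t,t^{-1}]\to\gen\len_m\otimes\CC[\varpi,\varpi^{-1}]$, and in particular $x(r)x(s)=x(r+s)$ for all $r,s$.

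For part $(a)$, tensoring this algebra homomorphism with $\gen\len_\ell$ and composing with the block insertion $\gen\len_\ell\otimes\gen\len_m=\gen\len_{m\ell}$ produces an algebra, hence Lie algebra, homomorphism $\phi_0\colon\sen\len_\ell\otimes\CC[t,t^{-1}]\to\gen\len_{m\ell}\otimes\CC[\varpi,\varpi^{-1}]$, $e_{p,q}\otimes t^r\mapsto x(r)^{(p,q)}$. I would next check its image lies in $\sen\len_{m\ell}\otimes\CC[\varpi,\varpi^{-1}]$: for $r\notin m\ZZ$ each $x(r)$ is already traceless, while for $r\in m\ZZ$ one has $x(r)=\Id_m\otimes\varpi^{r/m}$, so that the Cartan elements $(e_{p,p}-e_{q,q})\otimes t^r$ do map to traceless elements. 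It then remains to lift $\phi_0$ to the central extensions with $\oneb\mapsto\oneb$, i.e.\ to show that the $2$-cocycle of $\widehat{\sen\len}_{m\ell}$ pulls back to that of $\widehat{\sen\len}_\ell$. Using the residue description of the cocycle implicit in (\ref{affbracket}), the central component of $[a,b]$ for $a,b$ in the loop algebra is $\bigl(\Res_\varpi\tr(b\,da)\bigr)\oneb$; by the block structure $\Res_\varpi\tr\bigl(x(s)^{(p',q')}\,d\,x(r)^{(p,q)}\bigr)$ vanishes unless $q=p'$ and $p=q'$, in which case it equals $\Res_\varpi\tr_m\bigl(x(s)\,d\,x(r)\bigr)$; and since $x(r)$, $x(s)$ are multiplication by $t^r$, $t^s$, this last residue is $\Res_t\bigl(t^s\,d(t^r)\bigr)=r\,\delta_{r,-s}$ — exactly the corresponding central term in $\widehat{\sen\len}_\ell$. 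Hence $\phi$ is a Lie algebra homomorphism; it is injective because the $x(r)$, $r\in\ZZ$, are linearly independent and $\oneb\mapsto\oneb$.

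For part $(b)$, I observe that by construction the isomorphism (\ref{psi}) is the semi-infinite wedge of the $\CC$-linear isomorphism $\theta\colon V_\ell\to V_{m\ell}$ obtained from the inverse of (\ref{isomspace}) followed by the tautological identification $V_{m,\ell}=V_{m\ell}$; concretely, $\theta$ is
$$V_\ell=\CC^\ell\otimes\CC[t,t^{-1}]\ \xrightarrow{\ \sim\ }\ \CC^\ell\otimes\bigl(\CC^m\otimes\CC[\varpi,\varpi^{-1}]\bigr)=\CC^{m\ell}\otimes\CC[\varpi,\varpi^{-1}]=V_{m\ell}$$
using the identification $\CC[t,t^{-1}]=\CC^m\otimes\CC[\varpi,\varpi^{-1}]$ above (one verifies the induced reindexing of the $u_a$'s agrees with (\ref{isomspace})). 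By the folding identity, $\theta$ carries the level $0$ action on $V_\ell$ of $e_{p,q}\otimes\varpi^r$ — i.e.\ multiplication by $e_{p,q}\otimes t^r$ — to multiplication by $x(r)^{(p,q)}=\phi(e_{p,q}\otimes\varpi^r)$ on $V_{m\ell}$. Taking semi-infinite wedges, it therefore suffices to check that (\ref{psi}) intertwines the $\widehat{\sen\len}_\ell$- and $\widehat{\sen\len}_{m\ell}$-actions on the Chevalley generators $e_p$, $f_p$ and on $\oneb$: for the former this is immediate from the previous sentence since these are off-diagonal $1$-particle operators (no normal-ordering constant intervenes), and $\phi(\oneb)=\oneb$ acts by the identity on both $\Fc_\ell$ and $\Fc_{m\ell}$; since the $e_p$, $f_p$ and $\oneb$ generate $\widehat{\sen\len}_\ell$ and $\phi$ is a Lie algebra homomorphism by $(a)$, the intertwining propagates to all of $\widehat{\sen\len}_\ell$, proving $(b)$.

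The one genuinely delicate point is the compatibility of the central extensions in $(a)$; it is disposed of by the residue computation above, once one notices that $x(r)$ is literally ``multiplication by $t^r$'', so that the trace over $\CC^m$ of a product of such operators is computed by the $t$-residue. Everything else is routine verification of the matching between the three indexings (\ref{wedge}), (\ref{u}), (\ref{isomspace}).
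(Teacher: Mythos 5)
Your proof is correct, and it supplies exactly the ``direct computation'' that the paper leaves to the reader: everything reduces to the observation that, under the identification $\eps_i\otimes\varpi^k\leftrightarrow t^{1-i+mk}$ coming from (\ref{isomspace}), the element $x(r)$ is the matrix of multiplication by $t^r$, whence $x(r)x(s)=x(r+s)$ and the block map $e_{p,q}\otimes t^r\mapsto x(r)^{(p,q)}$ is an associative algebra homomorphism. The one step worth spelling out is the identity $\Res_\varpi\tr_m\bigl(x(s)\,d(x(r))\bigr)=\Res_t\bigl(t^s\,d(t^r)\bigr)$: the entrywise $\varpi$-differential of the matrix of multiplication by $t^r$ is \emph{not} the matrix of multiplication by $d(t^r)$ (the basis vectors $t^{1-i}$ themselves have nonzero $\varpi$-derivative), but the discrepancy is a commutator with a fixed matrix and is therefore killed under $\tr_m\bigl(x(s)\,\cdot\,\bigr)$ because $x(r)$ and $x(s)$ commute; alternatively one verifies the two cases $m\mid r+s$ and $m\nmid r+s$ by hand, obtaining $r\,\delta_{r,-s}$ in agreement with the cocycle of $\widehat{\sen\len}_\ell$. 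With that point secured, the cocycle comparison in $(a)$ and the reduction of $(b)$ to the Chevalley generators (which act on semi-infinite wedges without normal-ordering constants, so the intertwining propagates from $V_\ell\to V_{m\ell}$ by the generation argument) are both sound.
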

\end{rk}

\section{The categorification of the Heisenberg algebra}

We'll abbreviate
$$[\Oc(\Gamma)]=\bigoplus_{n\geqslant 0}[\Oc(\Gamma_n)].$$
Assume that $h$, $h_p$ are rational numbers as in (\ref{h}).
Thus $\Lambda$ is a rational weight
of $\widehat{\sen\len}_\ell$ of level 1.
Let $m$ be the denominator of $h$.
We'll assume that $m>2$.

\vskip3mm

\subsection{The functors $A_{\l,!}, A_\l^*, A_{\l,*}$ on $D^b(\Oc(\Gamma))$}
To simplify the exposition, from now on we'll assume that $\ell>1$.
All the statements below have an analoguous version for $\ell=1$,
by replacing everywhere $\CC^n$ by $\CC^n_0$.
Let $n,r$ be non-negative integers.
Consider the point
$$b_{n,r}=(0,\dots,0,1,\dots,1)\in\hen=\CC^{n+r},$$
with $x_i=0$ for $1\leqslant i\leqslant n$, and
$x_i=1$ for $n<i\leqslant n+r$.
The centralizer of $b_{n,r}$ in $\Gamma_{n+r}$ is the parabolic subgroup
$\Gamma_{n,r}$. We have
$$\hen/\hen^{\Gamma_{n,r}}=\CC^{n}\times\CC^r_0.$$
Here $\CC^{n}$ is the reflection representation of $\Gamma_{n}$
and $\CC^r_0$ is the reflection representation of $\Sen_{r}$.
Note that
$$\Oc(\Gamma_{n,r})=
\Oc(\Gamma_{n,r},\CC^{n}\times\CC^{r}_0),\quad
\Oc(\frak  S_{r})=\Oc(\frak S_{r},\CC^{r}_0).$$
In particular we have a canonical equivalence of categories
$$\Oc(\Gamma_{n,r})=\Oc(\Gamma_{n})\otimes\Oc(\frak S_{r}).$$
Thus the induction and restriction relative to $b_{n,r}$ yield functors
\begin{equation*}
\begin{split}
{}^\Oc\!\Ind_{n,r}:
\Oc(\Gamma_{n})\otimes\Oc(\frak S_{r})\to\Oc(\Gamma_{n+r}),
\cr
{}^\Oc\!\Res_{n,r}:
\Oc(\Gamma_{n+r})\to\Oc(\Gamma_{n})\otimes\Oc(\frak S_{r}).
\end{split}
\end{equation*}
Now consider the functors
${}^\Oc\!\Ind_{n,mr}$, ${}^\Oc\!\Res_{n,mr}$. The parameters of
$H(\Gamma_{n+mr})$ and $H(\Gamma_{n})$ are $h$, $\Lambda$.
The parameter of $H(\frak S_{mr})$ is $h$.
Fix a partition $\l\in\Pc_r$.
We define the functors
$$\gathered
\Oc(\Gamma_{n})\otimes\Oc(\frak S_{mr})\to \Oc(\Gamma_{n}),\cr
M\mapsto\Hom_{\Oc(\frak S_{mr})}(M,L_{m\l})^*,\quad
M\mapsto\Hom_{\Oc(\frak S_{mr})}(L_{m\l},M),
\endgathered$$
as the tensor product of the identity of
$\Oc(\Gamma_{n})$
and of the functors
$$\gathered\Oc(\frak S_{mr})\to\Rep(\CC),\cr
M\mapsto\Hom_{\Oc(\frak S_{mr})}(M,L_{m\l})^*,\quad
M\mapsto\Hom_{\Oc(\frak S_{mr})}(L_{m\l},M).\endgathered$$
Here the upperscript $*$ denotes the dual $\CC$-vector space.
We denote the corresponding derived functors in the following way
$$M\mapsto\RHom_{\Oc(\frak S_{mr})}(M,L_{m\l})^*,\quad
M\mapsto\RHom_{\Oc(\frak S_{mr})}(L_{m\l},M).$$

\begin{df}
For $\l\in\Pc_r$ with $r\geqslant 0$ we define the functors
$$\begin{aligned}
&A_{\l,!}&:D^b(\Oc(\Gamma_{n+mr}))\to D^b(\Oc(\Gamma_{n})),\quad
&M\mapsto\RHom_{D^b(\Oc(\frak S_{mr}))}({}^\Oc\!\Res_{n,mr}(M),L_{m\l})^*,\cr
&A_\l^*&:D^b(\Oc(\Gamma_{n}))\to D^b(\Oc(\Gamma_{n+mr})),\quad
&M\mapsto{}^\Oc\!\Ind_{n,mr}(M\otimes L_{m\l}),\cr
&A_{\l,*}&:D^b(\Oc(\Gamma_{n+mr}))\to D^b(\Oc(\Gamma_{n})),\quad
&M\mapsto\RHom_{D^b(\Oc(\frak S_{mr}))}(L_{m\l},{}^\Oc\!\Res_{n,mr}(M)).
\end{aligned}$$
\end{df}

\begin{prop} \label{prop:adjointtriple}
We have a triple of exact adjoint endofunctors
$(A_{\l,!},\,A_\l^*,\,A_{\l,*})$ of the triangulated category
$D^b(\Oc(\Gamma))$. For $M,N\in D^b(\Oc(\Gamma))$ we have
$$\begin{gathered}
\RHom_{D^b(\Oc(\Gamma))}(A_\l^*(M),N)=\RHom_{D^b(\Oc(\Gamma))}(M,
A_{\l,*}(N)),\cr
\RHom_{D^b(\Oc(\Gamma))}(A_{\l,!}(M),N)=\RHom_{D^b(\Oc(\Gamma))}(M,
A_\l^*(N)).
\end{gathered}$$
\end{prop}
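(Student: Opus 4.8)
The plan is to factor each of the three functors through $D^b(\Oc(\Gamma_{n,mr}))$ and to obtain the adjunctions by composing two elementary adjunctions. Write $\Ac=\Oc(\Gamma_n)$ and $\Bc=\Oc(\frak S_{mr})$, so that $\Ac\otimes\Bc=\Oc(\Gamma_{n,mr})$. Directly from the definition, $A_\l^*$ is the composite of the external product functor $\iota\colon D^b(\Ac)\to D^b(\Ac\otimes\Bc)$, $M\mapsto M\otimes L_{m\l}$, with ${}^\Oc\!\Ind_{n,mr}$, while $A_{\l,!}$ and $A_{\l,*}$ are the composites of ${}^\Oc\!\Res_{n,mr}$ with the partial derived functors $N\mapsto\RHom_{\Oc(\frak S_{mr})}(N,L_{m\l})^*$ and $N\mapsto\RHom_{\Oc(\frak S_{mr})}(L_{m\l},N)$ introduced just above the statement. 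All three of $\iota$, ${}^\Oc\!\Ind_{n,mr}$, ${}^\Oc\!\Res_{n,mr}$ are exact, and since $\Bc$ has finite global dimension and finite dimensional $\Hom$-spaces the two partial derived functors are well defined triangulated functors $D^b(\Ac\otimes\Bc)\to D^b(\Ac)$; hence $A_{\l,!}$, $A_\l^*$, $A_{\l,*}$ are exact endofunctors of $D^b(\Oc(\Gamma))=\bigoplus_nD^b(\Oc(\Gamma_n))$, acting blockwise, so it is enough to establish the two adjunctions for fixed $n,r$.

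The first ingredient is the biadjointness of parabolic induction and restriction: by \cite{BE} (see also \cite{S}) the functors ${}^\Oc\!\Ind_{n,mr}$ and ${}^\Oc\!\Res_{n,mr}$ are exact and biadjoint, so that both $({}^\Oc\!\Ind_{n,mr},{}^\Oc\!\Res_{n,mr})$ and $({}^\Oc\!\Res_{n,mr},{}^\Oc\!\Ind_{n,mr})$ are adjoint pairs of abelian, hence of bounded derived, categories.

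The second ingredient, and the step I expect to require the most care, is a ``partial $\RHom$'' adjoint triple attached to $L_{m\l}$: I claim $\iota$ has left adjoint $N\mapsto\RHom_{\Oc(\frak S_{mr})}(N,L_{m\l})^*$ and right adjoint $N\mapsto\RHom_{\Oc(\frak S_{mr})}(L_{m\l},N)$, the adjunction isomorphisms being natural at the level of $\RHom$-complexes. To prove this I would use that $\Ac$ and $\Bc$ are module categories over finite dimensional $\CC$-algebras, so $\Ac\otimes\Bc=\Rep(A\otimes B)$ with $A\otimes B$ of finite global dimension and $D^b(\Ac\otimes\Bc)$ generated as a triangulated category by the external products $M'\otimes N'$. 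On such a generator one computes the candidate left (resp. right) adjoint without deriving by taking $N'$ projective (resp. injective): it equals $M'\mapsto\Hom_\Bc(N',L_{m\l})^*\otimes M'$ (resp. $\Hom_\Bc(L_{m\l},N')\otimes M'$), a finite direct sum of shifts of copies of $M'$. Then the K\"unneth isomorphism $\RHom_{\Ac\otimes\Bc}(M_1\otimes N_1,M_2\otimes N_2)=\RHom_\Ac(M_1,M_2)\otimes_\CC\RHom_\Bc(N_1,N_2)$ reduces both adjunctions, on generators, to the tautology $\RHom_\Ac(V^*\otimes M',M)=V\otimes\RHom_\Ac(M',M)$ for a finite dimensional vector space $V$; a d\'evissage, using that $\RHom$ is triangulated in each variable, then propagates the isomorphisms to all of $D^b(\Ac\otimes\Bc)$. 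The genuinely delicate point is making this precise against the Deligne tensor product of derived categories (boundedness, the finiteness of the $\Hom$-spaces, and the generation statement); the rest is formal.

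Granting these two ingredients, I would conclude by composing adjunctions. Writing $\RHom$ in the ambient category determined by its arguments, for $M,N\in D^b(\Oc(\Gamma))$ one has
$$\aligned
\RHom(A_\l^*(M),N)
&=\RHom\bigl({}^\Oc\!\Ind_{n,mr}(M\otimes L_{m\l}),N\bigr)
=\RHom\bigl(M\otimes L_{m\l},{}^\Oc\!\Res_{n,mr}(N)\bigr)\cr
&=\RHom\bigl(M,\RHom_{\Oc(\frak S_{mr})}(L_{m\l},{}^\Oc\!\Res_{n,mr}(N))\bigr)
=\RHom\bigl(M,A_{\l,*}(N)\bigr),
\endaligned$$
using ${}^\Oc\!\Ind_{n,mr}\dashv{}^\Oc\!\Res_{n,mr}$ and then $\iota\dashv\RHom_{\Oc(\frak S_{mr})}(L_{m\l},-)$; and similarly
$$\aligned
\RHom\bigl(M,A_\l^*(N)\bigr)
&=\RHom\bigl(M,{}^\Oc\!\Ind_{n,mr}(N\otimes L_{m\l})\bigr)
=\RHom\bigl({}^\Oc\!\Res_{n,mr}(M),N\otimes L_{m\l}\bigr)\cr
&=\RHom\bigl(\RHom_{\Oc(\frak S_{mr})}({}^\Oc\!\Res_{n,mr}(M),L_{m\l})^*,N\bigr)
=\RHom\bigl(A_{\l,!}(M),N\bigr),
\endaligned$$
using ${}^\Oc\!\Res_{n,mr}\dashv{}^\Oc\!\Ind_{n,mr}$ and then $\RHom_{\Oc(\frak S_{mr})}(-,L_{m\l})^*\dashv\iota$. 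Together with the exactness noted in the first paragraph, this gives the proposition.
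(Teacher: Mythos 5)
Your argument is correct and is exactly the route the paper takes: the paper's proof is the one\--line remark that the claim is ``obvious because ${}^\Oc\!\Ind_{n,mr}$ and ${}^\Oc\!\Res_{n,mr}$ are exact and biadjoint,'' and your write\--up simply makes that precise by composing the $(\Ind,\Res)$ biadjunction with the partial\--$\RHom$ adjoint triple for $M\mapsto M\otimes L_{m\l}$, verified on external\--product generators via K\"unneth. The extra care you devote to the second ingredient (finite global dimension of the quasi\--hereditary category $\Oc(\frak S_{mr})$, generation of $D^b(\Ac\otimes\Bc)$ by external products, d\'evissage) is sound and fills in exactly what the paper leaves implicit.
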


\begin{proof} Obvious because the functors
${}^\Oc\!\Ind_{n,mr}$ and ${}^\Oc\!\Res_{n,mr}$ are exact and biadjoint,
see \cite{BE}, \cite{S}.
\end{proof}

\vskip3mm

\subsection{The $\Sen_r$-action on
$(A_!)^r$, $(A^*)^r$ and $(A_*)^r$.}
\label{sec:5.2}
For $\flat=!,*$
we write
$A^*=A^*_{(1)}$ and $A_\flat=A_{(1),\flat}$.
For $r\geqslant 1$,
the transitivity of the induction and restriction functors \cite[cor.~2.5]{S}
yield functor isomorphisms
\begin{equation}
\label{5.2}
\aligned
(A_!)^r&=\RHom_{D^b(\Oc(\Sen_{(m^r)}))}\bigl({}^\Oc\!\Res_{n,(m^r)}(\bullet),
L\bigr)^*,\cr
(A^*)^r&=
{}^\Oc\!\Ind_{n,(m^r)}(\bullet\otimes L)=
{}^\Oc\!\Ind_{n,mr}\bigl(\bullet\otimes
{}^\Oc\!\Ind_{(m^r)}(L)\bigr),\cr
(A_*)^r&=\RHom_{D^b(\Oc(\Sen_{(m^r)}))}\bigl(L,
{}^\Oc\!\Res_{n,(m^r)}(\bullet)\bigr).
\endaligned
\end{equation}
Here, to unburden the notation we abbreviate
$L=L_{(m)}^{\otimes r}$.
The goal of this section is to construct a $\Sen_r$-action on
$(A_!)^r$, $(A^*)^r$ and $(A_*)^r$, and to decompose these functors
using this action. To do this,
let $\Hb(\Gamma_{n,(m^r)})$, $\Hb(\Gamma_{n})$, $\Hb(\Sen_m)$
be as in Appendix \ref{app:A}, with the parameters $\zeta$ and $v_p$ as in
Section \ref{sec:KZ}.
There is an obvious isomorphism
$$\Hb(\Gamma_{n,(m^r)})=\Hb(\Gamma_{n})\otimes \Hb(\Sen_m)^{\otimes r}.$$
Let $\tau_i\in\Sen_{n+mr}$ be the unique permutation such that
\begin{itemize}
\item $\tau_i$ is minimal in the coset
$\Sen_{(n,m^r)}\tau_i\Sen_{(n,m^r)}$,
\item
$\tau_i(vw_1w_2\dots w_r)\tau_i^{-1}=vw_1\dots w_{i+1}w_i\dots w_r$
for $v\in\Sen_n$, $w_1,\dots,w_r\in \Sen_m$.
\end{itemize}
Let
$\tau_i$
denote also the algebra isomorphism 
$\Hb(\Gamma_{n,(m^r)})\to\Hb(\Gamma_{n,(m^r)})$
given by
$$\gathered
x\otimes y_1\otimes\dots \otimes y_r\to x\otimes y_1\otimes\dots
\otimes y_{i+1}\otimes y_i\otimes\dots \otimes y_r.
\endgathered$$
We have the following relation
in $\Hb(\Gamma_{n+mr})$
\begin{equation}
\label{taui}
T_{\tau_i}z=\tau_i(z)T_{\tau_i},
\quad z\in\Hb(\Gamma_{n,(m^r)}).
\end{equation}
Therefore, the element $T_{\tau_i}$ belongs to the normalizer
of $\Hb(\Gamma_{n,(m^r)})$ in $\Hb(\Gamma_{n+mr})$.
The twist of a module by $\tau_i$ yields the functor
$$\gathered
\tau_i:\Rep(\Hb(\Gamma_{n,(m^r)}))\to\Rep(\Hb(\Gamma_{n,(m^r)})),
\cr
M\otimes N_1\otimes\dots \otimes N_r\to M\otimes N_1\otimes\dots
\otimes N_{i+1}\otimes N_i\otimes\dots \otimes N_r.
\endgathered$$
We define the morphism of functors
$$\gathered
{}^\Hb\tau_i:
{}^\Hb\!\Ind_{n,(m^r)}\to{}^\Hb\!\Ind_{n,(m^r)}\circ\tau_i,\quad
{}^\Hb\tau_i(M)(h\otimes v)=hT_{\tau_i}\otimes \tau_i(v),\cr
h\in\Hb(\Gamma_{n+mr}),\quad
v\in M,\quad
M\in\Rep(\Hb(\Gamma_{n,(m^r)})).
\endgathered$$
It is well-defined by (\ref{taui}).
Next, the permutation $\tau_i$ yields also a functor
$$\gathered
\tau_i:\Oc(\Gamma_{n,(m^r)})\to\Oc(\Gamma_{n,(m^r)}),\cr
M\otimes N_1\otimes\dots \otimes N_r\to M\otimes N_1\otimes\dots
\otimes N_{i+1}\otimes N_i\otimes\dots \otimes N_r.
\endgathered$$
The functor $\KZ $
yields a $\CC$-algebra isomorphism \cite[lem.~2.4]{S}
\begin{equation}\label{kzh}\KZ:\End\bigl({}^\Oc\!\Ind_{n,(m^r)}\bigr)\to
\End\bigl(\KZ\circ{}^\Oc\!\Ind_{n,(m^r)}\bigr)=
\End\bigl({}^\Hb\!\Ind_{n,(m^r)}\circ\KZ\bigr).
\end{equation}
For the same reason we have also an isomorphism
$$\KZ :\Hom\bigl({}^\Oc\!\Ind_{n,(m^r)},
{}^\Oc\!\Ind_{n,(m^r)}\circ\tau_i\bigr)\to
\Hom\bigl({}^\Hb\!\Ind_{n,(m^r)}\circ\KZ,
{}^\Hb\!\Ind_{n,(m^r)}\circ\tau_i\circ\KZ\bigr).
$$
So there is a unique morphism of functors
$${}^\Oc\!\tau_i:{}^\Oc\!\Ind_{n,(m^r)}
\to{}^\Oc\!\Ind_{n,(m^r)}\circ\tau_i$$
which satisfies the following identity
\begin{equation}
\label{5.18}
\KZ({}^\Oc\!\tau_i(M))={}^\Hb\tau_i(\KZ(M)),
\quad
M\in\Oc(\Gamma_{n,(m^r)}).
\end{equation}
The functor
$\bullet\otimes L$
yields a map
\begin{equation}\label{map'}
\Hom\bigl({}^\Oc\!\Ind_{n,(m^r)},{}^\Oc\!\Ind_{n,(m^r)}
\circ\tau_i\bigr)\to\End((A^*)^r).
\end{equation}
Let  $\bar\tau_i$ denote
the image of ${}^\Oc\tau_i$
by this map.

\begin{lemma}
\label{lem:L5}
The following relations hold in $\End\bigl((A^*)^r\bigr)$
\begin{itemize}
\item $\bar\tau_i^2=1,$
\item
$\bar\tau_i\bar\tau_j=\bar\tau_j\bar\tau_i\ \text{if}\ j\neq i-1,i+1,$
\item
$\bar\tau_i\bar\tau_{i+1}\bar\tau_i=\bar\tau_{i+1}\bar\tau_{i}\bar\tau_{i+1}.$
\end{itemize}
\end{lemma}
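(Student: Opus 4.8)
The plan is to push the braid and commutation relations to the Hecke side through $\KZ$, where they become instances of length additivity in $\Hb(\Gamma_{n+mr})$, and to treat $\bar\tau_i^2=1$ by a separate, more hands-on argument, since it is the only one of the three that uses the special module $L_{(m)}$ in an essential way. Recall that $\bar\tau_i$ is the image of ${}^\Oc\!\tau_i\colon{}^\Oc\!\Ind_{n,(m^r)}\to{}^\Oc\!\Ind_{n,(m^r)}\circ\tau_i$ under the map (\ref{map'}), which is $\bullet\otimes L$ followed by the canonical isomorphism $\sigma_i\colon\tau_i(L)\xrightarrow{\sim}L$ transposing the equal factors $L_{(m)}$ in positions $i,i+1$ of $L=L_{(m)}^{\otimes r}$. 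The swaps $\sigma_i$ visibly form a genuine $\Sen_r$-action on $L_{(m)}^{\otimes r}$; together with $\tau_i^2=\id$, $\tau_i\tau_j=\tau_j\tau_i$ ($|i-j|\geqslant2$) and $\tau_i\tau_{i+1}\tau_i=\tau_{i+1}\tau_i\tau_{i+1}$ for the functors $\tau_i$, this reduces each of the three identities for the $\bar\tau_i$ to the corresponding identity among the ${}^\Oc\!\tau_i$, regarded as morphisms between the relevant twists of ${}^\Oc\!\Ind_{n,(m^r)}$. By (\ref{kzh}), its twisted analogue (faithfulness of $\KZ$ on these spaces of natural transformations, \cite[lem.~2.4]{S}) and (\ref{5.18}), those reduce further to identities among the explicit morphisms ${}^\Hb\!\tau_i$, i.e.\ to identities in $\Hb(\Gamma_{n+mr})$ involving the $T_{\tau_i}$ and the automorphisms $\tau_i$ of the parabolic subalgebra.

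For the commutation and braid relations this works cleanly. The $\tau_i$ ($1\leqslant i\leqslant r-1$) are the minimal length double coset representatives realizing the transpositions of the $r$ blocks of size $m$; they generate a copy of $\Sen_r$ inside $\Sen_{n+mr}$, with $\tau_i\tau_j=\tau_j\tau_i$ and additive lengths for $|i-j|\geqslant2$, and $\tau_i\tau_{i+1}\tau_i=\tau_{i+1}\tau_i\tau_{i+1}$ with additive lengths (this common value being the longest element of the $\Sen_3$ permuting three consecutive blocks). The product rule $T_uT_v=T_{uv}$, valid whenever lengths add, then gives $T_{\tau_i}T_{\tau_j}=T_{\tau_j}T_{\tau_i}$ and $T_{\tau_i}T_{\tau_{i+1}}T_{\tau_i}=T_{\tau_{i+1}}T_{\tau_i}T_{\tau_{i+1}}$; feeding these, together with the corresponding obvious identities for the algebra automorphisms $\tau_i$, into the formula ${}^\Hb\!\tau_i(M)(h\otimes v)=hT_{\tau_i}\otimes\tau_i(v)$ yields the first two relations of the lemma for the ${}^\Hb\!\tau_i$, and hence for the $\bar\tau_i$.

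The quadratic relation is the main obstacle. Since $\tau_i$ has positive length $m^2$, the element $T_{\tau_i}$ is not involutive: upstairs one only gets $({}^\Oc\!\tau_i)^2=[\,\text{multiplication by }T_{\tau_i}^2\,]$, which is not the identity; and because $\KZ$ annihilates the finite dimensional module $L_{(m)}$, it cannot detect $\bar\tau_i^2$, so a genuinely different argument is needed. After applying $\bullet\otimes L$ and contracting $\sigma_i^2=\id$, $\bar\tau_i^2$ is the image of $[\,\text{multiplication by }T_{\tau_i}^2\,]$; by transitivity of induction (\ref{5.2}), and since $T_{\tau_i}^2$ involves only the $\Sen_{mr}$-factor, it is enough to show that the ``double block transposition'' acts as the identity on ${}^\Oc\!\Ind_{(m^r)}(L_{(m)}^{\otimes r})\in\Oc(\Sen_{mr})$. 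Here I would invoke Rouquier's tensor equivalence (Proposition \ref{prop:tensorproduct}): under it this object becomes $R(L_{(m)})^{\dot\otimes r}$ in the braided category $\Rep(\Sb_\zeta)$, and the block transposition becomes the braiding coming from the universal $R$-matrix; the key fact is that $R(L_{(m)})$ is (a twist of) the pull-back of a classical $\mathfrak{gl}$-module along the quantum Frobenius homomorphism, so that the double braiding $R_{21}R$ on $R(L_{(m)})^{\dot\otimes2}$ degenerates to the classical flip and is an involution. Equivalently, through the decomposition ${}^\Oc\!\Ind_{(m^r)}(L_{(m)}^{\otimes r})\cong\bigoplus_{\l\in\Pc_r}L_{m\l}^{\oplus\dim\bar L_\l}$, one checks that $\bar\tau_i$ is realized by the genuine $\Sen_r$-action on the multiplicity spaces $\bar L_\l$, whence $\bar\tau_i^2=1$. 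Pinning down this collapse of the a priori Hecke-algebra normalization of $T_{\tau_i}^2$ on $L_{(m)}^{\otimes r}$ is the heart of the matter, and it is exactly what the appendices on the quantum Frobenius homomorphism and the universal $R$-matrix are there to supply.
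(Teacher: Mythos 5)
Your proof is correct, and for two of the three relations it takes a genuinely different route from the paper. The paper proves all three relations uniformly: it reduces (exactly as you do, via transitivity of induction) to the operators $\bar\tau_i^0$ on ${}^\Oc\!\Ind_{(m^r)}(L_{(m)}^{\otimes r})$, transports them through Rouquier's equivalence to the braiding operators $\Rc_{L_{(m)}^S,i}$ on $(L_{(m)}^S)^{\dot\otimes r}$ (this is the content of Corollary \ref{cor:B7}, i.e.\ Jimbo's identification of the $T_\tau$-action with the $R$-matrix), and then gets the commutation and braid relations from the general axioms of a braiding (Proposition \ref{prop:B8bis}) and the involutivity from the degeneration of the $R$-matrix under the quantum Frobenius homomorphism (Proposition \ref{prop:B8}). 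You instead obtain the commutation and braid relations directly on the Hecke side from length additivity, $T_{\tau_{i_1}}\cdots T_{\tau_{i_k}}=T_{\tau_{i_1}\cdots\tau_{i_k}}$ for reduced words in the block-$\Sen_r$, together with faithfulness of $\KZ$ on $\Hom({}^\Oc\!\Ind_{n,(m^r)},{}^\Oc\!\Ind_{n,(m^r)}\circ\tau_w)$; this is more elementary and avoids the braided-category machinery for those two relations (the paper itself uses the same length-additivity fact later, in the proof of Lemma \ref{lem:L2}). Your diagnosis of the quadratic relation is exactly right: since $\KZ$ kills $L_{(m)}$ it cannot see $\bar\tau_i^2$, and $T_{\tau_i}^2\neq 1$, so one must pass to $\Rep(\Sb_\zeta)$ and use that $L_{(m)}^S=\Fr^*(\bar L_{(1)}^S)$ makes the double braiding collapse to the (signed) flip — which is precisely the paper's argument. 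The only soft spot is your "equivalently" via the decomposition (\ref{formind}): as stated in the paper that decomposition is $\Sen_r$-equivariant only once the present lemma is known, so it cannot serve as an independent proof; but since you offer it only as an alternative to the Frobenius argument, nothing is lost.
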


\begin{proof}
We'll write $L^S=(L_{(m)}^S)^{\dot\otimes r}.$
Consider the  morphism of functors
\begin{equation}
\label{EQ1}
\gathered
{}^\Hb\tau_i^0:{}^\Hb\!\Ind_{(m^r)}\to{}^\Hb\!\Ind_{(m^r)}\circ\tau_i,
\quad
{}^\Hb\tau_i^0(M)(h\otimes v)=hT_{\tau_i}\otimes \tau_i(v),\cr
h\in\Hb(\Sen_{mr}),\quad
v\in M,\quad M\in\Rep(\Hb(\Sen_m)^{\otimes r}).
\endgathered
\end{equation}
It is well-defined by (\ref{taui}).
By (\ref{kzh}) there is a unique morphism of functors
$${}^\Oc\tau_i^0:{}^\Oc\!\Ind_{(m^r)}\to{}^\Oc\!\Ind_{(m^r)}\circ\tau_i$$
such that
\begin{equation}
\label{5.8}
\KZ({}^\Oc\tau_i^0(M))={}^\Hb\tau_i^0(\KZ(M)).
\end{equation}
We define the endomorphism $\bar\tau_i^0$ of the module
${}^\Oc\!\Ind_{(m^r)}(L)$ by
\begin{equation}
\label{EQ2}
\bar\tau_i^0={}^\Oc\tau_i^0(L).
\end{equation}
The transitivity of the induction functor
\cite[cor.~2.5]{S} yields
\begin{equation}
\label{5.29}
\gathered
(A^*)^r(M)={}^\Oc\!\Ind_{n,mr}
\bigl(M\otimes{}^\Oc\!\Ind_{(m^r)}(L)\bigr),\cr
\bar\tau_i(M)={}^\Oc\!\Ind_{n,mr}(\oneb\otimes\bar\tau_i^0).
\endgathered
\end{equation}
Therefore, we are reduced to check the following relations
\begin{itemize}
\item $(\bar\tau_i^0)^2=1,$
\item
$\bar\tau_i^0\bar\tau_j^0=\bar\tau_j^0\bar\tau_i^0\ \text{if}\ j\neq i-1,i+1,$
\item
$\bar\tau_i^0\bar\tau_{i+1}^0\bar\tau_i^0=\bar\tau_{i+1}^0\bar\tau_{i}^0\bar\tau_{i+1}^0.$
\end{itemize}
To prove this, recall that
Rouquier's functor $R$
yields an equivalence
\begin{equation}\label{R}
\Oc(\Sen_{mr})\to\Rep(\Sb_\zeta(mr)).\end{equation}
Here $\zeta$ is a primitive $m$-th root of 1.
We have
\begin{equation}\label{5.11bis}R(L_{m\l})=L^S_{m\l}.\end{equation}
By Proposition \ref{prop:tensorproduct} we have also
$$R\bigl({}^\Oc\!\Ind_{(m^r)}(L)\bigr)=L^S.$$
Thus the functor $R$ yields a $\CC$-algebra isomorphism
$$\End_{\Oc(\Sen_{mr})}\bigl({}^\Oc\!\Ind_{(m^r)}(L)\bigr)=
\End_{\Sb_\zeta(mr)}(L^S).$$
Therefore, we are reduced to check the following relations
in $\End_{\Sb_\zeta(mr)}(L^S)$
\begin{itemize}
\item $R(\bar\tau_i^0)^2=1,$
\item
$R(\bar\tau_i^0)R(\bar\tau_j^0)=
R(\bar\tau_j^0)R(\bar\tau_i^0)\ \text{if}\ j\neq i-1,i+1,$
\item
$R(\bar\tau_i^0)R(\bar\tau_{i+1}^0)R(\bar\tau_i^0)=
R(\bar\tau_{i+1}^0)R(\bar\tau_{i}^0)R(\bar\tau_{i+1}^0).$
\end{itemize}
By Proposition \ref{prop:tensorproduct} there is an isomorphism of functors
$\Oc(\Sen_{(m^r)})\to\Rep(\Sb_\zeta(mr))$
$$(\bullet)^{\dot\otimes r}\circ R=
R\circ{}^\Oc\!\Ind_{(m^r)}\circ(\bullet)^{\otimes r}.$$
Since $\oneb_R\,{}^\Oc\!\tau_i^0\,\oneb_{(\bullet)^{\otimes r}}$
is an endomorphism of the right hand side and since $R$ is an equivalence,
there is an unique endomorphism
${}^\Sb\tau_i^0$
of the functor
$$(\bullet)^{\dot\otimes r}:\Rep(\Sb_\zeta(m))\to\Rep(\Sb_\zeta(mr))$$
such that
\begin{equation}
\label{5.12}
{}^\Sb\tau_i^0\,\oneb_R=
\oneb_R\,{}^\Oc\!\tau_i^0\,\oneb_{(\bullet)^{\otimes r}}.
\end{equation}
Consider the diagram
$$\xymatrix{
\End\bigl({}^\Oc\!\Ind_{(m^r)}\circ(\bullet)^{\otimes r}\bigr)
\ar[r]^-\KZ\ar[d]_R&
\End
\bigl({}^\Hb\!\Ind_{(m^r)}\circ\KZ\circ(\bullet)^{\otimes r}\bigr)
\cr
\End\bigl((\bullet)^{\dot\otimes r}\circ R\bigr).
\ar[ur]_{\Phi^*}
}$$
The upper map is invertible by (\ref{kzh}), the vertical one by
Proposition \ref{prop:tensorproduct}, and the lower one by
Corollary \ref{cor:tensor*}. The diagram is commutative because
$\Phi^*\circ R=\KZ$. By (\ref{5.8}) and
(\ref{5.12}) the image of
${}^\Oc\!\tau_i^0\,\oneb_{(\bullet)^{\otimes r}}$ is given by
\begin{equation}\label{5.12bis}\begin{split}\xymatrix{
{}^\Oc\!\tau_i^0\oneb_{(\bullet)^{\otimes r}}
\ar@{|->}[r]\ar@{|->}[d]&
{}^\Hb\tau_i^0\,\oneb_{\KZ\circ(\bullet)^{\otimes r}}
\cr
{}^\Sb\tau_i^0\,\oneb_{R(\bullet)}.
\ar@{|->}[ur]
}\end{split}\end{equation}
Now, recall the endomorphisms of functors
$\Rc_{\bullet,i}$, $\Sc_{\bullet,i}$ defined in
(\ref{Rc}), (\ref{Sc}).
By Corollary \ref{cor:B7} the functor
$\Phi^*$ yields a map
$$\gathered
\End\bigl((\bullet)^{\dot\otimes r}\bigr)
\to
\End\bigl({}^\Hb\!\Ind_{(m^r)}\circ(\bullet)^{\otimes r}\circ\Phi^*\bigr)
,\quad
\Rc_{\bullet,i}\mapsto\Sc_{\Phi^*(\bullet),i}.
\endgathered
$$
By (\ref{EQ1}) we have
$$\Sc_{M,i}={}^\Hb\tau_i^0(M^{\otimes r}),\quad
M\in\Rep(\Hb(\Sen_m)).
$$
Therefore, by (\ref{5.12bis}) we have also
\begin{equation}
\label{EQ7}\Rc_{M,i}={}^\Sb\tau_i^0(M),\quad
M\in\Rep(\Sb_\zeta(m)).
\end{equation}
Now, by  (\ref{EQ2}), (\ref{5.11bis}) and (\ref{5.12}) we have
\begin{equation*}
\label{EQ6}
R(\bar\tau_i^0)={}^\Sb\tau_i^0\bigl(L_{(m)}^S\bigr).\end{equation*}
Thus, by (\ref{EQ7}) we must check that the operators
$\Rc_{L_{(m)}^S,i}$
satisfies the same relations as above.
The quantum Frobenius homomorphism yields a functor
$$\Fr^*:\Rep(\Sb_1(r))\to\Rep(\Sb_\zeta(mr))$$
such that
$L_{(m)}^S=\Fr^*(\bar L_{(1)}^S),$ see Section \ref{sec:B7}.
It is a braided tensor functor by Proposition \ref{prop:B8}.
Thus the claim follows from
Proposition \ref{prop:B8bis}.
\end{proof}

\vskip3mm

We can now prove the following, which is the main result of this section.

\begin{prop}
\label{prop:decomp}
Let $r\geqslant 1$.

(a) The group $\frak S_r$ acts on the functors $(A^*)^r$, $(A_*)^r$.

(b) We have the following $\Sen_r$-equivariant isomorphisms of functors
\begin{equation*}
(A^*)^r=\bigoplus_{\l\in\Pc_r}\bar L_\l\otimes A^*_{\l},\quad
(A_*)^r=\bigoplus_{\l\in\Pc_r}\bar L_\l\otimes A_{\l,*}.
\end{equation*}
\end{prop}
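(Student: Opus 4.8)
The plan is to bootstrap everything from Lemma~\ref{lem:L5} and the explicit formula~(\ref{5.29}) for $(A^*)^r$. Part~(a) for $(A^*)^r$ will be immediate: the endomorphisms $\bar\tau_i\in\End\bigl((A^*)^r\bigr)$ furnished by Lemma~\ref{lem:L5} satisfy precisely the Coxeter relations of the simple transpositions of $\Sen_r$, so $s_i\mapsto\bar\tau_i$ defines an action of $\Sen_r$ on the functor $(A^*)^r$. For part~(b) I would first reduce to a decomposition inside the abelian category $\Oc(\Sen_{mr})$. By~(\ref{5.29}) one has $(A^*)^r(M)={}^\Oc\!\Ind_{n,mr}(M\otimes P)$ with $P={}^\Oc\!\Ind_{(m^r)}(L_{(m)}^{\otimes r})$ and $\bar\tau_i={}^\Oc\!\Ind_{n,mr}(\oneb\otimes\bar\tau_i^0)$, where $i\mapsto\bar\tau_i^0$ is an action of $\Sen_r$ on $P$ (this is what is really verified in the proof of Lemma~\ref{lem:L5}). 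Since ${}^\Oc\!\Ind_{n,mr}(M\otimes-)$ is exact, $\CC$-linear and natural in $M$, it will be enough to prove the $\Sen_r$-equivariant decomposition $P=\bigoplus_{\l\in\Pc_r}\bar L_\l\otimes L_{m\l}$ in $\Oc(\Sen_{mr})$, with $\Sen_r$ acting only on the factors $\bar L_\l$; applying ${}^\Oc\!\Ind_{n,mr}(M\otimes-)$ will then yield $(A^*)^r=\bigoplus_\l\bar L_\l\otimes A^*_\l$ equivariantly.

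To prove this decomposition of $P$ I would transport it through the equivalences already in play. By Proposition~\ref{prop:tensorproduct} the functor $R$ is a tensor equivalence, and the proof of Lemma~\ref{lem:L5} already identifies, for each $i$, the image under $R$ of $\bar\tau_i^0$ with the operator $\Rc_{L_{(m)}^S,i}$ on $R(P)=(L_{(m)}^S)^{\dot\otimes r}$. Since $\Fr^*$ is a braided tensor functor with $\Fr^*(\bar L_{(1)}^S)=L_{(m)}^S$ (Proposition~\ref{prop:B8}), one has $(L_{(m)}^S)^{\dot\otimes r}=\Fr^*\bigl((\bar L_{(1)}^S)^{\dot\otimes r}\bigr)$ and $\Rc_{L_{(m)}^S,i}=\Fr^*\bigl(\Rc_{\bar L_{(1)}^S,i}\bigr)$. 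At $q=1$ the braiding is the plain flip of tensor factors, so $(\bar L_{(1)}^S)^{\dot\otimes r}$ together with the operators $\Rc_{\bar L_{(1)}^S,i}$ is simply $(\bar L_{(1)}^S)^{\dot\otimes r}$ with $\Sen_r$ acting by permuting the $r$ tensor factors; classical Schur--Weyl duality then gives $\bigl(\bar L_{(1)}^S\bigr)^{\dot\otimes r}=\bigoplus_{\l\in\Pc_r}\bar L_\l\otimes L_\l^S$ equivariantly. Applying $\Fr^*$ and using the quantum-Frobenius identity $\Fr^*(L_\l^S)=L_{m\l}^S$ (Section~\ref{sec:B7}) yields $R(P)=\bigoplus_\l\bar L_\l\otimes L_{m\l}^S$ equivariantly; since $R(L_{m\l})=L_{m\l}^S$ by~(\ref{5.11bis}) and $R$ is a $\CC$-linear equivalence, applying $R^{-1}$ produces the wanted decomposition of $P$, hence of $(A^*)^r$.

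Finally, for $(A_*)^r$ I would pass to adjoints. Proposition~\ref{prop:adjointtriple} makes $A^*$ left adjoint to $A_*$, hence $(A^*)^r$ left adjoint to $(A_*)^r$; the right adjoint of the functor $M\mapsto\bar L_\l\otimes A^*_\l(M)$ is $M\mapsto\bar L_\l^*\otimes A_{\l,*}(M)$, and using the self-duality $\bar L_\l^*\cong\bar L_\l$ of $\Sen_r$-modules, taking right adjoints in the decomposition of $(A^*)^r$ transports both the $\Sen_r$-action and the decomposition to $(A_*)^r$, giving $(A_*)^r=\bigoplus_{\l\in\Pc_r}\bar L_\l\otimes A_{\l,*}$ equivariantly (in particular proving~(a) for $(A_*)^r$). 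I expect the only genuinely delicate point to be the middle paragraph: following the $\Sen_r$-equivariant structure faithfully as it is carried through $R$ and $\Fr^*$, so that the computation collapses to classical Schur--Weyl at $q=1$, together with the quantum-Frobenius identity $\Fr^*(L_\l^S)=L_{m\l}^S$. Both ingredients are essentially contained in Lemma~\ref{lem:L5} and the appendices, so the rest is bookkeeping.
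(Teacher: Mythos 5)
Your proposal is correct and follows essentially the same route as the paper: part (a) via the $\bar\tau_i$ of Lemma \ref{lem:L5}, and part (b) by reducing to the decomposition ${}^\Oc\!\Ind_{(m^r)}(L_{(m)}^{\otimes r})=\bigoplus_{\l}\bar L_\l\otimes L_{m\l}$, proved by transporting through $R$ and the braided tensor functor $\Fr^*$ down to classical Schur--Weyl duality. Your treatment of $(A_*)^r$ by formally taking right adjoints (using $\bar L_\l^*\cong\bar L_\l$) is just a repackaging of the paper's argument, since the paper's $\Sen_r$-action on $(A_*)^r$ is itself defined as the adjoint transpose of ${}^\Oc\!\tau_i$ and its decomposition is read off from the same adjunction isomorphism $(A_*)^r(M)=\RHom({}^\Oc\!\Ind_{(m^r)}(L),{}^\Oc\!\Res_{n,mr}(M))$.
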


\begin{proof}
First, we concentrate on part $(a)$.
To unburden the notation we abbreviate
$$L=L_{(m)}^{\otimes r},\quad
L^S=(L_{(m)}^S)^{\dot\otimes r}.$$
By Lemma \ref{lem:L5} the assignment
$s_i\mapsto\bar\tau_i$ yields
a $\Sen_r$-action on
$(A^*)^r$.
Under the adjunction
$\bigl({}^\Oc\!\Ind_{n,(m^r)},{}^\Oc\!\Res_{n,(m^r)}\bigr)$
the isomorphism
${}^\Oc\!\tau_i$
yields a (right transposed) isomorphism of
${}^\Oc\!\Res_{n,(m^r)}$. We'll denote it by
${}^\Oc\!\tau_i$ again.
By definition of the right transposition, the following square is
commutative for $M\in\Oc(\Gamma_{n+mr})$
\begin{equation*}
\xymatrix{
\Hom_{\Oc(\Gamma_{n+mr})}
\bigl({}^\Oc\!\Ind_{n,(m^r)}(L),M\bigr)
\ar[r]^{\circ{}^\Oc\!\tau_i(L)}\ar@{=}[d]&
\Hom_{\Oc(\Gamma_{n+mr})}
\bigl({}^\Oc\!\Ind_{n,(m^r)}(L),M\bigr)
\ar@{=}[d]
\cr
\Hom_{\Oc(\Gamma_{n,(m^r)})}
\bigl(L,{}^\Oc\!\Res_{n,(m^r)}(M)\bigr)
\ar[r]^{{}^\Oc\!\tau_i(M)\circ}&
\Hom_{\Oc(\Gamma_{n,(m^r)})}
\bigl(L,{}^\Oc\!\Res_{n,(m^r)}(M)\bigr).
}
\end{equation*}
Here and in the rest of the proof, we use the canonical isomorphisms
$$\gathered
\Hom_{\Oc(\Gamma_{n+mr})}
\bigl({}^\Oc\!\Ind_{n,(m^r)}(L),M\bigr)=
\Hom_{\Oc(\Gamma_{n+mr})}
\bigl({}^\Oc\!\Ind_{n,(m^r)}(\tau_i(L)),M\bigr),\cr
\Hom_{\Oc(\Gamma_{n,(m^r)})}\bigl(L,{}^\Oc\!\Res_{n,(m^r)}(M)\bigr)=
\Hom_{\Oc(\Gamma_{n,(m^r)})}\bigl(L,\tau_i
({}^\Oc\!\Res_{n,(m^r)}(M))\bigr)
\endgathered$$
given by $\tau_i(L)=L$ without mentionning them explicitly.
Let $\langle\bullet,\bullet\rangle$ denote the canonical pairing
$$\RHom_{D^b(\Oc(\Sen_{(m^r)}))}(\bullet,L)^*
\times\RHom_{D^b(\Oc(\Sen_{(m^r)}))}(\bullet,L)\to\CC.$$
We define the $\Sen_r$-action on $(A_*)^r$ by
\begin{equation}
\label{5.9}
\gathered
s_i(f)={}^\Oc\!\tau_i(M)\circ f,\cr
f\in(A_*)^r(M)=\RHom_{D^b(\Oc(\Sen_{(m^r)}))}\bigl(L,
{}^\Oc\!\Res_{n,(m^r)}(M)\bigr).
\endgathered
\end{equation}
Note that the formulas (\ref{5.9}) do define an action of the group $\Sen_r$
by Lemma \ref{lem:L5}, because the square above is commutative.

Now, we prove part $(b)$.
It is convenient to rewrite the $\Sen_r$-action on $(A^*)^r$ in a
slightly different way.
Setting $n=0$ in the construction above we get a $\Sen_r$-action on
${}^\Oc\!\Ind_{(m^r)}(L)$
such that $s_i$ acts through the operator $\bar\tau_i^0$ in (\ref{EQ2}),
and by (\ref{5.29})
the reflection $s_i$ acts on $(A^*)^r$ through the automorphism
$${}^\Oc\!\Ind_{n,mr}(\oneb\otimes{}^\Oc\!\bar\tau_i^0).$$
We claim that the following
identity holds in $\Rep(\CC\Sen_r)\otimes\Oc(\frak S_{mr})$
\begin{equation}\label{formind}{}^\Oc\!\Ind_{(m^r)}(L)=
\bigoplus_{\l\in\Pc_r}\bar L_\l\otimes L_{m\l}.\end{equation}
To prove (\ref{formind}) we use
Rouquier's functor $R$
as in the proof of Lemma \ref{lem:L5}.
It is enough to check the following identity in
$\Rep(\CC\Sen_r)\otimes\Rep(\Sb_\zeta(mr))$
\begin{equation*}L^S=
\bigoplus_{\l\in\Pc_r}\bar L_\l\otimes L^S_{m\l}.\end{equation*}
To do that, note that by Proposition \ref{prop:B8}
the functor in Section \ref{sec:B7}
$$\Fr^*:\Rep(\Sb_1(r))=\Rep(\Sb_{(-1)^m}(r))\to\Rep(\Sb_\zeta(mr))$$
given by the quantum Frobenius homomorphism
is a braided tensor functor. Further we have
$$\Fr^*(\bar L^S_\l)=L^S_{m\l},\quad
\Fr^*((\bar L^S_{(1)})^{\dot\otimes r})=L^S,$$
where $\bar L^S_\l$ is the simple
$\Sb_1(r)$-module with the highest weight $\l$.
Therefore, to prove (\ref{formind}) we are reduced to check
the following identity in
$\Rep(\CC\frak S_r)\otimes\Rep(\Sb_1(r))$
\begin{equation*}(\bar L^S_{(1)})^{\dot\otimes r}=
\bigoplus_{\l\in\Pc_r}\bar L_\l\otimes\bar L^S_{\l}.\end{equation*}
This is a trivial consequence of the Schur duality.
The decomposition
\begin{equation}\label{p1r}
(A^*)^r=\bigoplus_{\l\in\Pc_r}\bar L_\l\otimes A_{\l}^*
\end{equation}
is a direct consequence of (\ref{formind}).
The decomposition of the functor
$(A_*)^r$ follows from (\ref{formind})
and the commutativity of the diagram above,
because it implies that the canonical isomorphism
$$(A_*)^r(M)=\RHom_{D^b(\Oc(\Sen_{(m^r)}))}\bigl(
{}^\Oc\!\Ind_{(m^r)}(L),{}^\Oc\!\Res_{n,mr}(M)\bigr)$$
is $\Sen_r$-equivariant.
\end{proof}

\vskip3mm

\begin{rk}
Using an adjunction 
$\bigl({}^\Oc\!\Res_{n,(m^r)},{}^\Oc\!\Ind_{n,(m^r)}\bigr)$
for each $r$,
we can construct in a similar way a $\Sen_r$-action on
the functor $(A_!)^r$ such that we have the decomposition
$$(A_!)^r=\bigoplus_{\l\in\Pc_r}\bar L_\l\otimes A_{\l,!}.$$
Then, by Propositions
\ref{prop:adjointtriple}
and
\ref{prop:decomp}
we have the triple 
$\bigl((A_!)^r,(A^*)^r,(A_*)^r\bigr)$
of adjoint $\Sen_r$-equivariant functors.
\end{rk}

\vskip3mm

\begin{rk}
We have used the hypothesis $m>2$ in the proof of
Proposition \ref{prop:decomp} when using Rouquier's functor $R$.
Probably this is not necessary.
\end{rk}

\vskip3mm

\begin{prop} \label{prop:shift} For $r\geqslant 1$
we have an isomorphism of functors
$$(A_{!})^r[2r(1-m)]=(A_{*})^r.$$
\end{prop}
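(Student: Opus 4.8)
The plan is to reduce the whole statement, which compares the left and the right adjoint of $(A^*)^r$, to the computation of the Serre functor of a derived category of type $A$ on a single finite dimensional module, and then to identify that module with a Frobenius twist.

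First I would pass to $D^b(\Oc(\Sen_m^r))$. By the formulas (\ref{5.2}) for $(A_!)^r$ and $(A_*)^r$ and the biadjointness of ${}^\Oc\!\Ind_{n,(m^r)}$ and ${}^\Oc\!\Res_{n,(m^r)}$, the proposition follows once we produce, for every object $N$ of $D^b(\Oc(\Sen_m^r))$, a functorial isomorphism
$$\RHom_{D^b(\Oc(\Sen_m^r))}(N,L)^*[2r(1-m)]\;\cong\;\RHom_{D^b(\Oc(\Sen_m^r))}(L,N),\qquad L=L_{(m)}^{\otimes r}.$$
The statement actually needed carries an extra $\Oc(\Gamma_n)$-variable (it is applied to $N={}^\Oc\!\Res_{n,(m^r)}(M)$, which lies in $D^b(\Oc(\Gamma_n)\otimes\Oc(\Sen_m^r))$, the $\RHom$ being taken relatively to $\Oc(\Sen_m^r)$), but this follows formally from the displayed one: since $\Oc(\Sen_m^r)$ is quasi-hereditary, hence of finite global dimension, $L$ is a perfect complex and the relative $\RHom$'s are computed by bounded complexes functorially in the coefficients.

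Next I would rephrase this via the Serre functor. Write $\scrS$ for the Serre (Nakayama) functor of $D^b(\Oc(\Sen_m^r))$; it exists because this is the module category of a finite dimensional $\CC$-algebra of finite global dimension, and $\RHom(A,B)^*\cong\RHom(B,\scrS A)$ naturally, so $\RHom(N,L)^*\cong\RHom(L,\scrS N)\cong\RHom(\scrS^{-1}L,N)$. Thus the desired isomorphism reads $\RHom(\scrS^{-1}L,N)\cong\RHom(L[2r(1-m)],N)$ for all $N$, which by the Yoneda lemma is equivalent to $\scrS^{-1}L\cong L[2r(1-m)]$, i.e. to $\scrS(L_{(m)}^{\otimes r})\cong L_{(m)}^{\otimes r}[2r(m-1)]$. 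Since $\Oc(\Sen_m^r)=\Oc(\Sen_m)^{\otimes r}$ and the Serre functor of a tensor product of finite-global-dimension $\CC$-categories is the exterior tensor product of the Serre functors, this reduces to the case $r=1$, namely $\scrS_m(L_{(m)})\cong L_{(m)}[2(m-1)]$ in $D^b(\Oc(\Sen_m))$, where $\scrS_m$ is the Serre functor there.

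Finally I would prove this base case. By Remark~\ref{rk:support4} (with $r=1$), $L_{(m)}$ is the unique finite dimensional simple object of $\Oc(\Sen_m)$ for the parameter $h=-1/m$. As $m>2$, Rouquier's equivalence $R\colon\Oc(\Sen_m)\to\Rep(\Sb_\zeta(m))$ (with $\zeta$ a primitive $m$-th root of $1$) is available, and being a derived equivalence it commutes with Serre functors; so it suffices to prove $\scrS(L^S_{(m)})\cong L^S_{(m)}[2(m-1)]$ in $D^b(\Rep(\Sb_\zeta(m)))$. By the proof of Proposition~\ref{prop:decomp}, $L^S_{(m)}=\Fr^*(\oneb)$ is the image under the quantum Frobenius functor $\Fr^*\colon\Rep(\Sb_1(1))=\Rep(\CC)\to\Rep(\Sb_\zeta(m))$ of the one dimensional object, $\Fr^*$ being a braided tensor functor (Proposition~\ref{prop:B8}) with right adjoint $\Fr_*$. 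The base case then amounts to the self-duality, up to the cohomological shift $2(m-1)$, of the minimal projective resolution of $L^S_{(m)}$ — equivalently, to the identity $\REnd_{\Sb_\zeta(m)}(L^S_{(m)})\cong H^*(\PP^{m-1})$ together with the self-duality of $L^S_{(m)}$ under the contravariant duality of $\Rep(\Sb_\zeta(m))$ — and adjunction together with $\Sb_1(1)=\CC$ gives $\REnd_{\Sb_\zeta(m)}(\Fr^*\oneb)\cong R\Fr_*\Fr^*(\oneb)$, which should be the cohomology of the quantum Frobenius kernel with trivial coefficients, namely $H^*(\PP^{m-1})$. I expect this last computation — the only step using genuine input from the structure of $\Sb_\zeta(m)$ and the quantum Frobenius homomorphism of the appendix — to be the main obstacle; the two reductions that precede it are purely formal.
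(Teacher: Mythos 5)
Your formal reductions are correct and in fact reproduce the skeleton of the paper's own argument: the paper likewise constructs a single isomorphism of functors on $D^b(\Oc(\Sen_{(m^r)}))$ as the $r$-th tensor power of an isomorphism on $D^b(\Oc(\Sen_m))$, and then composes with $\oneb_{{}^\Oc\!\Res_{n,(m^r)}}$ to handle the extra $\Oc(\Gamma_n)$-variable, exactly as you do. The genuine gap is in your base case. You reduce everything to $\scrS(L_{(m)})\cong L_{(m)}[2(m-1)]$ in $D^b(\Oc(\Sen_m))$ and propose to deduce it from (i) $\REnd_{\Oc(\Sen_m)}(L_{(m)})\cong H^*(\PP^{m-1})$, itself to be obtained from $R\Fr_*\Fr^*(\oneb)$ being the cohomology of the quantum Frobenius kernel, and (ii) a ``self-duality of the minimal projective resolution of $L_{(m)}$ up to the shift $2(m-1)$''. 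Neither is established: (i) is asserted rather than proved, and (ii) is essentially a restatement of the claim, since $\scrS$ is the Nakayama functor applied to that projective resolution. Moreover, even granting (i), the Ext-algebra of $L_{(m)}$ alone does not determine $\scrS(L_{(m)})$: it only shows that $\RHom(L_{(m)},\scrS L_{(m)})\cong\REnd(L_{(m)})^*$ has a nonzero class in degree $-2(m-1)$, not that $\scrS L_{(m)}$ is a shift of $L_{(m)}$. One needs a Calabi--Yau-type statement about the whole block containing $L_{(m)}$, not just about its endomorphism complex.

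The paper supplies exactly this missing input by quoting \cite[thm.~1.3]{BEG}: the block of $\Oc(\Sen_m)$ containing $L_{(m)}$ is equivalent to the category of perverse sheaves on $\PP^{m-1}$ constructible with respect to the standard stratification, under which $L_{(m)}$ goes to $\CC_{\PP^{m-1}}[m-1]$. The required isomorphism $\RHom(L_{(m)},\bullet)\cong\RHom(\bullet,L_{(m)})^*[2(1-m)]$ is then literally Verdier duality on $\PP^{m-1}$, and your two formal reductions (tensor power over $r$, insertion of the restriction functor) finish the proof. If you want to keep your quantum-group route, you would have to compute the full derived structure of the block around $L^S_{(m)}=\Fr^*(\oneb)$ --- for instance its description as modules over a Koszul-type algebra --- and not merely the graded dimension of $\REnd(\Fr^*\oneb)$; the \cite{BEG} input is the cheapest way to get this.
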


\begin{proof}
Once again we'll abbreviate $L=L_{(m)}^{\otimes r}$.
Let $\Pc erv(\PP^{m-1})$ be the category of perverse sheaves on
$\PP^{m-1}$ which are constructible with respect to the standard stratification
$\PP^{m-1}=\CC^0\cup\CC^1\cup\cdots\cup\CC^{m-1}$.
By \cite[thm.~1.3]{BEG} the category $\Oc(\frak S_m)$ decomposes
as the direct sum of $\Pc erv(\PP^{m-1})$ and semisimple blocks.
Under this equivalence the module $L_{(m)}$ is taken to the perverse sheaf
$\CC_{\PP^{m-1}}[m-1]$. So, by Verdier duality \cite[(3.1.8)]{KS} we have an isomorphism of functors
$D^b(\Oc(\Sen_{m}))\to D^b(\CC)$
\begin{equation}\label{verdier}\RHom_{D^b(\Oc(\frak S_m))}(L_{(m)},\bullet)\to
\RHom_{D^b(\Oc(\frak S_m))}(\bullet,L_{(m)})^*[2(1-m)].
\end{equation}
The tensor power of (\ref{verdier}) is an isomorphism of functors
$D^b(\Oc(\Sen_{(m^r)}))\to D^b(\CC)$
$$\gathered
\theta^0:\RHom_{D^b(\Oc(\Sen_{(m^r)}))}(L,\bullet)\to
\RHom_{D^b(\Oc(\Sen_{(m^r)}))}(\bullet,L)^*[2r(1-m)].\cr
\endgathered$$
The group $\Sen_r$ acts on
$D^b(\Oc(\Sen_{(m^r)}))$
in such a way that the simple reflection $s_i$ acts
via the permutation functor
$$\gathered
\tau_i:\Oc(\Sen_{(m^r)})\to\Oc(\Sen_{(m^r)}),\quad
M_1\otimes\dots \otimes M_r\to M_1\otimes\dots
\otimes M_{i+1}\otimes M_i\otimes\dots \otimes M_r.
\endgathered$$
The isomorphism $\theta^0$ is $\Sen_r$-equivariant, i.e., we have
$$\gathered
\theta^0(\tau_i(M))(\tau_i(f))=
\tau_i(\theta^0(M)(f)),
\cr
M\in \Oc(\Sen_{(m^r)}), \quad
f\in\RHom_{D^b(\Oc(\Sen_{(m^r)}))}(L,M).
\endgathered$$
It yields an isomorphism of functors
$D^b(\Oc(\Gamma_{n,(m^r)}))\to D^b(\Oc(\Gamma_n))$
$$\gathered
\theta:\RHom_{D^b(\Oc(\Sen_{(m^r)}))}
(L,\bullet)\to
\RHom_{D^b(\Oc(\Sen_{(m^r)}))}(\bullet,L)^*
[2r(1-m)]\cr
\endgathered$$
such that
\begin{equation}
\label{5.5}\gathered
\theta(\tau_i(M))(\tau_i(f))=
\tau_i(\theta(M)(f)),
\cr
M\in \Oc(\Gamma_{n,(m^r)}), \quad
f\in\RHom_{D^b(\Oc(\Sen_{(m^r)}))}(L,M).
\endgathered
\end{equation}
We define an isomorphism of functors
$D^b(\Oc(\Gamma_{n+mr}))\to D^b(\Oc(\Gamma_n))$
by
$$\theta'=\theta\,\oneb_{{}^\Oc\!\Res_{n,(m^r)}}.$$
More precisely, we have
$$\gathered
\theta':
\RHom_{D^b(\Oc(\Sen_{(m^r)}))}\bigl(L,
{}^\Oc\!\Res_{n,(m^r)}(\bullet)\bigr)\to\cr
\to\RHom_{D^b(\Oc(\Sen_{(m^r)}))}\bigl({}^\Oc\!\Res_{n,(m^r)}(\bullet),
L\bigr)^*[2r(1-m)].
\endgathered$$
By (\ref{5.2}) we may view $\theta'$ as an isomorphism 
$(A_*)^r\to(A_!)^r[2r(1-m)].$
\end{proof}

\vskip3mm

\begin{rk}
Probably we can choose the $\Sen_r$-action on $(A_!)^r$ in such a way that
the isomorphism $(A_*)^r\to(A_!)^r[2r(1-m)]$ is $\Sen_r$-equivariant.
This would imply that for $\l\in\Pc_r$ we have
$A_{\l,!}[2r(1-m)]=A_{\l,*}.$
We'll not use this.
\end{rk}

\vskip3mm

\begin{rk}
The transitivity of the induction functor \cite[cor.~2.5]{S}
yields an isomorphism of functors $A_\l^*\,A^*_\mu=A^*_\mu\,A^*_\l$
for $\l,\mu\in\Pc.$
Taking the adjoint functors we get also the isomorphisms
$A_{\l,!}\,A_{\mu,!}=A_{\mu,!}\,A_{\l,!}$ and
$A_{\l,*}\,A_{\mu,*}=A_{\mu,*}\,A_{\l,*}.$
\end{rk}

\vskip3mm

\begin{rk}
The functors $A_{\l,!}$, $A_\l^*$, $A_{\l,*}$ yield linear endomorphisms
of the $\CC$-vector space
$[\Oc(\Gamma)].$
Let us denote them $A_{\l,!}$, $A_\l^*$, $A_{\l,*}$ again.
\end{rk}

\vskip3mm

\begin{rk}
\label{rk:triangle} 
Recall that $\langle m\rangle=\bigoplus_{i=0}^{m-1}\CC[-2i].$
For any object
$M$ of $D^b(\Oc(\Gamma))$ there should be a distinguished triangle
\begin{equation*}\label{triangle}
\xymatrix{
\ell\langle m\rangle M\ar[r]&
A_*A^*(M)\ar[r]&A^*A_*(M)\ar[r]^-{+1}&.
}
\end{equation*}
\end{rk}

\vskip3mm

\vskip3mm

\subsection{The functors $a^*_\l$, $a_{\l,*}$
on $\Oc(\Gamma)$ and the $\Hen$-action on the Fock space}

For $i\in\ZZ$ and $\flat=!,*$ we consider the endofunctor
$H^i(A_{\l,\flat})$ of $\Oc(\Gamma)$ given by
$$
H^i(A_{\l,\flat})(M)=H^i(A_{\l,\flat}(M)),\quad M\in\Oc(\Gamma).$$
From now on we'll write $Ra_{\l,\flat}=A_{\l,\flat}$ and
$R^ia_{\l,\flat}=H^i(A_{\l,\flat})$.

\begin{df}
Let $a_\l^*$ be the restriction of $A_\l^*$ to the Abelian category
$\Oc(\Gamma)$.
Since $a_\l^*$ is an exact endofunctor of $\Oc(\Gamma)$,
we may write $a_\l^*$ for $A_\l^*$ if it does not create any confusion.
We abbreviate $a_{\l,\flat}=R^0a_{\l,\flat}$.
The functor $a_{\l,*}$
is a left exact endofunctor of $\Oc(\Gamma)$,
while $a_{\l,!}$
is right exact.
\end{df}

\noindent
Consider the chain of $\CC$-linear isomorphisms
which is the composition of (\ref{groth}), of the characteristic map $\ch$, and of
(\ref{focksyml}),
\begin{equation}\label{chain}
\begin{matrix}
[\Oc(\Gamma)]&\to&R(\Gamma)&\to&\Sym_\Gamma&\to&
\Fc_{m,\ell}^{(s)},\cr
\Delta_\l&\mapsto&\bar L_\l&\mapsto& S_{\tau\l}&\mapsto&|\l,s\rangle.
\end{matrix}
\end{equation}
Recall that  symmetric bilinear form on $\Fc^{(s)}_{m,\ell}$ defined in
Section \ref{sec:fockl}.

\begin{prop}\label{prop:comparison}
(a)
The map (\ref{chain}) identifies the symmetric $\CC$-bilinear form on
$\Fc^{(s)}_{m,\ell}$ with the $\CC$-bilinear form
$$[\Oc(\Gamma)]\times[\Oc(\Gamma)]\to\CC,
\quad(M,N)\mapsto\sum_i(-1)^i\dim\Ext^i_{\Oc(\Gamma)}(M,N).$$

(b)
The map (\ref{chain}) identifies the operators $b_{S_\l}$, $b'_{S_\l}$ on
$\Fc_{m,\ell}^{(s)}$ with the operators $a_\l^*$, $Ra_{\l,*}$ on $[\Oc(\Gamma)]$.
\end{prop}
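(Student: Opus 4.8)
The plan is to prove the two parts separately, reducing each to a statement about symmetric functions and the explicit combinatorics of the Fock space set up in Section~\ref{sec:fockl}. For part $(a)$, I would first observe that the Euler pairing $(M,N)\mapsto\sum_i(-1)^i\dim\Ext^i_{\Oc(\Gamma)}(M,N)$ descends to a well-defined $\CC$-bilinear form on $[\Oc(\Gamma)]$ because $\Oc(\Gamma)$ has finite global dimension (being quasi-hereditary). On standard modules this pairing is computed by the BGG-type reciprocity in $\Oc$: the classes $[\Delta_\l]$ and the costandard classes $[\nabla_\mu]$ are dual, i.e. $\sum_i(-1)^i\dim\Ext^i(\Delta_\l,\nabla_\mu)=\delta_{\l,\mu}$, and $[\nabla_\mu]=[\Delta_\mu]$ in $[\Oc(\Gamma)]$. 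Hence the Euler form makes $\{[\Delta_\l]\}$ an orthonormal basis of $[\Oc(\Gamma)]$. Under the chain (\ref{chain}), $[\Delta_\l]\mapsto S_{\tau\l}\mapsto|\l,s\rangle$, and by (\ref{pairing}) together with the orthonormality of the $|\l,s\rangle$ noted after (\ref{rk:4.7}), the Shapovalov/standard form on $\Fc^{(s)}_{m,\ell}$ also has $\{|\l,s\rangle\}$ as an orthonormal basis. Two symmetric bilinear forms sharing an orthonormal basis under a linear isomorphism agree, which gives $(a)$.

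For part $(b)$, I would argue that it suffices to treat the generators $b_{P_r}$, $b'_{P_r}$, $r>0$, since the $b_{S_\l}$, $b'_{S_\l}$ are $\CC$-polynomial combinations of these and $a^*_\l$, $Ra_{\l,*}$ are built functorially from the $r=1$ case through the $\Sen_r$-decomposition of Proposition~\ref{prop:decomp}; more precisely, the decomposition $(A^*)^r=\bigoplus_\l\bar L_\l\otimes A^*_\l$ together with $\ch(\bar L_\l)=S_{\tau\l}$ and the fact that $\ch$ intertwines induction with multiplication translates the passage from $A^*$ to $A^*_\l$ on Grothendieck groups into exactly the passage from $b_{P_1}$ to $b_{S_\l}$ in the plethystic description of $\Sym$, and dually for $Ra_{\l,*}$. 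So the crux is: on $[\Oc(\Gamma)]$, the operator $A^*={}^\Oc\!\Ind_{n,m}(\bullet\otimes L_{(m)})$ corresponds to multiplication by $P_m^1=\sum_p P_{m,p}$ in $\Sym_\Gamma$, and $Ra_*=\sum_i(-1)^iH^i(A_*)$ corresponds to its adjoint $b'_{P_1}$. The statement about $A^*$ follows because $\ch$ intertwines induction with multiplication in $\Sym_\Gamma$ and $\ch([L_{(m)}])=\ch(\bar L_{(m)}(\chi_0)^{\otimes m})$; by Remark~\ref{rk:support4} the simple $L_{(m)}$ is finite-dimensional precisely as needed, and its class equals $[\Delta_{(m)}]$ in $[\Oc(\Sen_m)]$ up to the semisimple-block correction—so $\ch$ sends it to the appropriate Schur/power-sum class, and its image in $\Sym_\Gamma$ under $\Ind^\Gamma_{\Sen}$ is $P_m^1$ by Proposition~\ref{GammaS}(b) (noting $P_1\mapsto P_1^1$ and matching the grading shift by $m$ coming from the subgroup $\Sen_m$). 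Then by Proposition~\ref{prop:fockl}, multiplication by $P_m^1$ on $\Sym_\Gamma$ is exactly the operator $b_1$ on $\Fc^{(s)}_{m,\ell}$ (level $\ell$, whence the $P_{mr}$ vs. $P_r$ bookkeeping of Section~\ref{sec:fock}), i.e. $b_{P_1}$. For the adjoint statement, I would invoke part $(a)$: $Ra_{\l,*}$ is by construction the adjoint of $A^*_\l$ with respect to the Euler form on $[\Oc(\Gamma)]$ (Proposition~\ref{prop:adjointtriple}), and $b'_{S_\l}$ is the adjoint of $b_{S_\l}$ with respect to the Shapovalov form on $\Fc^{(s)}_{m,\ell}$ (Section~\ref{sec:heisenberg}, Proposition~\ref{prop:fockl}); since $(a)$ identifies the two forms and the first halves of $(b)$ identify $A^*_\l$ with $b_{S_\l}$, uniqueness of adjoints forces $Ra_{\l,*}=b'_{S_\l}$.

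I expect the main obstacle to be the bookkeeping in identifying $[L_{(m)}]\in[\Oc(\Sen_m)]$ with the correct element of $\Sym$ under $\ch\circ\spe^{-1}$: one must use the decomposition of $\Oc(\Sen_m)$ into $\Pc erv(\PP^{m-1})$ plus semisimple blocks from \cite{BEG} (as in the proof of Proposition~\ref{prop:shift}) to see that, at the level of Grothendieck groups, $[L_{(m)}]$ contributes exactly the class whose characteristic image is the relevant homogeneous degree-$m$ piece matching $P_m$ under the plethystic rescaling $P_r\mapsto P_{mr}$ built into the Fock space identification of Section~\ref{sec:fock}. The $q$-induction/restriction refinement ($e_q$, $f_q$ vs. $E$, $F$) should not intervene here since $A^*_\l$, $A_{\l,*}$ are defined via the plain induction/restriction ${}^\Oc\!\Ind_{n,mr}$, ${}^\Oc\!\Res_{n,mr}$, which is why the Heisenberg operators, not the $\widehat{\sen\len}_m$-operators, appear; still, one must check carefully that the $D_n(z)$-eigenspace decompositions do not obstruct the commutation of $\ch$ with these functors, which follows from \cite{S} as cited in the proof of Proposition~\ref{prop:tensorproduct}.
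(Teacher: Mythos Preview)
Your proof of part $(a)$ is correct and essentially identical to the paper's: both use $\Ext^i(\Delta_\l,\nabla_\mu)=\delta_{i,0}\delta_{\l,\mu}$ and $[\Delta_\mu]=[\nabla_\mu]$ to see that the Euler form makes $\{[\Delta_\l]\}$ orthonormal, then match with the orthonormality of $\{|\l,s\rangle\}$.

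For part $(b)$ the overall architecture---prove $a^*_\l=b_{S_\l}$ and deduce the dual identity from $(a)$ by uniqueness of adjoints---is exactly the paper's strategy. But your execution has a genuine gap at the key computational step. You write that $[L_{(m)}]$ ``equals $[\Delta_{(m)}]$ in $[\Oc(\frak S_m)]$ up to the semisimple-block correction''; this is false. The simple module $L_{(m)}$ lives in the principal (non-semisimple) block, and its class is the alternating hook sum $\sum_{k=0}^{m-1}(-1)^k[\Delta_{(m-k,1^k)}]$, which under $\ch\circ\spe^{-1}$ goes to $P_m$, not to $S_{(m)}$. More generally, what is actually needed is the identity $\spe\circ\ch^{-1}\circ\psi^m(S_\l)=[L_{m\l}]$ in $[\Oc(\frak S_{mr})]$, where $\psi^m$ is the plethysm by $P_m$; this is the heart of the matter and you do not establish it.

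Your proposed reduction to $r=1$ does not close this gap either. Knowing $a^*=b_1$ gives only $(a^*)^r=\sum_\l\dim(\bar L_\l)\,a^*_\l=\sum_\l\dim(\bar L_\l)\,b_{S_\l}$ on Grothendieck groups, a single linear relation among the differences $a^*_\l-b_{S_\l}$; it does not force each to vanish. The $\frak S_r$-action of Proposition~\ref{prop:decomp} is on \emph{functors}, and its isotypic decomposition does not automatically refine this Grothendieck-group identity into one identity per $\l$. The paper avoids this by proving the plethysm identity for all $\l$ at once: it passes through Rouquier's equivalence $R:\Oc(\frak S_{mr})\to\Rep(\Sb_\zeta(mr))$ (so $[L_{m\l}]\mapsto[L^S_{m\l}]$) and the quantum Frobenius $\Fr^*:\Rep(\Sb_1(r))\to\Rep(\Sb_\zeta(mr))$ (so $L^S_{m\l}=\Fr^*(\bar L^S_\l)$), and then reads off the claim from the commutative diagram relating $\Fr^*$ to $\psi^m$ on formal characters. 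That is the missing ingredient in your argument.
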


\begin{proof}
Part $(a)$ is obvious because we have
$$\dim\Ext^i_{\Oc(\Gamma_n)}(\Delta_\l,\nabla_\mu)=\delta_{i,0}\delta_{\l,\mu},\quad
[\Delta_\mu]=[\nabla_\mu],\quad \forall\l,\mu\in\Pc^\ell_n,$$
because $\Oc(\Gamma_n)$ is a quasi-hereditary category, see e.g.,
\cite[prop.~A.2.2]{D}.
Now we concentrate on $(b)$.
By $(a)$ and Proposition \ref{prop:adjointtriple},
the pairs $(b_{S_\l}, b'_{S_\l})$ and
$(a_\l^*,Ra_{\l,*})$ consist of adjoint linear operators on $\Fc^{(s)}_{m,\ell}$.
So it is enough to check that under (\ref{chain}) we have
the following equality
\begin{equation}\label{equality}b_{S_\l}=a_\l^*.\end{equation}
To do that, observe first that,
by Proposition \ref{GammaS},
for $r>0$ the map
$\ch:R(\Gamma)\to\Sym_\Gamma$
intertwines
the operator
$$R(\Gamma)\to R(\Gamma),\quad
M\mapsto
\Ind_{\Gamma\times\frak  S}^\Gamma\bigl(M\otimes\ch^{-1}(P_{mr})\bigr)$$
and the multiplication by $\sum_{p\in\ZZ_\ell}P_{mr,p}$.
Here we have abbreviated
$$\Ind_{\Gamma\times\frak  S}^{\Gamma}=
\bigoplus_{n,r\geqslant 0}
\Ind_{n,mr}.$$
Next,
by Proposition \ref{prop:fockl}, the map
$\Sym_\Gamma\to\Fc^{(s)}_{m,\ell}$ above intertwines
the multiplication by $\sum_{p\in\ZZ_\ell}P_{mr,p}$
and the operator $b_r$.
By definition, the plethysm with the power sum $P_m$ is
the $\CC$-algebra endomorphism
$$\psi^m:\ \Sym\to\Sym,\quad
f\mapsto\sum_{\lambda\in\Pc}
z_{\lambda}^{-1}\langle f,P_{\lambda}\rangle\,P_{m\lambda}.$$
The discussion above implies that the map
$R(\Gamma)\to\Fc^{(s)}_{m,\ell}$ above
identifies the action of $b_{S_\l}$ on $\Fc_{m,\ell}^{(s)}$
with the operator
$$R(\Gamma)\to R(\Gamma),\quad
M\mapsto\Ind_{\Gamma\times\frak S}^\Gamma(M\otimes\ch^{-1}\psi^m(S_{\l})).$$
Now, recall the maps
$$\spe:[\Rep(\CC \Gamma_n)]\to [\Oc(\Gamma_n)],\quad
\spe:[\Rep(\CC \frak S_{mr})]\to [\Oc(\frak S_{mr})].$$
By Lemma \ref{lem:indresgroth}, they  commute with the induction and
restriction.
We claim that
$$\spe\circ\ch^{-1}\circ\psi^m(S_\l)=L_{m\l}.$$
Thus (\ref{equality}) follows from (\ref{chain}).
To prove the claim,
set $\zeta$ equal to a primitive $m$-th root of 1.
Then Rouquier's functor yields an isomorphism, see (\ref{R}),
\begin{equation*}[\Oc(\frak S_{mr})]=[\Rep(\Sb_\zeta(mr))].
\end{equation*}
Next, the quantum Frobenius homomorphism
yields a commutative diagram
\begin{equation}\label{diag1}
\begin{split}
\xymatrix{
[\Rep(\Sb_1(r))]\ar[r]^{\Fr^*}&[\Rep(\Sb_\zeta(mr))]\cr
\Sym\ar@{=}[u]^\chi
\ar[ur]^-\psi\ar[r]^{\psi^m}
&\Sym\ar@{=}[u]_\chi
}
\end{split}
\end{equation}
where $\chi$ is the formal character, see e.g., \cite[sec.~II.H.9]{J}.
Consider the chain of maps
$$\theta\;:\ \xymatrix{
[\Rep(\CC \frak S_{mr})]\ar@{=}[r]^-{(\ref{groth})}
&[\Oc(\frak S_{mr})]\;
\ar@{=}[r]^-{(\ref{R})}
&[\Rep(\Sb_\zeta(mr))]}.
$$
We have
$$\psi(S_\l)=L_{m\l}^S,\quad
\theta\, \ch^{-1}(S_\mu)=\Delta_\mu^S,\quad
\l\in\Pc_r,\quad\mu\in\Pc_{mr}.$$
Thus we have
$$\chi(\theta\,\ch^{-1}(S_\mu))=\chi(\Delta_\mu^S)=S_\mu,\quad
\mu\in\Pc_{mr}.$$
Therefore we have also
$$\chi(\theta\circ\ch^{-1}\circ\psi^m(S_\l))=
\psi^m(S_\l)=
\chi(\psi(S_\l))=
\chi(L_{m\l}^S).$$
This implies that
$\theta\circ\ch^{-1}\circ\psi^m(S_\l)=
L_{m\l}^S,$ proving the claim and the proposition.
\end{proof}

\vskip3mm

\begin{rk}
It has been conjectured in \cite[sec.~6.6]{E} that the Shapovalov form on
$V_{\omega_{d\,\text{mod}\,\ell}}^{\widehat{\gen\len}_\ell}$  
should be related to the bilinear form on $[\Oc(\Gamma)]$ in
Proposition \ref{prop:comparison}. Recall that
$$\Fc^{(d)}_\ell=V_{\omega_{d\,\text{mod}\,\ell}}^{\widehat{\gen\len}_\ell}$$ 
and that the Shapovalov form on the right hand side is identified with the
symmetric bilinear form on the left hand side considered in Section
\ref{sec:fock1}. By (\ref{rk:4.7}), under the isomorphism
(\ref{isomfock}), the latter is identified with the bilinear form on
$\Fc_{m,\ell}^{(d)}$ in Section \ref{sec:fockl}. 
Thus Proposition \ref{prop:comparison} implies Etingof's conjecture.
\end{rk}

\vskip3mm

\begin{prop} \label{prop:biadjoint} Let $\l\in\Pc_r$ with $r\geqslant 0$.

(a) We have a triple of adjoint functors $(a_{\l,!},\,a_\l^*,\,a_{\l,*})$.


(b) For $\flat=*,!$, $q=0,1,\dots, m-1,$ and $i\geqslant 0$ there are isomorphisms of functors
$$e_q\,R^ia_{\l,\flat}=R^ia_{\l,\flat}\,e_q,\quad e_q\,a_\l^*=a_\l^*\,e_q,\quad
f_q\,R^ia_{\l,\flat}=R^ia_{\l,\flat}\,f_q,\quad f_q\,a_\l^*=a_\l^*\,f_q.$$
\end{prop}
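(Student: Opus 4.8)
The plan is to reduce everything to the derived adjoint triple of Proposition~\ref{prop:adjointtriple}, the definition of $e_q,f_q$ through the central projectors $Q_{n,a}$, and the transitivity and Mackey properties of parabolic induction and restriction.

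For part~(a), I would restrict the two derived adjunctions of Proposition~\ref{prop:adjointtriple} to the abelian subcategory $\Oc(\Gamma)\subset D^b(\Oc(\Gamma))$ sitting in degree $0$. Since ${}^\Oc\!\Ind_{n,mr}$ is exact, $a_\l^*=A_\l^*$ already preserves $\Oc(\Gamma)$; since ${}^\Oc\!\Res_{n,mr}$ is exact, for $N\in\Oc(\Gamma)$ the complex $A_{\l,*}(N)$ has cohomology in degrees $\geqslant 0$ and $A_{\l,!}(N)$ in degrees $\leqslant 0$. Hence for $M,N\in\Oc(\Gamma)$ one gets $\RHom_{D^b(\Oc(\Gamma))}(M,A_{\l,*}(N))=\Hom_{\Oc(\Gamma)}(M,H^0A_{\l,*}(N))=\Hom_{\Oc(\Gamma)}(M,a_{\l,*}(N))$ and dually $\RHom_{D^b(\Oc(\Gamma))}(A_{\l,!}(M),N)=\Hom_{\Oc(\Gamma)}(a_{\l,!}(M),N)$, so $(a_{\l,!},a_\l^*,a_{\l,*})$ is an adjoint triple.

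For part~(b) the whole point is to establish $e_q\,a_\l^*\cong a_\l^*\,e_q$ and $f_q\,a_\l^*\cong a_\l^*\,f_q$; the remaining relations then follow formally. I would first record two ``bare'' commutations. For induction, $a_\l^*\circ{}^\Oc\!\Ind_n\cong{}^\Oc\!\Ind_n\circ a_\l^*$ (suitably indexed) is pure transitivity of parabolic induction: both composites identify with ${}^\Oc\!\Ind^{\Gamma_{n+mr+1}}_{\Gamma_{n,mr}}(\,\bullet\otimes L_{m\l})$, viewing $\Gamma_{n,mr}$ as a parabolic of $\Gamma_{n+mr+1}$ through the two chains $\Gamma_{n,mr}\subset\Gamma_{n+mr}\subset\Gamma_{n+mr+1}$ and $\Gamma_{n,mr}\subset\Gamma_{n+1,mr}\subset\Gamma_{n+mr+1}$. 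For restriction, I would apply the (functorial) Mackey formula for ${}^\Oc\!\Res\circ{}^\Oc\!\Ind$ to the triple $\Gamma_{n+mr}\supset\Gamma_{n,mr},\ \Gamma_{n+mr-1}$: going through the double cosets of Lemma~\ref{lem:mackey}, only the one coming from a coordinate among the first $n$ survives, producing the term $a_\l^*\circ{}^\Oc\!\Res_n$, while every other term factors through ${}^\Oc\!\Res^{\Sen_{mr}}_{\Sen_{mr-1}}(L_{m\l})$, which is $0$ because $\Supp(L_{m\l})=X_{\Sen_m^r,\CC^{mr}_0}$ contains no stratum $X_{\Sen_{mr-1},\CC^{mr}_0}$ (Remark~\ref{rk:support4} and Proposition~\ref{prop:ressup}(a)); thus the Mackey filtration collapses and $a_\l^*\circ{}^\Oc\!\Res_n\cong{}^\Oc\!\Res_n\circ a_\l^*$.

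To pass to the $q$-graded statements I would track $D_n(z)$: by its behaviour under parabolic induction \cite[sec.~4.2]{S}, $a_\l^*$ intertwines $Q_{n,a}$ with $Q_{n+mr,\,a\,c_{m\l}}$ for a fixed $c_{m\l}\in\CC(z)^\times$ (essentially the content polynomial of $m\l$), independently of $a$. Plugging this and the two bare commutations into $e_q=\bigoplus_aQ_{n-1,a/(z-\zeta^q)}\,{}^\Oc\!\Res_n\,Q_{n,a}$ and $f_q=\bigoplus_aQ_{n,a(z-\zeta^q)}\,{}^\Oc\!\Ind_n\,Q_{n-1,a}$, the factor $c_{m\l}$ commutes past $(z-\zeta^q)$ and the reindexing $b=a\,c_{m\l}$ yields $e_q\,a_\l^*\cong a_\l^*\,e_q$ and $f_q\,a_\l^*\cong a_\l^*\,f_q$. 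Applying the adjunctions of Proposition~\ref{prop:adjointtriple} and the biadjointness of $(e_q,f_q)$ (Proposition~\ref{prop:EF}(a)) to these two isomorphisms on $D^b(\Oc(\Gamma))$ then gives $e_q\,A_{\l,\flat}\cong A_{\l,\flat}\,e_q$ and $f_q\,A_{\l,\flat}\cong A_{\l,\flat}\,f_q$ for $\flat=!,*$; since $e_q,f_q$ are exact, taking $H^i$ commutes with these isomorphisms and produces the stated relations for $R^ia_{\l,\flat}$. The main obstacle is the functorial Mackey formula needed for $a_\l^*\circ{}^\Oc\!\Res_n$: Lemma~\ref{lem:mackey} is only stated in $K_0$, so one must either invoke the functor-level Mackey filtration available for these parabolic induction/restriction functors \cite{BE} or argue directly from the geometric definition of ${}^\Oc\!\Res$ that all but one summand of ${}^\Oc\!\Res_n\circ a_\l^*$ is annihilated by ${}^\Oc\!\Res^{\Sen_{mr}}_{\Sen_{mr-1}}(L_{m\l})=0$.
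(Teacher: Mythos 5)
Your part (a) and the overall architecture of part (b) --- establish the two ``bare'' commutations $F\,a_\l^*\cong a_\l^*\,F$ (transitivity of induction) and $E\,a_\l^*\cong a_\l^*\,E$ (Mackey), refine to individual $q$-components by block/eigenvalue bookkeeping, then deduce the statements for $A_{\l,!}$, $A_{\l,*}$ and $R^ia_{\l,\flat}$ by adjunction and exactness of $e_q,f_q$ --- coincide with the paper's proof. The divergences are in how the two hard points are filled, and one of them is a genuine gap.

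The step you yourself flag as the main obstacle is precisely the substantive content of the paper's argument, and neither of your proposed fixes is carried out. A functor-level Mackey filtration for the Bezrukavnikov--Etingof induction/restriction functors is not available off the shelf in \cite{BE}, and ``argue directly from the geometric definition'' is not an argument. The paper manufactures the needed isomorphism by hand: it proves a $(\Hb(\Gamma_{n+m-1}),\Hb(\Gamma_{n,m}))$-bimodule decomposition of $\Hb(\Gamma_{n+m})$ (Proposition \ref{prop:cycloheckeindres}), hence a direct-sum decomposition of ${}^\Hb\!\Res\circ{}^\Hb\!\Ind$; transfers it to $\Oc$ on projectives via $\KZ$, the right adjoint $S$ and (\ref{isomproj}); extends to all of $\Oc$ using that the functors are exact and $\Oc$ has enough projectives; and finally shows the resulting projection ${}^\Oc\!\Res_{n+mr}\,{}^\Oc\!\Ind_{n,mr}\to{}^\Oc\!\Ind_{n-1,mr}\,({}^\Oc\!\Res_n\otimes\oneb)$ is an isomorphism on $M\otimes{}^\Oc\!\Ind_{(m^r)}(L)$ by comparing classes in the Grothendieck group via Proposition \ref{prop:comparison}. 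Your support computation showing ${}^\Oc\!\Res^{\Sen_{mr}}_{\Sen_{mr-1}}(L_{m\l})=0$ is correct, but absent a functorial decomposition it only controls classes in $K_0$, which does not produce an isomorphism of functors.

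For upgrading $F\,a_\l^*\cong a_\l^*\,F$ to $f_q\,a_\l^*\cong a_\l^*\,f_q$, the paper does not track $D_n(z)$ under parabolic induction; it proves instead that the $\widetilde{\sen\len}_m$-weight subcategories $\Oc(\Gamma)_\nu$ are blocks (via $\KZ$ and the Lyle--Mathas block classification) and that $a_\l^*$ shifts blocks by a multiple of $\delta$ (Proposition \ref{prop:comparison}), so the two sides of the identity $\bigoplus_qf_q\,a_\l^*(M)=\bigoplus_qa_\l^*\,f_q(M)$ match block by block. Your $c_{m\l}$-eigenvalue argument is morally the same bookkeeping, but the multiplicativity of $D(z)$ under ${}^\Oc\!\Ind_{n,mr}$ is an additional unproved input; the block route avoids it.
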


\begin{proof}
By definition of the functors $A_{\l,*}$, $A_{\l,!}$  we have
$$A_{\l,*}(\Oc(\Gamma))\subset D^{\geqslant 0}(\Oc(\Gamma)),\quad
A_{\l,!}(\Oc(\Gamma))\subset D^{\leqslant 0}(\Oc(\Gamma)).$$
Thus, by Proposition \ref{prop:adjointtriple}
we have the triple of adjoint endofunctors
of $\Oc(\Gamma)$
$$(a_{\l,!},\,a_\l^*,\,a_{\l,*})=(H^{0}(A_{\l,!}),\,A_\l^*,\,H^0(A_{\l,*})).$$
This proves $(a)$. 
Next, let us prove part $(b)$. It is enough to give isomorphisms of functors
\begin{equation}\label{eg1}
e_q\,a_\l^*=a_\l^*\,e_q,\quad
f_q\,a_\l^*=a_\l^*\,f_q.
\end{equation}
First, observe that we have an isomorphism of functors
\begin{equation}\label{eg2}
F\,a_\l^*=a_\l^*\,F.
\end{equation}
Indeed, for $M\in\Oc(\Gamma_n)$
the transitivity of the induction functor \cite[cor.~2.5]{S} yields
$$\aligned
F\,a_\l^*(M)
&={}^\Oc\!\Ind_{n+mr}{}^\Oc\!\Ind_{n,mr}(M\otimes L_{m\l})\cr
&={}^\Oc\!\Ind_{\Gamma_{n+mr}}^{\Gamma_{n+mr+1}}
{}^\Oc\!\Ind_{\Gamma_{n,mr}}^{\Gamma_{n+mr}}
(M\otimes L_{m\l})\cr
&={}^\Oc\!\Ind^{\Gamma_{n+mr+1}}_{\Gamma_{n,mr}}
(M\otimes L_{m\l}),
\endaligned$$
$$\aligned
a_\l^*F(M)
&={}^\Oc\!\Ind_{n+1,mr}({}^\Oc\!\Ind_{n}(M)\otimes L_{m\l})\cr
&={}^\Oc\!\Ind^{\Gamma_{n+mr+1}}_{\Gamma_{n+1,mr}}
\bigl({}^\Oc\!\Ind_{\Gamma_n}^{\Gamma_{n+1}}(M)\otimes L_{m\l}\bigr)\cr
&={}^\Oc\!\Ind^{\Gamma_{n+mr+1}}_{\Gamma_{n+1,mr}}
{}^\Oc\!\Ind_{\Gamma_{n,mr}}^{\Gamma_{n+1,mr}}
(M\otimes L_{m\l})\cr
&={}^\Oc\!\Ind^{\Gamma_{n+mr+1}}_{\Gamma_{n,mr}}(M\otimes L_{m\l}).
\endaligned$$
By (\ref{eg2}) for each $M\in\Oc(\Gamma_n)$ we have
\begin{equation}
\label{mod111}
\bigoplus_qf_q\,a_\l^*(M)=\bigoplus_qa_\l^*\,f_q(M).
\end{equation}
We must prove that we have also an isomorphism
$f_q\,a_\l^*(M)=a_\l^*\,f_q(M)$.
Let $\Oc(\Gamma)_\nu\subset\Oc(\Gamma)$ be the full subcategory consisting
of the modules whose class is a weight vector of weight
$\nu$ of $[\Oc(\Gamma)]$.
Here $\nu$ is any weight of the $\widetilde{\sen\len}_m$-module $[\Oc(\Gamma)]$.
Recall that

\begin{lemma} We have the block decomposition
$\Oc(\Gamma)=\bigoplus_\nu\Oc(\Gamma)_\nu$,
where $\nu$ runs over the set of all weights of
the $\tilde{\sen\len}_m$-module $[\Oc(\Gamma)]$.
\end{lemma}

\begin{proof}
By \cite{S} the image by $\KZ:[\Oc(\Gamma)]\to[\Rep(\Hb(\Gamma))]$
of the class of a standard module is the class of a Specht module.
By \cite[thm.~2.11]{LM} we have a block decomposition
$$\Rep(\Hb(\Gamma))=\bigoplus_\nu\Rep(\Hb(\Gamma))_\nu,$$
where $\nu$ runs over a set of weights of $\widetilde{\sen\len}_m$
and the block $\Rep(\Hb(\Gamma))_\nu$ is generated by the constituents
of the Specht modules whose classes are the images by KZ
of the class of a standard module in $\Oc(\Gamma)_\nu$.
In particular, each Specht module belongs to a single block of
$\Rep(\Hb(\Gamma))$.
Now, since the standard modules in $\Oc(\Gamma)$ are indecomposable
(they have a simple top), each of them belong to a single block and
any block is generated by the constituents of the standard modules
in this block.
Finally, by \cite{GGOR} the functor KZ induces a bijection from the blocks of $\Oc(\Gamma)$ to the blocks of $\Rep(\Hb(\Gamma)$. Hence 
two standard modules belong to the same block of $\Oc(\Gamma)$
if and only if their images by KZ belong to the same block
of $\Rep(\Hb(\Gamma))$.
Therefore $\Oc(\Gamma)_\nu$ is a block of $\Oc(\Gamma)$.
This proves the lemma.
\end{proof}

\noindent
Therefore, to prove the isomorphism $f_q\,a_\l^*(M)=a_\l^*\,f_q(M)$
we may assume that $M$ lies in $\Oc(\Gamma)_\nu$.
Then $f_q\,a_\l^*(M)$ and $a_\l^*\,f_q(M)$ belong to
$\Oc(\Gamma)_{\nu+\alpha_q}$
by Proposition \ref{prop:comparison}.
Thus the isomorphism above follows from (\ref{mod111}).
The second isomorphism in (\ref{eg1}) is proved.
Next, let us prove that we have an isomorphism of functors
\begin{equation}\label{eg3}
E\,a_\l^*=a_\l^*\,E.
\end{equation}
The first isomorphism in (\ref{eg1}) follows from (\ref{eg3})
by a similar argument to the one above.
For $M\in\Oc(\Gamma_n)$ we have
$$\aligned
E\,a_\l^*(M)
&={}^\Oc\!\Res_{n+mr}{}^\Oc\!\Ind_{n,mr}(M\otimes L_{m\l}),\cr
a_\l^*\,E(M)
&={}^\Oc\!\Ind_{n-1,mr}({}^\Oc\!\Res_n(M)\otimes L_{m\l}),\cr
\endaligned$$
As above, we abbreviate $L=L_{(m)}^{\otimes r}$.
By Proposition \ref{prop:decomp} it is enough to prove 
that we have a natural isomorphism
$${}^\Oc\!\Res_{n+mr}{}^\Oc\!\Ind_{n,mr}(M\otimes 
{}^\Oc\!\Ind_{(m^r)}(L))\to
{}^\Oc\!\Ind_{n-1,mr}({}^\Oc\!\Res_n(M)\otimes 
{}^\Oc\!\Ind_{(m^r)}(L))$$
that is equivariant with respect to the $\Sen_r$-action induced by the 
$\Sen_r$-action on ${}^\Oc\!\Ind_{(m^r)}(L)$ 
given in (\ref{formind}). To see this, note that 
Proposition \ref{prop:cycloheckeindres} yields
the following decomposition of functors
$$\gathered
{}^\Hb\!\Res_{n+mr}\circ\,{}^\Hb\!\Ind_{n,mr}=
\bigl({}^\Hb\!\Ind_{n-1,mr}\circ\,({}^\Hb\!\Res_n\otimes\,\oneb)\bigr)
\oplus\cr
\oplus
\bigl({}^\Hb\!\Ind_{n,mr-1}\circ\,
(\oneb\otimes{}^\Hb\!\Res_{mr})\bigr)^{\oplus\ell}.\endgathered$$
Therefore we have also the following decomposition of functors
$$\gathered
\KZ\circ{}^\Oc\!\Res_{n+mr}\circ\,{}^\Oc\!\Ind_{n,mr}=\cr
=\bigl(\KZ\circ\,{}^\Oc\!\Ind_{n-1,mr}\circ\,({}^\Oc\!\Res_n\otimes\,\oneb)\bigr)
\oplus
\bigl(\KZ\circ\,{}^\Oc\!\Ind_{n,mr-1}\circ\,
(\oneb\otimes{}^\Oc\!\Res_{mr})\bigr)^{\oplus\ell}.
\endgathered$$
The induction and restriction functors on $\Oc(\Gamma)$ take projective modules
to projective ones, because they are exact and biadjoint.
Thus, by (\ref{isomproj}) we have a natural isomorphism
$$\gathered
{}^\Oc\!\Res_{n+mr}\,{}^\Oc\!\Ind_{n,mr}(P)=\cr
={}^\Oc\!\Ind_{n-1,mr}\,({}^\Oc\!\Res_n\otimes\,\oneb)(P)
\oplus
\bigl({}^\Oc\!\Ind_{n,mr-1}\,
(\oneb\otimes{}^\Oc\!\Res_{mr})(P)\bigr)^{\oplus\ell}
\endgathered$$
for any projective module $P\in\Oc(\Gamma)$.
Since $\Oc(\Gamma)$ has enough projective objects, this yields an isomorphism of functors
$$
{}^\Oc\!\Res_{n+mr}\,{}^\Oc\!\Ind_{n,mr}
={}^\Oc\!\Ind_{n-1,mr}\,({}^\Oc\!\Res_n\otimes\,\oneb)
\oplus
\bigl({}^\Oc\!\Ind_{n,mr-1}\,
(\oneb\otimes{}^\Oc\!\Res_{mr})\bigr)^{\oplus\ell}.$$
In particular, the projection yields a morphism of functors
$${}^\Oc\!\Res_{n+mr}\,{}^\Oc\!\Ind_{n,mr}
\to {}^\Oc\!\Ind_{n-1,mr}\,({}^\Oc\!\Res_n\otimes\,\oneb).$$
Applying this to the module $M\otimes {}^\Oc\!\Ind_{(m^r)}(L)$ 
yields an $\Sen_r$-equivariant surjective morphism
$$\Psi(M): {}^\Oc\!\Res_{n+mr}{}^\Oc\!\Ind_{n,mr}(M\otimes {}^\Oc\!\Ind_{(m^r)}(L))\to
{}^\Oc\!\Ind_{n-1,mr}({}^\Oc\!\Res_n(M)\otimes {}^\Oc\!\Ind_{(m^r)}(L)).$$
Now, by (\ref{5.2}) 
the left hand side is equal to $E\circ (a^\ast)^r(M)$ and 
the right hand side is equal to $(a^\ast)^r\circ E(M)$. 
So by Proposition \ref{prop:comparison} and the fact that the actions 
of $\Hen$ and $\widehat{\sen\len}_m$ on $\Fc_{m,\ell}^{(s)}$ commute 
with each other, we have 
$$[E\circ (a^\ast)^r(M)]=[(a^\ast)^r\circ E(M)].$$
Thus $\Psi(M)$ is indeed an isomorphism. 
So (\ref{eg3}) is proved.
\end{proof}

\vskip3mm

\subsection{Primitive modules}

\begin{df}
A module $M\in\Oc(\Gamma)$ is {\it primitive}
if $Ra_*(M)=0$ and $E(M)=0$ (or, equivalently, if $R^ia_*(M)=e_q(M)=0$ for all $q,i$).
Let  $\CIrr(\Oc(\Gamma))$
be the set
of isomorphism classes of primitive simple modules.
\end{df}

\begin{prop}\label{prop:critical1}
For $L\in\Irr(\Oc(\Gamma_n))$ the following are equivalent

(a) $L\in\CIrr(\Oc(\Gamma_n))$,

(b) $L\in\Irr(\Oc(\Gamma_n))_{0,0}$,

(c) $\dim(L)<\infty$.
\end{prop}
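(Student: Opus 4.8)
The plan is to reduce the whole statement to the behaviour of parabolic restriction on supports, which is controlled by Proposition~\ref{prop:ressup}. First, $(b)\Leftrightarrow(c)$ is immediate from Remark~\ref{rk:fdim}: a module in $\Oc(\Gamma_n)$ is finite dimensional if and only if its support is $\{0\}$, and $\Irr(\Oc(\Gamma_n))_{0,0}$ is by definition the set of simples with support $X_{n,0}=\{0\}$. So the work is in $(a)\Leftrightarrow(b)$, and for it I will use that $E={}^\Oc\!\Res_n=\Res_{b_n}$ with $b_n=(0,\dots,0,1)$, and that $Ra_{*}=A_{(1),*}$ sends $L\in\Oc(\Gamma_n)$ to $\RHom_{D^b(\Oc(\Sen_m))}\bigl(L_{(m)},{}^\Oc\!\Res_{n-m,m}(L)\bigr)$, where ${}^\Oc\!\Res_{n-m,m}=\Res_{b_{n-m,m}}$ and $b_{n-m,m}=(0,\dots,0,1,\dots,1)$ has $n-m$ zeros.

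For $(b)\Rightarrow(a)$ I would argue as follows. If $L$ is finite dimensional then $\Supp(L)=\{0\}$; since the points $b_n$ and $b_{n-m,m}$ are both different from $0$, they do not lie in $\Supp(L)$, so Proposition~\ref{prop:ressup}(a) gives $E(L)={}^\Oc\!\Res_n(L)=0$ and ${}^\Oc\!\Res_{n-m,m}(L)=0$, hence $Ra_{*}(L)=0$. Thus $L$ is primitive. (Alternatively, $E(L)=0$ follows directly from Proposition~\ref{prop:voiciunlemme}(b), since here $i=0$.)

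For $(a)\Rightarrow(b)$, suppose $L\in\Irr(\Oc(\Gamma_n))$ is primitive. By Remark~\ref{rk:support2}, $\Supp(L)=X_{l,j}$ for some $l,j$, and $E(L)=0$ forces $i=n-l-jm=0$ by Proposition~\ref{prop:voiciunlemme}(b), so $l+jm=n$; it remains to show $j=0$. Assume $j\geqslant 1$. Then $n-m=l+(j-1)m\geqslant 0$ and $b_{n-m,m}$ lies in $X_{l,j}=\Supp(L)$, because its stabilizer $\Gamma_{n-m}\times\Sen_m=\Gamma_{l+(j-1)m}\times\Sen_m$ contains a $\Gamma_n$-conjugate of $\Gamma_{l,(m^j)}=\Gamma_{l}\times\Sen_m^{j}$; hence ${}^\Oc\!\Res_{n-m,m}(L)\neq 0$. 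The key step is to compute its support using Proposition~\ref{prop:ressup}(b): it equals a union of sets $X_{W_1,\CC^{n-m}\times\CC^m_0}$ over parabolic subgroups $W_1\subseteq\Gamma_{n-m}\times\Sen_m$ containing a $\Gamma_n$-conjugate of $\Gamma_{l,(m^j)}$. Any such $W_1$ is a product $W_1^{(1)}\times W_1^{(2)}$ with $W_1^{(1)}\subseteq\Gamma_{n-m}$ and $W_1^{(2)}\subseteq\Sen_m$, and a count of single-coordinate rotations and of blocks of size $m$ shows that the copy of $\Gamma_l$ and all but (at most) one of the $j$ copies of $\Sen_m$ inside $\Gamma_{l,(m^j)}$ must land in $\Gamma_{n-m}$, which forces $W_1^{(2)}=\Sen_m$. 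Therefore $X_{W_1,\CC^{n-m}\times\CC^m_0}\subseteq\CC^{n-m}\times X_{\Sen_m,\CC^m_0}=\CC^{n-m}\times\{0\}$, so every composition factor $A\otimes B$ of ${}^\Oc\!\Res_{n-m,m}(L)$, with $A\in\Irr(\Oc(\Gamma_{n-m}))$ and $B\in\Irr(\Oc(\Sen_m))$, has $\Supp(B)=\{0\}$, i.e.\ $B$ is finite dimensional. Since the parameter here is $h=-1/m$, the only finite dimensional simple object of $\Oc(\Sen_m)$ is $L_{(m)}$ (by \cite{BEG}, compatible with Remark~\ref{rk:support4}), so $B=L_{(m)}$ for every composition factor. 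Picking a simple submodule $A\otimes L_{(m)}\hookrightarrow{}^\Oc\!\Res_{n-m,m}(L)$ with $A\neq 0$ and applying the left exact functor $\Hom_{\Oc(\Sen_m)}(L_{(m)},-)$, which takes values in $\Oc(\Gamma_{n-m})$, I obtain a nonzero injection $A\hookrightarrow\Hom_{\Oc(\Sen_m)}\bigl(L_{(m)},{}^\Oc\!\Res_{n-m,m}(L)\bigr)=H^0\bigl(Ra_{*}(L)\bigr)$, contradicting $Ra_{*}(L)=0$. Hence $j=0$, so $\Supp(L)=X_{n,0}=\{0\}$ and $L\in\Irr(\Oc(\Gamma_n))_{0,0}$.

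The main obstacle will be the support computation in $(a)\Rightarrow(b)$, namely showing that all $\Sen_m$-composition factors of ${}^\Oc\!\Res_{n-m,m}(L)$ are finite dimensional; this rests on Proposition~\ref{prop:ressup}(b) and on the combinatorics of parabolic subgroups of $\Gamma_{n-m}\times\Sen_m$, and one additionally has to invoke the classification of finite dimensional simple objects of $\Oc(\Sen_m)$ at parameter $-1/m$ (there is exactly one, namely $L_{(m)}$).
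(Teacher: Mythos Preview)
Your proof is correct and reaches the same conclusion as the paper, but the technical route in $(a)\Rightarrow(b)$ is organised differently. You first use $E(L)=0$ and Proposition~\ref{prop:voiciunlemme}(b) to get $i=0$, and then exploit the resulting equality $l+jm=n$ in a support computation via Proposition~\ref{prop:ressup}(b): the counting of coordinate blocks forces $W_1^{(2)}=\Sen_m$, so every $\Oc(\Sen_m)$-constituent of ${}^\Oc\!\Res_{n-m,m}(L)$ is finite dimensional, hence equal to $L_{(m)}$; a socle argument then contradicts $a_*(L)=0$. The paper instead proves $j=0$ first and independently of $i$: it decomposes $[{}^\Oc\!\Res_{n-m,m}(L)]=\sum_\mu[M_\mu\otimes L_\mu]$ in the Grothendieck group, restricts further to $\Gamma_{l,(m^j)}$ to see that only $\mu=(m)$ contributes (because $\Res_1(L)$ is finite dimensional), and then computes $0=[\Res_2\,Ra_*(L)]=[\Res_2(M_{(m)})]\cdot[\REnd_{\Oc(\Sen_m)}(L_{(m)})]$ to force $\Res_2(M_{(m)})=0$, contradicting $\Res_1(L)\neq0$. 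Your argument stays at the level of modules and supports and is arguably more transparent, but it genuinely needs $i=0$ for the block-counting to go through; the paper's Grothendieck-group computation with the auxiliary restriction $\Res_2$ avoids that dependence and treats $j=0$ and $i=0$ in the opposite order.
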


\begin{proof}
Assume that $L\in\Irr(\Oc(\Gamma_n))$.
The equivalence of $(b)$ and $(c)$ is Remark \ref{rk:fdim}.
Let us prove that $(a)\Rightarrow (b)$.
Fix $l,j\geqslant 0$ such that $\Supp(L)=X_{l,j}$.
Set $i=n-l-mj$.
We first prove that $j=0$. Assume that $j>0$.
Then we have
$$\Gamma_{l,(m^j)}=\Gamma_{l,(m^{j-1})}\times\frak S_m,\quad
\Gamma_{l,(m^{j-1})}\subset\Gamma_{n-m}. $$
There are modules $M_\mu\in\Oc(\Gamma_{n-m})$, $\mu\in\Pc_m$, such that
in $[\Oc(\Gamma_{n,m})]$ we have
$$[\Res_{n,m}(L)]=\sum_{\mu\in\Pc_m} [M_\mu\otimes L_\mu].$$
The transitivity of the restriction functor \cite[cor.~2.5]{S}
yields the following formula
\begin{equation*}
[\Res_1(L)]=\sum_\mu [\Res_2(M_\mu)\otimes L_\mu],\quad
\Res_1={}^\Oc\!\Res_{\Gamma_{l,(m^j)}}^{\Gamma_n},\quad
\Res_2={}^\Oc\!\Res^{\Gamma_{n-m}}_{\Gamma_{l,(m^{j-1})}}.
\end{equation*}
The $H(\Gamma_{l,(m^j)})$-module
$\Res_1(L)$ is finite dimensional,
because $\Supp(L)=X_{l,j}$.
Thus we have  $\Res_2(M_\mu)=0$ unless $\mu=(m)$, and
\begin{equation}\label{eq0}
[\Res_1(L)]=[\Res_2(M_{(m)})\otimes L_{(m)}].
\end{equation}
Next, since $R a_*([L])=0$ we have
$$\aligned
0&=[\Res_2Ra_*(L)]\cr
&=\sum_{\mu\in\Pc_m}[\Res_2(M_\mu)\otimes \RHom_{\Oc(\frak S_m)}(L_{(m)},L_{\mu})],\cr
&=[\Res_2(M_{(m)})\otimes \REnd_{\Oc(\frak S_m)}(L_{(m)})].
\endaligned$$
Thus, using  \cite[thm.~1.3]{BEG} we get
$\Res_2(M_{(m)})=0$. This yields a contradiction with (\ref{eq0}) because
$\Res_1(L)\neq 0$. So we have $j=0$.
Next, since $E(L)=0$, by Corollary \ref{cor:kere} and
Remark \ref{rk:support0j}
we have $i=0$.

Finally, we prove that $(c)\Rightarrow (a)$.
We must prove that if $L$ is finite dimensional then it is primitive.
This is obvious, because ${}^\Oc\!\Res_{n,m}(L)={}^\Oc\!\Res_{n}(L)=0$.
\end{proof}

\vskip3mm

\begin{rk}
By Proposition \ref{prop:critical1} the elements of $\CIrr(\Oc(\Gamma_n))$
form a basis of $F_{0,0}(\Gamma_n)$.
\end{rk}

\subsection{Endomorphisms of induced modules}

\noindent For $r\geqslant 1$ we consider the algebras
$$B_{r}=\Sen_r\ltimes
\CC[x_1,x_2,\dots,x_r],\quad
B_{r,\ell}=B_r/(x_1^\ell, x_2^\ell,\dots,x_r^\ell).$$
The following proposition is the main result of this subsection.

\begin{prop}
\label{prop:endo}
Let $r\geqslant 1$.

(a) The $\CC$-algebra homomorphism $\CC\frak S_r\to\End_{\Oc(\Gamma)}((a^*)^r)$
in Proposition \ref{prop:decomp} extends to a $\CC$-algebra homomorphism
$B_r\to\End_{\Oc(\Gamma)}((a^*)^r)$ such that
$x_1,x_2,\dots,x_r$
map to nilpotent operators in
$\End_{\Oc(\Gamma)}((a^*)^r(L))$ for each $L\in\Oc(\Gamma)$.

(b)
The $\CC$-algebra homomorphism $B_r\to\End_{\Oc(\Gamma)}((a^*)^r)$
factors to an isomorphism
$B_{r,\ell}=\End_{\Oc(\Gamma)}((a^*)^r(L))$ for $L\in\CIrr(\Oc(\Gamma))$.
\end{prop}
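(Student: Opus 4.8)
The plan is to produce a single natural transformation $x\colon a^{*}\Rightarrow a^{*}$, spread it over the $r$ factors of $(a^{*})^{r}$, and then pin down the endomorphism ring on primitive modules by an explicit computation of ${}^{\Oc}\!\Res\circ{}^{\Oc}\!\Ind$.

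\emph{Part (a).} Recall $a^{*}(M)={}^{\Oc}\!\Ind_{n,m}(M\otimes L_{(m)})$, and that, exactly as for the $\bar{\tau}_{i}$ of Section~\ref{sec:5.2}, $\KZ$ identifies $\End_{\Oc(\Gamma)}(a^{*})$ with the endomorphism algebra of the functor $N\mapsto{}^{\Hb}\!\Ind_{n,m}(N\otimes\mathbf{1})$, where $\mathbf{1}=\KZ(L_{(m)})$ is the one-dimensional $\Hb(\Sen_{m})$-module. The sum of the Jucys--Murphy elements of the block $\{n+1,\dots,n+m\}$ in $\Hb(\Gamma_{n+m})$ (equivalently, the Euler element of that block) commutes with the parabolic subalgebra $\Hb(\Gamma_{n,m})$, hence acts on ${}^{\Hb}\!\Ind_{n,m}(-\otimes\mathbf{1})$; I would take $x\in\End_{\Oc(\Gamma)}(a^{*})$ to be the corresponding transformation, normalised so that it acts nilpotently on every $a^{*}(L)$ --- which is possible for the same reason $X-\zeta^{q}$ is locally nilpotent on the summand $f_{q}$ in Section~\ref{sec:categ}, namely that $L_{(m)}$ is a fixed module so that a single ``residue'' of the new block occurs. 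One also records that $x$ obeys a monic degree-$\ell$ polynomial identity, the avatar for a ``fat box'' of size $m$ of the cyclotomic relation of $\Hb$. Now for $1\le i\le r$ put
\[
x_{i}\;=\;\oneb_{(a^{*})^{r-i}}\,x\,\oneb_{(a^{*})^{i-1}}\ \in\ \End_{\Oc(\Gamma)}\bigl((a^{*})^{r}\bigr),
\]
the horizontal composition inserting $x$ into the $i$-th factor; this is legitimate because $(a^{*})^{r}(M)={}^{\Oc}\!\Ind_{n,(m^{r})}\bigl(M\otimes L_{(m)}^{\otimes r}\bigr)$ by transitivity of induction. Insertions in distinct factors commute, so $x_{i}x_{j}=x_{j}x_{i}$, and since $\bar{\tau}_{i}$ transposes the $i$-th and $(i+1)$-th factors one gets $\bar{\tau}_{i}x_{i}=x_{i+1}\bar{\tau}_{i}$ and $\bar{\tau}_{i}x_{j}=x_{j}\bar{\tau}_{i}$ for $j\ne i,i+1$, once the compatibility of $x$ with ${}^{\Oc}\!\tau_{i}$ is checked --- which through $\KZ$ amounts to the commutation in $\Hb(\Gamma_{n+mr})$ of $T_{\tau_{i}}$ with the Jucys--Murphy elements of the blocks it does not transpose. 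Together with Lemma~\ref{lem:L5} these are exactly the defining relations of $B_{r}=\Sen_{r}\ltimes\CC[x_{1},\dots,x_{r}]$, so one gets the homomorphism $B_{r}\to\End_{\Oc(\Gamma)}((a^{*})^{r})$, each $x_{i}$ acting nilpotently on every $(a^{*})^{r}(L)$.

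\emph{Part (b).} Let $L$ be primitive, so $E(L)=0$ and $Ra_{*}(L)=0$. The first step is the computation
\[
{}^{\Oc}\!\Res_{n,(m^{r})}\,{}^{\Oc}\!\Ind_{n,(m^{r})}\bigl(L\otimes L_{(m)}^{\otimes r}\bigr)\ \simeq\ \bigl(L\otimes L_{(m)}^{\otimes r}\bigr)^{\oplus r!\ell^{r}}.
\]
Indeed, in the Mackey formula for the pair $\bigl(\Gamma_{n,(m^{r})},\Gamma_{n,(m^{r})}\bigr)$ inside $\Gamma_{n+mr}$ (Lemma~\ref{lem:mackey}; or Proposition~\ref{prop:cycloheckeindres} transported via $\KZ$ as in the proof of Proposition~\ref{prop:biadjoint}), a double coset contributes zero unless it forces neither a restriction of $L$ to a \emph{proper} parabolic (killed by $E(L)=0$) nor a restriction of the finite-dimensional $L_{(m)}$ to a non-trivial parabolic of $\Sen_{m}$ (killed since $\Supp(L_{(m)})=\{0\}$); the surviving double cosets are precisely those obtained by permuting the $r$ blocks ($\Sen_{r}$) and twisting each by one of the $\ell$ diagonal torus elements of $\Gamma$, hence $r!\ell^{r}$ of them, each contributing one copy of $L\otimes L_{(m)}^{\otimes r}$. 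Combining this with~(\ref{5.2}) and with \cite[thm.~1.3]{BEG} --- which identifies the block of $L_{(m)}$ in $\Oc(\Sen_{m})$ with $\Pc erv(\PP^{m-1})$ and $L_{(m)}$ with $\CC_{\PP^{m-1}}[m-1]$, so that $\REnd_{\Oc(\Sen_{m})}(L_{(m)})\simeq\langle m\rangle$ --- one obtains $(A_{*})^{r}(A^{*})^{r}(L)\simeq\bigl(L\otimes\langle m\rangle^{\otimes r}\bigr)^{\oplus r!\ell^{r}}$. Since this object lies in $D^{\geqslant 0}$ with $H^{0}=L^{\oplus r!\ell^{r}}$, and $(A^{*})^{r}(L)$ is concentrated in degree $0$, biadjointness (Proposition~\ref{prop:adjointtriple}) gives
\[
\dim_{\CC}\End_{\Oc(\Gamma)}\bigl((a^{*})^{r}(L)\bigr)=\dim_{\CC}\Hom_{\Oc(\Gamma)}\bigl(L,L^{\oplus r!\ell^{r}}\bigr)=r!\ell^{r}=\dim_{\CC}B_{r,\ell}.
\]
For $r=1$ this reads $\dim_{\CC}\End_{\Oc(\Gamma)}(a^{*}(L))=\ell$; moreover, tracking the action of $x$ through the computation shows that $x$ acts on $(A_{*}A^{*})(L)\simeq(L\otimes\langle m\rangle)^{\oplus\ell}$ as a regular nilpotent of the $\ell$ summands, so $x$ has nilpotency degree exactly $\ell$ and $\CC[x]$ is already $\ell$-dimensional. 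Hence $\End_{\Oc(\Gamma)}(a^{*}(L))=\CC[x]/(x^{\ell})$; in particular $x_{i}^{\ell}$ vanishes on every $a^{*}(L)$-factor, the homomorphism of~(a) factors through $B_{r,\ell}$, and the resulting map $B_{r,\ell}\to\End_{\Oc(\Gamma)}((a^{*})^{r}(L))$ is surjective (its image contains $\Sen_{r}$ and $x_{1}$, hence, by Proposition~\ref{prop:decomp} and the $r=1$ case applied in each factor, meets every block $\Hom_{\Oc(\Gamma)}(A_{\l}^{*}(L),A_{\mu}^{*}(L))$) and therefore, by the dimension count, an isomorphism.

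\emph{Where the difficulty lies.} Part (a) is essentially formal once Lemma~\ref{lem:L5} is in hand, modulo the routine compatibility of $x$ with the $\bar{\tau}_{i}$. The substance is in part (b): the precise determination of the surviving double cosets in the Mackey decomposition for primitive $L$ --- in particular producing the exact multiplicity $r!\ell^{r}$ --- and, for $r=1$, the verification that $x$ acts on $\ell\langle m\rangle L$ by a regular nilpotent, which is what forces the nilpotency degree to be exactly $\ell$ rather than merely at most $\ell$, and hence makes $\End_{\Oc(\Gamma)}(a^{*}(L))$ the \emph{local} ring $\CC[x]/(x^{\ell})$. This is the point where one sees that it is the cyclotomic parameter $\ell$, and not the Coxeter number $m$ of the inner symmetric group, that governs the nilpotency --- heuristically, that the ``fat box'' $L_{(m)}$ contributes a copy of $\PP^{\ell-1}$ rather than of $\PP^{m-1}$.
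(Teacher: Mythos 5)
Your overall strategy is the same as the paper's: produce operators $x_i$ on $(a^*)^r$ by transporting, via the isomorphism $\KZ:\End({}^\Oc\!\Ind_{n,(m^r)})\to\End({}^\Hb\!\Ind_{n,(m^r)}\circ\KZ)$, elements of $\Hb(\Gamma_{n+mr})$ that centralize $\Hb(\Gamma_{n,(m^r)})$, check the defining relations of $B_r$ against the $\bar\tau_i$, and compute $\End_{\Oc(\Gamma)}((a^*)^r(L))$ for primitive $L$ by a Mackey count giving $\ell^r r!$. Two remarks on part (a): the paper uses the \emph{product} $\xi_i=X_{n+m(i-1)+1}\cdots X_{n+mi}$ of the block variables, not their sum, and the relation $\bar\tau_i\bar\xi_i\bar\tau_i=\bar\xi_{i+1}$ is not a "commutation with the blocks not transposed" but an explicit Hecke-algebra computation producing a factor $\zeta^{m^2}=1$ (Lemma~\ref{lem:L1}); likewise the nilpotency of $\bar\xi_i+1$ is extracted from the central element $z_{n+m}z_n^{-1}$ and the block decomposition, and for the sum of the $X_j$'s both computations would have to be redone from scratch.

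The genuine gap is in part (b), precisely where you place "the substance". The Mackey computation (the paper's Lemma~\ref{lem:L3}) yields only the identity $[{}^\Oc\!\Res_{n,(m^r)}(a^*)^r(L)]=\ell^r r!\,[L\otimes L_{(m)}^{\otimes r}]$ in the Grothendieck group, not a module isomorphism; this still gives the \emph{upper} bound $\dim\End_{\Oc(\Gamma)}((a^*)^r(L))\leqslant\ell^r r!$ (Lemma~\ref{lem:L4}), but nothing more. Your claims that $x$ "acts as a regular nilpotent of the $\ell$ summands", that its nilpotency degree is exactly $\ell$, and that the map $B_{r,\ell}\to\End_{\Oc(\Gamma)}((a^*)^r(L))$ is surjective are asserted without argument; and the cyclotomic relation $(X_1-v_1)\cdots(X_1-v_\ell)=0$ constrains $X_1$ only, so it does not formally impose a monic degree-$\ell$ identity on $\xi_i$. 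The missing ingredient is the paper's Lemma~\ref{lem:L2}: using the PBW basis of $\Hb(\Gamma_{n+mr})$ one shows that the $\ell^r r!$ elements $\xi^p t_w$ span a free direct summand of $\Hb(\Gamma_{n+mr})$ as a left $\Hb(\Gamma_{n,(m^r)})$-module, hence give an injective bimodule map ${}^\Hb\psi$, which is then transported to $\Oc(\Gamma)$ via the right adjoint $S$ of $\KZ$, projectivity of induced/restricted projectives, and the five lemma. This proves the $\ell^r r!$ operators $\bar\xi^p\bar\tau_w(L)$ are linearly independent for \emph{every} simple $L$, which is what forces the nilpotency degree to equal $\ell$ (Lemma~\ref{lem:L6}) and, combined with the upper bound, yields the isomorphism with $B_{r,\ell}$. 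Without this lower-bound argument (or a substitute) your proof only bounds the endomorphism algebra from above.
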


\begin{proof}
The proof of this proposition is done in several steps.
Let $\Hb(\Gamma_{n,(m^r)})$, $\Hb(\Gamma_{n})$ and
$X_i$ be as in Appendix \ref{app:A}. Consider the elements
$$
\xi_{i}=X_{n+m(i-1)+1}X_{n+m(i-1)+2}\cdots X_{n+mi},
\quad
i=1,2,\dots,r.$$
They belong to the centralizer
of $\Hb(\Gamma_{n,(m^r)})$ in $\Hb(\Gamma_{n+mr})$.
Thus the right multiplication by
$\xi_i,$ $i=1,2,\dots,r$, defines an automorphism ${}^\Hb\xi_i$ of the functor
${}^\Hb\!\Ind_{n,(m^r)}$. More precisely, for a $\Hb(\Gamma_{n,(m^r)})$-module $M$
we set
$${}^\Hb\xi_i(h\otimes v)=h\xi_i\otimes v,\quad
h\in\Hb(\Gamma_{n+mr}),\quad
v\in M.$$
The functor $\KZ $
yields a $\CC$-algebra isomorphism (\ref{kzh})
\begin{equation*}\KZ:\End\bigl({}^\Oc\!\Ind_{n,(m^r)}\bigr)\to
\End\bigl({}^\Hb\!\Ind_{n,(m^r)}\circ\KZ\bigr).
\end{equation*}
Thus there is a unique endomorphism ${}^\Oc\xi_i$
of the functor
${}^\Oc\!\Ind_{n,(m^r)}$
such that
\begin{equation}
\label{5.15}
\KZ({}^\Oc\xi_i(M))=
{}^\Hb\xi_i(\KZ(M)),\quad \forall M\in\Oc(\Gamma_{n,(m^r)}).
\end{equation}
The functor
$\bullet\otimes L:\Oc(\Gamma_{n})\to\Oc(\Gamma_{n,(m^r)})$
yields a $\CC$-algebra homomorphism
\begin{equation}\label{map0}
\End\bigl({}^\Oc\!\Ind_{n,(m^r)}\bigr)\to\End((a^*)^r).
\end{equation}
Let  $\bar\xi_i$ denote
the image of ${}^\Oc\xi_i$
by the map (\ref{map0}).
Next, recall the operators
$$\bar\tau_i\in\End((a^*)^r)=\End((A^*)^r),\quad i=1,2,\dots,r-1$$
defined in Proposition
\ref{prop:decomp}, see also the proof of Lemma \ref{lem:L5}.

\begin{lemma}
\label{lem:L1}
The following relations hold in $\End((a^*)^r)$ for $j\neq i,i+1$
$$\bar\tau_i\circ\bar\xi_i\circ\bar\tau_i=\bar\xi_{i+1},\quad
\bar\tau_i\circ\bar\xi_j\circ\bar\tau_i=\bar\xi_{j}.$$
\end{lemma}

\begin{proof}
By (\ref{5.15}) and (\ref{5.18}) it is enough to prove that
$$\gathered
({}^\Hb\tau_i\,\oneb_{\tau_i})\circ
({}^\Hb\xi_i\,\oneb_{\tau_i})\circ{}^\Hb\tau_i={}^\Hb\xi_{i+1},\quad
\quad
({}^\Hb\tau_i\,\oneb_{\tau_i})\circ({}^\Hb\xi_j\,\oneb_{\tau_i})
\circ{}^\Hb\tau_i={}^\Hb\xi_{j}.
\endgathered$$
To do so, we are reduced to check the following relations in $\Hb(\Gamma_{n+mr})$
$$T_{\tau_i}\xi_i T_{\tau_i}=\xi_{i+1},
\quad T_{\tau_i}\xi_j T_{\tau_i}=\xi_{j}.$$
Recall that $\zeta$ is a $m$-th root of 1. Let $a_i=n+(i-1)m+1$, $b_i=n+im$, and
$$K_l=T_{b_i-l}T_{b_i-l+2}\cdots T_{b_i+l-2}T_{b_i+l}.$$
A direct computation yields that
$$T_{\tau_i}=K_0K_1\cdots K_{m-2}K_{m-1}K_{m-2}\cdots K_1K_0.$$
Further, for $0\leqslant l\leqslant m-1$ we have
\begin{eqnarray*}
K_l X_{a_i}X_{a_i+1}\cdots X_{b_i-l-2}X_{b_i-l-1}(X_{b_i-l}X_{b_i-l+2}\cdots X_{b_i+l}) K_l=\\ =\zeta^{l+1}X_{a_i}X_{a_i+1}\cdots X_{b_i-l-2} (X_{b_i-l-1}X_{b_i-l+1}X_{b_i-l+3}\cdots X_{b_i+l+1}),
\end{eqnarray*}
and for $0\leqslant l\leqslant m-2$ we have
\begin{eqnarray*}
K_l (X_{b_i-l}X_{b_i-l+2}\cdots X_{b_i+l})X_{b_i+l+2}X_{b_i+l+3}\cdots X_{b_i+m}K_l=\\ =\zeta^{l+1}(X_{b_i-l+1}X_{b_i-l+3}\cdots X_{b_i+l-1} )X_{b_i+l+1}X_{b_i+l+2}X_{b_i+l+3}\cdots X_{b_i+m}.
\end{eqnarray*}
We deduce that
$$\begin{aligned}
T_{\tau_i}\xi_i T_{\tau_i}
&=T_{\tau_i}X_{a_i}X_{a_i+1}\cdots X_{b_i}T_{\tau_i}\cr
&=\zeta^{1+2+\cdots+m}K_0\cdots K_{m-2} X_{a_i+1}X_{a_i+3}\cdots X_{b_i+m-2}X_{b_i+m}K_{m-2}\cdots K_{0}\cr
&=\zeta^{1+2+\cdots+m}\zeta^{1+2+\cdots+m-1}X_{a_i+m}X_{a_i+m+1}\cdots X_{b_i+m}\cr
&=\zeta^{m^2}\xi_{i+1}\cr
&=\xi_{i+1}.
\end{aligned}$$
The relation $T_{\tau_i}\xi_jT_{\tau_i}=\xi_{j}$ for $j\neq i, i+1$ is obvious.
\end{proof}

\vskip3mm

For any element $w\in\Sen_r$ we set
$$\bar\tau_w=\bar\tau_{i_1}\bar \tau_{i_2}\cdots \bar\tau_{i_k}
\in\End((a^*)^r)$$
for $w=s_{i_1}s_{i_2}\cdots s_{i_k}$.
This definition does not depend on the choice of the decomposition
of $w$ by
Lemma \ref{lem:L5}.
Next, for a tuple $p=(p_1,p_2,\dots,p_r)\in\ZZ^r$ such that
$0\leqslant p_i<\ell$ we set
$$\xi^p=\xi_1^{p_1}\xi_2^{p_2}\cdots\xi_r^{p_r},
\quad
\bar\xi^p=\bar\xi_1^{p_1}\bar\xi_2^{p_2}\cdots\bar\xi_r^{p_r}.$$

\begin{lemma}
\label{lem:L2}
For any $L\in\Irr(\Oc(\Gamma))$
the elements $\bar\xi^p\,\bar\tau_w(L)$ of $\End_{\Oc(\Gamma)}((a^*)^r(L))$,
with
$w\in\Sen_r$ and $p\in[0,\ell)^r$, are linearly independent.
\end{lemma}

\begin{proof}
For  $w, i_1,\dots,i_k,p$ as above the expression
$\tau_{i_1}\tau_{i_2}\cdots\tau_{i_k}$ is reduced.
Let us define the following elements in
$\Hb(\Gamma_{n+mr})$
$$t_w=T_{\tau_{i_1}}T_{\tau_{i_2}}\cdots T_{\tau_{i_k}},
\quad \xi^p=\xi_1^{p_1}\xi_2^{p_2}\cdots\xi_r^{p_r}.$$
Recall that the elements
$$X_1^{p_1}X_2^{p_2}\cdots X_{n+mr}^{p_{n+mr}}T_w,\quad
p_i\in[0,\ell),\quad w\in\Sen_{n+mr},$$
form a $\CC$-basis of $\Hb(\Gamma_{n+mr})$.
Further $\xi^p$ centralizes $\Hb(\Gamma_{n,(m^r)})$ and
the element $\tau_{i_1}\tau_{i_2}\cdots\tau_{i_k}$ above
is minimal in its right $\Sen_{(n,m^r)}$-coset.
Therefore the left $\Hb(\Gamma_{n,(m^r)})$-submodule
of $\Hb(\Gamma_{n+mr})$ spanned by
$$\{\xi^pt_w\,;\, w\in\Sen_r,\, p\in[0,\ell)^r\},$$
is indeed the direct sum
$$\bigoplus_{p,w}\Hb(\Gamma_{n,(m^r)})\,\xi^pt_w,$$
where $p$ runs over $[0,\ell)^r$ and $w$ over $\Sen_r$.
In other words, there is an injective
$\Hb(\Gamma_{n,(m^r)})$-module homomorphism
\begin{equation}\label{injection}
{}^\Hb\psi:\Hb(\Gamma_{n,(m^r)})^{\oplus\ell^rr!}\to\Hb(\Gamma_{n+mr}),\quad
(h_{p,w})\mapsto\sum_{p,w}h_{p,w}\,\xi^p\,t_w,
\end{equation}
where $w,p$ run over $\Sen_r$, $[0,\ell)^r$ respectively.
Further, since $\xi^p$ centralizes $\Hb(\Gamma_{n,(m^r)})$, the relation
(\ref{taui}) yields
$$z\xi^pt_w=
\xi^pzt_w=
\xi^pt_ww^{-1}(z),
\quad
z\in\Hb(\Gamma_{n,(m^r)}),$$
where $w^{-1}(z)=\tau_{i_k}\cdots\tau_{i_2}\tau_{i_1}(z)$.
Therefore
${}^\Hb\psi$ is a $(\Hb(\Gamma_{n,(m^r)}),\Hb(\Gamma_{n,(m^r)}))$-bimodule
homomorphism, where the right
$\Hb(\Gamma_{n,(m^r)})$-action on
$\Hb(\Gamma_{n,(m^r)})^{\oplus\ell^rr!}$ is twisted in the obvious way.
Since ${}^\Hb\psi$ is injective, and both sides are free
$\Hb(\Gamma_{n,(m^r)})$-modules, for each
$M\in\Oc(\Gamma_{n,(m^r)})$
we have an injective homomorphism
$$\gathered
{}^\Hb\psi(\KZ(M)) :
\bigoplus_{p,w}w\KZ(M)\to
{}^\Hb\!\Res_{n,(m^r)}\circ\,{}^\Hb\!\Ind_{n,(m^r)}\KZ(M)=\cr
=\KZ\circ{}^\Oc\!\Res_{n,(m^r)}\circ\,{}^\Oc\!\Ind_{n,(m^r)}(M),
\endgathered$$
where
$$w=\tau_{i_1}\tau_{i_2}\dots\tau_{i_k}:
\Rep(\Hb(\Gamma_{n,(m^r)}))
\to\Rep(\Hb(\Gamma_{n,(m^r)})).$$
Further, we have
$$w\KZ(M)=\KZ(wM),$$
where
$$w:
\Oc(\Gamma_{n,(m^r)})
\to\Oc(\Gamma_{n,(m^r)})$$
is the twist by the permutation
$$w:H(\Gamma_{n,(m^r)})=H(\Gamma_n)\otimes H(\Sen_m)^{\otimes r}\to
H(\Gamma_n)\otimes H(\Sen_m)^{\otimes r}
=H(\Gamma_{n,(m^r)}).$$
The canonical adjunction morphism
$$P\to S(\KZ(P))$$ is an isomorphism for each
projective module $P\in\Oc(\Gamma)$.
Here
$S:\Rep(\Hb(\Gamma))\to\Oc(\Gamma)$ is the functor from Section \ref{sec:KZ}.
Further, the functors ${}^\Oc\!\Res_{n,(m^r)}$ and
${}^\Oc\!\Ind_{n,(m^r)}$ preserve the projective
objects, because they are bi-adjoint and exact.
Therefore, applying the left exact functor $S$ to the map
${}^\Hb\psi(\KZ(P))$, with $P$ projective in $\Oc(\Gamma_{n,(m^r)})$,
we get an injection
$${}^\Oc\psi(P):\bigoplus_{p,w}wP\to
{}^\Oc\Res_{n,(m^r)}\circ\,{}^\Oc\Ind_{n,(m^r)}(P).$$
Since the category $\Oc(\Gamma_{n,(m^r)})$ has enough projective objects and since
the functor ${}^\Oc\Res_{n,(m^r)}\circ\,{}^\Oc\Ind_{n,(m^r)}$ is exact, the  five lemma implies that
there is a functorial injective morphism
$${}^\Oc\psi(M):\bigoplus_{p,w}wM\to
{}^\Oc\Res_{n,(m^r)}\circ\,{}^\Oc\Ind_{n,(m^r)}(M),\quad
M\in\Oc(\Gamma_{n,(m^r)}).$$
Now, set $M=L\otimes L_{(m)}^{\otimes r}$ with $L\in\Irr(\Oc(\Gamma))$.
Then we have $wM=M$ for all $w$ as above.
Therefore we get an injective linear map
$$\gathered
\CC^{\ell^rr!}=\Hom_{\Oc(\Gamma)}(L\otimes L_{(m)}^{\otimes r},
L\otimes L_{(m)}^{\otimes r})^{\oplus\ell^rr!}\to\cr
\to\Hom_{\Oc(\Gamma)}(L\otimes L_{(m)}^{\otimes r},{}^\Oc\Res_{n,(m^r)}{}^\Oc\Ind_{n,(m^r)}(L\otimes L_{(m)}^{\otimes r}))
=\End_{\Oc(\Gamma)}((a^*)^r(L)).
\endgathered$$
It maps the canonical basis elements to the
elements $\bar\xi^p\,\bar\tau_w(L)$  with
$w\in\Sen_r$ and $p\in[0,\ell)^r$.
\end{proof}

\begin{lemma}
\label{lem:L3}
For $L\in\CIrr(\Oc(\Gamma_n))$
the following identity holds in
$[\Oc(\Gamma_{n,(m^r)})]$
$$[{}^\Oc\!\Res_{n,(m^r)}(a^*)^r(L)]=\ell^rr!\,[L\otimes L_{(m)}^{\otimes r}].$$
\end{lemma}

\begin{proof}
By Lemma \ref{lem:mackey} the left hand side is equal to
\begin{equation*}
\sum_x
{}^\Oc\!\Ind^{\Gamma_{n,(m^r)}}_{x^{-1}W_xx}\circ\,
{}^x\bigl({}^\Oc\!\Res^{\Gamma_{n,(m^r)}}_{W_x}
([L\otimes L_{(m)}^{\otimes r}])\bigr),
\end{equation*}
where $W_x=x\Gamma_{n,(m^r)}x^{-1}\cap \Gamma_{n,(m^r)}$ and
$x$ runs over a set of representatives of the double cosets
in $\Gamma_{n,(m^r)}\setminus \Gamma_{n+mr} /\Gamma_{n,(m^r)}$.
Since $W_x$ is a parabolic subgroup of $\Gamma_{n,(m^r)}$, it is generated by reflections. Hence we can decompose the group $W_x$ in the following way
\begin{equation}\label{decompW}
W_x=W'_x\times W''_x,
\quad W'_x\subset\Gamma_{n},\quad W''_x\subset\frak S_m^r.
\end{equation}
Here $W'_x,$ $W''_x$ are parabolic subgroups.
We have
$${}^\Oc\!\Res^{\Gamma_{n,(m^r)}}_{W_x}(L\otimes L_{(m)}^{\otimes r})=
{}^\Oc\!\Res^{\Gamma_{n}}_{W'_x}(L)\otimes
{}^\Oc\!\Res^{\frak S_m^r}_{W''_x}(L_{(m)}^{\otimes r}),$$
and a similar decomposition holds for the induction functor.
Further, since $L\in\CIrr(\Oc(\Gamma_{n}))$ we have
${}^\Oc\!\Res^{\Gamma_{n}}_{W'_x}(L)=0$
if $W'_x$ is proper by Proposition \ref{prop:critical1}.
Thus we can assume that $W'_x=\Gamma_n$, i.e.,
we can assume that $x$ belongs to the subgroup
$\{1\}\times\Gamma_{mr}\subset\Gamma_{n+mr}$.
We'll abbreviate
$$\Sen_m^r=\{1\}\times\Sen_m^r,
\quad\Gamma_{mr}=\{1\}\times\Gamma_{mr}.$$
Then we have $W''_x=x\Sen_m^rx^{-1}\cap\Sen_m^r$,
and we are reduced to check that
\begin{equation*}
\sum_x
{}^\Oc\!\Ind^{\Sen_m^r}_{x^{-1}W_xx}\circ\,
{}^x\bigl({}^\Oc\!\Res^{\Sen_m^r}_{W_x}
([L_{(m)}^{\otimes r}])\bigr)=\ell^rr!\,[L_{(m)}^{\otimes r}],
\end{equation*}
where $W_x=x\Sen_m^rx^{-1}\cap \Sen_m^r$ and
$x$ runs over a set of representatives of the double cosets
in $\Sen_m^r\setminus \Gamma_{mr} /\Sen_m^r$. Now, observe that
$${}^\Oc\!\Res^{\Sen_m^r}_{W_x}
(L_{(m)}^{\otimes r})=0$$
unless $x\Sen_m^rx^{-1}=\Sen_m^r$, and that
$x\Sen_m^rx^{-1}=\Sen_m^r$ if and only if $x$ belongs to
$N_{\Gamma_{mr}}(\Sen_m^r)$, the normalizer of $\Sen_m^r$ in $\Gamma_{mr}$.
Further, we have a group isomorphism
$$N_{\Gamma_{mr}}(\Sen_m^r)/\Sen_m^r=\Gamma_{r}.$$
This proves the lemma.
\end{proof}

\begin{lemma}
\label{lem:L4}
For $L\in\CIrr(\Oc(\Gamma))$
the elements $\bar\xi^p\,\bar\tau_w(L)$ with
$w\in\Sen_r$ and $p\in[0,\ell)^r$ form a basis of $\End_{\Oc(\Gamma)}((a^*)^r(L))$.
\end{lemma}

\begin{proof}
By Lemma \ref{lem:L2} it is enough to check that
$$\dim\,\End_{\Oc(\Gamma)}((a^*)^r(L))\leqslant \ell^rr!.$$
For $L\in\CIrr(\Oc(\Gamma_n))$ Lemma \ref{lem:L3} yields
$$\aligned
\dim\,\End_{\Oc(\Gamma)}((a^*)^r(L))
=
\dim\,\Hom_{\Oc(\Gamma)}(L\otimes L_{(m)}^{\otimes r},{}^\Oc\Res_{n,(m^r)}(a^*)^r(L))\leqslant\ell^rr!.
\endaligned$$
\end{proof}

\begin{lemma}
\label{lem:L6}
For $i=1,2,\dots,r$ and $L\in\Oc(\Gamma)$ the
operator $\bar\xi_i(L)+1$ on $(a^*)^r(L)$ is nilpotent.
Further, if $L\in\CIrr(\Oc(\Gamma))$ we have
$(\bar\xi_i(L)+1)^\ell=0$.
\end{lemma}

\begin{proof}
The $\CC$-vector space $[\Oc(\Gamma)]$ is equipped with an $\widetilde{\sen\len}_m$-action
via the isomorphism (\ref{chain}), see also Remark \ref{rk:scaling}.
For a weight $\mu$ of $\widetilde{\sen\len}_m$
let $\Oc(\Gamma)_\mu\subset\Oc(\Gamma)$
be the Serre subcategory generated by the simple modules $L$ whose class in $[\Oc(\Gamma)]$
has the weight $\mu$. 
Set $\Oc(\Gamma_n)_\mu=\Oc(\Gamma)_\mu\cap\Oc(\Gamma_n)$.
Although we'll not need this formula, note that if $\Delta_\l\in\Oc(\Gamma_n)_\mu$ then we have
$$\mu=\mu_0-\sum_{q=0}^{m-1}n_q(\l)\a_q$$
where $\mu_0$ is a weight which does not depend on $n$, $\l$, and
$n_q(\l)$ is the number of $q$-nodes in the $\ell$-partition $\l$.
The element
$$z_n=X_1X_2\cdots X_n$$
belongs to the center of $\Hb(\Gamma_n)$.
Thus it yields an element ${}^\Hb z_n$ in the center of $\Rep(\Hb(\Gamma_n))$.
Since $\KZ$ identifies the centers of $\Oc(\Gamma_n)$ and $\Rep(\Hb(\Gamma_n))$,
it yields also an element ${}^\Oc z_n$ in the center of $\Oc(\Gamma_n)$. Let
$L\in\Irr(\Oc(\Gamma_n)_\mu)$. Then
${}^\Oc z_n$ acts on $L$ by multiplication by the scalar $\zeta^{\nu(\mu)}$,
where $\nu$ is a linear form such that $\nu(\a_i)=i$ for $i=0,1,\dots,m-1$, see e.g.,
\cite[sec.~4.1]{S}. Now the operator $a^*$ maps $\Oc(\Gamma_n)_\mu$ to
$\Oc(\Gamma_{n+m})_{\mu+\delta}$ by Proposition \ref{prop:comparison}.
Thus ${}^\Oc z_{n+m}$ acts on $a^*(L)$ by multiplication by the scalar
$\zeta^{\nu(\mu+\delta)}$. Therefore $\bar\xi_1$ acts on $a^*(L)$ by multiplication by the scalar
$$\zeta^{\nu(\delta)}=\zeta^{m(m-1)/2}=-1.$$
By Lemmas \ref{lem:L1}, \ref{lem:L5} this implies that
for any $L\in\Oc(\Gamma)$ we have $(\bar\xi_i(L)+1)^N=0$ in
$\End_{\Oc(\Gamma)}((a^*)^r(L))$
for $i=1,2,\dots,r$ and $N$ large enough.

Now, assume that $L\in\CIrr(\Oc(\Gamma))$.
Let $N_i$ be the minimal integer
such that $(\bar\xi_i(L)+1)^{N_i}=0$.
By Lemmas \ref{lem:L1}, \ref{lem:L5} we have
$N_1=N_2=\dots=N_r$.
Hence, by Lemma \ref{lem:L2} we have also $\ell=N_1=N_2=\dots=N_r$.
\end{proof}

\vskip3mm

\noindent
Now we complete the  proof of Proposition \ref{prop:endo}.
The previous lemmas imply that the assignment
\begin{equation}
\label{map}
x_i\mapsto\bar\xi_i+1,\quad s_{j}\mapsto\bar\tau_j,\quad
i=1,2,\dots,r,\quad j=1,2,\dots,r-1,
\end{equation}
yields
a $\CC$-algebra morphism
$B_{r}\to\End_{\Oc(\Gamma)}((a^*)^r)$
such that
$x_i$ maps to a nilpotent operator in
$\End_{\Oc(\Gamma)}((a^*)^r(L))$ for each $L\in\Oc(\Gamma)$.
The action of $s_j$ on $(a^*)^r$ given above is the same as 
the action of $s_j$ on $(A^*)^r$ in Proposition
\ref{prop:decomp}.
This proves part $(a)$.
Part $(b)$ follows  from Lemmas \ref{lem:L4}, \ref{lem:L6}.
\end{proof}

\vskip3mm

For a module $M$ in $\Oc(\Gamma)$ the adjunction yields a morphism
$$\eta(M):M\otimes L_{(m)}^{\otimes r}\to {}^\Oc\!\Res_{n,(m^r)}(a^*)^r(M).$$

\begin{cor}
\label{cor:cor1}
For $r\geqslant 1$ and $L\in\CIrr(\Oc(\Gamma_n))$ the
$\CC$-algebra isomorphism
(\ref{map})
$$B_{r,\ell}=\End_{\Oc(\Gamma)}((a^*)^r(L))$$
yields an isomorphism
of $B_{r,\ell}\times H(\Gamma_{n,(m^r)})$-modules
$$\gathered
B_{r,\ell}\otimes(L\otimes L_{(m)}^{\otimes r})\to
{}^\Oc\!\Res_{n,(m^r)}(a^*)^r(L),
\quad w\otimes v\mapsto {}^\Oc\!\Res_{n,(m^r)}(w)\cdot\eta(L)(v).
\endgathered$$
\end{cor}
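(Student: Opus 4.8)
The plan is to prove the displayed map, call it $\Phi$, is an isomorphism of $B_{r,\ell}\times H(\Gamma_{n,(m^r)})$-modules by a Grothendieck group count together with a short $\Hom$-space computation, using only Lemma \ref{lem:L3}, Proposition \ref{prop:endo}, and the description \eqref{5.2}; no new input is needed. First I would check that $\Phi$ is well defined and bi-equivariant. Abbreviate $S=L\otimes L_{(m)}^{\otimes r}$, and recall from \eqref{5.2} that $(a^*)^r(L)={}^\Oc\!\Ind_{n,(m^r)}(S)$ and that $\eta(L)\colon S\to{}^\Oc\!\Res_{n,(m^r)}(a^*)^r(L)$ is the unit of the adjunction $\bigl({}^\Oc\!\Ind_{n,(m^r)},{}^\Oc\!\Res_{n,(m^r)}\bigr)$ at $S$, so $\eta(L)$ corresponds to $\oneb_{(a^*)^r(L)}$ under that adjunction. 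The assignment $(w,v)\mapsto{}^\Oc\!\Res_{n,(m^r)}(w)\bigl(\eta(L)(v)\bigr)$ is $\CC$-bilinear because $w\mapsto{}^\Oc\!\Res_{n,(m^r)}(w)$ is additive (functoriality) and $\eta(L)$ and each ${}^\Oc\!\Res_{n,(m^r)}(w)$ are $\CC$-linear, hence it descends to $B_{r,\ell}\otimes S$; it is $H(\Gamma_{n,(m^r)})$-linear since $\eta(L)$ and ${}^\Oc\!\Res_{n,(m^r)}(w)$ are morphisms in $\Oc(\Gamma_{n,(m^r)})$, and $B_{r,\ell}$-linear since ${}^\Oc\!\Res_{n,(m^r)}$ respects composition.

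Next I would compare classes in $[\Oc(\Gamma_{n,(m^r)})]$. Since $\Oc(\Gamma_{n,(m^r)})=\Oc(\Gamma_n)\otimes\Oc(\Sen_m)^{\otimes r}$ and $L$, $L_{(m)}$ are simple, the module $S$ is simple, so $\End_{\Oc(\Gamma_{n,(m^r)})}(S)=\CC$. As $\dim_\CC B_{r,\ell}=\ell^r r!$, the source of $\Phi$ is, as an object of $\Oc(\Gamma_{n,(m^r)})$, isomorphic to $S^{\oplus\ell^r r!}$ and so has class $\ell^r r!\,[S]$; by Lemma \ref{lem:L3} the target has the same class. Because $\Oc(\Gamma_{n,(m^r)})$ is of finite length, it then suffices to prove that $\Phi$ is injective: the cokernel will have class zero, hence vanish.

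For injectivity, since the source is isotypic of type $S$, $\Phi$ is injective iff $\Hom_{\Oc(\Gamma_{n,(m^r)})}(S,\Phi)$ is. Identify $\Hom_{\Oc(\Gamma_{n,(m^r)})}(S,B_{r,\ell}\otimes S)$ with $B_{r,\ell}$ via $\End(S)=\CC$ (so $w$ corresponds to $v\mapsto w\otimes v$), and identify $\Hom_{\Oc(\Gamma_{n,(m^r)})}\bigl(S,{}^\Oc\!\Res_{n,(m^r)}(a^*)^r(L)\bigr)$ with $\End_{\Oc(\Gamma)}\bigl((a^*)^r(L)\bigr)=B_{r,\ell}$ by the adjunction together with Proposition \ref{prop:endo}(b). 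Under these identifications $\Hom(S,\Phi)$ sends $w$ to ${}^\Oc\!\Res_{n,(m^r)}(w)\circ\eta(L)$, which by naturality of the adjunction isomorphism and $\eta(L)\leftrightarrow\oneb$ corresponds to $w\circ\oneb=w$; thus $\Hom(S,\Phi)$ is the identity of $B_{r,\ell}$, in particular injective, so $\Phi$ is injective and hence an isomorphism. Equivalently, one may identify $\Phi$ with the functorial injection ${}^\Oc\psi(S)$ built in the proof of Lemma \ref{lem:L2}, using $wS=S$ for all $w\in\Sen_r$. The only step requiring genuine care is this last bookkeeping — tracking $\Phi$ through the chain of adjunction isomorphisms (equivalently, matching it with ${}^\Oc\psi(S)$) — but it is routine once $\eta(L)$ is recognized as the adjunction unit; everything else is a formal consequence of the class count and Proposition \ref{prop:endo}.
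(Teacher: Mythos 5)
Your proof is correct and follows essentially the same route as the paper's: the paper's two-line argument also rests on Lemma \ref{lem:L3} for the equality of classes in $[\Oc(\Gamma_{n,(m^r)})]$ and on Proposition \ref{prop:endo} identifying $\Hom_{\Oc(\Gamma_{n,(m^r)})}\bigl(L\otimes L_{(m)}^{\otimes r},{}^\Oc\!\Res_{n,(m^r)}(a^*)^r(L)\bigr)$ with the free rank-one $B_{r,\ell}$-module generated by $\eta(L)$. You have merely made explicit the standard deduction (injectivity from linear independence in the $\Hom$-space since the source is $S$-isotypic, then surjectivity from the class count in a finite-length category) that the paper leaves implicit.
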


\begin{proof} The corollary follows from
Proposition \ref{prop:endo} and Lemma \ref{lem:L3}, because
$$\aligned
\End_{\Oc(\Gamma)}((a^*)^r(L))
=
\Hom_{\Oc(\Gamma)}(L\otimes L_{(m)}^{\otimes r},{}^\Oc\!\Res_{n,(m^r)}(a^*)^r(L))
\endaligned$$
is a free $B_{r,\ell}$-module of rank one
and, in $[\Oc(\Gamma_{n,(m^r)})]$, we have
$$[{}^\Oc\!\Res_{n,(m^r)}(a^*)^r(L)]=
\dim(B_{r,\ell})\,[L\otimes L_{(m)}^{\otimes r}].$$
\end{proof}

\vskip3mm

\begin{df}
For $\l\in\Pc_r$, $r\geqslant 1$, we can regard the $\Sen_r$-module
$\bar L_\l$ as a $B_{r,\ell}$-module
such that $x_1,x_2,\dots,x_r$ act by zero.
For $L\in\CIrr(\Oc(\Gamma_n))$ we define
$$\bar a^*_\l(L)=\bar L_\l\otimes_{B_{r,\ell}}(a^*)^r(L)\in\Oc(\Gamma_{n+mr}).$$
\end{df}

\begin{df}
For $r\geqslant 1$ we define a functor
$\Oc(\Gamma_{n+mr})\to\Rep(\Sen_r)\otimes\Oc(\Gamma_n)$ by
$$\aligned
\Psi(M)
&=\Hom_{\Oc(\Sen_m^r)}(L_{(m)}^{\otimes r},{}^\Oc\!\Res_{n,(m^r)}(M))
\cr
&=\Hom_{\Oc(\Sen_{mr})}({}^\Oc\!\Ind_{(m^r)}(L_{(m)}^{\otimes r}),
{}^\Oc\!\Res_{n,mr}(M)).
\endaligned$$
The  $\Sen_r$-action on $\Psi(M)$
is the $\Sen_r$-action on ${}^\Oc\!\Ind_{(m^r)}(L_{(m)}^{\otimes r})$
in the proof of Proposition \ref{prop:decomp}.
In other words, we have $\Psi=(a_*)^r$, viewed as a $\Sen_r$-equivariant 
functor as in the proof of Proposition \ref{prop:decomp}.
\end{df}

\begin{cor}
\label{cor:cor2}
For $r\geqslant 1$ and $L\in\CIrr(\Oc(\Gamma_n))$
we have an isomorphism
$$(L\otimes L_{(m)}^{\otimes r})^{\oplus \dim(\bar L_\l)}=
{}^\Oc\!\Res_{n,(m^r)}(\bar a_\l^*(L))$$
as $H(\Gamma_{n,(m^r)})$-modules, and we have an isomorphism
of $\Sen_r\times H(\Gamma_n)$-modules
$$\bar L_\l\otimes L=\Psi(\bar a_\l^*(L)).$$
\end{cor}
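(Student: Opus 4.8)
The plan is to deduce everything from Corollary \ref{cor:cor1}, the exactness of the restriction functor, and the fact --- immediate from \cite[thm.~1.3]{BEG}, where $L_{(m)}$ corresponds to the simple perverse sheaf $\CC_{\PP^{m-1}}[m-1]$ --- that $\End_{\Oc(\Sen_m)}(L_{(m)})=\CC$, and hence $\End_{\Oc(\Sen_m^r)}(L_{(m)}^{\otimes r})=\CC$.

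First I would establish the isomorphism of $H(\Gamma_{n,(m^r)})$-modules. By construction $\bar a^*_\l(L)=\bar L_\l\otimes_{B_{r,\ell}}(a^*)^r(L)$, which is the cokernel in $\Oc(\Gamma_{n+mr})$ of a morphism between finite direct sums of copies of $(a^*)^r(L)$ coming from a finite presentation of $\bar L_\l$ over $B_{r,\ell}$; here $B_{r,\ell}=\End_{\Oc(\Gamma)}((a^*)^r(L))$ acts on $(a^*)^r(L)$ tautologically, since $L$ is primitive (Proposition \ref{prop:endo}). Because ${}^\Oc\!\Res_{n,(m^r)}$ is exact it preserves this cokernel, so ${}^\Oc\!\Res_{n,(m^r)}(\bar a^*_\l(L))=\bar L_\l\otimes_{B_{r,\ell}}{}^\Oc\!\Res_{n,(m^r)}((a^*)^r(L))$, the $B_{r,\ell}$-action on the right being $w\mapsto{}^\Oc\!\Res_{n,(m^r)}(w)$. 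Now Corollary \ref{cor:cor1} identifies ${}^\Oc\!\Res_{n,(m^r)}((a^*)^r(L))$ with $B_{r,\ell}\otimes(L\otimes L_{(m)}^{\otimes r})$ as a $B_{r,\ell}\times H(\Gamma_{n,(m^r)})$-module, $B_{r,\ell}$ acting by left multiplication on its own factor; tensoring over $B_{r,\ell}$ with $\bar L_\l$ then gives
$${}^\Oc\!\Res_{n,(m^r)}(\bar a^*_\l(L))=\bar L_\l\otimes(L\otimes L_{(m)}^{\otimes r})=(L\otimes L_{(m)}^{\otimes r})^{\oplus\dim(\bar L_\l)}$$
as $H(\Gamma_{n,(m^r)})$-modules, the factor $\bar L_\l$ contributing only its dimension.

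Next, for the second isomorphism, the definition of $\Psi$ gives $\Psi(\bar a^*_\l(L))=\Hom_{\Oc(\Sen_m^r)}(L_{(m)}^{\otimes r},{}^\Oc\!\Res_{n,(m^r)}(\bar a^*_\l(L)))$, and substituting the previous paragraph this equals $\Hom_{\Oc(\Sen_m^r)}(L_{(m)}^{\otimes r},\bar L_\l\otimes L\otimes L_{(m)}^{\otimes r})$. Since $\bar L_\l\otimes L$ carries no $\Sen_m^r$-action, this is $\bar L_\l\otimes L\otimes\End_{\Oc(\Sen_m^r)}(L_{(m)}^{\otimes r})=\bar L_\l\otimes L$, which settles the $H(\Gamma_n)$-module structure.

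It then remains to match the $\Sen_r$-actions, and this is the step I expect to be the main obstacle. On one side, $\Psi(\bar a^*_\l(L))=(a_*)^r(\bar a^*_\l(L))$ carries the $\Sen_r$-action of Proposition \ref{prop:decomp} on the functor $(a_*)^r$, which by the definition of $\Psi$ is induced from the $\Sen_r$-action on ${}^\Oc\!\Ind_{(m^r)}(L_{(m)}^{\otimes r})$; by (\ref{formind}) the latter acts through the tautological $\Sen_r$-action on each summand $\bar L_\mu$. On the other side, the identification in the two preceding paragraphs naturally produces the $\Sen_r$-action coming from left multiplication by $\CC\Sen_r\subset B_{r,\ell}=\End_{\Oc(\Gamma)}((a^*)^r(L))$ on the factor $B_{r,\ell}$ of $B_{r,\ell}\otimes(L\otimes L_{(m)}^{\otimes r})$, which after applying $\bar L_\l\otimes_{B_{r,\ell}}(-)$ becomes the tautological action on $\bar L_\l$. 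By the last lines of the proof of Proposition \ref{prop:endo}, the generators $s_i\mapsto\bar\tau_i$ of this action agree with the Proposition \ref{prop:decomp} action on $(a^*)^r$; so I would finish by tracing both $\Sen_r$-actions through the chain of identifications above --- using that the $\Sen_r$- and $H(\Gamma_n)$-actions commute and that each multiplicity space $\End_{\Oc(\Sen_m^r)}(L_{(m)}^{\otimes r})$ is one-dimensional --- to conclude that $\Psi(\bar a^*_\l(L))=\bar L_\l\otimes L$ as $\Sen_r\times H(\Gamma_n)$-modules, with $\Sen_r$ acting on the right-hand side through $\bar L_\l$ and trivially on $L$. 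The crux is precisely checking that these two a priori different $\Sen_r$-actions --- the one carried intrinsically by the functor $\Psi=(a_*)^r$ and the one coming from $\CC\Sen_r\subset\End((a^*)^r(L))$ --- are identified under Corollary \ref{cor:cor1}.
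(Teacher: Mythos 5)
Your proposal is correct and follows essentially the same route as the paper: the first isomorphism is obtained by applying the exact functor ${}^\Oc\!\Res_{n,(m^r)}$ to the presentation of $\bar L_\l\otimes_{B_{r,\ell}}(a^*)^r(L)$ and invoking Corollary \ref{cor:cor1}, and the second by matching the functorial $\Sen_r$-action on $\Psi$ with the $B_{r,\ell}$-action of Corollary \ref{cor:cor1}. The final verification you flag as the crux is exactly what the paper does, by noting that the two commuting $\Sen_r$-actions on $\Psi\bigl((a^*)^r(L)\bigr)=B_{r,\ell}\otimes L$ become the left and the dual right regular actions on $B_{r,\ell}$, and then decomposing $\CC\Sen_r=\bigoplus_\mu\bar L_\mu\otimes\bar L_\mu$.
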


\begin{proof}
Corollary \ref{cor:cor1}
yields an isomorphism
$$B_{r,\ell}\otimes(L\otimes L_{(m)}^{\otimes r})=
{}^\Oc\!\Res_{n,(m^r)}\bigl((a^*)^r(L)\bigr)$$
which factors to an isomorphism
\begin{equation}
\label{tutu}
\gathered
\CC\Sen_r\otimes(L\otimes L_{(m)}^{\otimes r})=
{}^\Oc\!\Res_{n,(m^r)}\bigl(\overline{(a^*)^r}(L)\bigr),
\endgathered
\end{equation}
with
$$\overline{(a^*)^r}(L)=
(a^*)^r(L)\big/\sum_ix_i\,(a^*)^r(L).$$
Further, taking the isotypic components
the isomorphism (\ref{tutu}) factors to an isomorphism
$$
(L\otimes L_{(m)}^{\otimes r})^{\oplus\dim (\bar L_\l)}=
{}^\Oc\!\Res_{n,(m^r)}\bigl(\bar a^*_\l(L)\bigr).
$$
This proves the first claim.
To prove the second claim, observe that
Corollary \ref{cor:cor1}
and (\ref{tutu})
yield compatible $\Sen_r\times\Sen_r\times H(\Gamma_n)$-module isomorphism
\begin{equation}
\label{tvtv}
B_{r,\ell}\otimes L=\Psi\bigl((a^*)^r(L)\bigr),
\quad\CC\Sen_r\otimes L=\Psi\bigl(\overline{(a^*)^r}(L)\bigr).
\end{equation}
The first $\Sen_r$-action on
$\Psi\bigl(\overline{(a^*)^r}(L)\bigr)$
is the $\Sen_r$-action in the definition of $\Psi$,
and the first $\Sen_r$-action on $\CC\Sen_r\otimes L$ is the dual of the right $\Sen_r$-action on
$\CC\Sen_r$.
The second $\Sen_r$-action on
$\Psi\bigl(\overline{(a^*)^r}(L)\bigr)$ is the $\Sen_r$-action on $\overline{(a^*)^r}(L)$
in Corollary \ref{cor:cor1},
and the second $\Sen_r$-action on
$\CC\Sen_r\otimes L$ is the left $\Sen_r$-action on
$\CC\Sen_r$.
To identify  the actions as above, it is enough to note that
the isomorphism
\begin{equation}
\label{twtw}
\gathered
B_{r,\ell}=\Hom_{\Oc(\Gamma_n)}(L,B_{r,\ell}\otimes L)=
\Hom_{\Oc(\Gamma_n)}\bigl(L,\Psi(a^*)^r(L)\bigr)=\cr
=\End_{\Oc(\Gamma)}((a^*)^r(L))\endgathered
\end{equation}
given by (\ref{tvtv})
is equal to the isomorphism (\ref{map}), and that the $\Sen_r$-actions
on $(a^*)^r(L)$ are taken to the left and to the dual
right $\Sen_r$-action on $B_{r,\ell}$
by the map (\ref{twtw}).
Next, write
$$\CC\Sen_r=\bigoplus_\l\bar L_\l\otimes\bar L_\l$$
as an $\Sen_r\times\Sen_r$-module, and take the isotypic component.
\end{proof}

\subsection{Definition of  the map $\tilde a_\l$}

\begin{prop}\label{prop:eheh}
For $\l\in\Pc_r$ with $r\geqslant 1$ we have
$$a_\l^*(F_{i,j}(\Gamma_n))\subset F_{i,j+r}(\Gamma_{n+mr}),
\quad a_\l^*(F_{i,j}(\Gamma_n)^\circ)\subset F_{i,j+r}(\Gamma_{n+mr})^\circ.$$
\end{prop}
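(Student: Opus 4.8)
The plan is to reduce the statement about the functor $a_\l^*$ on the filtered Grothendieck group to the already-established support computations of Section 3, exactly as was done for $E$ and $F$ in Corollary \ref{cor:kerE}. Since $F_{i,j}(\Gamma_n)$ is spanned by the classes $[L]$ of simple modules $L$ with $\Supp(L)=X_{l',j'}$ for $(i',j')\leqslant(i,j)$, it suffices to treat such an $L$, and then by Proposition \ref{prop:ressup}(a) and the definition of the order it is enough to show that $\Supp(a_\l^*(L))=X_{l',j'+r,\CC^{n+mr}}$, i.e. that applying $a_\l^*$ raises $j$ by exactly $r$ and leaves $i$ unchanged. By the block decomposition and the commutation $e_q\,a_\l^*=a_\l^*\,e_q$ from Proposition \ref{prop:biadjoint}(b), $a_\l^*(L)$ has all its constituents in a single $\widetilde{\sen\len}_m$-weight space, so (as in Corollary \ref{cor:keretilde}) it is enough to show that $\Supp(a_\l^*(L))$ is \emph{contained in} $X_{l',j+r}$ and \emph{contains} $X_{l',j+r}$ — the weight argument then forces equality on the nose, and in particular $a_\l^*(L)\neq 0$ lies in $F_{i,j+r}^\circ$.

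First I would prove the inclusion $\Supp(a_\l^*(L))\subset X_{l',j'+r,\CC^{n+mr}}$. By definition $a_\l^*(L)={}^\Oc\!\Ind_{n,mr}(L\otimes L_{m\l})$, and by Remark \ref{rk:support4} the $H(\frak S_{mr})$-module $L_{m\l}$ has support $X_{\Sen_m^r,\CC^{mr}_0}$; its support is therefore of the stratum type $X_{0,r}$ inside $\CC^{mr}_0$. Combined with $\Supp(L)=X_{l',j',\CC^n}$, the module $L\otimes L_{m\l}\in\Oc(\Gamma_{n,mr})$ has support $X_{W,\CC^{n}\times\CC^{mr}_0}$ with $W=\Gamma_{l',(m^{j'})}\times\Sen_m^r\subset\Gamma_{n,mr}$, and this $W$ is $\Gamma_{n+mr}$-conjugate to $\Gamma_{l',(m^{j'+r})}$. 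Proposition \ref{prop:indsup}, applied to the parabolic $\Gamma_{n,mr}\subset\Gamma_{n+mr}$, then gives $\Supp\bigl({}^\Oc\!\Ind_{n,mr}(L\otimes L_{m\l})\bigr)=X_{l',j'+r,\CC^{n+mr}}$ directly; in particular $a_\l^*(L)\neq 0$. Strictly, Proposition \ref{prop:indsup} is stated for a \emph{simple} module with support a single closed stratum, and $L\otimes L_{m\l}$ is simple as a module over $\Gamma_{n,mr}$, with support exactly one $X_{W}$, so it applies verbatim.

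At this point both inclusions are in hand and the proposition follows, with no further weight argument needed — Proposition \ref{prop:indsup} already delivers the exact support. The only mildly delicate point, and the one I expect to be the main obstacle, is the bookkeeping that identifies $W=\Gamma_{l',(m^{j'})}\times\Sen_m^r$ up to $\Gamma_{n+mr}$-conjugacy with $\Gamma_{l',(m^{j'+r})}$, i.e. that the $r$ new ``packets of $m$'' coming from the $\Sen_m^r$-factor of $L_{m\l}$ merge cleanly with the $j'$ existing packets of $L$; this is the same elementary reflection-group manipulation ($0^{l'}(m)^{j'}*^{i'}$ juxtaposed with $(m)^r$) that appears in the proof of Proposition \ref{prop:voiciunlemme}, so it is routine. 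The assertion about $F_{i,j}(\Gamma_n)^\circ$ is then immediate from the characterization in Remark \ref{rk:support3}: $[L]\in F_{i,j}(\Gamma_n)^\circ$ iff $\Supp(L)=X_{l,j}$, and we have just shown this is taken to a module with support exactly $X_{l,j+r,\CC^{n+mr}}$, which by Remark \ref{rk:support3} means its class lies in $F_{i,j+r}(\Gamma_{n+mr})^\circ$.
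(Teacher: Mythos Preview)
Your proposal is correct and follows essentially the same route as the paper: compute $\Supp(L\otimes L_{m\l})$ from Remark \ref{rk:support4}, apply Proposition \ref{prop:indsup} to get $\Supp(a_\l^*(L))=X_{l',j'+r,\CC^{n+mr}}$ exactly, then invoke Remark \ref{rk:support3} and the order relation (via (\ref{bizarre})) to conclude. Your initial detour through the block decomposition and the $e_q\,a_\l^*$ commutation is unnecessary, as you yourself note; the paper does not use it either.
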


\begin{proof}
By Remark \ref{rk:support4}
we have
$$\Supp(L_{m\l})=X_{\frak S_m^r,\CC_0^{mr}}.$$
Let $L\in\Irr(\Oc(\Gamma_n))$.
First, assume that
$L\in\Irr(\Oc(\Gamma_n))_{i,j}$, i.e., that
$$\Supp(L)=X_{l,j,\CC^n}$$ by Remark \ref{rk:support3}.
Hence the module $L\otimes L_{m\l}$
has the following support
$$\Supp(L\otimes L_{m\l})=X_{l,j,\CC^n}\times X_{\frak S_m^r,\CC_0^{mr}}.$$
So by Proposition \ref{prop:indsup} we have
$$\Supp(a^*_\l(L))=X_{l,j+r,\CC^{n+mr}}.$$
Thus the class of $a^*_\l(L)$ belongs to $F_{i,j+r}(\Gamma_{n+mr})^\circ$
by Remark \ref{rk:support3}. Next, assume that
$[L]\in F_{i,j}(\Gamma_n)$, i.e.,
$$\Supp(L)=X_{l',j',\CC^n},\quad
X_{l',j',\CC^n}\subset X_{l,j,\CC^n}.$$
Thus we have
$$\Supp(a^*_\l(L'))=X_{l',j'+r,\CC^{n+mr}}.$$
So (\ref{bizarre}) yields
$$X_{l',j'+r,\CC^{n+mr}}\subset X_{l,j+r,\CC^{n+mr}},$$ i.e.,
the class of $a^*_\l(L)$ lies in $F_{i,j+r}(\Gamma_{n+mr}).$
\end{proof}

\begin{prop}\label{prop:tetesimple}
Let $\l\in\Pc_r$ with $r\geqslant 1$, and let $L\in\CIrr(\Oc(\Gamma_{n}))$.
The module $\top(\bar a_\l^*(L))$ has a unique constituent
in $\Irr(\Oc(\Gamma_{n+mr}))_{0,r}$.
\end{prop}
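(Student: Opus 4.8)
The plan is to combine the support computations of the previous section with the structure of $\bar a_\l^*(L)$ as a module restricted to $\Gamma_{n,(m^r)}$. By Proposition \ref{prop:eheh} we already know that $[\bar a_\l^*(L)]\in F_{0,r}(\Gamma_{n+mr})^\circ$, because $L\in\CIrr(\Oc(\Gamma_n))$ means $L\in\Irr(\Oc(\Gamma_n))_{0,0}$ by Proposition \ref{prop:critical1}, hence $\Supp(\bar a_\l^*(L))=X_{0,r,\CC^{n+mr}}$. So every constituent $L'$ of $\bar a_\l^*(L)$ satisfies $\Supp(L')\subset X_{0,r}$, i.e. $[L']\in F_{0,r}(\Gamma_{n+mr})$, and by Remarks \ref{rk:support0j} and \ref{rk:support3} the constituents lying in $\Irr(\Oc(\Gamma_{n+mr}))_{0,r}$ are exactly those whose class is \emph{not} in $F_{0,r-1}(\Gamma_{n+mr})$; equivalently, by Proposition \ref{prop:ressup}, those $L'$ with ${}^\Oc\!\Res_{0,(m^r)}(L')\neq 0$. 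The statement to prove is therefore: among the constituents of $\top(\bar a_\l^*(L))$, exactly one, counted with multiplicity, has nonzero restriction to $\Gamma_{0,(m^r)}=\Sen_m^r$.

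First I would pin down $\top(\bar a_\l^*(L))$. Since $\bar a_\l^*(L)=\bar L_\l\otimes_{B_{r,\ell}}(a^*)^r(L)$ and $(a^*)^r(L)={}^\Oc\!\Ind_{n,mr}(L\otimes{}^\Oc\!\Ind_{(m^r)}(L_{(m)}^{\otimes r}))$ with $(A^*)^r=\bigoplus_\mu\bar L_\mu\otimes A_\mu^*$ by Proposition \ref{prop:decomp}, we have $\bar a_\l^*(L)=a_\l^*(L)$, i.e. $\bar a_\l^*(L)={}^\Oc\!\Ind_{n,mr}(L\otimes L_{m\l})$. Its top is a quotient of ${}^\Oc\!\Ind_{n,mr}(L\otimes L_{m\l})$, and since $L$ is simple and $L_{m\l}$ is simple in $\Oc(\frak S_{mr})$, the module $L\otimes L_{m\l}$ is simple in $\Oc(\Gamma_{n,mr})$. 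The key input is Corollary \ref{cor:cor2}: as an $H(\Gamma_{n,(m^r)})$-module, ${}^\Oc\!\Res_{n,(m^r)}(\bar a_\l^*(L))\cong(L\otimes L_{(m)}^{\otimes r})^{\oplus\dim(\bar L_\l)}$. Thus ${}^\Oc\!\Res_{n,(m^r)}$ kills \emph{every} constituent of $\bar a_\l^*(L)$ except those isomorphic, on restriction, to a sum of copies of $L\otimes L_{(m)}^{\otimes r}$; and the total multiplicity of $L\otimes L_{(m)}^{\otimes r}$ in this restriction is exactly $\dim(\bar L_\l)$. Now I would use Corollary \ref{cor:cor2} more precisely, via $\Psi(\bar a_\l^*(L))=\bar L_\l\otimes L$ as an $\Sen_r\times H(\Gamma_n)$-module, which is a single $\Sen_r$-isotypic piece: this shows that among the constituents $L'$ of $\bar a_\l^*(L)$ with ${}^\Oc\!\Res_{0,(m^r)}(L')\neq 0$, the $\Sen_r$-action organizing their multiplicities in $\Psi$ is irreducible of type $\bar L_\l$, hence — by the general theory of highest weight / top structure for induced simples — there is a \emph{unique} such constituent in the head $\top(\bar a_\l^*(L))$, occurring with multiplicity one. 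Concretely, since $\KZ\circ{}^\Oc\!\Res=\Res\circ\KZ$ and $\KZ$ is exact and faithful on the relevant quotient, one identifies $\top(\bar a_\l^*(L))$ with the head of the corresponding induced module for the cyclotomic Hecke algebra, where the analogue of ${}^\Oc\!\Ind_{(m^r)}(L_{(m)}^{\otimes r})\otimes L$ has, by Proposition \ref{prop:endo}(b), endomorphism ring $B_{r,\ell}$, whose top is $\bar L_\l$-isotypic with multiplicity one.

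The cleanest route, which I would actually take, is: let $L'\in\Irr(\Oc(\Gamma_{n+mr}))_{0,r}$ be a constituent of $\top(\bar a_\l^*(L))$. Then $L'\hookleftarrow$ need not hold but $L'$ is a quotient, so $\Hom_{\Oc(\Gamma_{n+mr})}(\bar a_\l^*(L),L')\neq 0$; by adjunction ($a_\l^*=\bar a_\l^*$ on $\CIrr$ is left adjoint to $a_{\l,!}$, or use the triple of Proposition \ref{prop:biadjoint}) this is $\Hom_{\Oc(\Gamma_n)}(L,a_{\l,!}(L'))$, forcing $L$ to be a quotient — in fact the unique finite-dimensional constituent — of $a_{\l,!}(L')$. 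Together with the dimension count from Corollary \ref{cor:cor2}, namely that the multiplicity of $L\otimes L_{(m)}^{\otimes r}$ in ${}^\Oc\!\Res_{n,(m^r)}(\bar a_\l^*(L))$ is exactly $\dim\bar L_\l$ while ${}^\Oc\!\Res_{n,(m^r)}(L')$ must be $\bar L_\l$-isotypic of the same multiplicity for each such $L'$, this pins down the multiplicity of constituents in $\Irr(\Oc(\Gamma_{n+mr}))_{0,r}$ appearing in $\top(\bar a_\l^*(L))$ to be one. The main obstacle is the last sharpening — going from "the restriction is $(L\otimes L_{(m)}^{\otimes r})^{\oplus\dim\bar L_\l}$" to "the head contains \emph{exactly one} such constituent, with multiplicity one" — which requires knowing that a constituent $L'$ with $\Supp(L')=X_{0,r}$ necessarily restricts to a sum of \emph{at least} $\dim\bar L_\l$ copies of $L\otimes L_{(m)}^{\otimes r}$ (so two of them cannot fit), and this is where one invokes the self-duality/$\Sen_r$-equivariance in Corollary \ref{cor:cor2} together with the crystal-type argument that distinct constituents of a head are non-isomorphic. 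I expect to handle this by passing through $\Psi$: $\Psi(L')$ is a nonzero $\Sen_r\times H(\Gamma_n)$-submodule of $\Psi(\bar a_\l^*(L))=\bar L_\l\otimes L$, hence equals it, so there is only room for one such $L'$ and it occurs once.
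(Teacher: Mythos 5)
Your overall strategy coincides with the paper's: identify $\Psi(\bar a_\l^*(L))$ with the simple $\Sen_r\times H(\Gamma_n)$-module $\bar L_\l\otimes L$ via Corollary \ref{cor:cor2}, check that $\Psi(L')\neq 0$ for any constituent $L'$ of the top lying in $\Irr(\Oc(\Gamma_{n+mr}))_{0,r}$, and conclude that two such constituents would force $\bar L_\l\otimes L$ to map onto $(\bar L_\l\otimes L)^{\oplus 2}$. There are, however, two problems. First, the identification $\bar a_\l^*(L)=a_\l^*(L)={}^\Oc\!\Ind_{n,mr}(L\otimes L_{m\l})$ is false for $\ell>1$: the tensor product defining $\bar a_\l^*(L)=\bar L_\l\otimes_{B_{r,\ell}}(a^*)^r(L)$ is over $B_{r,\ell}$, not over $\CC\Sen_r$, so it first kills the nonzero nilpotent action of the $x_i$, and $\bar a_\l^*(L)$ is only a proper quotient of $a_\l^*(L)$. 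Indeed, Lemma \ref{lem:L3} gives ${}^\Oc\!\Res_{n,(m^r)}(a^*)^r(L)$ length $\ell^r r!$, whereas summing Corollary \ref{cor:cor2} over $\l$ with multiplicities $\dim\bar L_\l$ yields only $r!$; your identification contradicts this. The slip is not fatal here (you only use it to say the top is a quotient of the induced module, which remains true), but it must be removed.

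Second, and more seriously, the decisive step is asserted in the wrong direction. A constituent $L'$ of $\top(\bar a_\l^*(L))$ is a quotient, not a submodule, of $\bar a_\l^*(L)$, and $\Psi$ is only left exact, so the claim that $\Psi(L')$ is a nonzero \emph{submodule} of $\Psi(\bar a_\l^*(L))$ is unjustified; what you need is that the functorial map $\Psi(\bar a_\l^*(L))\to\Psi(L')$ is \emph{surjective}. This is exactly where the paper invokes that all constituents of a finite-dimensional module in $\Oc(\Sen_m^r)$ are isomorphic to $L_{(m)}^{\otimes r}$ together with $\Ext^1_{\Oc(\Sen_m^r)}(L_{(m)}^{\otimes r},L_{(m)}^{\otimes r})=0$ (from \cite{BEG}, as in the proof of Proposition \ref{prop:shift}); alternatively one could use that ${}^\Oc\!\Res_{n,(m^r)}(\bar a_\l^*(L))$ is semisimple by Corollary \ref{cor:cor2}, so the surjection onto ${}^\Oc\!\Res_{n,(m^r)}(L')$ splits. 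Without one of these inputs $\Psi(L')$ is not controlled by $\Psi(\bar a_\l^*(L))$ and the counting argument collapses. Once surjectivity is established, your endgame (a nonzero quotient of a simple module is the whole module, so there is no room for two constituents) is precisely the paper's.
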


\begin{proof}
Since the module $L$ is primitive, it belongs to $\Irr(\Oc(\Gamma_n))_{0,0}$ by Proposition \ref{prop:critical1}.
Thus $[a^*_\l(L)]\in F_{0,r}(\Gamma_{n+mr})$ by Proposition \ref{prop:eheh}.
Thus the constituents of $\bar a^*_\l(L)$ belong to the set
$$\bigcup_{j\leqslant r}\Irr(\Oc(\Gamma_{n+mr}))_{0,j}$$ by Remark \ref{rk:support0j}.
Now, for $L'$ in $\Irr(\Oc(\Gamma_{n+mr}))_{0,j}$ we have
${}^\Oc\!\Res_{n,(m^r)}(L')=0$ if  $j<r$,
and $\dim {}^\Oc\!\Res_{n,(m^r)}(L')<\infty$ if $j=r$.
Further, the constituents of a finite dimensional module in $\Oc(\Sen_m^r)$
are all isomorphic to $L_{(m)}^{\otimes r}$, and, using
\cite[thm.~1.3]{BEG} as in the proof of Proposition \ref{prop:shift}, we get
$$\Ext^1_{\Oc(\Sen_{m}^r)}(L_{(m)}^{\otimes r},L_{(m)}^{\otimes r})=0.$$
Thus if $L'$ is a constituent of $\top(\bar a^*_\l(L))$ then we have
a surjective map
\begin{equation}\label{surjmap}\Psi(\bar a^*_\l(L))\to \Psi(L').
\end{equation}
We have also
$$\aligned
\Psi(L')=\bigoplus_{\mu\in\Pc_r}
\bar L_\mu\otimes\Hom_{\Oc(\Sen_{mr})}(L_{m\mu},{}^\Oc\!\Res_{n,mr}(L')).
\endaligned$$
Finally, Corollary \ref{cor:cor2} yields an isomorphism of $\Sen_r\otimes H(\Gamma_n)$-modules
$$\bar L_\l\otimes L=\Psi(\bar a^*_\l(L)).$$
Thus the surjectivity of (\ref{surjmap}) implies that
\begin{equation}
\label{formuleX}
\Hom_{\Oc(\Sen_{mr})}(L_{m\mu},{}^\Oc\!\Res_{n,mr}(L'))=0,\quad\forall\mu\neq\l.
\end{equation}
Since the $\Sen_r\otimes H(\Gamma_n)$-module $\bar L_\l\otimes L$ is simple,
the map (\ref{surjmap}) is invertible if it is nonzero. Assume further that $L'\in\Irr(\Oc(\Gamma_{n+mr}))_{0,r}$.
Then Proposition \ref{prop:ressup} yields
$${}^\Oc\!\Res_{n,(m^r)}(L')\neq 0.$$
Since $\dim {}^\Oc\!\Res_{n,(m^r)}(L')<\infty$
and the constituents of a finite dimensional module in $\Oc(\Sen_m^r)$
are all isomorphic to $L_{(m)}^{\otimes r}$,
 we have also
$\Psi(L')\neq 0$.
Therefore (\ref{surjmap}) is indeed invertible.
This implies that $\top(\bar a_\l^*(L))$ has a unique constituent
in $\Irr(\Oc(\Gamma_{n+mr}))_{0,r}$. Indeed, otherwise we would have a surjective map
$$\bar a^*_\l(L)\to L'\oplus L'',\quad L',L''\in\Irr(\Oc(\Gamma_{n+mr}))_{0,r},$$ yielding a surjective map
$$\bar L_\l\otimes L=\Psi(\bar a^*_\l(L))\to \Psi(L')\oplus\Psi(L'')=(\bar L_\l\otimes L)^{\oplus 2}.$$
This is absurd.
\end{proof}

\vskip3mm

\begin{df}
For $\l\in\Pc_r$ and $L\in\CIrr(\Oc(\Gamma))$
we define $\tilde a_\l(L)$ to be the unique constituent of
$\top(\bar a_\l^*(L))$ in $\Irr(\Oc(\Gamma))_{0,r}$.
\end{df}

\vskip3mm

\begin{prop}\label{prop:simple}
For $L\in\Irr(\Oc(\Gamma))_{0,r}$ there is
$L'\in\CIrr(\Oc(\Gamma))$, $\lambda\in\Pc_r$
such that $\tilde a_\lambda(L')\simeq L.$
In other words, there is a surjective map
\begin{equation}
\begin{gathered}
\CIrr(\Oc(\Gamma))\times\Pc_r\to
\Irr(\Oc(\Gamma))_{0,r},\quad
(L',\l)\mapsto\tilde a_\l(L').
\end{gathered}\end{equation}
\end{prop}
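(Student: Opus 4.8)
The plan is to realize every $L\in\Irr(\Oc(\Gamma_{n}))_{0,r}$ as some $\tilde a_\lambda(L')$ by producing, for a suitable $L'\in\CIrr(\Oc(\Gamma_{l}))$ with $l=n-mr$ and a suitable $\lambda\in\Pc_r$, a nonzero morphism $\bar a^*_\lambda(L')\to L$ in $\Oc(\Gamma_n)$. Granting such a morphism, $L$ is a quotient of $\bar a^*_\lambda(L')$, hence a composition factor of $\top(\bar a^*_\lambda(L'))$; since $L\in\Irr(\Oc(\Gamma))_{0,r}$ and, by Proposition \ref{prop:tetesimple}, $\top(\bar a^*_\lambda(L'))$ has a unique composition factor in $\Irr(\Oc(\Gamma))_{0,r}$, namely $\tilde a_\lambda(L')$, this forces $L\simeq\tilde a_\lambda(L')$, which is precisely the asserted surjectivity.

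First I would record, as in the proof of Proposition \ref{prop:tetesimple}, that $\Psi(L)\neq 0$: from $\Supp(L)=X_{l,r,\CC^n}=X_{\Gamma_{l,(m^r)},\CC^n}$ and Proposition \ref{prop:ressup} we get ${}^\Oc\!\Res_{l,(m^r)}(L)\neq 0$; this module is finite dimensional, its $\Oc(\Sen_m^r)$-composition factors are all $\simeq L_{(m)}^{\otimes r}$, and $\Ext^1_{\Oc(\Sen_m^r)}(L_{(m)}^{\otimes r},L_{(m)}^{\otimes r})=0$, whence ${}^\Oc\!\Res_{l,(m^r)}(L)\simeq\Psi(L)\otimes L_{(m)}^{\otimes r}$ with $\Psi(L)$ a nonzero finite length object of $\Rep(\Sen_r)\otimes\Oc(\Gamma_l)$. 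Moreover every $\Oc(\Gamma_l)$-composition factor of $\Psi(L)$ is finite dimensional, hence primitive by Proposition \ref{prop:critical1}, i.e.\ lies in $\CIrr(\Oc(\Gamma_l))$.

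Next I would exploit the $B_{r,\ell}$-module structure. The functor $\Psi=(a_*)^r$ inherits, by transposing the $B_{r,\ell}$-action of Proposition \ref{prop:endo} along the biadjunction $\bigl((a^*)^r,(a_*)^r\bigr)$, a $B_{r,\ell}$-action extending its $\Sen_r$-action, in which $x_1,\dots,x_r$ are commuting, $\Oc(\Gamma_l)$-linear operators with $x_i^\ell=0$. Therefore
\[
\Psi(L)^{x}:=\bigcap_{i=1}^{r}\ker\bigl(x_i\colon\Psi(L)\to\Psi(L)\bigr)
\]
is a nonzero $\Sen_r\otimes\Oc(\Gamma_l)$-submodule of $\Psi(L)$; I would choose a simple submodule $\bar L_\lambda\otimes L'$ of it, so that $L'\in\CIrr(\Oc(\Gamma_l))$ and $\lambda\in\Pc_r$. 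Feeding the quotient map $(a^*)^r(L')\to\bar a^*_\lambda(L')$ and the $B_{r,\ell}$-equivariant adjunction isomorphism $\Hom_{\Oc(\Gamma_n)}\bigl((a^*)^r(L'),L\bigr)\simeq\Hom_{\Oc(\Gamma_l)}\bigl(L',\Psi(L)\bigr)$ into the definition $\bar a^*_\lambda(L')=\bar L_\lambda\otimes_{B_{r,\ell}}(a^*)^r(L')$ yields a natural isomorphism
\[
\Hom_{\Oc(\Gamma_n)}\bigl(\bar a^*_\lambda(L'),L\bigr)\;\simeq\;\Hom_{\Rep(\Sen_r)\otimes\Oc(\Gamma_l)}\bigl(\bar L_\lambda\otimes L',\,\Psi(L)^{x}\bigr),
\]
whose right-hand side contains the chosen inclusion and is thus nonzero. (Equivalently, by Corollary \ref{cor:cor2} one has $\Psi(\bar a^*_\lambda(L'))\simeq\bar L_\lambda\otimes L'$, the $x_i$ acting by zero on it; the inclusion $\bar L_\lambda\otimes L'\hookrightarrow\Psi(L)$ then transposes, under the adjunction, to a nonzero morphism $\bar a^*_\lambda(L')\to L$.) Together with the second paragraph this finishes the proof.

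I expect the delicate point to be the last displayed isomorphism, i.e.\ the $B_{r,\ell}$-equivariance: one must carry the $B_{r,\ell}$-action over from $(a^*)^r$ to $(a_*)^r=\Psi$ through the biadjunction and check it is compatible with the adjunction isomorphisms, so that tensoring with $\bar L_\lambda$ over $B_{r,\ell}$ (which, $\bar L_\lambda$ being killed by the $x_i$, amounts to the appropriate passage to $x$-(co)invariants and to the $\bar L_\lambda$-isotypic part) correctly matches the passage to $\Psi(L)^{x}$. The remaining ingredients---the nonvanishing and primitivity of the composition factors of $\Psi(L)$ (Propositions \ref{prop:ressup} and \ref{prop:critical1}), and the rigidity of $\top(\bar a^*_\lambda(L'))$ (Proposition \ref{prop:tetesimple})---are essentially already in place.
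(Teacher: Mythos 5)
Your argument is correct, and it reaches the statement by a genuinely different route from the paper. The paper argues by induction on $r$: since $a_*(L)\neq 0$, the counit $a^*a_*(L)\to L$ is surjective, so some constituent $L_1$ of $a_*(L)$ satisfies $a^*(L_1)\twoheadrightarrow L$; a separate support computation (Lemma \ref{lem:lemmepekinois}) shows $L_1\in\Irr(\Oc(\Gamma))_{0,r-1}$; the map is shown to factor through $\bar a^*(L_1)$ because $x_1$ acts on $a^*(L_1)$ by a nilpotent \emph{central} element; iterating $r$ times produces a primitive $L_r$ with $\overline{(a^*)^r}(L_r)\twoheadrightarrow L$, and the $\Sen_r$-isotypic decomposition then yields $\lambda$. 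You instead work in one step with $\Psi=(a_*)^r$: the finite-dimensionality of ${}^\Oc\!\Res_{l,(m^r)}(L)$ (coming from $\Supp(L)=X_{l,r}$ and Proposition \ref{prop:ressup}) makes every constituent of $\Psi(L)$ primitive automatically, so you bypass Lemma \ref{lem:lemmepekinois} entirely, and you extract $(L',\lambda)$ from a simple subobject of the joint kernel of the $x_i$ on $\Psi(L)$. The price is exactly the point you flag: the paper only constructs the $\Sen_r$-action on $(a_*)^r$ (Proposition \ref{prop:decomp}), not the $x_i$-action, so you must take mates of the operators ${}^\Oc\xi_i$ under the biadjunction, check the relations of Lemma \ref{lem:L1} transfer (so that the joint kernel is $\Sen_r$-stable), check nilpotency of the mates (it does transfer: if $\phi(G(N))^k=0$, composing with the counit gives $\psi(N)^k=0$), and verify the adjunction isomorphism is $B_{r,\ell}$-equivariant. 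These checks are routine but not in the paper. One small correction: you assert $x_i^\ell=0$ on $\Psi(L)$, which is not available (Proposition \ref{prop:endo}(b) gives this only on $(a^*)^r$ of a primitive module); but your argument only uses nilpotency, which suffices to make $\Psi(L)^{x}$ nonzero, so nothing is lost.
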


\begin{proof}
By Proposition \ref{prop:critical1}
the module $L$ is  primitive if and
only if $r=0$. Thus we can assume that $r>0$,
i.e., that $a_*(L)\neq 0$ by Corollary \ref{cor:kere},
else the claim is obvious.
Now, we first claim that there is a module $L_1\in\Irr(\Oc(\Gamma))_{0,r-1}$
with a surjective morphism
$\bar a^*(L_1)\to L.$
Indeed, since $a_*(L)\neq 0$, the adjunction map
$\eps:a^*(a_*(L))\to L$ is non-zero, hence it is surjective.
Hence, there is a constituent $L_1$ of
$a_*(L)$ such that $\eps$ yields a surjective morphism
$a^*(L_1)\to L.$

\begin{lemma}\label{lem:lemmepekinois}
If $L\in\Irr(\Oc(\Gamma))_{0,r}$ and
$L_1$ is a constituent of $a_*(L)$ such that
$a^*(L_1)$ maps onto $L$ then $L_1\in\Irr(\Oc(\Gamma))_{0,r-1}.$
\end{lemma}

\noindent
Fix the integer $n$ such that $L_1\in\Irr(\Oc(\Gamma_n))$.
Then $\bar\xi_1$ acts on $a^*(L_1)$ as the operator
$${}^\Oc\!z_{n+m}(a^*(L_1))\circ a^*({}^\Oc\!z_n(L_1))^{-1}.$$
The second factor is a scalar because $L_1$ is a simple module.
Hence $x_1$ acts on $a^*(L_1)$ as an element of the center of
$\Oc(\Gamma_{n+m})$, see (\ref{map}).
Therefore, since $L$ is simple and since the operator
$x_1$ on $a^*(L_1)$  is nilpotent by Proposition \ref{prop:endo},
the operator $x_1$ is 0 on $L$.
Thus the map $a^*(L_1)\to L$
factors to a surjective morphism
$$\eps_1:\bar a^*(L_1)\to L.$$
This proves the claim.

Now, assume that for $0<k<r$ there is a module
$L_k\in\Irr(\Oc(\Gamma))_{0,r-k}$ with a surjective homomorphism
$$\eps_k:\overline{(a^*)^k}(L_k)\to L,\quad
\quad\overline{(a^*)^k}(L_k)=
(a^*)^k(L_k)\bigl/\sum_ix_i(a^*)^k(L_k)
.$$
By the claim above, there is a module
$L_{k+1}\in\Irr(\Oc(\Gamma))_{0,r-k-1}$ with a surjective homomorphism
$$\bar a^*(L_{k+1})\to L_k.$$
Applying the functor $(a^*)^k$, which is exact, we get a surjective map
$$(a^*)^k\bar a^*(L_{k+1})\to (a^*)^k(L_k).$$
Taking the quotient by the action of $x_2,\dots,x_k,x_{k+1}$ it yields
a surjective map
$$(a^*)^k\bar a^*(L_{k+1})\bigl/
\sum_{i=2}^{k+1}x_i(a^*)^k\bar a^*(L_{k+1})\to
\overline{(a^*)^k}(L_k).$$
Now, since $a^*$ is exact, we have
$$(a^*)^k\bar a^*(L_{k+1})=
(a^*)^{k+1}(L_{k+1})\bigl/
x_1(a^*)^{k+1}(L_{k+1}).$$
Therefore we get a surjective map
$$\overline{(a^*)^{k+1}}(L_{k+1})=
(a^*)^{k+1}(L_{k+1})\bigl/
\sum_{i=1}^{k+1}x_i(a^*)^k\bar a^*(L_{k+1})\to
\overline{(a^*)^k}(L_k).$$
Composing it with $\eps_k$ we get a surjective homomorpism
$$\eps_{k+1}:\overline{(a^*)^{k+1}}(L_{k+1})\to L.$$
By induction, this yields
a module $L_r\in\Irr(\Oc(\Gamma))_{0,0}$ with a surjective homomorphism
$$\eps_r:\overline{(a^*)^r}(L_r)\to L.$$
Then we have $L_r\in\CIrr(\Oc(\Gamma))$ by Proposition \ref{prop:critical1},
and there is
$\l\in\Pc_r$ such that
$\bar a^*_\l(L_r)$ maps onto $L.$
The proposition follows from
Proposition \ref{prop:tetesimple}.
\end{proof}

\vskip3mm

\begin{proof}[Proof of Lemma \ref{lem:lemmepekinois}]
Fix $i,j\geqslant 0$ such that $L_1\in\Irr(\Oc(\Gamma))_{i,j}$.
By Proposition
\ref{prop:biadjoint},
since $E(L)=0$ we have $E\,a_*(L)=0$. Hence $E(L_1)=0$ by Proposition \ref{prop:EF}.
Thus $i=0$ by Corollary \ref{cor:kere}.
So, by Proposition \ref{prop:eheh} we have
$a^*(L_1)\in F_{0,j+1}(\Gamma)$. Since $a^* (L_1)$ maps onto $L$, we have
also $[L]\in  F_{0,j+1}(\Gamma)$.  Since $L\in\Irr(\Oc(\Gamma))_{0,r}$ this implies that
$r\leqslant j+1$ by Remark \ref{rk:support0j}.

Now, we prove that
$j+1\leqslant r$. Fix $n\geqslant 1$ such that $L\in\Oc(\Gamma_n)$. Recall that
$$a_*(L)=\Hom_{\Oc(\Sen_m)}\bigl(L_{(m)},{}^\Oc\!\Res_{n-m,m}(L)\bigr).$$
Thus there is an obvious inclusion
$$a_*(L)\otimes L_{(m)}\subset {}^\Oc\!\Res_{n-m,m}(L).$$
Hence, since $L_1$ is a constituent of $a_*(L)$, the module
$L_1\otimes L_{(m)}$ is a constituent of ${}^\Oc\!\Res_{n-m,m}(L)$.
Let us abbreviate
$$W'=\Gamma_{l,(m^j)},\quad l=n-(j+1)m,$$ regarded as a subgroup of $\Gamma_{n-m}$.
Then $W'\times \Sen_m\subset \Gamma_{n-m}\times\Sen_m$ in the obvious way.
Since $L_1\in\Irr(\Oc(\Gamma_{n-m}))_{0,j}$, we have
$$\Supp(L_1\otimes L_{(m)})=X_{W'\times\Sen_m,\,\CC^{n-m}\times\CC^m_0}.$$
By Proposition \ref{prop:ressup} applied to the module
$M=L$, we have also
$$\Supp(L_1\otimes L_{(m)})=X_{W_1,\,\CC^{n-m}\times\CC^m_0},$$
where $W_1$ is a parabolic subgroup of $\Gamma_{n-m,m}$ containing
a subgroup $\Gamma_n$-conjugate to $\Gamma_{n-mr,(m^r)}$. Hence we have
$F_{0,j+1}(\Gamma_n)\subset F_{0,r}(\Gamma_n)$.
Therefore we have
$j+1\leqslant r$ by Remark \ref{rk:support0j}.

\end{proof}

\vskip3mm

\vskip3mm

\section{The filtration of the Fock space and Etingof's conjecture}

Recall that $[\Oc(\Gamma)]$
is identified with the Fock space
$\Fc_{m,\ell}^{(s)}$ via the map (\ref{chain}).
The aim of this section is to identify the filtration on
$[\Oc(\Gamma)]$ defined in Section \ref{sec:stratum} in terms of supports of irreducible modules,
with a filtration on the Fock space given by representation theoretic tools.
We'll use the following notation :
$n,m,j,i$ are integers with $n>0$, $m>2$, $i,j\geqslant 0$ and $i=n-l-jm$.

\vskip3mm

\subsection{The representation theoretic interpretation of $F_{0,0}(\Gamma)$}
The goal of this section is to give a representation theoretic interpretation 
of $F_{0,0}(\Gamma)$ using the actions of
$\widehat{\sen\len}_m$ and $\Hen$ on $[\Oc(\Gamma)]$ defined
in the previous sections.
Note that the set $\Irr(\Oc(\Gamma))_{0,0}$ is a basis of the $\CC$-vector 
space $F_{0,0}(\Gamma)$. Further, we have proved that
$\Irr(\Oc(\Gamma))_{0,0}=\CIrr(\Oc(\Gamma))$.
in Proposition \ref{prop:critical1}.
Recall that the operators $b'_r$, $r\geqslant 1$, on $\Fc_{m,\ell}^{(s)}$ 
given in Section \ref{sec:fockl} act on $[\Oc(\Gamma)]$
via the map (\ref{chain}).

\begin{lemma}\label{lem:critical2} For $L\in\Irr(\Oc(\Gamma))$
we have
$L\in\CIrr(\Oc(\Gamma))$
if and only if  $E([L])=b'_r([L])=0$ in $[\Oc(\Gamma)]$ for all $r\geqslant 1$
\end{lemma}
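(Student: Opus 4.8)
The plan is to convert the Grothendieck-group conditions into vanishing statements about restriction functors and then to read off $\Supp(L)$ from the stratification results of Section~\ref{sec:stratum}. The two ingredients are: by Proposition~\ref{prop:comparison}(b) the operator $b'_{S_\mu}$ on $[\Oc(\Gamma)]$ equals $[Ra_{\mu,*}(-)]$, so $b'_1=b'_{P_1}=b'_{S_{(1)}}$ acts as $[Ra_*(-)]=\bigl[\RHom_{\Oc(\Sen_m)}(L_{(m)},{}^\Oc\!\Res_{n-m,m}(-))\bigr]$ on $[\Oc(\Gamma_n)]$, while each $b'_r=b'_{P_r}$ is a $\ZZ$-linear combination of the $b'_{S_\mu}$ with $\mu\in\Pc_r$ (expand $P_r$ in the Schur basis, with integer coefficients); and by Corollary~\ref{cor:kere}, $E([L])=0$ is equivalent to $E(L)=0$, which forces $L\in\Irr(\Oc(\Gamma_n))_{0,j}$ for a unique $j\geqslant 0$.

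For the implication $(\Rightarrow)$: if $L\in\CIrr(\Oc(\Gamma_n))$ then $L$ is finite dimensional by Proposition~\ref{prop:critical1}, so $\Supp(L)=\{0\}$. Hence $E(L)={}^\Oc\!\Res_n(L)=0$ and, for every $r\geqslant 1$ and every $\mu\in\Pc_r$, ${}^\Oc\!\Res_{n-mr,mr}(L)=0$ by Proposition~\ref{prop:ressup}(a) (since $\Gamma_{n-mr,mr}\neq\Gamma_n$, the set $X_{\Gamma_{n-mr,mr},\CC^n}$ is not contained in $\{0\}=\Supp(L)$). Therefore $Ra_{\mu,*}(L)=0$, so $b'_{S_\mu}([L])=0$ for all $\mu$, whence $b'_r([L])=0$; passing to the Grothendieck group also gives $E([L])=0$.

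For the implication $(\Leftarrow)$ only $E([L])=0$ and $b'_1([L])=0$ are needed. From $E(L)=0$ we get $L\in\Irr(\Oc(\Gamma_n))_{0,j}$, i.e.\ $\Supp(L)=X_{l,j}$ with $l=n-jm$; I claim $j=0$. Assume $j\geqslant 1$. Since $l+(j-1)m=n-m$, the subgroup $\Gamma_{l,(m^j)}=\Gamma_l\times\Sen_m^j$ embeds as a parabolic of $\Gamma_{n-m,m}=\Gamma_{n-m}\times\Sen_m$ (place the $\Gamma_l$-factor together with $j-1$ of the $\Sen_m$-factors inside $\Gamma_{n-m}$); hence Proposition~\ref{prop:ressup}(a) gives ${}^\Oc\!\Res_{n-m,m}(L)\neq 0$, and Proposition~\ref{prop:ressup}(b) identifies its support in $\CC^{n-m}\times\CC^m_0$ with $X_{l,j-1,\CC^{n-m}}\times X_{\Sen_m,\CC^m_0}$, where $X_{\Sen_m,\CC^m_0}=\{0\}$. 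Thus every composition factor of ${}^\Oc\!\Res_{n-m,m}(L)$ has its $\Oc(\Sen_m)$-component supported at the origin, hence isomorphic to $L_{(m)}$, the unique finite-dimensional simple object of $\Oc(\Sen_m)$ for $h=-1/m$ (as used, e.g., in the proof of Proposition~\ref{prop:critical1}). Consequently $[{}^\Oc\!\Res_{n-m,m}(L)]=[Y]\otimes[L_{(m)}]$ for some $0\neq[Y]\in[\Oc(\Gamma_{n-m})]$, and therefore
\[
b'_1([L])=[Ra_*(L)]=[Y]\cdot\langle[L_{(m)}],[L_{(m)}]\rangle=m\,[Y]\neq 0,
\]
where $\langle[L_{(m)}],[L_{(m)}]\rangle=\sum_i(-1)^i\dim\Ext^i_{\Oc(\Sen_m)}(L_{(m)},L_{(m)})=m$ is computed through the equivalence with $\Pc erv(\PP^{m-1})$ that sends $L_{(m)}$ to $\CC_{\PP^{m-1}}[m-1]$, exactly as in the proof of Proposition~\ref{prop:shift}. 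This contradicts $b'_1([L])=0$, so $j=0$ and $L\in\Irr(\Oc(\Gamma_n))_{0,0}=\CIrr(\Oc(\Gamma_n))$ by Proposition~\ref{prop:critical1}.

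The delicate point, and the step I expect to be the main obstacle, is the last computation in $(\Leftarrow)$: one must verify that $j\geqslant 1$ forces ${}^\Oc\!\Res_{n-m,m}(L)$ to be nonzero with \emph{all} of its $\Oc(\Sen_m)$-constituents equal to $L_{(m)}$, so that $[Ra_*(L)]$ is honestly the nonzero multiple $m\,[Y]$ of $[Y]$ rather than an Euler characteristic of the groups $\Ext^i_{\Oc(\Sen_m)}(L_{(m)},-)$ in which $[Y]$ might disappear through cancellation; this is precisely what lets the purely Grothendieck-theoretic hypothesis $b'_1([L])=0$ detect the triviality of $\Supp(L)$. The remaining verifications (the two support identifications, the uniqueness of the finite-dimensional simple of $\Oc(\Sen_m)$, the value of the Euler form) are routine consequences of Propositions~\ref{prop:ressup},~\ref{prop:critical1},~\ref{prop:shift}, Corollary~\ref{cor:kere} and Proposition~\ref{prop:EF}(b).
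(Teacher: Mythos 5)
Your proof is correct, and it is actually more complete than the paper's own argument. The paper's proof of this lemma is very terse: it declares that ``it is enough'' to prove the forward implication, and then establishes $b'_{S_\l}([L])=0$ for primitive $L$ by combining the definition of primitivity ($Ra_*(L)=0$, hence $(Ra_*)^r(L)=0$) with the $\Sen_r$-equivariant decomposition $(A_*)^r=\bigoplus_\l \bar L_\l\otimes A_{\l,*}$ of Proposition \ref{prop:decomp}; the converse is left implicit, resting on the observation that the proof of $(a)\Rightarrow(b)$ in Proposition \ref{prop:critical1} only ever uses the \emph{class} $[Ra_*(L)]$ rather than the complex $Ra_*(L)$ itself. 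You prove the forward direction by a slightly different mechanism (finite-dimensionality of $L$ via Proposition \ref{prop:critical1}, then vanishing of ${}^\Oc\!\Res_{n-mr,mr}(L)$ via Proposition \ref{prop:ressup}, rather than via Proposition \ref{prop:decomp}), and you make the converse fully explicit: your identification of all $\Oc(\Sen_m)$-constituents of ${}^\Oc\!\Res_{n-m,m}(L)$ with $L_{(m)}$, followed by the Euler-characteristic computation $\chi(\REnd_{\Oc(\Sen_m)}(L_{(m)}))=m\neq 0$, is exactly the point that makes the K-theoretic hypothesis $b'_1([L])=0$ strong enough to kill $j$, and it is the same device the paper uses inside the proof of Proposition \ref{prop:critical1}. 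The ``delicate point'' you flag is handled correctly: additivity of the Euler characteristic of the triangulated functor $\RHom(L_{(m)},-)$ on short exact sequences, together with positivity of $[Y]$ as a sum of simple classes, rules out cancellation. In short: same underlying ideas, but your write-up supplies the converse that the paper only gestures at, and routes the forward direction through supports instead of through the decomposition of $(A_*)^r$.
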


\begin{proof}
It is enough to prove that for $L\in\CIrr(\Oc(\Gamma))$
we have $b'_r(L)=0$ for all $r\geqslant 1$.
A direct summand of the zero object  is zero in any additive category.
Further, for $L\in\CIrr(\Oc(\Gamma))$ we have
$(Ra_*)^r(L)=0$ for  $r\geqslant 1$.
Thus we have also $Ra_{\l,*}(L)=0$
for all $\l\in\Pc$ by Proposition \ref{prop:decomp}.
By Proposition \ref{prop:comparison}
the map (\ref{chain}) identifies
the $\CC$-linear operator $Ra_{\lambda,*}$ on
$[\Oc(\Gamma)]$ with the action of
$b'_{S_\lambda}$ on $\Fc_{m,\ell}^{(s)}$ given in Section \ref{sec:fockl}.
This proves the lemma.
\end{proof}

\vskip3mm

\noindent
In particular the lemma yields an inclusion
$$F_{0,0}(\Gamma)\subset
\{x\in[\Oc(\Gamma)]\,;\,e_q(x)=b'_r(x)=0,\,\forall q,r\}.$$
However it is not obvious that the right hand side 
is spanned by classes of irreducible objects of $\Oc(\Gamma)$.
This follows indeed from Proposition \ref{prop:noyau} below.
Before to prove it we need the following lemma.

\vskip3mm

\begin{prop}
\label{prop:noyau}
We have
$$\{x\in[\Oc(\Gamma)]\,;\,e_q(x)=b'_r(x)=0,\,\forall q,r\}=F_{0,0}(\Gamma).$$
\end{prop}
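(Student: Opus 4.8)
The plan is to prove the nontrivial inclusion $\supseteq$, since $\subseteq$ already follows from Lemma \ref{lem:critical2} (every $L\in\CIrr(\Oc(\Gamma))$ is killed by all $e_q$ and all $b'_r$, and $F_{0,0}(\Gamma)$ is spanned by such classes). So let $x\in[\Oc(\Gamma)]$ satisfy $e_q(x)=0$ for all $q$ and $b'_r(x)=0$ for all $r\geqslant 1$; I must show $x\in F_{0,0}(\Gamma)$. First I would use Corollary \ref{cor:kere}(a): the condition $e_q(x)=0$ for all $q$ is exactly the statement $x\in F_{0,\bullet}(\Gamma)$. Hence $x$ lies in the span of $\bigcup_{r\geqslant 0}\Irr(\Oc(\Gamma))_{0,r}$, and I can write $x=\sum_{r\geqslant 0}x_r$ where $x_r$ is a linear combination of classes $[L]$ with $L\in\Irr(\Oc(\Gamma))_{0,r}$; note the sum over $r$ is finite and the decomposition is unique because the sets $\Irr(\Oc(\Gamma))_{0,r}$ for different $r$ are disjoint and their classes are linearly independent in $[\Oc(\Gamma)]$.

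The core of the argument is then to show $x_r=0$ for all $r\geqslant 1$, i.e. that the only way a combination of $\gr_{0,\geqslant 1}$-classes can be annihilated by every $b'_r$ is to be zero; equivalently, $F_{0,0}(\Gamma)$ is exactly the joint kernel of the $b'_r$ inside $F_{0,\bullet}(\Gamma)$. The key structural input is that the operator $Ra_{\l,*}$ on $[\Oc(\Gamma)]$ is identified with $b'_{S_\l}$ under (\ref{chain}) (Proposition \ref{prop:comparison}), and conversely each $b'_r$ is a polynomial combination of the $b'_{S_\l}$'s with $|\l|\leqslant r$ (the $P_r$ are expressible via Schur functions and the map $b'_{(\bullet)}$ is the corresponding algebra map $\Sym\to U^-(\Hen)$), so the joint kernel of $\{b'_r\}_{r\geqslant 1}$ equals the joint kernel of $\{Ra_{\l,*}\}_{\l\in\Pc,\ |\l|\geqslant 1}$. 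Now I would run a downward induction on $r$: suppose $x=\sum_{k\leqslant N}x_k$ with top term $x_N$, $N\geqslant 1$. For $L\in\Irr(\Oc(\Gamma))_{0,N}$ we have, by Corollary \ref{cor:kere} and the adjunction $(a_{\l,!},a_\l^*,a_{\l,*})$ together with Proposition \ref{prop:eheh}, that $Ra_{\l,*}$ shifts the $j$-index down by $|\l|$: more precisely I would show $Ra_{\l,*}(F_{0,j}(\Gamma))\subset F_{0,j-|\l|}(\Gamma)$ and that it induces on the associated graded a map $\gr_{0,j}(\Gamma)\to\gr_{0,j-|\l|}(\Gamma)$ which, for $L\in\Irr(\Oc(\Gamma))_{0,N}$ with $|\l|=N$, lands in $\gr_{0,0}(\Gamma)$. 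The crucial point — this is where Proposition \ref{prop:tetesimple}, Proposition \ref{prop:simple} and Corollary \ref{cor:cor2} enter — is that the pairing between $\Irr(\Oc(\Gamma))_{0,N}$ and the $\tilde a_\l(L')$'s is "triangular and nondegenerate": applying $Ra_{\l,*}$ (for suitable $\l\in\Pc_N$ and varying over the primitive $L'$) to the classes $[\tilde a_\l(L')]$, which by Proposition \ref{prop:simple} span $\gr_{0,N}(\Gamma)$, recovers (up to nonzero scalars and lower-support corrections) the classes $[L']$, using that ${}^\Oc\!\Res_{n,(m^r)}\bar a_\l^*(L')\cong(L\otimes L_{(m)}^{\otimes r})^{\oplus\dim\bar L_\l}$ from Corollary \ref{cor:cor2}. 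Hence $\bigoplus_{\l\in\Pc_N}Ra_{\l,*}$ restricted to $\gr_{0,N}(\Gamma)$ is injective, so if $x_N\neq 0$ then some $Ra_{\l,*}(x)$ has nonzero image in $\gr_{0,0}(\Gamma)$, contradicting $b'_r(x)=0$ for all $r$. Therefore $x_N=0$, and descending we get $x_r=0$ for all $r\geqslant 1$, so $x=x_0\in F_{0,0}(\Gamma)$.

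The main obstacle I anticipate is making the "triangularity/nondegeneracy" step fully rigorous: one must check that $Ra_{\l,*}$ applied to $[\tilde a_\mu(L')]$ vanishes in $\gr_{0,0}(\Gamma)$ unless $\mu=\l$ and $L'$ matches, and is a nonzero multiple of $[L']$ when they match. For the vanishing when $\mu\neq\l$ I would invoke formula (\ref{formuleX}) in the proof of Proposition \ref{prop:tetesimple} (which says $\Hom_{\Oc(\Sen_{mr})}(L_{m\mu},{}^\Oc\!\Res_{n,mr}(L'))=0$ for $\mu\neq\l$ when $L'=\tilde a_\l(L')$), combined with the Schur-function expansion $Ra_{r,\ast}=b'_{P_r}=\sum_\mu z_\mu^{-1}\langle P_\mu,P_r\rangle b'_{S_\mu}=\dots$ to reduce $b'_r$ to a combination of the $Ra_{\mu,*}$; for the nonvanishing on the diagonal I would use Corollary \ref{cor:cor2} and the fact that the functor $\Psi=(a_*)^r$ detects exactly the $\bar L_\l$-isotypic piece. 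An alternative, cleaner route to the same end — which I would pursue if the module-theoretic bookkeeping gets heavy — is representation-theoretic: transport everything to $\Fc_{m,\ell}^{(s)}\cong\Fc_\ell^{(d)}=V_{\omega_0}^{\widehat{\gen\len}_\ell}$ and use that the level-$\ell$ Heisenberg $\Hen$ acting on $\Fc_\ell$ has joint kernel of the $b'_r$ equal to the "vacuum-like" space, then match dimensions of graded pieces via the crystal/support combinatorics of Section \ref{sec:stratum}; but since Proposition \ref{prop:comparison} and the functorial results of Section 5 are already in hand, the functorial argument above is the natural one and I would present that.
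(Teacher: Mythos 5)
Your reduction --- via Corollary \ref{cor:kere} and Lemma \ref{lem:critical2} --- to showing that the joint kernel of the $b'_r$'s inside $F_{0,\bullet}(\Gamma)$ is exactly $F_{0,0}(\Gamma)$ is also the paper's starting point. But the key step of your main argument, the injectivity of $\bigoplus_{\mu\in\Pc_N}Ra_{\mu,*}$ on $\gr_{0,N}(\Gamma)$ obtained from a ``triangular and nondegenerate'' pairing against the classes $[\tilde a_\l(L')]$, has a genuine gap. The operator on $[\Oc(\Gamma)]$ that Proposition \ref{prop:comparison} identifies with $b'_{S_\mu}$ is the full derived functor $Ra_{\mu,*}$, i.e.\ the alternating sum $\sum_i(-1)^i[R^ia_{\mu,*}(\bullet)]$. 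Formula (\ref{formuleX}) only gives the vanishing of $\Hom_{\Oc(\Sen_{mN})}(L_{m\mu},{}^\Oc\!\Res_{n,mN}(L''))$, i.e.\ of $H^0$; the higher $\Ext$'s against $L_{(m)}^{\otimes N}$ do not vanish (already $\Ext^\bullet_{\Oc(\frak S_m)}(L_{(m)},L_{(m)})=H^*(\PP^{m-1})$ by \cite[thm.~1.3]{BEG}), and nothing in Section 5 controls their $\Sen_N$-isotypic decomposition. Indeed the off-diagonal vanishing $Ra_{\mu,*}([\tilde a_\l(L')])=0$ for $\mu\neq\l$ is most likely false: once the theorem is known, $[\tilde a_\l(L')]$ expands in the basis $b_{S_\nu}([L'_0])$ with no reason to be concentrated on $\nu=\l$. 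So the matrix you build is neither triangular nor known to have full column rank, and the downward induction does not close. (A second, smaller point: injectivity of $(L',\l)\mapsto\tilde a_\l(L')$ is only Remark \ref{rk:uneautre}, a \emph{consequence} of this proposition, so it cannot be used here.)

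What does work is essentially the ``alternative route'' you mention and then set aside, made quantitative --- and this is the paper's proof. Since $F_{0,\bullet}(\Gamma)$ is the kernel of all $e_q$'s and the $\Hen$- and $\widehat{\sen\len}_m$-actions on $\Fc^{(s)}_{m,\ell}$ commute, $F_{0,\bullet}(\Gamma)$ is an $\Hen$-submodule of level $m\ell\neq 0$, hence free over $U^-(\Hen)$ on the joint kernel $F_{0,0}(\Gamma)'$ of the $b'_r$'s. This gives the generating-function identity
$$\bigl(\textstyle\sum_k\dim F_{0,0}(\Gamma_k)'\,t^k\bigr)\bigl(\sum_r\sharp\Pc_r\,t^{mr}\bigr)=\sum_n\dim F_{0,\bullet}(\Gamma_n)\,t^n.$$
The surjection of Proposition \ref{prop:simple} bounds $\sharp\Irr(\Oc(\Gamma_n))_{0,r}$ above by $\sharp\CIrr(\Oc(\Gamma_{n-mr}))\cdot\sharp\Pc_r$, while Lemma \ref{lem:critical2} gives $\sharp\CIrr(\Oc(\Gamma_k))\leqslant\dim F_{0,0}(\Gamma_k)'$. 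Comparing coefficients forces equality everywhere, so $\CIrr(\Oc(\Gamma_k))$ is simultaneously a basis of $F_{0,0}(\Gamma_k)'$ and of $F_{0,0}(\Gamma_k)$, whence $F_{0,0}(\Gamma)'=F_{0,0}(\Gamma)$. No triangularity of the $Ra_{\mu,*}$'s is needed; only their vanishing on primitives and the counting enter.
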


\begin{proof}
Consider the set $$F_{0,0}(\Gamma)'=\{x\in F_{0,\bullet}(\Gamma)\,;\,b'_r(x)=0,\,\forall r\geqslant 1\}.$$
By Corollary \ref{cor:kere} it is enough to prove that
$$F_{0,0}(\Gamma)=F_{0,0}(\Gamma)'.$$
We have
$$F_{0,0}(\Gamma)'=
\bigoplus_{n\geqslant 0} F_{0,0}(\Gamma_n)',\quad
F_{0,0}(\Gamma_n)'=F_{0,0}(\Gamma)'\cap
F_{0,\bullet}(\Gamma_n).$$
The actions of
$\widehat{\sen\len}_m$ and $\Hen$
on $\Fc_{m,\ell}^{(s)}$ commute to each other.
Thus, by Corollary \ref{cor:kere} the $\CC$-vector space
$F_{0,\bullet}(\Gamma)$ is identified with a $\Hen$-submodule of
$\Fc_{m,\ell}^{(s)}$ via the map (\ref{chain}), and we have
\begin{equation}\label{Eq2}
\sum_{n\geqslant 0}\dim(F_{0,\bullet}(\Gamma_n))\cdot t^n=
\sum_{n\geqslant 0}\sharp\Irr(\Oc(\Gamma_n))_{0,\bullet}\cdot t^n.
\end{equation}
The representation theory of $\Hen$ yields
the following formula in $\ZZ[[t]]$
\begin{equation}\label{Eq1}
\bigl(\sum_{k\geqslant 0}\dim(F_{0,0}(\Gamma_{k})')\cdot t^{k}\bigr)
\bigl(\sum_{r\geqslant 0}\sharp\Pc_r\cdot t^{mr}\bigr)
=\sum_{n\geqslant 0}\dim(F_{0,\bullet}(\Gamma_n))\cdot t^n.
\end{equation}
Finally,
Proposition \ref{prop:simple}
yields a surjective map
\begin{equation}\label{Eq4}
\begin{gathered}
\CIrr(\Oc(\Gamma_{k}))\times\Pc_r\to
\Irr(\Oc(\Gamma_n))_{0,r},\quad
(L,\l)\mapsto\tilde a_\l(L)
\end{gathered}\end{equation}
for $k,r\geqslant 0$
such that $n=k+mr$. From (\ref{Eq2}) and (\ref{Eq4}) we get
\begin{equation}\label{Eq3}
\bigl(\sum_{k\geqslant 0}\sharp\CIrr(\Oc(\Gamma_{k}))\cdot t^{k}\bigr)
\bigl(\sum_{r\geqslant 0}\sharp\Pc_r\cdot t^{mr}\bigr)-
\sum_{n\geqslant 0}\dim(F_{0,\bullet}(\Gamma_n))\cdot t^n
\in\NN[[t]].
\end{equation}
By Corollary \ref{cor:kere} and Lemma \ref{lem:critical2} we have
$\CIrr(\Oc(\Gamma_{k}))\subset F_{0,0}(\Gamma_{k})',$
hence we have
$$\sharp\CIrr(\Oc(\Gamma_{k}))\leqslant
\dim(F_{0,0}(\Gamma_{k})').$$
Therefore, comparing (\ref{Eq1}) and (\ref{Eq3}) we get the equality
\begin{equation}\label{Eq5}\sharp\CIrr(\Oc(\Gamma_{k}))=
\dim(F_{0,0}(\Gamma_{k})').
\end{equation}
In other words $\CIrr(\Oc(\Gamma_{k}))$
is a basis of $F_{0,0}(\Gamma_{k})'.$
Since $\CIrr(\Oc(\Gamma_{k}))$
is a basis of $F_{0,0}(\Gamma_{k})$
by Proposition \ref{prop:critical1},
we have also
$$F_{0,0}(\Gamma_k)=F_{0,0}(\Gamma_{k})'.$$
\end{proof}

\vskip3mm

\begin{rk}\label{rk:uneautre}
The proof
of Proposition \ref{prop:noyau}
and Corollary \ref{cor:kere}
imply that the map (\ref{Eq4})
yields a bijection
\begin{equation*}
\begin{gathered}
\CIrr(\Oc(\Gamma_{k}))\times\Pc_r\to
\Irr(\Oc(\Gamma_n))_{0,r},\quad
(L,\l)\mapsto\tilde a_\l(L)
\end{gathered}\end{equation*}
for $k,r\geqslant 0$ such that $n=k+mr$.
Note that Proposition \ref{prop:critical1} yields
$$\CIrr(\Oc(\Gamma_k))=\Irr(\Oc(\Gamma_k))_{0,0}.$$
\end{rk}

\vskip3mm

\subsection{The representation theoretic grading on $[\Oc(\Gamma)]$}
Using the actions of the Lie algebras $\Hen$ and $\widehat{\sen\len}_m$
we can now define a grading
$$[\Oc(\Gamma)]=\bigoplus_{i,j\geqslant 0}[\Oc(\Gamma)]_{i,j}.$$
Then, we'll compare it with the filtration by the support introduced
in Section \ref{sec:stratum}, i.e., we'll compare it with
the grading
$$[\Oc(\Gamma)]=\bigoplus_{i,j\geqslant 0}\gr_{i,j}(\Gamma).$$
To do so, let us consider the level $m\ell$ Casimir operator
\begin{equation*}
\partial={1\over m\ell}\sum_{r\geqslant 1}b_{r}b'_{r},
\end{equation*}
see (\ref{casimir0}).
Under the map (\ref{chain}) this formal sum defines a
diagonalisable $\CC$-linear operator on $[\Oc(\Gamma)]$.
For any integer $j$ let $[\Oc(\Gamma)]_{\bullet,j}$ be the eigenspace of $\partial$ with the eigenvalue $j$.
Note that $[\Oc(\Gamma)]_{\bullet,j}=0$ if $j<0$. Next, given an integer $i\geqslant 0$ we define
$[\Oc(\Gamma)]_{i,\bullet}$ to be the image of
$$\bigoplus_{\mu,\a} V_\mu^{\widehat{\sen\len}_m}[\mu-\a]\otimes
\Hom_{\widehat{\sen\len}_m}(V_\mu^{\widehat{\sen\len}_m},[\Oc(\Gamma)])$$
under the canonical maps
$$V_\mu^{\widehat{\sen\len}_m}\otimes
\Hom_{\widehat{\sen\len}_m}(V_\mu^{\widehat{\sen\len}_m},[\Oc(\Gamma)])\to
[\Oc(\Gamma)].$$
Here the sum runs over all sums $\a$
of $i$ affine simple roots of $\widehat{\sen\len}_m$, and over all dominant affine weight $\mu$
of $\widehat{\sen\len}_m$.  If $i<0$ we set
$[\Oc(\Gamma)]_{i,\bullet}=0.$

\begin{df} We define a grading on $[\Oc(\Gamma)]$ by the following formula
$$[\Oc(\Gamma)]_{i,j}=[\Oc(\Gamma)]_{i,\bullet}\cap[\Oc(\Gamma)]_{\bullet,j},
\quad
[\Oc(\Gamma_n)]_{i,j}=[\Oc(\Gamma)]_{i,j}\cap[\Oc(\Gamma_n)]$$
\end{df}

\begin{prop}\label{prop:propinteressante}
We have
$\dim[\Oc(\Gamma_n)]_{i,j}=
\dim\,\gr_{i,j}(\Gamma_n)$ for all $n,i,j\geqslant 0$.
\end{prop}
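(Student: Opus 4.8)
The plan is to prove the two gradings of $[\Oc(\Gamma_n)]$ have the same graded dimensions by comparing each with a combinatorial count, and in fact showing $[\Oc(\Gamma)]_{i,j}$ can be identified with a "representation-theoretic" version of $\gr_{i,j}(\Gamma_n)$ built from primitive modules. The key point is that both sides admit a decomposition governed by the same bookkeeping: the $\Hen$-action on the Fock space (which controls the index $j$) and the $\widehat{\sen\len}_m$-action (which controls the index $i$).

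First I would analyze the right-hand side $\gr_{i,j}(\Gamma_n)$. By Remark \ref{rk:support3}, $\gr_{i,j}(\Gamma_n)$ has basis $\Irr(\Oc(\Gamma_n))_{i,j}$. By Corollary \ref{cor:keretilde}, the Kashiwara-type operators $\tilde e_q, \tilde f_q$ shift $(i,j)\mapsto(i\mp 1,j)$, so the crystal structure from Proposition \ref{prop:EF}(e) restricts to each "slice" $\bigsqcup_i\Irr(\Oc(\Gamma))_{i,j}$. Combined with Remark \ref{rk:uneautre}, which gives a bijection $\CIrr(\Oc(\Gamma_k))\times\Pc_r\xrightarrow{\sim}\Irr(\Oc(\Gamma_n))_{0,r}$ for $n=k+mr$, one expresses $\sum_n\sharp\Irr(\Oc(\Gamma_n))_{i,j}t^n$ in terms of $\sharp\CIrr(\Oc(\Gamma_k))$, the partition numbers $\sharp\Pc_r$, and the weight multiplicities of the $\widehat{\sen\len}_m$-crystal $\Fc^{(s)}_{m,\ell}$ (the index $i$ counts the number of simple roots subtracted to reach the relevant weight from a dominant one). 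This is exactly the same combinatorial data that, via Proposition \ref{prop:noyau} (which identifies $F_{0,0}(\Gamma)$ as the common highest-weight space for both $E$ and the $b'_r$) and standard representation theory of $\Hen$ and $\widehat{\sen\len}_m$, computes $\dim[\Oc(\Gamma_n)]_{i,j}$.

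The concrete execution: on the Fock space side, Proposition \ref{prop:comparison} identifies $b'_r$ and $b_r$ with the functors $Ra_{\bullet,*}$, $a^*_\bullet$, and $[\Oc(\Gamma)]$ with $\Fc^{(s)}_{m,\ell}$, which as an $\widehat{\sen\len}_m\times\Hen$-module is a tensor-type object. The $\Hen$-module structure means $[\Oc(\Gamma)]$ is free over $U^-(\Hen)$ on the joint $\widehat{\sen\len}_m$- and $\Hen$-highest weight vectors, i.e. on $F_{0,0}(\Gamma)$ by Proposition \ref{prop:noyau}; the eigenvalue-$j$ space $[\Oc(\Gamma)]_{\bullet,j}$ of the Casimir $\partial$ is then $\bigoplus_{r}U^-(\Hen)_{r}\cdot F_{0,\bullet}(\Gamma)$ appropriately graded — more precisely, applying a monomial $b_{r_1}b_{r_2}\cdots$ of total degree $j$ to $F_{0,0}(\Gamma)$, which by Corollary \ref{cor:kere} and the compatibility of the actions lands in $F_{0,\bullet}$ and raises $j$ correctly, gives $\sum_n\dim[\Oc(\Gamma_n)]_{\bullet,j}t^n = (\sum_k\sharp\CIrr(\Oc(\Gamma_k))t^k)\cdot(\text{generating function for }\Hen\text{-monomials of degree }j)$. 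Intersecting with $[\Oc(\Gamma)]_{i,\bullet}$, whose dimension is controlled by $\widehat{\sen\len}_m$ weight multiplicities applied to the $\Hen$-highest weight spaces, yields a formula for $\dim[\Oc(\Gamma_n)]_{i,j}$ in terms of $\sharp\CIrr(\Oc(\Gamma_k))$, $\sharp\Pc_r$, and $\widehat{\sen\len}_m$-character data. Matching this with the formula for $\dim\gr_{i,j}(\Gamma_n)$ obtained above — using that $\tilde a_\l$ from Proposition \ref{prop:simple}/Remark \ref{rk:uneautre} is the crystal-theoretic shadow of $a^*_\l$, so the two countings use the same numbers — completes the proof.

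The main obstacle I expect is bookkeeping the index $i$ precisely on both sides: on the support side it is $n-l-jm$ and is shifted by $\tilde e_q,\tilde f_q$; on the Fock space side it must be read off as the number of affine simple roots separating a weight from the dominant weight of its $\widehat{\sen\len}_m$-isotypic component. One needs to check these two notions of "distance to the top" coincide stratum by stratum. The clean way is to observe that $F_{0,\bullet}(\Gamma)$ is the $\widehat{\sen\len}_m$-highest-weight space inside the $\Hen$-highest-weight space $F_{0,0}(\Gamma)$-generated part, via Corollary \ref{cor:kere} and Lemma \ref{lem:critical2}, and that the $\widehat{\sen\len}_m$-submodule generated by $F_{0,\bullet}(\Gamma)$ is a direct sum of integrable highest-weight modules (Proposition \ref{prop:EF}(d)); then $[\Oc(\Gamma)]_{i,\bullet}$ is exactly the span of the $\Irr(\Oc(\Gamma))_{i,\bullet}$ by Corollary \ref{cor:keretilde} applied slice-by-slice, giving the equality of dimensions directly rather than through generating functions. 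I would present the argument in whichever of these two forms turns out shorter once the crystal slices are set up.
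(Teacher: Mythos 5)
Your first (counting/multiplicity) route is essentially the paper's argument: establish $\dim[\Oc(\Gamma_n)]_{0,j}=\sharp\Irr(\Oc(\Gamma_n))_{0,j}$ from the freeness of $[\Oc(\Gamma)]_{0,\bullet}=F_{0,\bullet}(\Gamma)$ over $U^-(\Hen)$ together with the bijection of Remark \ref{rk:uneautre}, then propagate to general $i$ by matching the depth-$i$ layer of each crystal component (Corollary \ref{cor:keretilde}) with the corresponding weight spaces of the irreducible $\widehat{\sen\len}_m$-summands of $[\Oc(\Gamma)]_{\bullet,j}$. The paper packages this last step by choosing a canonical basis with $\G(\tilde a_\l(L))=a^*_\l([L])$, which is exactly your ``crystal-theoretic shadow'' remark; you should make that weight-matching explicit, since the $i=0$ dimension count for each $n$ alone does not determine the multiset of highest weights of the summands of $[\Oc(\Gamma)]_{\bullet,j}$ --- one needs that $a^*_\l([L])$ and $[\tilde a_\l(L)]$ are weight vectors of the same $\widetilde{\sen\len}_m$-weight, which follows from Propositions \ref{prop:comparison} and \ref{prop:biadjoint}.

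Your proposed ``clean way,'' however, contains a false step: $[\Oc(\Gamma)]_{i,\bullet}$ is \emph{not} the span of $\Irr(\Oc(\Gamma))_{i,\bullet}$ --- the paper explicitly warns, in the remark following the proposition, that this is not implied even for $(i,j)$ jointly. The classes $[L]$ with $L\in\Irr(\Oc(\Gamma))_{i,j}$ lie in the filtration step $F_{i,j}(\Gamma)$, and when decomposed along the isotypic/depth grading they may acquire components in lower graded pieces; so they need not lie in, let alone span, $[\Oc(\Gamma)]_{i,j}$. Only the equality of dimensions (equivalently, an identification with the associated graded) can be asserted, which is why the counting route is the one to write up. A smaller slip in the same spirit: the generating-function identity in your second paragraph computes $\sum_n\dim[\Oc(\Gamma_n)]_{0,j}\,t^n$, not $\sum_n\dim[\Oc(\Gamma_n)]_{\bullet,j}\,t^n$; the latter space contains the entire $\widehat{\sen\len}_m$-submodule generated by $[\Oc(\Gamma)]_{0,j}$, not just $U^-(\Hen)_j\cdot F_{0,0}(\Gamma)$.
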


\begin{proof}
The vector space $[\Oc(\Gamma)]_{0,\bullet}$ is a $\Hen$-submodule
of $[\Oc(\Gamma)]$. Thus it is preserved by
the linear operator $\partial$ and
$[\Oc(\Gamma)]_{0,j}$ is the eigenspace with the eigenvalue $j$.
Since the $\Hen$-action on $[\Oc(\Gamma)]_{0,\bullet}$ has the level $m\ell$
we have $[\partial,b_j]=jb_j$ for all $j>0$.
Next, we have
$$[\Oc(\Gamma)]_{0,\bullet}=F_{0,\bullet}(\Gamma),\quad
[\Oc(\Gamma)]_{0,0}=F_{0,0}(\Gamma)$$
by Corollary \ref{cor:kere} and
Proposition \ref{prop:noyau}.
Further, the $\Hen$-action yields an isomorphism
\begin{equation}\label{isom1}
U^-(\Hen)_j\otimes [\Oc(\Gamma)]_{0,0}=[\Oc(\Gamma)]_{0,j}.
\end{equation}
By Remark \ref{rk:uneautre}, for $n=k+mj$
we have a bijection
\begin{equation}\label{isom0}
\Irr(\Oc(\Gamma_k))_{0,0}\times\Pc_j\to
\Irr(\Oc(\Gamma_n))_{0,j},\quad
(L,\l)\mapsto\tilde a_\l(L).
\end{equation}
Thus the isomorphism (\ref{isom1}) yields the following equality
\begin{equation}
\label{egalite}
\dim\,[\Oc(\Gamma_n)]_{0,j}=\sharp\Irr(\Oc(\Gamma_n))_{0,j}.
\end{equation}
Now, to compare
$\dim\,[\Oc(\Gamma_n)]_{i,j}$ and 
$\sharp\Irr(\Oc(\Gamma_n))_{i,j}$ for any $i\geqslant 0$,
we need some tools from canonical bases.
Since the integrable
$\widehat{\sen\len}_m$-module $[\Oc(\Gamma)]$
is not simple, the choice of a canonical basis of this module depends on a choice of a basis of
$[\Oc(\Gamma)]_{0,\bullet}.$
The general theory of canonical bases yields a bijection $\G$ between the
canonical basis of  $[\Oc(\Gamma)]$ and its crystal basis, the latter
being identified with $\Irr(\Oc(\Gamma))$ by Proposition \ref{prop:EF}.
The bijection $\G$
is such that a basis of $[\Oc(\Gamma)]_{0,\bullet}$ is given by
$$\{\G(L)\,;\,\tilde e_q(L)=0,\,\forall q\}.$$
By Corollary \ref{cor:kere} we have
$$\aligned
\{L\in\Irr(\Oc(\Gamma))\,;\,\tilde e_q(L)=0,\,\forall q\}
&=\Irr(\Oc(\Gamma))_{0,\bullet}\cr
&=\{\tilde a_\l(L)\,;\,
\forall\l\in\Pc,\,\forall L\in\Irr(\Oc(\Gamma))_{0,0}\}.
\endaligned$$ We'll choose the canonical basis of $[\Oc(\Gamma)]$ such that
$$\G(\tilde a_\l(L))=a^*_\l(L),\quad
\forall\l\in\Pc,\quad\forall L\in\Irr(\Oc(\Gamma))_{0,0}.$$
Then the set
$\{\G(L)\,;\, L\in\Irr(\Oc(\Gamma))_{0,j}\}$
is a basis of $[\Oc(\Gamma)]_{0,j}$ by (\ref{isom1}) and (\ref{isom0}).
The $\widehat{\sen\len}_m$-action on $[\Oc(\Gamma)]$ commutes
with the operator $\partial$.
Thus $[\Oc(\Gamma)]_{\bullet,j}$ is an $\widehat{\sen\len}_m$-module and the
$\widehat{\sen\len}_m$-action yields a surjective $\CC$-linear map
\begin{equation}\label{map1}
U^-(\widehat{\sen\len}_m)_i\otimes [\Oc(\Gamma)]_{0,j}\to
[\Oc(\Gamma)]_{i,j}.
\end{equation}
For weight reasons, the crystal of $[\Oc(\Gamma)]$
decomposes in the following way
$$\Irr(\Oc(\Gamma))=\bigsqcup_{i,j\geqslant 0}\Irr(\Oc(\Gamma))'_{i,j},
\quad \Irr(\Oc(\Gamma))'_{i,j}=\{L\in\Irr(\Oc(\Gamma))\,;\,\G(L)\in[\Oc(\Gamma)]_{i,j}\}.
$$
Since $\{\G(L)\,;\, L\in\Irr(\Oc(\Gamma))_{0,j}\}$
is a basis of $[\Oc(\Gamma)]_{0,j}$, we have
$$\Irr(\Oc(\Gamma))'_{0,j}=
\Irr(\Oc(\Gamma))_{0,j}.$$
Next $\Irr(\Oc(\Gamma))'_{\bullet,j}$
is the union of connected components of
$\Irr(\Oc(\Gamma))$ whose highest weight vector is in $\Irr(\Oc(\Gamma))'_{0,j}$, and
by Corollary \ref{cor:keretilde},
the set $\Irr(\Oc(\Gamma))_{\bullet,j}$
is the union of connected components of
$\Irr(\Oc(\Gamma))$ whose highest weight vector is in $\Irr(\Oc(\Gamma))_{0,j}$.
Thus, for all $n$ we have
$$\Irr(\Oc(\Gamma_n))'_{\bullet,j}=\Irr(\Oc(\Gamma_n))_{\bullet,j}.$$
By Corollary  \ref{cor:keretilde} and (\ref{map1}), for all $i$ we have also the inclusion
\begin{equation}\label{tyty}
\Irr(\Oc(\Gamma_n))'_{i,j}\subset\Irr(\Oc(\Gamma_n))_{i,j}.
\end{equation}
Thus (\ref{tyty}) is indeed an equality.
By definition, we have
$$\dim\,\gr_{i,j}(\Gamma_n)=\sharp\Irr(\Oc(\Gamma_n))_{i,j},
\quad
\dim\,[\Oc(\Gamma_n)]_{i,j}=\sharp\Irr(\Oc(\Gamma_n))'_{i,j}.$$
Thus the corollary is proved.
\end{proof}

\vskip3mm

\begin{rk}
Recall that $\gr_{i,j}(\Gamma)$ is identified with the subspace of
$[\Oc(\Gamma)]$ spanned by $\Irr(\Oc(\Gamma))_{i,j}$, see Section
\ref{sec:stratum}. Proposition \ref{prop:propinteressante} does not imply that
$[\Oc(\Gamma)]_{i,j}$ is also spanned by $\Irr(\Oc(\Gamma))_{i,j}$.
However, since
$$[\Oc(\Gamma)]_{0,0}=\{x\in[\Oc(\Gamma)]\,;\,e_q(x)=b'_r(x)=0,\,\forall q,r\},$$
the subspace $[\Oc(\Gamma)]_{0,0}$ is indeed spanned by $\Irr(\Oc(\Gamma))_{0,0}$
by Proposition \ref{prop:noyau}.
\end{rk}

\vskip3mm

\subsection{Etingof's conjecture}
Let $\a_{p,q}$ be the root of the elementary matrix $e_{p,q}$.
Recall that $\omega_0,\omega_1,\dots,\omega_{\ell-1}$
are the affine fundamental weights.
Fix a level 1 weight
$$\Lambda=\sum_ph_p\,\omega_p.$$

\begin{df}
Let $\tilde\aen_\Lambda$
be the Lie subalgebra of $\widetilde{\gen\len}_\ell$
spanned by $\oneb$, $D$ and
the elements $e_{p,q}\otimes\varpi^{r}$ with
$p,q=1,2,\dots,\ell$ and $r\in\ZZ$ such that
$\langle\Lambda,\a_{p,q}\rangle-hr\in\ZZ.$
We abbreviate $\tilde\aen=\tilde\aen_\Lambda$
and $\hat\aen=\tilde\aen\cap\widehat{\gen\len}_\ell$.
\end{df}

\vskip1mm

\noindent
We define the set of {\it positive real roots of $\tilde\aen$} to be
the set $\Delta^{\hat\aen}_+$ consisting of the real roots of
$\widehat{\gen\len}_\ell$ of the form
$\a+(r-\langle\Lambda,\a\rangle/h)\delta$ where $\a$ is a root of
$\gen\len_\ell$ and $\a+r\delta$ is a positive real root of
$\widehat{\gen\len}_\ell$.
Let $P^{\tilde\aen}_+$ be the set of {\it dominant integral weights for $\tilde\aen$},
i.e., the set of integral weights
of $\widetilde{\gen\len}_\ell$
which are $\geqslant 0$ on $\Delta^{\hat\aen}_+$.
For $\mu\in P^{\tilde\aen}_+$ let $V_\mu^{\tilde\aen}$ be the corresponding irreducible
integrable $\tilde\aen$-module.
We'll say that a non zero vector of an $\tilde\aen$-module is {\it primitive for $\tilde\aen$}
or {\it $\tilde\aen$-primitive} if
it is a weight vector whose weight belongs to $P^{\tilde\aen}_+$, and if it is killed by the action
of the weight vectors of $\tilde\aen$ whose weights are positive roots of $\tilde\aen$.
Now, let $h,h_p$ be the parameters of the $\CC$-algebra
$H(\Gamma_n)$ for each $n>0$.
Assume that $h$ is a rational number with the denominator $m>1$.
The elements of $\Hen$ can be regarded as elements
of $\widetilde{\gen\len}_\ell$ as in (\ref{prodlie}).
We have $b_{mr}, b'_{mr}\in\tilde\aen$ for each $r>0$.
The formal sum
\begin{equation*}
\partial_m={1\over m\ell}\sum_{r\geqslant 1}b_{mr}b'_{mr}
\end{equation*}
acts on every $\tilde\aen$-module $V_\mu^{\tilde\aen}$.
It is the $m$-th Casimir operator of $\widetilde{\gen\len}_\ell$
introduced in (\ref{mcasimir}).
For any weight $\l$ and any integer $j$ we denote by $V_\mu^{\tilde\aen}[\l,j]$
the subspace of weight $\l$ and eigenvalue $j$ of $\partial_m$.
We are interested in the following conjecture \cite[conj.~6.7]{E}.

\begin{conj}
\label{conj:etingof}
There exists an isomorphism of $\CC$-vector spaces
\begin{equation}
\label{conj}\gr_{i,j}(\Gamma_n)=\bigoplus_\mu
V^{\tilde\aen}_\mu[\omega_0-n\delta,j]\otimes
\Hom_{\tilde\aen}(V_\mu^{\tilde\aen},V_{\omega_0}^{\widetilde{\gen\len}_\ell}),
\end{equation}
where the sum is over all weights $\mu\in P^{\tilde\aen}_+$ such that
$\langle\mu,\mu\rangle=-2i$.
\end{conj}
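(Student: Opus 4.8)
The plan is to deduce Conjecture \ref{conj:etingof} from Proposition \ref{prop:propinteressante}, which already gives $\dim\gr_{i,j}(\Gamma_n)=\dim[\Oc(\Gamma_n)]_{i,j}$, by checking that the right-hand side of (\ref{conj}) has dimension $\dim[\Oc(\Gamma_n)]_{i,j}$. Since (\ref{conj}) only asserts an isomorphism of $\CC$-vector spaces, a dimension count suffices, after which any bijection of bases produces the isomorphism. Using the chain (\ref{chain}), the whole statement thereby becomes a purely representation-theoretic identity inside the level $\ell$ Fock space, with the geometry of supports entirely absorbed into Proposition \ref{prop:propinteressante}.

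First I would reinterpret $[\Oc(\Gamma_n)]_{i,j}$ in Fock-space terms. By Section \ref{sec:fock} we have $\Fc^{(0)}_{m,\ell}=V_{\omega_0}^{\widetilde{\gen\len}_\ell}$ as a level $1$ $\widetilde{\gen\len}_\ell$-module, and by Section \ref{sec:fockl} the subspace $[\Oc(\Gamma)]=\Fc^{(s)}_{m,\ell}$ is the $\widehat{\sen\len}_\ell$-weight-$\hat\g(s,m)$ part of $\Fc^{(d)}_{m,\ell}$, with $[\Oc(\Gamma_n)]$ the further slice where the number of boxes is $n$, which by Remark \ref{rk:scaling} and (\ref{macdo2}) is the slice of $\widetilde{\gen\len}_\ell$-weight $\omega_0-n\delta$. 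On the other hand $[\Oc(\Gamma)]_{\bullet,j}$ is by definition the $j$-eigenspace of the level $m\ell$ Casimir $\partial$, which coincides with $\partial_m$ by the discussion in Section \ref{sec:fock}, and $[\Oc(\Gamma)]_{i,\bullet}$ is spanned by the depth-$i$ parts of the $\widehat{\sen\len}_m$-submodules generated by $\widehat{\sen\len}_m$-highest weight vectors. By Corollary \ref{cor:kere}, Proposition \ref{prop:noyau}, Remark \ref{rk:uneautre} and Proposition \ref{prop:simple}, these highest weight vectors span $F_{0,\bullet}(\Gamma)$, a free module over the level $m\ell$ Heisenberg algebra on the primitive simple modules $\CIrr(\Oc(\Gamma))=F_{0,0}(\Gamma)$. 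Hence, as a module over $\widehat{\gen\len}_m=(\widehat{\sen\len}_m\times\Hen)/\bigl(m(\oneb,0)-(0,\oneb)\bigr)$,
$$[\Oc(\Gamma)]=\bigoplus_\mu V_\mu^{\widehat{\gen\len}_m}\otimes M_\mu,$$
where $M_\mu$ is spanned by the primitive simples of $\widehat{\sen\len}_m$-weight $\mu$, and $[\Oc(\Gamma_n)]_{i,j}$ is the graded piece cut out by depth $i$ below the dominant weight, Casimir eigenvalue $\partial_m=j$, and weight $\omega_0-n\delta$.

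The heart of the argument is a level-rank duality identifying this $\widehat{\gen\len}_m$-picture with the $\tilde\aen$-picture of the conjecture. The $\widehat{\gen\len}_m$-action and the $\tilde\aen\subset\widetilde{\gen\len}_\ell$-action on $V_{\omega_0}^{\widetilde{\gen\len}_\ell}$ commute: the defining congruence $\langle\Lambda,\a_{p,q}\rangle-hr\in\ZZ$ with $h=-1/m$ cuts out exactly the part of $\widetilde{\gen\len}_\ell$ that survives the $m$-fold substitution $x\otimes\varpi^r\mapsto x\otimes\varpi^{mr}$ of Section \ref{sec:fock} and commutes with the categorified Heisenberg operators $b_r,b'_r$. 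I would then show they form a dual pair,
$$V_{\omega_0}^{\widetilde{\gen\len}_\ell}=\bigoplus_\mu V_\mu^{\tilde\aen}\boxtimes V_{\mu^\vee}^{\widehat{\gen\len}_m},$$
for a bijection $\mu\leftrightarrow\mu^\vee$ between the relevant dominant weights, under which $\Hom_{\tilde\aen}(V_\mu^{\tilde\aen},V_{\omega_0}^{\widetilde{\gen\len}_\ell})=V_{\mu^\vee}^{\widehat{\gen\len}_m}$. The untwisted prototype — the decomposition of $\Fc_{m\ell}$ under $\widehat{\sen\len}_m\times\widehat{\sen\len}_\ell$, compare the remark containing Proposition \ref{lem:lemmegrecque} — is classical; the work is to carry it through for the $\Lambda$-twisted subalgebra $\tilde\aen$. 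Granting this, two numerical checks finish the proof: the $\widehat{\sen\len}_m$-depth of $\mu^\vee$ matches $i$ exactly when $\langle\mu,\mu\rangle=-2i$, which I would extract from the characterization in Section \ref{sec:extremal} that $\langle\nu,\nu\rangle=-2i$ if and only if $\nu+i\delta$ is an extremal (equivalently maximal) weight, transported across the duality into ``depth $i$ below dominant''; and the weight $\omega_0-n\delta$ and the $\partial_m$-grading correspond on the two sides to the box-number $n$ and the Casimir eigenvalue $j$. Summing $\dim V_{\mu^\vee}^{\widehat{\gen\len}_m}$ over weights of prescribed depth, Casimir eigenvalue and $\delta$-level, weighted by $\dim M_\mu$, then equals the corresponding count on the $\tilde\aen$ side, giving $\dim[\Oc(\Gamma_n)]_{i,j}=\dim$ of the right-hand side of (\ref{conj}).

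The principal obstacle is precisely this level-rank step. Since $\tilde\aen$ is not a Kac--Moody algebra in standard form but a weight-twisted ``congruence'' subalgebra of $\widetilde{\gen\len}_\ell$, one must first identify its integrable highest-weight modules $V_\mu^{\tilde\aen}$ and compute their characters, and then establish the dual-pair decomposition of $V_{\omega_0}^{\widetilde{\gen\len}_\ell}$ together with the precise matching of labels $\mu\leftrightarrow\mu^\vee$ and the relation $\langle\mu,\mu\rangle=-2i\leftrightarrow$ depth $i$. Everything else — the reduction via Proposition \ref{prop:propinteressante}, the $\widehat{\gen\len}_m$-decomposition of $[\Oc(\Gamma)]$ in terms of primitive modules, and the bookkeeping of the $n$- and $j$-gradings — is routine given the results already in place.
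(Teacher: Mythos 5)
Your overall strategy coincides with the paper's: reduce to a dimension count via Proposition \ref{prop:propinteressante}, transport everything to the Fock space by (\ref{chain}), and invoke level-rank duality. But the step you yourself flag as ``the principal obstacle'' --- establishing the dual-pair decomposition of $V_{\omega_0}^{\widetilde{\gen\len}_\ell}$ under $\tilde\aen\times\widehat{\gen\len}_m$ for the $\Lambda$-twisted congruence subalgebra $\tilde\aen$ --- is precisely the content of the proof, and you leave it unresolved. The mechanism that makes it work is Remark \ref{rk:basic}: when the parameters are as in (\ref{h}) and the $\ell$-charge has weight zero, the element $\g=\sum_p(s_{p+1}-s_p)(\o_p-\o_0)$ lies in $Q^{\sen\len_\ell}$, hence is a cocharacter of $T_\ell$, and $\tilde\aen=\ad(\g)^{-1}(\tilde\aen_{\omega_0})$. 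The action of $\g(\varpi)$ on the integrable module $V_{\omega_0}^{\widetilde{\gen\len}_\ell}$ (formula (\ref{formule78998})) then carries the $\tilde\aen$-isotypic picture to the $\tilde\aen_{\omega_0}$-isotypic picture and sends the weight space $[\omega_0-n\delta]$ to $[\xi_\g^{-1}(\omega_0)-n\delta]$, which is where $[\Oc(\Gamma_n)]$ actually sits (this identification itself needs Lemma \ref{lem:lemmechinois}, not just Remark \ref{rk:scaling} --- your assertion that $[\Oc(\Gamma_n)]$ is the slice of weight $\omega_0-n\delta$ is off by this twist). For $\Lambda=\omega_0$ one has $\tilde\aen_{\omega_0}\simeq\widetilde{\gen\len}_\ell$ via $\varpi\mapsto\varpi^m$ (Remark \ref{rk:omega0}), and the level $m$ action of $\widehat{\gen\len}_\ell$ appearing in the classical level-rank decomposition (\ref{levelrank}) is exactly the restriction of the level $1$ action to this subalgebra; so no new representation theory of $\tilde\aen$ needs to be developed. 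Without the conjugation trick, your plan to ``identify the integrable highest-weight modules $V_\mu^{\tilde\aen}$ and compute their characters'' from scratch is not a proof.

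Two further points your sketch glosses over. First, to match the sum in (\ref{conj}) with the level-rank decomposition you must know exactly which $\mu\in P^{\tilde\aen}_+$ with $\langle\mu,\mu\rangle=-2i$ contribute a nonzero $\Hom$-space; for $i=0$ this is Lemma \ref{lem:lemmebis} (every weight in $P^{\hat\aen_{\omega_0}}_+\cap\Wt(V_{\omega_0}^{\widehat{\gen\len}_\ell})$ supports a primitive vector, proved by the norm computation $\langle\tilde\mu+\nu,\tilde\mu+\nu\rangle>0$), together with Lemma \ref{lem:lemme} identifying these weights with $\{\hat\g(\l,m):\l\in A(\ell,m)_0\}$. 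Second, the passage from $i=0$ to general $i$ is not a pure character identity: the paper sandwiches $[\Oc(\Gamma_n)]_{i,j}\subset\Theta'_{n,i}$ using the surjection (\ref{map1}) and $[D,f_p]=-f_p$, and then forces equality by the already-established sum over all $i$ (Case 1). Your ``depth $i$ corresponds to $\langle\mu,\mu\rangle=-2i$'' is the right heuristic, but it needs this two-sided argument to become a proof, since a priori $[\Oc(\Gamma_n)]_{i,j}$ is only the image of a map from $U^-(\widehat{\sen\len}_m)_i\otimes[\Oc(\Gamma_n)]_{0,j}$, not a full weight space.
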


\vskip3mm

\begin{rk}
\label{rk:omega0}
If $\Lambda=\omega_0$ then we have
$$\tilde\aen_{\o_0}=\bigl(\gen\len_\ell\otimes\CC[\varpi^m,\varpi^{-m}]\bigr)
\oplus\CC\oneb\oplus\CC D,$$
and the map (\ref{multm}) below yields a Lie algebra isomorphism
$\tilde\aen_{\o_0}=\widetilde{\gen\len}_\ell.$
\end{rk}

\vskip3mm

\begin{rk}
\label{rk:basic}
Assume that the $h_p$'s are rational numbers.
Let $\bar K$ be the algebraic closure of the field $K=\CC((\varpi))$.
Set $$\g=-\sum_{p=1}^{\ell-1}{h_p\over h}\,(\o_p-\o_0)\in P^{\sen\len_\ell}\otimes_\ZZ\QQ.$$ We have
$\a(\g)=-\langle\Lambda,\a\rangle/h$ for each root $\a$ of
$\sen\len_\ell$.
We may view $\g$ as the element $\g(\varpi)$ in $T_\ell(\bar K)$.
We have
$\tilde\aen=\ad(\g)^{-1}(\tilde\aen_{\omega_0}).$
Now, assume that $h$, $h_p$ are as in (\ref{h}).
Then we have $\g\in P^{\sen\len_\ell}$, because
$$\g
=\sum_{p=1}^{\ell-1}(s_{p+1}-s_p)\,(\o_p-\o_0)
.$$
A short computation
using the standard identification of
$\omega_p-\omega_0$ with the $\ell$-tuple
\begin{equation}\label{tuple}(1^p0^{\ell-p})-(p/\ell)\,(1^\ell)\end{equation}
shows that $\g$ belongs to $Q^{\sen\len_\ell}$ if and only if the $\ell$-charge $s$ has weight 0.
In this case $\g\in T_\ell(K)$, more precisely,
$\g$ is a cocharacter of $T_\ell$.
Thus the element $\xi_\g$ of the affine symmetric group is well-defined.
For a future use note that
\begin{equation}\label{gammaxi}
\hat\g(s,m)=\xi_\g^{-1}(\o_0)',
\end{equation}
where $\hat\g(s,m)\in P^{\widehat{\sen\len}_\ell}$
is as in (\ref{weight}),
and that
$\xi_\g(\mu)\in P^{\hat\aen}_+$ if and only if
$\mu'\in P^{\widehat{\gen\len}_\ell}_+$.
Here $\mu'$ is as in (\ref{mu'}) below.
\end{rk}

\vskip3mm

\subsection{Reminder on the level-rank duality}
For $\l\in\ZZ^\ell$ we consider the weights 
$$
\tilde\g(\l,m)=\hat\g(\l,m)-\Delta(\l,m)\delta\in
P^{\widetilde{\sen\len}_\ell},
$$
with $\hat\g(\l,m)\in P^{\widehat{\sen\len}_\ell},$
see Remark \ref{rk:scaling} and (\ref{weight}).
Note that $\tilde\g(\l,m)$ is dominant if and only if
$$\l\in A(\ell,m)=
\{(\l_1,\l_2,\dots,\l_\ell)\in\ZZ^\ell_+\,;\,\l_1-\l_\ell\leqslant m\}.$$
For $d\in\ZZ$ we write
$$A(\ell,m)_d=\{\l\in A(\ell,m)\,;\,\sum_p\l_p=d\}.$$
The {\it level-rank duality} yields a bijection
$A(\ell,m)_d\to A(m,\ell)_d$, $\l\mapsto\l^\dag$
such that

\begin{itemize}
\item we have the equality of weights
$$\hat\g(\l,m)=\sum_{p=1}^{m}\o_{\l_p^\dag\,\text{mod}\,\ell},$$
\item we have an $\widetilde{\sen\len}_m\times\Hen
\times\widetilde{\sen\len}_\ell$-module isomorphism
\begin{equation}\label{levelrankisom}\Fc_{m,\ell}^{(d)}
=\bigoplus_{\l\in A(\ell,m)_d}
V_{\tilde\g(\l^\dag,\ell)}^{\widetilde{\sen\len}_m}\otimes V^{\Hen}_{m\ell}
\otimes V_{\tilde\g(\l,m)}^{\widetilde{\sen\len}_\ell}
\end{equation}
and there are highest weight vectors
$v_{\tilde\g(\l^\dag,\ell)}$, $v_{m\ell}$,
$v_{\tilde\g(\l,m)}$ of
$V_{\tilde\g(\l^\dag,\ell)}^{\widetilde{\sen\len}_m}$, $V^{\Hen}_{m\ell}$,
$V_{\tilde\g(\l,m)}^{\widetilde{\sen\len}_\ell}$ such that
$|0,\l\rangle=v_{\tilde\g(\l^\dag,\ell)}\otimes v_{m\ell}\otimes v_{\tilde\g(\l,m)}$
for $\l\in A(\ell,m)_d$.
\end{itemize}
See e.g.,  \cite[(3.17)]{NT}, \cite[sec.~4.2, 4.3]{U}, for details.
Let $s=(s_p)$ be an $\ell$-charge of weight $d$.
Setting $d=0$, the formula (\ref{coco}) yields
\begin{equation}
\label{levelrank}
\Fc_{m,\ell}^{(s)}=
\bigoplus_{\l\in A(\ell,m)_0}
V_{\hat\g(\l^\dag,\ell)}^{\widehat{\sen\len}_m}\otimes V^{\Hen}_{m\ell}
\otimes\bigl( V_{\hat\g(\l,m)}^{\widehat{\sen\len}_\ell}[\hat\g(s,m)]\bigr).
\end{equation}
Here the bracket indicates the weight for the
$\widehat{\sen\len}_\ell$-action of level $m$.

\vskip3mm

\subsection{Proof of Etingof's conjecture for an integral $\ell$-charge}
In this subsection we prove Etingof's conjecture in the particular case where
the parameters $h$, $h_p$ are as in (\ref{h}). Note that our terminology differs from
\cite{E} because this case corresponds indeed to {\it rational (possibly non integral)}
values of the parameters.
From now on, unless specified otherwise
we'll assume that the parameters $h$, $h_p$ are as in (\ref{h}), and
we'll also assume that the
$\ell$-charge $s$ has weight zero.
To any level one weight $\mu$ of $\widehat{\gen\len}_\ell$ we associate the level $m$ weight
$\mu'$ given by
\begin{equation}
\label{mu'}\mu'=m\,\o_0+\sum_{p=1}^{\ell-1}\mu_p(\o_p-\o_0)\quad
\text{where}\quad
\mu=\o_0+\sum_{p=1}^{\ell-1}\mu_p(\o_p-\o_0).
\end{equation}
Note that $\g\in Q^{\sen\len}_\ell$
and that $\hat\g(s,m)=\xi_\g^{-1}(\o_0)'$
by (\ref{gammaxi}).
Using this and (\ref{levelrank}) we get a
$\widehat{\sen\len}_m\times\Hen$-module isomorphism
$$\Fc_{m,\ell}^{(s)}=\bigoplus_{\l\in A(\ell,m)_0}
V_{\hat\g(\l^\dag,\ell)}^{\widehat{\sen\len}_m}\otimes V^{\Hen}_{m\ell}
\otimes\bigl(V_{\hat\g(\l,m)}^{\widehat{\sen\len}_\ell}[\xi_\g^{-1}(\o_0)']\bigr).$$
Thus, by (\ref{coco}), (\ref{coucou}) and (\ref{multm}) we have
$$\Fc_{m,\ell}^{(s)}=V_{\omega_0}^{\widehat{\gen\len}_\ell}[\xi_\g^{-1}(\o_0)],$$
where the bracket indicates the weight subspace for the
$\widehat{\gen\len}_\ell$-action of level 1.
Since the map (\ref{chain}) yields an isomorphism
$[\Oc(\Gamma)]=\Fc_{m,\ell}^{(s)}$, we get
also an isomorphism
\begin{equation}
\label{turlututu}
[\Oc(\Gamma)]=V_{\omega_0}^{\widehat{\gen\len}_\ell}[\xi_\g^{-1}(\omega_0)].
\end{equation}
Under this isomorphism we have
$$[\Oc(\Gamma_n)]
=V_{\omega_0}^{\widetilde{\gen\len}_\ell}[\xi_\g^{-1}(\omega_0)-n\delta]$$
by (\ref{macdo2}) and the following lemma.

\begin{lemma}\label{lem:lemmechinois}
(a) If $\l^c$ is an $\ell$-core such that $\tau(\l^c)=s$
then $n_0(\l^c)={1\over 2}\langle\g,\g\rangle.$

(b) The element $|0,s\rangle$ is an extremal weight vector
of the module
$\Fc_{m,\ell}^{(0)}=V_{\omega_0}^{\widetilde{\gen\len}_\ell}$
with the weight $\xi_\g^{-1}(\omega_0)$.
\end{lemma}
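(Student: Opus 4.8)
The statement has two parts, both about the $\ell$-core $\l^c$ with $\tau(\l^c)=s$, where $\tau$ is the Lachowska--Miemietz bijection from $\ell$-cores to weight zero $\ell$-charges used in Section~\ref{sec:fock}. The plan is to reduce part $(b)$ to part $(a)$ together with the weight computation already available via Remark~\ref{rk:scaling} and the identities $(\ref{macdo00})$--$(\ref{macdo2})$, and to prove part $(a)$ by a direct combinatorial identity for $\ell$-cores. First I would recall that under the isomorphism $(\ref{isomfock})$ we have $|0,s\rangle=|\l^c,0\rangle$ in $\Fc_\ell^{(0)}$ (the element $|0,s\rangle$ corresponds to the partition that is itself an $\ell$-core, namely $\l^c$, with trivial $\ell$-quotient), and that by $(\ref{macdo2})$ the scaling element $D$ acts on $|0,s\rangle$ by $-n_0(\l^c)$ since the $\ell$-quotient is empty. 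So once $(a)$ is established, $D$ acts by $-\tfrac12\langle\g,\g\rangle$, which matches $\langle\xi_\g^{-1}(\o_0),\delta\rangle$-type bookkeeping: by $(\ref{formule78997})$ applied to $\xi_\g^{-1}$ one has $\xi_\g^{-1}(\o_0)=\o_0-\g-\tfrac12\langle\g,\g\rangle\delta$ (using $\o_0(\oneb)=1$ and that $-\g$ is the $\sen\len_\ell$-part), hence the $\delta$-coefficient is exactly $-\tfrac12\langle\g,\g\rangle=-n_0(\l^c)=D(|0,s\rangle)$-eigenvalue. This identifies the weight of $|0,s\rangle$ as $\xi_\g^{-1}(\o_0)$.

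For the ``extremal'' claim in $(b)$, I would invoke the characterization recalled in Section~\ref{sec:extremal}: a weight $\mu$ of $V_{\o_0}^{\tilde{\sen\len}_\ell}$ is extremal if and only if $\langle\mu,\mu\rangle=0$, equivalently it is in the $\widehat\Sen_\ell$-orbit (via the $\xi_\beta$-action $(\ref{formule78997})$) of $\o_0$. Since $\g\in Q^{\sen\len_\ell}$ under hypothesis $(\ref{h})$ with $s$ of weight zero (this is exactly the computation in Remark~\ref{rk:basic}), the element $\xi_\g$ lies in the affine symmetric group $\widehat\Sen_\ell=\Sen_\ell\ltimes Q^{\sen\len_\ell}$, so $\xi_\g^{-1}(\o_0)$ is in the $\widehat\Sen_\ell$-orbit of $\o_0$ and therefore $\langle\xi_\g^{-1}(\o_0),\xi_\g^{-1}(\o_0)\rangle=\langle\o_0,\o_0\rangle=0$; thus the weight is extremal. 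Since $|0,s\rangle$ is a nonzero vector of $\Fc_{m,\ell}^{(0)}=V_{\o_0}^{\widetilde{\gen\len}_\ell}$ of that extremal weight and the extremal weight spaces of $V_{\o_0}^{\tilde{\sen\len}_\ell}$ are one-dimensional, it spans the weight space, hence it is (up to scalar) the extremal weight vector. This gives $(b)$.

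It remains to prove $(a)$, the identity $n_0(\l^c)=\tfrac12\langle\g,\g\rangle$ for the $\ell$-core $\l^c$ with charge $s$. Here $n_0(\l^c)$ is the number of $0$-nodes of $\l^c$, i.e. nodes of content $\equiv 0 \pmod \ell$ (with the convention that for a weight-zero charge $s_p$ enters the content shift, but for the core itself and charge $(0,\dots,0)$-normalization the relevant count is intrinsic). The approach is to use the standard abacus/beta-number description of $\ell$-cores: the bijection $\tau$ sends $\l^c$ to $s=(s_p)$ where $s_p$ records the number of beads in runner $p$ of the abacus (suitably normalized so $\sum_p s_p=0$), and then a direct count — as in \cite[ex.~I.11]{M} or the argument already cited around $(\ref{macdo0})$ — expresses $n_0(\l^c)$ as a quadratic form in the $s_p$'s. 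Writing $\g=\sum_{p=1}^{\ell-1}(s_{p+1}-s_p)(\o_p-\o_0)$ as in Remark~\ref{rk:basic} and using the pairing $\langle\o_p,\o_q\rangle=\min(p,q)-pq/\ell$, one computes $\tfrac12\langle\g,\g\rangle=\tfrac1{2}\sum_p s_p^2$ minus the square of the average (which vanishes since $\sum_p s_p=0$), i.e. $\tfrac12\sum_p s_p^2$ after normalization; and the abacus count gives $n_0(\l^c)=\sum_p\binom{s_p}{2}+(\text{linear terms that cancel by }\sum s_p=0)=\tfrac12\sum_p s_p^2-\tfrac12\sum_p s_p=\tfrac12\sum_p s_p^2$. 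Matching these two expressions proves $(a)$.

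\textbf{Main obstacle.} The genuine work is in part $(a)$: one must pin down the precise normalization conventions (how $\tau$ is defined in \cite{LMi}, which runner is ``runner $0$'', the shift making $\sum_p s_p=0$, and the convention for $0$-nodes) so that the abacus bead-count of $0$-nodes of $\l^c$ and the Cartan-pairing evaluation $\tfrac12\langle\g,\g\rangle$ literally agree, including getting the linear terms to cancel using $\sum_p s_p=0$. The representation-theoretic half — deducing that $|0,s\rangle$ has weight $\xi_\g^{-1}(\o_0)$ and is extremal — is then essentially formal, resting on $(\ref{formule78997})$, the extremal-weight criterion $\langle\mu,\mu\rangle=0$ from Section~\ref{sec:extremal}, and the fact that $\xi_\g\in\widehat\Sen_\ell$ established in Remark~\ref{rk:basic}.
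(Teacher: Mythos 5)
Your proposal is correct and matches the paper's proof in all essentials: part (b) is deduced from part (a) exactly as in the paper, via the $D$-eigenvalue $-n_0(\l^c)$ from (\ref{macdo00}), the fact (from Uglov) that the finite part of the $\widehat{\gen\len}_\ell$-weight of $|0,s\rangle$ is $\o_0-\g$, the identity $\xi_\g^{-1}(\o_0)=\o_0-\g-\tfrac12\langle\g,\g\rangle\delta$ from (\ref{formule78997}), and the extremality criterion of Section \ref{sec:extremal}. For part (a) the paper counts $0$-nodes row by row in the Young diagram (via the division algorithm on $\l_i-i+1$) rather than on the abacus, but this is the same standard computation reducing both sides to $\tfrac12\sum_p s_p^2$.
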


\noindent
The formula
(\ref{conj}) we want to prove is
$$\dim\gr_{i,j}(\Gamma_n)=\sum_\mu
\dim \bigl(V^{\tilde\aen}_\mu[\omega_0-n\delta,j]\otimes
\Hom_{\tilde\aen}
(V_\mu^{\tilde\aen},V_{\omega_0}^{\widetilde{\gen\len}_\ell})\bigr),$$
where the sum is over all weights $\mu\in P^{\tilde\aen}_+$ such that
$\langle\mu,\mu\rangle=-2i$.
The proof consists of three steps.

\vskip1mm

{\it Case 1:} First, let us consider the sum over all $i$'s. We must prove that
$$\dim\,\gr_{\bullet,j}(\Gamma_n)
=\dim\bigl(V_{\omega_0}^{\widetilde{\gen\len}_\ell}
[\omega_0-n\delta,j]\bigr).$$
Note that
$$\dim\bigl(V_{\omega_0}^{\widetilde{\gen\len}_\ell}
[\omega_0-n\delta,j]\bigr)=\dim\bigl(V_{\omega_0}^{\widetilde{\gen\len}_\ell}
[\xi_\g^{-1}(\omega_0)-n\delta,j]\bigr),$$
because the Casimir operator $\partial_m$
commutes with the $\g$-action on
$V_{\omega_0}^{\widetilde{\gen\len}_\ell}$
by (\ref{twoheisenberg}).
Therefore, by Proposition \ref{prop:propinteressante}
we are thus reduced to prove that under
(\ref{turlututu}) we have
$$[\Oc(\Gamma_n)]_{\bullet,j}=
V_{\omega_0}^{\widetilde{\gen\len}_\ell}[\xi_\g^{-1}(\omega_0)-n\delta,j].$$
This follows from the
equality of the Casimir operators
(\ref{casimir}) and (\ref{mcasimir}), see (\ref{twoheisenberg}).

\vskip1mm

{\it Case 2 :}  Next, consider the case $i=0$.
Let $\Theta_{n,0}$ be the image of
\begin{equation}
\label{titi}
\bigoplus_{\tilde\mu}
V^{\tilde\aen}_{\tilde\mu}
[\omega_0-n\delta,j]\otimes
\Hom_{\tilde\aen}(V_{\tilde\mu}^{\tilde\aen},
V_{\omega_0}^{\widetilde{\gen\len}_\ell})
\end{equation}
by the canonical maps
$
V^{\tilde\aen}_{\tilde\mu}\otimes
\Hom_{\tilde\aen}(V_{\tilde\mu}^{\tilde\aen},
V_{\omega_0}^{\widetilde{\gen\len}_\ell})\to
V_{\omega_0}^{\widetilde{\gen\len}_\ell}.$
Here $\tilde\mu$ runs over the set of all weights in $P^{\tilde\aen}_+$
with $\langle\tilde\mu,\tilde\mu\rangle=0$.
By Proposition \ref{prop:propinteressante} and the discussion
above we must prove that
the image of $[\Oc(\Gamma_n)]_{0,j}$
by (\ref{turlututu})
is isomorphic to $\Theta_{n,0}$ as a vector space.
To do that, observe first that
by definition of $[\Oc(\Gamma_n)]_{0,j}$ the map
(\ref{turlututu}) takes $[\Oc(\Gamma_n)]_{0,j}$ onto the subspace
\begin{equation}
\label{forma}
V_{\omega_0}^{\widetilde{\gen\len}_\ell}[\xi_\g^{-1}(\omega_0)-n\delta]
\cap\bigoplus_{\l\in A(\ell,m)_0}
v_{\hat\g(\l^\dag,\ell)}\otimes
V^{\Hen}_{m\ell}[j]\otimes
V_{\hat\g(\l,m)}^{\widehat{\sen\len}_\ell}.
\end{equation}
Note that $v_{\hat\g(\l^\dag,\ell)}\otimes
V^{\Hen}_{m\ell}[j]\otimes
V_{\hat\g(\l,m)}^{\widehat{\sen\len}_\ell}$ is the
submodule of $\Fc^{(0)}_{m,\ell}=V_{\omega_0}^{\widehat{\gen\len}_\ell}$
generated by the vector $|0,\l\rangle$ for the level $m$ action of $\widehat{\gen\len}_\ell$.
Note also that $\tilde\aen_{\o_0}\simeq\widetilde{\gen\len}_\ell$
by Remark \ref{rk:omega0}. Finally, the set of weights of
$V_{\omega_0}^{\widehat{\gen\len}_\ell}$ is
$$\Wt(V_{\omega_0}^{\widehat{\gen\len}_\ell})=\{\o_0+\beta\,;\,\beta\in Q^{\sen\len_\ell}\},$$
see Section \ref{sec:extremal}, and we have the following lemma.

\begin{lemma}\label{lem:lemme}
(a) We have $\nu\in P^{\hat\aen_{\omega_0}}_+$ if and only if $\nu'\in P^{\widehat{\gen\len}_\ell}_+$.

(b) We have
$\{\nu'\,;\,\nu\in P^{\hat\aen_{\omega_0}}_+\cap\Wt(V_{\omega_0}^{\widehat{\gen\len}_\ell})\}=
\{\hat\g(\l,m)\,;\,\l\in A(\ell,m)_0\}.$
\end{lemma}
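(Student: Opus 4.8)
The plan is to deduce both parts from the rescaling isomorphism of Remark \ref{rk:omega0} and then to read off part (b) from the level--rank dictionary of Section \ref{sec:extremal} (more precisely from the set-up of \ref{levelrankisom}--\ref{levelrank}).

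For part (a) I would proceed as follows. By Remark \ref{rk:omega0}, when $\Lambda=\omega_0$ the algebra $\tilde\aen_{\omega_0}$ equals $\gen\len_\ell\otimes\CC[\varpi^m,\varpi^{-m}]\oplus\CC\oneb\oplus\CC D$ and the map $(\ref{multm})$ gives a Lie algebra isomorphism $\widetilde{\gen\len}_\ell\to\tilde\aen_{\omega_0}$, which restricts to an isomorphism $\widehat{\gen\len}_\ell\to\hat\aen_{\omega_0}$. Since $(\ref{multm})$ is the identity on the finite part $\gen\len_\ell\otimes 1$, sends $\oneb$ to $m\oneb$, and multiplies the $\delta$--grading by $m$, the pull-back along it of a level one weight $\nu=\omega_0+\sum_{p\geqslant 1}\nu_p(\omega_p-\omega_0)$ of $\widehat{\gen\len}_\ell$ is exactly the level $m$ weight $\nu'=m\,\omega_0+\sum_{p\geqslant 1}\nu_p(\omega_p-\omega_0)$ of $(\ref{mu'})$, the $\delta$--ambiguity being harmless as we work over $\widehat{\gen\len}_\ell$ rather than $\widetilde{\gen\len}_\ell$. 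Next I would unwind the definition of the positive real roots of $\hat\aen_{\omega_0}$: here $\langle\omega_0,\a_{p,q}\rangle=0$ for every root $\a_{p,q}$ of $\gen\len_\ell$, and requiring $e_{p,q}\otimes\varpi^{\,r}\in\tilde\aen_{\omega_0}$ forces $m\mid r$, so $\Delta^{\hat\aen_{\omega_0}}_+$ is precisely the set of roots $\a_{p,q}+mk\delta$ of the subalgebra $\gen\len_\ell\otimes\CC[\varpi^m,\varpi^{-m}]\oplus\CC\oneb$ that are positive in $\widehat{\gen\len}_\ell$; under $(\ref{multm})$ these are carried bijectively onto the positive real roots of $\widehat{\gen\len}_\ell$, and correspondingly the coroots onto coroots. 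Since $(\ref{multm})$ simultaneously identifies $\Delta^{\hat\aen_{\omega_0}}_+$ with $\Delta^{\widehat{\gen\len}_\ell}_+$ and the weight $\nu$ with $\nu'$, integrality and nonnegativity on the relevant coroots are transported faithfully, so $\nu\in P^{\hat\aen_{\omega_0}}_+$ if and only if $\nu'\in P^{\widehat{\gen\len}_\ell}_+$.

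For part (b) I would combine (a) with the description $\Wt(V_{\omega_0}^{\widehat{\gen\len}_\ell})=\{\omega_0+\beta\,;\,\beta\in Q^{\sen\len_\ell}\}$ recalled in Section \ref{sec:extremal}. By (a), for $\nu=\omega_0+\beta$ one has $\nu'=m\,\omega_0+\beta$, so the left-hand set of (b) is the set of level $m$ dominant integral weights of $\widehat{\gen\len}_\ell$ lying in the coset $m\,\omega_0+Q^{\sen\len_\ell}$; equivalently, writing such a weight as $\mu=\sum_{p=0}^{\ell-1}a_p\omega_p$ with $a_p\geqslant 0$ and $\sum_pa_p=m$, these are the $\mu$ with $\sum_{p=1}^{\ell-1}p\,a_p\equiv 0\pmod{\ell}$. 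On the other side, the level--rank section gives $\hat\g(\l,m)=\sum_{p=1}^{m}\omega_{\l^\dag_p\bmod\ell}$ with $\l^\dag\in A(m,\ell)_0$; since $\sum_p\l^\dag_p=0$ this weight is level $m$ dominant and $\sum_{p=1}^m(\l^\dag_p\bmod\ell)\equiv 0\pmod{\ell}$, so it lies in the required coset. Conversely, given such a $\mu$, arranging its multiset of residues (with $a_p$ occurrences of $p$) into a weakly decreasing integer sequence of length $m$, total sum $0$, and spread at most $\ell$ --- which is possible exactly because the residue sum vanishes modulo $\ell$ --- produces an element of $A(m,\ell)_0$, hence via level--rank duality an $\l\in A(\ell,m)_0$ with $\hat\g(\l,m)=\mu$. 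This is precisely the abacus combinatorics already underlying the bijection $A(\ell,m)_0\to A(m,\ell)_0$ used in that section, so no new input is needed, and the two sets of (b) coincide.

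The step I expect to cost the most care is the bookkeeping in part (a): the isomorphism $(\ref{multm})$ rescales both the level and the imaginary root by $m$, and one must check that the set extracted from the general definition of $\Delta^{\hat\aen}_+$ really specialises at $\Lambda=\omega_0$ to the honest positive real roots of the subalgebra $\gen\len_\ell\otimes\CC[\varpi^m,\varpi^{-m}]\oplus\CC\oneb$, and that $\nu$ pulls back to $\nu'$ rather than to some $\delta$--translate of it that would alter dominance. Once this dictionary is fixed, (a) is a comparison of dominance chambers and (b) is elementary combinatorics of dominant affine weights.
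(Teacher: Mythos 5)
Your part (a) is the argument the paper leaves implicit: Remark \ref{rk:omega0} identifies $\tilde\aen_{\omega_0}$ with $\widetilde{\gen\len}_\ell$ via (\ref{multm}), and since that map fixes the finite part, multiplies the level by $m$ and rescales $\delta$ by $m$, positivity of $\nu$ on $\Delta^{\hat\aen_{\omega_0}}_+$ (the positive roots $\a+mk\delta$ of the subalgebra) transports exactly to level $m$ dominance of $\nu'$. That bookkeeping is correct and is worth having written out. For part (b), however, you take a genuinely different and more roundabout route than the paper. The paper simply observes that $\l\mapsto\hat\g(\l,m)$ maps $A(\ell,m)$ \emph{onto the set of all} level $m$ dominant integral weights of $\widehat{\sen\len}_\ell$ (an immediate check from (\ref{weight})), and then uses the identification (\ref{tuple}) of $\omega_p-\omega_0$ with an $\ell$-tuple to see that the corresponding $\beta=\sum_p(\l_p-\l_{p+1})(\omega_p-\omega_0)$ lies in $Q^{\sen\len_\ell}$ precisely when $\sum_p\l_p\equiv 0 \bmod \ell$, i.e.\ (after translating $\l$ by a multiple of $(1^\ell)$, which does not change $\hat\g(\l,m)$) when $\l\in A(\ell,m)_0$. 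No passage to the dual side $A(m,\ell)_0$ is needed.

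Your detour through $A(m,\ell)_0$ has a soft spot in the converse direction. You claim that a level $m$ dominant weight $\mu=\sum_p a_p\omega_p$ with $\sum_{p\geqslant 1}p\,a_p\equiv 0\bmod\ell$ can always have its multiset of residues lifted to a weakly decreasing integer $m$-tuple of sum $0$ and spread at most $\ell$, "because the residue sum vanishes modulo $\ell$". This is true but not as immediate as stated: the obvious lift (take representatives in $[0,\ell-1]$ and push the smallest ones down by $\ell$) can violate the spread bound. For instance with $\ell=3$, $m=2$ and residue multiset $\{2,1\}$ one gets $(2,-2)$, of spread $4>\ell$, and one must instead choose the lift $(1,-1)$; the correct general argument uses that the window has length $\ell+1$, so a residue congruent to its lower endpoint admits two lifts. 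Since you do not prove this lifting lemma and it is not literally contained in the cited level--rank bijection (which is a bijection $A(\ell,m)_d\to A(m,\ell)_d$, not a surjectivity statement onto a set of weights), the cleanest repair is to replace this step by the paper's direct observation that $\{\hat\g(\l,m)\,;\,\l\in A(\ell,m)\}$ already exhausts $P^{\widehat{\gen\len}_\ell}_+$ at level $m$; your residue computation then reduces to the same congruence $\sum_p\l_p\equiv 0\bmod\ell$ and the two sets coincide.
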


\noindent
Thus, by Lemmas \ref{lem:lemmechinois}, \ref{lem:lemme} the space (\ref{forma}) is indeed equal to
\begin{equation}
\label{formx}
\bigoplus_{\tilde\nu}
V_{\tilde\nu}^{\tilde\aen_{\omega_0}}[\xi_\g^{-1}(\omega_0)-n\delta,j],
\end{equation}
where the sum is over all extremal weights
$\tilde\nu$ in $P^{\tilde\aen_{\omega_0}}_+\cap\Wt(V_{\omega_0}^{\widetilde{\gen\len}_\ell})$
and $V^{\tilde\aen_{\omega_0}}_{\tilde\nu}$ is identified with the $\tilde\aen_{\omega_0}$-submodule of
$V_{\omega_0}^{\widetilde{\gen\len}_\ell}$ generated by a non zero extremal weight vector of weight
$\tilde\nu$.
Now, let us consider the space $\Theta_{n,0}$.
Recall that $\langle\tilde\mu,\tilde\mu\rangle=0$ if and only if $\tilde\mu$
is an extremal weight of
$V_{\omega_0}^{\widetilde{\gen\len}_\ell}$.
Further an extremal weight have a one-dimensional weight subspace,
see Section \ref{sec:extremal}.
Thus $\Theta_{n,0}$ is equal to the sum
\begin{equation}\label{formb}
\bigoplus_{\tilde\mu}
V^{\tilde\aen}_{\tilde\mu}[\omega_0-n\delta,j],
\end{equation}
where $\tilde\mu$ runs over the set of all extremal weights  such that
$V_{\omega_0}^{\widetilde{\gen\len}_\ell}$
contains an  $\tilde\aen$-primitive vector of weight $\tilde\mu$, say $v_{\tilde\mu}$, and
$V^{\tilde\aen}_{\tilde\mu}$ is identified with the $\tilde\aen$-submodule of
$V_{\omega_0}^{\widetilde{\gen\len}_\ell}$
generated by $v_{\tilde\mu}$.
Now, Remark \ref{rk:basic} yields
$$\tilde\aen=
\ad(\g)^{-1}(\tilde\aen_{\o_0}),\quad
\xi_\g(\tilde\mu)\in P^{\tilde\aen}_+\iff\tilde\mu\in P^{\widetilde\aen_{\omega_0}}_+.$$
Thus the $\g$-action yields a linear automorphism of $V_{\omega_0}^{\widetilde{\gen\len}_\ell}$
such that
$$
\g^{-1}(V^{\tilde\aen_{\omega_0}}_{\tilde\mu}[\xi_\g^{-1}(\omega_0)-n\delta,j])=
V^{\tilde\aen}_{\xi_\g(\tilde\mu)}[\omega_0-n\delta,j],
\quad\forall\tilde\mu\in P^{\tilde\aen_{\omega_0}}_+.$$
Thus (\ref{formx}) is equal to $\Theta_{n,0}$ by the following lemma.

\begin{lemma}\label{lem:lemmebis}
For all weight
$\mu$ in $P^{\hat\aen_{\omega_0}}_+\cap\Wt(V_{\omega_0}^{\widehat{\gen\len}_\ell})$
the module
$V_{\omega_0}^{\widetilde{\gen\len}_\ell}$ contains a
$\tilde\aen_{\o_0}$-primitive vector
of weight $\tilde\mu$.
\end{lemma}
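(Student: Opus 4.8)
The plan is to produce the required vector as an extremal weight vector of $V_{\omega_0}^{\widetilde{\gen\len}_\ell}$ and to check primitivity by a short computation with the invariant form. Since $\mu\in\Wt(V_{\omega_0}^{\widehat{\gen\len}_\ell})$, by Section \ref{sec:extremal} we may write $\mu=\omega_0+\beta$ with $\beta\in Q^{\sen\len_\ell}$; set $\tilde\mu=\xi_\beta(\omega_0)=\omega_0+\beta-\tfrac12\langle\beta,\beta\rangle\delta$, the extremal weight of $V_{\omega_0}^{\widetilde{\gen\len}_\ell}$ over $\mu$, which is characterized by $\langle\tilde\mu,\tilde\mu\rangle=0$. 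Then $V_{\omega_0}^{\widetilde{\gen\len}_\ell}[\tilde\mu]$ is one-dimensional; fix a nonzero vector $v_{\tilde\mu}$ in it. The claim is that $v_{\tilde\mu}$ is $\tilde\aen_{\omega_0}$-primitive (this also pins down the weight $\tilde\mu$ in the statement: a $\tilde\aen_{\omega_0}$-primitive vector over $\mu$ is in particular extremal for $\widetilde{\gen\len}_\ell$, so its weight must be $\xi_\beta(\omega_0)$).

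First I would verify $\tilde\mu\in P^{\tilde\aen_{\omega_0}}_+$. By Remark \ref{rk:omega0} the positive real roots of $\tilde\aen_{\omega_0}$ are the $\gamma=\alpha+mk\delta$ with $\alpha$ a root of $\sen\len_\ell$ and either $k\geqslant 1$, or $k=0$ and $\alpha>0$. Using $\langle\omega_0,\alpha\rangle=0$, $\langle\omega_0,\delta\rangle=1$, $\langle\beta,\delta\rangle=\langle\delta,\delta\rangle=0$ one finds $\langle\tilde\mu,\gamma\rangle=mk+\langle\beta,\alpha\rangle$, and this equals the pairing of $\mu$ with the corresponding coroot of $\hat\aen_{\omega_0}$, which is $\geqslant 0$ by the hypothesis $\mu\in P^{\hat\aen_{\omega_0}}_+$; integrality of $\tilde\mu$ is immediate. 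Hence $\tilde\mu$ is dominant for $\tilde\aen_{\omega_0}$.

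The annihilation of $v_{\tilde\mu}$ by the positive root vectors of $\tilde\aen_{\omega_0}$ is then automatic from a norm bound. By Section \ref{sec:extremal} every weight $\nu$ of $V_{\omega_0}^{\widetilde{\gen\len}_\ell}=V_{\omega_0}^{\widetilde{\sen\len}_\ell}\otimes V^\Hen_\ell$ satisfies $\langle\nu,\nu\rangle\leqslant 0$ (the weights of $V_{\omega_0}^{\widetilde{\sen\len}_\ell}$ have norm $-2i\leqslant 0$, while the weights of $V^\Hen_\ell$ are $-r\delta$ with $r\geqslant 0$, each contributing $-2r\langle\omega_0,\delta\rangle=-2r$ to the norm). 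Now let $\gamma$ be a positive root of $\tilde\aen_{\omega_0}$ and $x\in(\tilde\aen_{\omega_0})_\gamma$; then $xv_{\tilde\mu}$ lies in the weight space for $\tilde\mu+\gamma$, and $\langle\tilde\mu+\gamma,\tilde\mu+\gamma\rangle=\langle\tilde\mu,\tilde\mu\rangle+2\langle\tilde\mu,\gamma\rangle+\langle\gamma,\gamma\rangle=2\langle\tilde\mu,\gamma\rangle+\langle\gamma,\gamma\rangle$. If $\gamma$ is real then $\langle\gamma,\gamma\rangle=2$ and $\langle\tilde\mu,\gamma\rangle\geqslant 0$ by the dominance just proved, so this is $\geqslant 2>0$; if $\gamma=mk\delta$ is imaginary with $k\geqslant 1$ then $\langle\gamma,\gamma\rangle=0$ but $\langle\tilde\mu,\gamma\rangle=mk\langle\tilde\mu,\delta\rangle=mk>0$ since $\langle\tilde\mu,\delta\rangle=1$. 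In either case $\langle\tilde\mu+\gamma,\tilde\mu+\gamma\rangle>0$, so $\tilde\mu+\gamma$ is not a weight of $V_{\omega_0}^{\widetilde{\gen\len}_\ell}$ and $xv_{\tilde\mu}=0$. Thus $v_{\tilde\mu}$ is a nonzero weight vector of $\tilde\aen_{\omega_0}$-dominant weight annihilated by all positive root vectors of $\tilde\aen_{\omega_0}$, i.e.\ it is $\tilde\aen_{\omega_0}$-primitive.

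I do not anticipate a genuine difficulty here: the only care needed is in identifying the positive real and imaginary roots (and the root spaces) of $\tilde\aen_{\omega_0}$ via Remark \ref{rk:omega0}, and in checking that the two norm inequalities use nothing beyond the hypothesis $\mu\in P^{\hat\aen_{\omega_0}}_+$. As an alternative route, the lemma can also be deduced from the level-rank decomposition (\ref{levelrankisom}) together with Remark \ref{rk:omega0} and Lemma \ref{lem:lemme}, which present $V_{\omega_0}^{\widetilde{\gen\len}_\ell}$, restricted to $\tilde\aen_{\omega_0}\cong\widetilde{\gen\len}_\ell$ via (\ref{multm}), as a direct sum of integrable highest weight modules whose highest weights run exactly over the $\tilde\mu$ in the statement, so that the explicit highest weight vectors appearing in $|0,\l\rangle$ furnish the primitive vectors.
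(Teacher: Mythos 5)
Your argument is correct and is essentially the paper's own proof (which is borrowed from \cite[sec.~6.2]{E}): identify $\tilde\mu$ as the extremal weight over $\mu$, so $\langle\tilde\mu,\tilde\mu\rangle=0$, and rule out $\tilde\mu+\gamma$ as a weight of $V_{\omega_0}^{\widetilde{\gen\len}_\ell}$ via the norm inequality $\langle\tilde\mu+\gamma,\tilde\mu+\gamma\rangle=2\langle\tilde\mu,\gamma\rangle+\langle\gamma,\gamma\rangle>0$. You are in fact slightly more complete than the paper, since you explicitly verify $\tilde\mu\in P^{\tilde\aen_{\omega_0}}_+$ from the hypothesis on $\mu$ and you treat the imaginary root directions (the positive Heisenberg part), both of which the paper leaves implicit.
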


\vskip1mm

{\it Case 3 :}  Finally, consider the general case.
Fix the integers $n,j$. Let $\Theta_{n,i}$ be the image of
\begin{equation*}
\bigoplus_{\tilde\nu}
V^{\tilde\aen}_{\tilde\nu}
[\omega_0-n\delta,j]\otimes
\Hom_{\tilde\aen}(V_{\tilde\nu}^{\tilde\aen},
V_{\omega_0}^{\widetilde{\gen\len}_\ell}),
\end{equation*}
by the canonical maps
$
V^{\tilde\aen}_{\tilde\nu}\otimes
\Hom_{\tilde\aen}(V_{\tilde\nu}^{\tilde\aen},
V_{\omega_0}^{\widetilde{\gen\len}_\ell})\to
V_{\omega_0}^{\widetilde{\gen\len}_\ell}.$
Here the sum is over all weights $\tilde\nu\in P^{\tilde\aen}_+$
such that
$\langle\tilde\nu,\tilde\nu\rangle=-2i$.
The same argument as for Case 2 implies that
$\Theta_{n,i}=\g^{-1}(\Theta'_{n,i})$ where
$\Theta'_{n,i}$ is the image of
\begin{equation*}
\bigoplus_{\tilde\mu}
V^{\tilde\aen_{\omega_0}}_{\tilde\mu}
[\xi_\g^{-1}(\omega_0)-n\delta,j]\otimes
\Hom_{\tilde\aen_{\omega_0}}(V_{\tilde\mu}^{\tilde\aen_{\omega_0}},
V_{\omega_0}^{\widetilde{\gen\len}_\ell}),
\end{equation*}
by the canonical maps
$
V^{\tilde\aen_{\omega_0}}_{\tilde\mu}\otimes
\Hom_{\tilde\aen_{\omega_0}}(V_{\tilde\mu}^{\tilde\aen_{\omega_0}},
V_{\omega_0}^{\widetilde{\gen\len}_\ell})\to
V_{\omega_0}^{\widetilde{\gen\len}_\ell},$
because the composition by the automorphism $\g^{-1}$ of
$V_{\omega_0}^{\widetilde{\gen\len}_\ell}$ yields a linear isomorphism
$$\Hom_{\tilde\aen}(V_{\xi_\g(\tilde\mu)}^{\tilde\aen},
V_{\omega_0}^{\widetilde{\gen\len}_\ell})=
\Hom_{\tilde\aen_{\omega_0}}(V_{\tilde\mu}^{\tilde\aen_{\omega_0}},
V_{\omega_0}^{\widetilde{\gen\len}_\ell}).$$
Here the sum is over all weights $\tilde\mu\in P^{\tilde\aen_{\omega_0}}_+$
such that
$\langle\tilde\mu,\tilde\mu\rangle=-2i$.
Let us prove that (\ref{turlututu}) maps
$[\Oc(\Gamma_n)]_{i,j}$ onto $\Theta'_{n,i}$.
The proof of Case 2 implies that
(\ref{turlututu}) maps
$[\Oc(\Gamma_n)]_{0,j}$ onto $\Theta'_{n,0}$.
By (\ref{map1}) we have
$$U^-(\widehat{\sen\len}_m)_i\,\bigl([\Oc(\Gamma_n)]_{0,j}\bigr)
=[\Oc(\Gamma_n)]_{i,j}.$$
By (\ref{macdo1}) we have also
$$U^-(\widehat{\sen\len}_m)_i\,(\Theta'_{n,0})\subset\Theta'_{n,i},$$
because the actions of
$\widehat{\sen\len}_m$ and $\hat\aen_{\omega_0}$ commute with each other.
Therefore, we have
$$[\Oc(\Gamma_n)]_{i,j}\subset\Theta'_{n,i}.$$
On the other hand, the proof of the first case implies that
$$[\Oc(\Gamma_n)]_{\bullet,j}=\bigoplus_{i\geqslant 0}\Theta'_{n,i}.$$
Thus we have the equality $[\Oc(\Gamma_n)]_{i,j}=\Theta'_{n,i}.$

\vskip3mm

\begin{proof}[Proof of Lemma \ref{lem:lemmechinois}]
A direct computation shows that $${1\over 2}\langle\g,\g\rangle={1\over 2}\sum_{p=1}^\ell s_p^2.$$
Now, consider the partition $\l^c=(\l_1,\ldots,\l_{k\ell})$.
We choose $k$ to be large enough such that
$\l_{k\ell}=0$. Write
$$\l_i-i+1= (a_i-1)\ell +b_i,\quad 1\leqslant b_i\leqslant \ell,$$
$$i-1=a_i'\ell + b_i',\quad 0\leqslant b_i'\leqslant \ell-1.$$
The number of 0-nodes in the $i$-th row of the Young diagram associated with
$\l^c$ is equal to $a_i+a_i'$. So
$$n_0(\l^c)=\sum_{i=1}^{k\ell} (a_i+a_i').$$
We have
$$\sum_{i=1}^{k\ell}a_i'=\frac{-k(-k+1)\ell}{2}.$$
By the definition of the bijection $\tau$,
\begin{eqnarray*} \sum_{i=1}^{k\ell}a_i&=&\sum_{p=1}^\ell\bigl( (-k+1)+(-k+2)+\cdots+s_p\bigr)\\
&=&{1\over 2}\sum_{p=1}^\ell s_p^2 -\frac{-k(-k+1)\ell}{2}.
\end{eqnarray*}
This proves part $(a)$. For part $(b)$, note that $(a)$ and (\ref{macdo00}) yield
$$D(|0,s\rangle)=-{1\over 2}\langle\g,\g\rangle|0,s\rangle$$
Further $|0,s\rangle$ is a weight vector for the level one representation of
$\widehat{\gen\len}_\ell$  with the weight $\o_0-\g$,
see \cite[(28)]{U}. Thus $|0,s\rangle$ is a weight vector for the level one representation of
$\widetilde{\gen\len}_\ell$  with the weight
$$\xi_\g^{-1}(\o_0)=\o_0-\g-{1\over 2}\langle\g,\g\rangle\delta.$$
The latter is an extremal weight, see Section \ref{sec:extremal}.
\end{proof}

\vskip3mm

\begin{proof}[Proof of Lemma \ref{lem:lemme}]
The set of all dominant integral weights of $\widehat{\sen\len}_\ell$ is
$$\aligned
\bigl\{\hat\g(\l,m)\,;\,\l\in A(\ell,m)\bigr\}
&=\bigl\{(m-\l_1+\l_\ell)\,\omega_0+
\sum_{p=1}^{\ell-1}(\l_p-\l_{p+1})\,\omega_p\,;\,\l\in A(\ell,m)\bigr\}\cr
&=\bigl\{m\,\omega_0+
\sum_{p=1}^{\ell-1}(\l_p-\l_{p+1})\,(\omega_p-\omega_0)\,;\,\l\in A(\ell,m)\bigr\}.
\endaligned$$
Set $\beta=\sum_{p=1}^{\ell-1}(\l_p-\l_{p+1})\,(\omega_p-\omega_0)$ with $\l\in A(\ell,m)$.
Identifying
$\omega_p-\omega_0$  with the $\ell$-tuple
(\ref{tuple}),
a short computation
shows that $\beta\in Q^{\sen\len_\ell}$ if and only if $\l\in A(\ell,m)_0$.
\end{proof}

\begin{proof}[Proof of Lemma \ref{lem:lemmebis}]
Fix a weight
$\mu$ in $P^{\hat\aen_{\omega_0}}_+\cap\Wt(V_{\omega_0}^{\widehat{\gen\len}_\ell})$.
Fix a non zero element $v\in V_{\omega_0}^{\widetilde{\gen\len}_\ell}$ of weight
$\tilde\mu=\mu-{1\over 2}\langle \mu,\mu\rangle\delta$. We must prove that
$v$ is $\tilde\aen_{\omega_0}$-primitive. The argument is  taken from \cite[sec.~6.2]{E}.
By Remark \ref{rk:omega0} it is enough to prove that $\tilde\mu+\nu$
is not a weight of $V_{\omega_0}^{\widetilde{\gen\len}_\ell}$ for any element $\nu$ in the set
$$\{\alpha_{p,p+1},\ \tilde\mu-\alpha_{1,\ell}+m\delta\,;\, p=1,2,\dots,\ell-1\}.$$
In fact, since $\tilde\mu\in P^{\tilde\aen}_+$, for such a $\nu$ we have
$$\langle\tilde\mu+\nu,\tilde\mu+\nu\rangle=\langle\nu,\nu\rangle+
2\langle\tilde\mu,\nu\rangle=2+
2\langle\tilde\mu,\nu\rangle>0.$$
Therefore $\tilde\mu+\nu$
is not a weight of $V_{\omega_0}^{\widetilde{\gen\len}_\ell}$ by Section \ref{sec:extremal}.
\end{proof}

\vskip3mm

\begin{rk}
Assume that the parameters $h$, $h_p$ are as in (\ref{h}).
Since $\g$ belongs to $T_\ell(K)$, it acts on any integrable $\widehat{\sen\len}_\ell$-module.
Let $0^\ell$ denote the trivial $\ell$-charge.
The $\g$-action on the representation of $\widehat{\sen\len}_\ell$ on
$\Fc_{m,\ell}$ of level 1 takes $\Fc^{(0^\ell)}_{m,\ell}$ onto $\Fc^{(s)}_{m,\ell}$.
Indeed, since $\g$ is a cocharacter of $T_\ell$ the formula
(\ref{formule78998}) yields the following equality
$$\aligned
\g(\Fc^{(0^\ell)}_{m,\ell})&=\g(\Fc^{(0)}_{m,\ell}[m\o_0])\cr
&=\g(V^{\widehat{\gen\len}_\ell}_{\o_0}[\o_0])\cr
&=V^{\widehat{\gen\len}_\ell}_{\o_0}[\xi_\g^{-1}(\o_0)]\cr
&=\Fc^{(0)}_{m,\ell}[\xi_\g^{-1}(\o_0)'].
\endaligned$$
Here the upper script ${}'$ is as in (\ref{mu'}).
Therefore, by Section \ref{sec:fockl} we are reduced to check the following identity
$$\xi_{\g}^{-1}(\o_0)=\o_0+{1\over h}\sum_{p=1}^{\ell-1}h_p\,(\o_p-\o_0).$$
Recall that $\g=-{1\over h}\sum_{p=1}^{\ell-1}h_p\,(\o_p-\o_0).$
Thus the claim follows from the formula (\ref{formule78997}) for the
$\widehat\Sen_\ell$-action on
$\ten^*_\ell\oplus\CC\o_0\oplus\CC\delta$.
\end{rk}

\vskip1cm

\appendix
\section{Reminder on Hecke algebras}\label{app:A}
\subsection{Affine Hecke algebras}
The affine Hecke algebra of type $GL_n$ with parameter $\zeta\in\CC^\times$
is the $\CC$-algebra $\hat\Hb_\zeta(n)$ generated by the symbols
$X_1,X_2,\dots,X_n$, $T_1,T_2,\dots,T_{n-1}$ modulo the defining relations
$$\gathered
X_iX_j=X_jX_i,\quad 1\leqslant i,j\leqslant n,\cr
T_iX_j=X_jT_i,\quad j\neq i,i+1,\cr
T_iX_iT_i=\zeta X_{i+1},\quad 1\leqslant i\leqslant n-1,\cr
(T_i+1)(T_i-\zeta)=0,\quad 1\leqslant i\leqslant n-1,\cr
T_iT_j=T_jT_i,\quad |i-j|>2,\cr
T_iT_{i+1}T_i=T_{i+1}T_iT_{i+1},\quad 1\leqslant i\leqslant n-2.
\endgathered$$
For $I\subset\{1,2,\dots,n-1\}$ let $\hat\Hb_\zeta(I)\subset\hat\Hb_\zeta(n)$
be the corresponding parabolic subalgebra. It is generated by the elements
$T_i$, $X_j$ with $i\in I$, $j=1,2,\dots,n$.
For a reduced expression $w=s_{i_1}s_{i_2}\cdots s_{i_k}$ of an element
$w\in\frak S_n$ we write $T_w=T_{i_1}T_{i_2}\cdots T_{i_k}$.
We abbreviate $T_{ij}=T_{s_{ij}}$.
Let $D_I$ be the set of minimal length representatives of the left cosets
in $\frak S_n/\frak S_I$. We'll abbreviate
$D_{I,J}=D_I^{-1}\cap D_J$.
For $x\in D_{I,J}$ the map
$$\frak S_{I\cap xJ}\to\frak S_{x^{-1}I\cap J},\quad
w\mapsto x^{-1}wx$$
defines a length preserving homomorphism.
Hence there is a $\CC$-algebra isomorphism
$$\hat\Hb_\zeta(I\cap xJ)\to\hat\Hb_\zeta(x^{-1}I\cap J),\quad
T_w\mapsto T_{x^{-1}wx},\quad
X_j\mapsto X_{x^{-1}(j)}.$$
Let
$$\Rep(\hat\Hb_\zeta(x^{-1}I\cap J))\to\Rep(\hat\Hb_\zeta(I\cap xJ)),\quad
M\mapsto {}^xM$$
be the corresponding twist functor.
The following is well-known.

\begin{lemma}
[Affine Mackey theorem]
Let $M\in\Rep(\hat\Hb_\zeta(J))$. The module
$$\Res^{\hat\Hb_\zeta(n)}_{\hat\Hb_\zeta(I)}
\Ind^{\hat\Hb_\zeta(n)}_{\hat\Hb_\zeta(J)}(M)$$ admits a filtration
with subquotients isomorphic to
$$\Ind^{\hat\Hb_\zeta(I)}_{\hat\Hb_\zeta(I\cap xJ)}
{}^x\Res^{\hat\Hb_\zeta(J)}_{\hat\Hb_\zeta(x^{-1}I\cap J)}(M),$$
one for each $x\in D_{I,J}$. The subquotients are taken in any order refining the Bruhat order on
$D_{I,J}$. In particular we have the inclusion
$$\Ind^{\hat\Hb_\zeta(I)}_{\hat\Hb_\zeta(I\cap J)}\Res^{\hat\Hb_\zeta(J)}_{\hat\Hb_\zeta(I\cap J)}(M)
\subset\Res^{\hat\Hb_\zeta(n)}_{\hat\Hb_\zeta(I)}
\Ind^{\hat\Hb_\zeta(n)}_{\hat\Hb_\zeta(J)}(M).$$
\end{lemma}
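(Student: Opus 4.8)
The plan is to reduce everything to the structure of $\hat\Hb_\zeta(n)$ as a $(\hat\Hb_\zeta(I),\hat\Hb_\zeta(J))$-bimodule, exactly as in the classical Mackey formula for finite Hecke algebras, the only new feature being the commutative ``lattice part''. Write $\hat\Hb=\hat\Hb_\zeta(n)$ and let $\mathcal{A}\subset\hat\Hb$ be the subalgebra generated by $X_1^{\pm1},\dots,X_n^{\pm1}$. The starting point is the Bernstein--Lusztig basis: $\hat\Hb$ is free as a left $\mathcal{A}$-module with basis $\{T_w:w\in\Sen_n\}$, and for any $I$ the parabolic $\hat\Hb_\zeta(I)$ contains $\mathcal{A}$ and is free over $\mathcal{A}$ with basis $\{T_w:w\in\Sen_I\}$; in particular $\hat\Hb$ is free as a left $\hat\Hb_\zeta(I)$-module and as a right $\hat\Hb_\zeta(J)$-module, so $\Ind^{\hat\Hb}_{\hat\Hb_\zeta(J)}=\hat\Hb\otimes_{\hat\Hb_\zeta(J)}(-)$ is exact. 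Since $M\mapsto\Res^{\hat\Hb}_{\hat\Hb_\zeta(I)}\Ind^{\hat\Hb}_{\hat\Hb_\zeta(J)}(M)=\hat\Hb\otimes_{\hat\Hb_\zeta(J)}M$ only sees $\hat\Hb$ through its bimodule structure, it suffices to construct a suitable bimodule filtration of $\hat\Hb$.

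Next I would build the filtration. By standard Coxeter combinatorics $D_{I,J}$ is a set of minimal length representatives of the double cosets $\Sen_I\backslash\Sen_n/\Sen_J$, so $\Sen_n=\bigsqcup_{x\in D_{I,J}}\Sen_I x\Sen_J$, and for $x\in D_{I,J}$ one has $\Sen_{I\cap xJ}=\Sen_I\cap x\Sen_Jx^{-1}$ together with the length preserving isomorphism $w\mapsto x^{-1}wx$ from $\Sen_{I\cap xJ}$ onto $\Sen_{x^{-1}I\cap J}$ recalled in the body of the paper. For an order ideal (lower set) $C$ of $(D_{I,J},\leqslant)$ in the Bruhat order set $\hat\Hb_{\leqslant C}=\sum_{x\in C}\hat\Hb_\zeta(I)\,T_x\,\hat\Hb_\zeta(J)$. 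I claim that $\hat\Hb_{\leqslant C}$ is a sub-bimodule, that the $\hat\Hb_{\leqslant C}$ exhaust $\hat\Hb$, and that for $C=C'\sqcup\{x\}$ with $C'$ an order ideal the multiplication map induces a bimodule isomorphism
\[
\hat\Hb_\zeta(I)\otimes_{\hat\Hb_\zeta(I\cap xJ)}{}^{x}\hat\Hb_\zeta(J)\;\xrightarrow{\ \sim\ }\;\hat\Hb_{\leqslant C}/\hat\Hb_{\leqslant C'} ,
\]
where ${}^{x}\hat\Hb_\zeta(J)$ is $\hat\Hb_\zeta(J)$ with the left $\hat\Hb_\zeta(I\cap xJ)$-action twisted through the isomorphism $\hat\Hb_\zeta(I\cap xJ)\to\hat\Hb_\zeta(x^{-1}I\cap J)$ of the excerpt followed by the inclusion into $\hat\Hb_\zeta(J)$. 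These three claims are proved by the usual argument: if $v\in\Sen_I$, $w\in\Sen_J$ satisfy $\ell(vxw)=\ell(v)+\ell(x)+\ell(w)$ then $T_vT_xT_w=T_{vxw}$, and using the quadratic relation $(T_i+1)(T_i-\zeta)=0$ together with descent to shorter words one rewrites any product $T_vT_xT_w$ with non-additive lengths as a combination of $T_{v'xw'}$ in the same double coset plus terms lying in $\hat\Hb_{\leqslant C'}$. The presence of $\mathcal{A}$ causes no trouble since $\mathcal{A}\subseteq\hat\Hb_\zeta(I)\cap\hat\Hb_\zeta(J)$: the relation $T_iX_iT_i=\zeta X_{i+1}$ lets one move any $X^\alpha$ past $T_x$ at the cost of $T$-terms of strictly smaller length, so the $\mathcal{A}$-coefficients never push a basis element into a larger double coset, and the displayed map is an isomorphism by a rank count with the Bernstein--Lusztig basis on both sides.

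Finally I would apply $(-)\otimes_{\hat\Hb_\zeta(J)}M$. As $\hat\Hb$ is right free over $\hat\Hb_\zeta(J)$, each $\hat\Hb_{\leqslant C}$ is a right $\hat\Hb_\zeta(J)$-submodule, and tensoring with $M$ turns the bimodule filtration into a filtration of the $\hat\Hb_\zeta(I)$-module $\Res^{\hat\Hb}_{\hat\Hb_\zeta(I)}\Ind^{\hat\Hb}_{\hat\Hb_\zeta(J)}(M)$ whose subquotients, one for each $x\in D_{I,J}$ and taken in any order refining the Bruhat order, are
\[
\hat\Hb_\zeta(I)\otimes_{\hat\Hb_\zeta(I\cap xJ)}{}^{x}\hat\Hb_\zeta(J)\otimes_{\hat\Hb_\zeta(J)}M=\Ind^{\hat\Hb_\zeta(I)}_{\hat\Hb_\zeta(I\cap xJ)}\,{}^{x}\Res^{\hat\Hb_\zeta(J)}_{\hat\Hb_\zeta(x^{-1}I\cap J)}(M),
\]
the last equality being the unravelling of the definition of the twist functor $M\mapsto{}^{x}M$. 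Taking $x=1$, the minimum of $D_{I,J}$, gives $I\cap xJ=x^{-1}I\cap J=I\cap J$ and a trivial twist, so the bottom step of the filtration is the asserted submodule $\Ind^{\hat\Hb_\zeta(I)}_{\hat\Hb_\zeta(I\cap J)}\Res^{\hat\Hb_\zeta(J)}_{\hat\Hb_\zeta(I\cap J)}(M)\subseteq\Res^{\hat\Hb_\zeta(n)}_{\hat\Hb_\zeta(I)}\Ind^{\hat\Hb_\zeta(n)}_{\hat\Hb_\zeta(J)}(M)$. The main obstacle is the middle paragraph: the careful bookkeeping in the non-length-additive case showing that all error terms genuinely land in $\hat\Hb_{\leqslant C'}$ and that the lattice elements $X^\alpha$ never violate this, so that the multiplication map on associated graded is well defined and bijective; the rest is formal.
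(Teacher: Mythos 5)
The paper gives no proof of this lemma --- it is stated as ``well-known'' and used as a black box --- so there is nothing to compare your argument against; judged on its own, your proof is the standard one and it is correct. The bimodule filtration $\hat\Hb_{\leqslant C}=\sum_{x\in C}\hat\Hb_\zeta(I)T_x\hat\Hb_\zeta(J)$ indexed by Bruhat order ideals of $D_{I,J}$, the identification of the associated graded pieces with $\hat\Hb_\zeta(I)\otimes_{\hat\Hb_\zeta(I\cap xJ)}{}^{x}\hat\Hb_\zeta(J)$ via a leading-term/rank count in the basis $\{X^\alpha T_w\}$, and the final base change $(-)\otimes_{\hat\Hb_\zeta(J)}M$ are exactly how one proves the affine Mackey theorem (compare the degenerate case in Kleshchev's book, which the paper cites elsewhere). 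You also correctly isolate the two points that need care: that error terms from non-length-additive products $T_vT_xT_w$ land in strictly smaller double cosets (which follows because the minimal representative $x''$ of the double coset of any $u<x$ satisfies $x''\leqslant u<x$), and that the Bernstein relations $T_iX^\alpha-X^{s_i\alpha}T_i\in\mathcal{A}$ only produce terms of strictly smaller $T$-length. Two small points worth making explicit in a final write-up: (i) the well-definedness of the map out of the tensor product uses $T_vT_x=T_xT_{x^{-1}vx}$ for $v\in\Sen_{I\cap xJ}$ (lengths add because $x\in D_I^{-1}\cap D_J$) and $X^\alpha T_x\equiv T_xX^{x^{-1}\alpha}$ modulo $\hat\Hb_{\leqslant C'}$; (ii) the reason the filtration survives $(-)\otimes_{\hat\Hb_\zeta(J)}M$ is not merely that $\hat\Hb$ is right free, but that each successive quotient is free as a right $\hat\Hb_\zeta(J)$-module, so every step of the filtration splits as right modules and injectivity is preserved after tensoring.
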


\subsection{Cyclotomic Hecke algebras}
\label{app:cyclohecke}
The cyclotomic Hecke algebra $\Hb_\zeta(n,\ell)$
associated with $\Gamma_n$ and the parameters
$\zeta,v_1,v_2,\dots,v_\ell\in\CC^\times$ is the quotient of
$\hat\Hb_\zeta(n)$ by the two-sided ideal generated by the element
$$(X_1-v_1)(X_1-v_2)\dots(X_1-v_\ell).$$
We'll denote the image of the generator $X_1$ in $\Hb_\zeta(n,\ell)$ by the
symbol $T_0$.
For a subset $I\subset\{0,1,\dots,n-1\}$ we define
$\Gamma_I\subset\Gamma_n$ as the subgroup
$\frak S_I$ if $0\not\in I$, or as the subgroup
generated by $\frak S_{I\setminus \{0\}}$ and $\{\g_1;\g\in\Gamma\}$ else.
This yields all parabolic subgroup of $\Gamma_n$. We consider also the
parabolic subalgebra $\Hb_\zeta(I,\ell)\subset\Hb_\zeta(n,\ell)$
which is the subalgebra generated by the elements $T_i$ with $i\in I$.
To unburden the notation, we abbreviate
$$\Hb(\Gamma_n)=\Hb_\zeta(n,\ell),\quad
\Hb(\frak S_m)=\Hb_\zeta(m),\quad
\Hb(\Gamma_I)=\Hb_\zeta(I,\ell).$$
For $r>0$ and
$I=\{0,1,\dots,n+mr-1\}\setminus\{n\}$ we write also
$$\Hb(\Gamma_{n,mr})=\Hb(\Gamma_I).$$

\subsection{Induction/restriction for cyclotomic Hecke algebras}
\label{app:cycloheckeindres}
We'll abbreviate
\begin{equation}
\label{abrevH}
\begin{aligned}
&{}^\Hb\!\Ind_{n}=\Ind_{\Hb(\Gamma_{n-1})}^{\Hb(\Gamma_{n})},
&{}^\Hb\!\Res_{n}=\Res^{\Hb(\Gamma_{n})}_{\Hb(\Gamma_{n-1})},\cr
&{}^\Hb\!\Ind_{n,(m^r)}=\Ind_{\Hb(\Gamma_{n,(m^r)})}^{\Hb(\Gamma_{n+mr})},
&{}^\Hb\!\Res_{n,(m^r)}=\Res^{\Hb(\Gamma_{n+mr})}_{\Hb(\Gamma_{n,(m^r)})},\cr
&{}^\Hb\!\Ind_{n,mr}=\Ind_{\Hb(\Gamma_{n,mr})}^{\Hb(\Gamma_{n+mr})},
&{}^\Hb\!\Res_{n,mr}=\Res^{\Hb(\Gamma_{n+mr})}_{\Hb(\Gamma_{n,mr})}.
\end{aligned}
\end{equation}
We write also
$$\gathered
{}^\Hb\!\Ind_{(m^r)}={}^\Hb\!\Ind_{\Sen_m^r}^{\Sen_{mr}}:\Rep(\Hb(\Sen_m^r))\to\Rep(\Hb(\Sen_{mr})),\cr
{}^\Hb\!\Res_{(m^r)}={}^\Hb\!\Res_{\Sen_m^r}^{\Sen_{mr}}:
\Rep(\Hb(\Sen_{mr}))\to\Rep(\Hb(\Sen_m^r)).
\endgathered$$
Now, we consider the Mackey decomposition of the functor
$${}^\Hb\!\Res_{n+m}\circ\,{}^\Hb\!\Ind_{n,m}:
\Rep(\Hb(\Gamma_{n,m}))\to\Rep(\Hb(\Gamma_{n+m-1})).$$
A short computation shows that a set of representatives of the double cosets
in $$\Gamma_{n+m-1}\setminus\Gamma_{n+m}/\Gamma_{n,m}$$ is
$\{\g_{n+m}, s_{n,n+m}\,;\,\g\in\Gamma\}$.
For
$$I=\{0,\dots,n+m-1\}\setminus\{n-1,n\},\quad
J=\{0,\dots,n+m-2\}\setminus\{n-1\}$$we have
$$\Hb(\Gamma_I)\subset\Hb(\Gamma_{n,m}),\quad
\Hb(\Gamma_J)=\Hb(\Gamma_{n-1,m})\subset\Hb(\Gamma_{n+m-1})
.$$
Further, there is an algebra isomorphism
$$\varphi:\ \Hb(\Gamma_J)\to\Hb(\Gamma_I),\
T_w\mapsto T_{sws^{-1}},\ X_i\mapsto X_{si},$$
where $s=s_ns_{n+1}\cdots s_{n+m-1}$.
For  each $i$, $p$ we write $X_i^p=(X_i)^p$.
We have the following decomposition. It is well known in the case $m=1$,
see e.g., \cite[lem.~5.6.1]{K} in the degenerate case.

\begin{prop}
\label{prop:cycloheckeindres}
(a) We have an isomorphism of $\Hb(\Gamma_{n+m-1})$-modules
$$\Hb(\Gamma_{n+m})=
\bigoplus_{0\leqslant p<\ell}\bigoplus_{1\leqslant j\leqslant n+m}
\Hb(\Gamma_{n+m-1})\,T_{j,n+m}\,X_j^p.$$
(b) We have an isomorphism of $\bigl(\Hb(\Gamma_{n+m-1}),\Hb(\Gamma_{n,m})\bigr)$-bimodules
$$\Hb(\Gamma_{n+m})=
\Hb(\Gamma_{n+m-1})\,T_{n,n+m}\,\Hb(\Gamma_{n,m})\oplus
\bigoplus_{0\leqslant p<\ell}
\Hb(\Gamma_{n+m-1})X_{n+m}^p\Hb(\Gamma_{n,m}).$$
(c) There are isomorphisms of
$\bigl(\Hb(\Gamma_{n+m-1}),\Hb(\Gamma_{n,m})\bigr)$-bimodules
$$\begin{gathered}
\Hb(\Gamma_{n+m-1})\,T_{n,n+m}\Hb(\Gamma_{n,m})=
\Hb(\Gamma_{n+m-1})\otimes_{\Hb(\Gamma_{n-1,m})}\Hb(\Gamma_{n,m}),\cr
\Hb(\Gamma_{n+m-1})\,X_{n+m}^p\Hb(\Gamma_{n,m})=
\Hb(\Gamma_{n+m-1})\otimes_{\Hb(\Gamma_{n,m-1})}\Hb(\Gamma_{n,m}),
\end{gathered}$$
where the algebra homomorphism
$\Hb(\Gamma_{n-1,m})\to\Hb(\Gamma_{n,m})$
is given by $\varphi$.
\end{prop}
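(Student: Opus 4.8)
The plan is to deduce all three statements from the Ariki--Koike monomial basis of $\Hb(\Gamma_{n+m})$, following essentially verbatim the classical proof of the $m=1$ branching rule for cyclotomic Hecke algebras (as in the reference cited for the degenerate case), the only new feature being that the single last strand is replaced by the block $\Sen_m$ on strands $n+1,\dots,n+m$. The tools will be the defining relations of $\hat\Hb_\zeta(n+m)$, in particular $T_iX_j=X_jT_i$ for $j\neq i,i+1$ and the Bernstein--Lusztig form $T_iX_{i+1}=X_iT_i+(\zeta-1)X_{i+1}$ of the relation $T_iX_iT_i=\zeta X_{i+1}$, together with the standard fact that a parabolic subalgebra of $\Hb(\Gamma_{n+m})$ is free of rank equal to the index of the corresponding reflection subgroups.

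For $(a)$, I would start from the classical decomposition $\Hb(\Gamma_{n+m})=\bigoplus_{0\leqslant p<\ell}\bigoplus_{1\leqslant j\leqslant n+m}\Hb(\Gamma_{n+m-1})\,X_{n+m}^p\,T_{d_j}$, where $d_1,\dots,d_{n+m}$ are the minimal length representatives of $\Sen_{n+m-1}\backslash\Sen_{n+m}$ and $X_{n+m}$ commutes with $\Hb(\Gamma_{n+m-1})$. A unitriangular change of basis, obtained by moving the power of $X$ across $T_{d_j}$ with the Bernstein--Lusztig relation, rewrites each $X_{n+m}^pT_{d_j}$ as $T_{j,n+m}X_j^p$ plus a $\Hb(\Gamma_{n+m-1})$-combination of terms of the original shape with strictly smaller indices; hence $\{T_{j,n+m}X_j^p\}$ is again a $\Hb(\Gamma_{n+m-1})$-basis. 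Since this computation only involves strands $j,\dots,n+m$ and $T_0=X_1$, it is insensitive to $m$, which is why the $m=1$ case is an adequate model.

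For $(b)$ and $(c)$ I would first record the two intertwining identities that make the expected bimodule maps well defined: $X_{n+m}$ commutes with the parabolic subalgebra $\Hb(\Gamma_{n,m-1})$ (it commutes with $T_0,\dots,T_{n+m-2}$), and $h\,T_{n,n+m}=T_{n,n+m}\,\varphi(h)$ for all $h\in\Hb(\Gamma_{n-1,m})$, which one checks on the algebra generators from the braid and quadratic relations (note that the relevant products in $\Sen_{n+m}$ need not be length additive, so this is where the isomorphism $\varphi$, rather than a naive inclusion, is forced). These produce surjective $(\Hb(\Gamma_{n+m-1}),\Hb(\Gamma_{n,m}))$-bimodule maps
\[
\Hb(\Gamma_{n+m-1})\otimes_{\Hb(\Gamma_{n,m-1})}\Hb(\Gamma_{n,m})\twoheadrightarrow\Hb(\Gamma_{n+m-1})\,X_{n+m}^p\,\Hb(\Gamma_{n,m}),
\]
\[
\Hb(\Gamma_{n+m-1})\otimes_{\Hb(\Gamma_{n-1,m})}\Hb(\Gamma_{n,m})\twoheadrightarrow\Hb(\Gamma_{n+m-1})\,T_{n,n+m}\,\Hb(\Gamma_{n,m}),
\]
whose sources are free left $\Hb(\Gamma_{n+m-1})$-modules of ranks $[\Hb(\Gamma_{n,m}):\Hb(\Gamma_{n,m-1})]=m$ and $[\Hb(\Gamma_{n,m}):\Hb(\Gamma_{n-1,m})]=\ell n$. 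Thus the right hand side of $(b)$ has left $\Hb(\Gamma_{n+m-1})$-rank at most $\ell m+\ell n=\ell(n+m)$. On the other hand, absorbing $\Hb(\Gamma_{n,m})$ on the right of the basis $\{T_{j,n+m}X_j^p\}$ of $(a)$ shows that these $\ell+1$ sub-bimodules already span $\Hb(\Gamma_{n+m})$: for $j\leqslant n$ one has $T_{j,n+m}=T_{j,n}T_{n,n+m}T_{j,n}$ with $T_{j,n}\in\Hb(\Gamma_{n+m-1})$ and $T_{j,n}X_j^p\in\Hb(\Gamma_{n,m})$, so that basis element lies in the $T_{n,n+m}$-summand; for $j>n$ one rewrites $T_{j,n+m}X_j^p$, using $X_{n+m}=\zeta^{-1}T_{n+m-1}X_{n+m-1}T_{n+m-1}$ and the commutation relations, as a combination of elements of the $X_{n+m}^{p'}$-summands. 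Comparing with the total rank $\ell(n+m)$ from $(a)$ forces the span to be direct and all three surjections to be isomorphisms, which yields $(b)$ and $(c)$ simultaneously.

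The main obstacle is the last paragraph's bookkeeping: establishing the intertwining identity $h\,T_{n,n+m}=T_{n,n+m}\,\varphi(h)$ on the $\Sen_m$-block, and verifying that after reducing the elements $T_{j,n+m}X_j^p$ with $j>n$ the $\ell+1$ explicit sub-bimodules exhaust the coset basis of $(a)$ without overlap. All of this is a finite computation with the defining relations, performed exactly as in the known $m=1$ case with the block $\Sen_m$ carried along.
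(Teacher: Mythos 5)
Your overall architecture is sound and is essentially the paper's argument reorganized: the paper proves $(b)$ by two explicit spanning identities and then deduces $(c)$ from a basis comparison, whereas you run a single rank count at the end ($\ell n+\ell m=\ell(n+m)=[\Gamma_{n+m}:\Gamma_{n+m-1}]$) to force directness and all three surjections to be isomorphisms at once; both versions rest on the same freeness facts and the same $\varphi$-twisted intertwining relation. However, two of the concrete identities you lean on are false, and both failures have the same source: you work with the transpositions $T_{n,n+m}$, $T_{j,n+m}$ where the argument requires the cycles $t_{n+m-1,j}=T_{n+m-1}T_{n+m-2}\cdots T_j$. First, the identity $h\,T_{n,n+m}=T_{n,n+m}\,\varphi(h)$ for $h\in\Hb(\Gamma_{n-1,m})$ fails for $m>2$: already at the level of the group, conjugation by the transposition $s_{n,n+m}$ sends $s_n$ to $s_{n+1,n+m}$, which is not $\varphi(s_n)=s_{n+1}$ (recall $\varphi$ is conjugation by the cycle $s=s_ns_{n+1}\cdots s_{n+m-1}$). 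The correct relation is $h\,t_{n+m-1,n}=t_{n+m-1,n}\,\varphi(h)$. Since $T_{n,n+m}=T_u\,t_{n+m-1,n}$ length-additively with $u=s_n\cdots s_{n+m-2}\in\Sen_{n+m-1}$, the sub-bimodule $\Hb(\Gamma_{n+m-1})T_{n,n+m}\Hb(\Gamma_{n,m})$ is unaffected, but the generator of your surjection from $\Hb(\Gamma_{n+m-1})\otimes_{\Hb(\Gamma_{n-1,m})}\Hb(\Gamma_{n,m})$ must be $t_{n+m-1,n}$; with $T_{n,n+m}$ the map $u\otimes v\mapsto u\,T_{n,n+m}\,v$ is not well defined over the twisted tensor product, and this surjection is the linchpin of your rank count. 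Second, $T_{j,n+m}=T_{j,n}T_{n,n+m}T_{j,n}$ is false for $j<n-1$: one has $\ell(s_{j,n+m})=2(n+m-j)-1$ while $2\ell(s_{j,n})+\ell(s_{n,n+m})=4(n-j)+2m-3$, and these agree only for $j=n-1$. The containment you want is nevertheless true, via the length-additive factorization $T_{j,n+m}=T_{u'}\,t_{n+m-1,n}\,t_{n-1,j}$ with $T_{u'}\in\Hb(\Sen_{n+m-1})$ invertible and $t_{n-1,j}X_j^p\in\Hb(\Gamma_{n,m})$.

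Both defects are repaired by systematically replacing each transposition by the corresponding minimal-length double coset representative (the cycle), which is exactly what the paper does. After that substitution your rank count is valid and does deliver $(b)$ and $(c)$ together. The one step you left genuinely unexamined is the reduction of $T_{j,n+m}X_j^p$ for $j>n$ into the $X_{n+m}^{p'}$-summands; this is the computation the paper performs for its second spanning identity, using the relation $X_{j+1}^pT_j-T_jX_j^p\in\CC[X_j,X_{j+1}]$, and it should be carried out rather than asserted, since it is where the $\ell$ summands indexed by $p$ are matched against the $m$ basis elements with $j>n$.
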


\begin{proof}
Part $(a)$ is standard, see e.g., \cite[lem.~5.6.1]{K} in the degenerate case.
Let us concentrate on $(b)$.
Write
$t_{j,i}=T_jT_{j-1}\cdots T_i$  for $1\leqslant i\leqslant j$, and $t_{j,i}=1$ for $i>j$.
By $(a)$ we are reduced to prove the following identities
\begin{equation}\label{form1}
\bigoplus_{0\leqslant p<\ell}\bigoplus_{1\leqslant j\leqslant n}
\Hb(\Gamma_{n+m-1})\,t_{n+m-1,j}X_j^p=
\Hb(\Gamma_{n+m-1})\,t_{n+m-1,n}\,\Hb(\Gamma_{n,m}),
\end{equation}
\begin{equation}\label{form2}
\bigoplus_{0\leqslant p<\ell}\bigoplus_{n< j\leqslant n+m}\hskip-3mm
\Hb(\Gamma_{n+m-1})\,t_{n+m-1,j}X_j^p=
\bigoplus_{0\leqslant p<\ell}\Hb(\Gamma_{n+m-1})\,X_{n+m}^p\,\Hb(\Gamma_{n,m}).
\end{equation}
We have
\begin{equation}\label{form3}
u\,t_{n+m-1,n}=t_{n+m-1,n}\,\varphi(u),\quad u\in\Hb(\Gamma_{n-1,m}),
\end{equation}
because for $i=1,2,\dots,n-1$ and $j\in J\setminus\{0\}$ we have
$$\gathered
T_j\,t_{n+m-1,n}=t_{n+m-1,n}T_{s(j)}=t_{n+m-1,n}\varphi(T_{j}),\cr
 X_i\,t_{n+m-1,n}=t_{n+m-1,n}X_{i}=t_{n+m-1,n}\varphi(X_{i}).
 \endgathered$$
Hence, by $(a)$ the right hand side of (\ref{form1}) is
$$\aligned
&=\bigoplus_{0\leqslant p<\ell}\bigoplus_{1\leqslant j\leqslant n}
\Hb(\Gamma_{n+m-1})\,t_{n+m-1,n}\,\Hb(\Gamma_I)\,t_{n-1,j}\,X_j^p,\cr
&=\bigoplus_{0\leqslant p<\ell}\bigoplus_{1\leqslant j\leqslant n}
\Hb(\Gamma_{n+m-1})\,\Hb(\Gamma_{n-1,m})\,t_{n+m-1,n}\,t_{n-1,j}\,X_j^p,\cr
&=\bigoplus_{0\leqslant p<\ell}\bigoplus_{1\leqslant j\leqslant n}
\Hb(\Gamma_{n+m-1})\,t_{n+m-1,j}\,X_j^p.
\endaligned$$
This proves the first identity.
Next, a short calculation involving the relation
$$X_{j+1}^pT_j-T_jX_j^p\in\CC[X_j,X_{j+1}]$$
proves that the sum
$$\aligned
\sum_{0\leqslant p<\ell}\sum_{n< j\leqslant n+m}
\Hb(\Gamma_{n+m-1})\,t_{n+m-1,j}X_j^p
\endaligned$$
is indeed a direct sum, i.e., it is equal to the
left hand side of (\ref{form2}).
Thus the identity (\ref{form2}) follows from the following equalities
$$\aligned
\Hb(\Gamma_{n+m-1})\,X_{n+m}^p\,\Hb(\Gamma_{n,m})
&=\sum_{n<j\leqslant n+m}\Hb(\Gamma_{n+m-1})\,X_{n+m}^p\,T_{j,n+m}
\cr
&=\sum_{n<j\leqslant n+m}\Hb(\Gamma_{n+m-1})\,X_{n+m}^p\,t_{n+m-1,j}\cr
&=\sum_{n< j\leqslant n+m}
\Hb(\Gamma_{n+m-1})\,t_{n+m-1,j}X_j^p.
\endaligned$$
Finally, let us prove $(c)$. The second claim is obvious because
$$\Hb(\Gamma_{n+m-1})X_{n+m}^p\Hb(\Gamma_{n,m})=
X_{n+m}^p\Hb(\Gamma_{n+m-1})\Hb(\Gamma_{n,m})=
\Hb(\Gamma_{n+m-1})\Hb(\Gamma_{n,m})$$
as $\bigl(\Hb(\Gamma_{n+m-1}),\Hb(\Gamma_{n,m})\bigr)$-bimodules.
For the first one we define a map
$$\gathered
\Hb(\Gamma_{n+m-1})\times\Hb(\Gamma_{n,m})\to
\Hb(\Gamma_{n+m-1})\,T_{n,n+m}\,\Hb(\Gamma_{n,m}),\cr
\ (u,v)\mapsto u\,t_{n+m-1,n}v.
\endgathered$$
By (\ref{form3}) it factors to a surjective homomorphism
$$\psi:\ \Hb(\Gamma_{n+m-1})\otimes_{\Hb(\Gamma_{n-1,m})}\Hb(\Gamma_{n,m})\to
\Hb(\Gamma_{n+m-1})\,T_{n,n+m}\,\Hb(\Gamma_{n,m}).$$
By $(a)$ the left hand side is a free $\Hb(\Gamma_{n+m-1})$-module on basis
$$1\otimes t_{n-1,j}X_j^p,\quad 1\leqslant j\leqslant n,\quad 0\leqslant p<\ell.$$
But $\psi$ maps these elements to
$$t_{n+m-1,j}X_j^p,\quad 1\leqslant j\leqslant n,\quad 0\leqslant p<\ell.$$
Further, the latter form a $\Hb(\Gamma_{n+m-1})$-basis of the right hand side by $(a)$ again.
We are done.
\end{proof}

\section{Reminder on $\zeta$-Schur algebras}\label{app:B}

\subsection{The quantized modified algebra}
Let $v$ be a formal variable. The {\it quantized modified algebra $\dot\Ub(n)$ of $\gen\len_n$}
is the associative $\QQ(v)$-algebra
with generators $E_i$, $F_i$
where $i=1,\dots,n-1$
and $1_\l$ where $\l\in\ZZ^n$,
with the defining relations \cite[sec.~23]{Lu}
\begin{itemize}
\item
$1_\l 1_\mu=\delta_{\l,\mu}1_\l$,

\item
$E_iF_j-F_jE_i=\delta_{ij}\sum_\l[\l_i-\l_{i+1}]1_\l$,

\item
$E_i1_\l=1_{\l+\a_i}E_i$,

\item
$1_\l F_i=F_i 1_{\l+\a_i}$,

\item $E_iE_j=E_jE_i$ if $i\neq j\pm 1$,
$E_i^2E_j-(v+v^{-1})E_iE_jE_i+E_jE_i^2=0$ else,

\item $F_iF_j=F_jF_i$ if $i\neq j\pm 1$,
$F_i^2F_j-(v+v^{-1})F_iF_jF_i+F_jF_i^2=0$ else,
\end{itemize}
where $[m]$ is the usual $v$-analogue of $m$ for any $m\in\NN$.
The comultiplication of $\dot\Ub(n)$ is the $\QQ(v)$-algebra homomorphism
\begin{equation*}
\Delta:\dot\Ub(n)\to\prod_{\l,\l'}
\bigl(\dot\Ub(n)1_\l\otimes \dot\Ub(n)1_{\l'}\bigr)
\end{equation*}
given by
\begin{itemize}
\item
$\Delta(1_\l)=\prod_{\l=\l'+\l''}1_{\l'}\otimes 1_{\l''},$
\item
$\Delta(E_i1_\l)=\prod_{\l=\l'+\l''}
(E_i1_{\l'}\otimes 1_{\l''}+v^{(\a_i,\l')}1_{\l'}\otimes E_i1_{\l''}),$
\item
$\Delta(F_i1_\l)=\prod_{\l=\l'+\l''}
(F_i1_{\l'}\otimes v^{-(\a_i,\l'')}1_{\l''}+1_{\l'}\otimes F_i1_{\l''}).$
\end{itemize}
Set $\Ac=\ZZ[v,v^{-1}]$.
The {\it integral
quantized modified algebra}
is the $\Ac$-subalgebra
$\dot\Ub_\Ac(n)\subset\dot\Ub(n)$
generated by the $1_\l$'s and all quantum divided powers
$E_i^{(d)}$,
$F_i^{(d)}$.
The comultiplication yieds an $\Ac$-algebra homomorphism
$\dot\Ub_\Ac\to\dot\Ub_\Ac\otimes_\Ac\dot\Ub_\Ac$.
For $\eps\in\CC^\times$ we consider the $\CC$-algebra
$$\dot\Ub_\eps(n)=\dot\Ub_\Ac(n)\otimes_\Ac\CC[v,v^{-1}]/(v-\eps).$$
For $V,$ $V'\in\Rep(\dot\Ub_\eps(n))$ let
$\sb_{V,V'}:V\otimes V'\to V'\otimes V$ be the permutation
$v\otimes v'\mapsto v'\otimes v$. The {\it R-matrix}
is a $\CC$-linear endomorphism $R_{V,V'}$ of $V\otimes V'$ such that
the composed map
$$\Rc_{V,V'}=\sb_{V,V'}\circ R_{V,V'}$$ is an isomorphism of
$\dot\Ub_\eps(n)$-modules $V\otimes V'\to V'\otimes V$.
The map $R_{V,V'}$ decomposes in the following form
$$\gathered
R_{V,V'}(v\otimes v')=R(v\otimes v'),\quad
R=\bar\Pi\bar\Theta,\cr
\bar\Pi=\prod_{\l, \l'}v^{-(\l,\l')}\,1_\l\otimes 1_{\l'},\quad
\bar\Theta\in\prod_{\l,\l'}
\bigl(\dot\Ub_\eps(n)1_\l\otimes\dot\Ub_\eps(n)1_{\l'}\bigr).
\endgathered
$$
The notation is chosen to agree with \cite[sec.~32]{Lu}.
We call $R$ the {\it universal $R$-matrix}.
To avoid confusions we may write $R_\eps$ for $R$.
We'll write $\Rc_{V,V'}$ again for the braiding of
right $\dot\Ub_\eps(n)$-modules
$V$, $V'$.
If $\eps$ is a primitive $2d$-th root of 1
then we have $\eps^{d^2}=(-1)^d$. Hence
the {\it quantum Frobenius homomorphism}
\cite[sec.~35.1]{Lu}
is the unique
$\CC$-algebra homomorphism
$$\Fr:\dot\Ub_\eps(n)\to\dot\Ub_{(-1)^d}(n)$$ such that
\begin{itemize}
\item
$\Fr(E_i^{(m)}1_\l)=E_i^{(m/d)}1_{\l/d}$ if $m\in d\ZZ$ and
$\l\in d\ZZ^n$, and 0 otherwise,
\item
$\Fr(F_i^{(m)}1_\l)=F_i^{(m/d)}1_{\l/d}$ if $m\in d\ZZ$ and
$\l\in d\ZZ^n$, and 0 otherwise.
\end{itemize}
The formulas in \cite[sec.~3.1.5]{Lu} imply that
$$\Delta\circ\Fr=\Fr\circ\Delta.$$

\begin{prop}
\label{prop:B1}
We have $(\Fr\otimes\Fr)(R_\eps)=R_{(-1)^d}=
\prod_{\l,\l'}(-1)^{d(\l,\l')}(1_\l\otimes 1_{\l'})$.
\end{prop}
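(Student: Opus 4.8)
The claim to establish is the identity $(\Fr\otimes\Fr)(R_\eps)=R_{(-1)^d}$, together with the explicit evaluation $R_{(-1)^d}=\prod_{\l,\l'}(-1)^{d(\l,\l')}(1_\l\otimes 1_{\l'})$. My plan is to treat these two assertions in turn. For the second one, I would first observe that at the parameter $v=(-1)^d$ the Serre generators satisfy $E_i^2=F_i^2=0$ on any integrable module of the modified algebra (since $(-1)^d+(-1)^{-d}=2(-1)^d$ and the relevant quantum integers $[2]_{(-1)^d}$ etc.\ behave like signs), so the quasi-$R$-matrix $\bar\Theta_{(-1)^d}$ collapses: only the weight-$0$ term $1\otimes 1$ survives in each graded piece, i.e.\ $\bar\Theta_{(-1)^d}=\sum_{\l,\l'}1_\l\otimes 1_{\l'}$. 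Combined with the definition $R=\bar\Pi\bar\Theta$ and $\bar\Pi_{(-1)^d}=\prod_{\l,\l'}((-1)^d)^{-(\l,\l')}1_\l\otimes 1_{\l'}=\prod_{\l,\l'}(-1)^{-d(\l,\l')}1_\l\otimes 1_{\l'}=\prod_{\l,\l'}(-1)^{d(\l,\l')}1_\l\otimes 1_{\l'}$, this gives the displayed formula for $R_{(-1)^d}$.

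For the first assertion, I would argue componentwise against the two factors $\bar\Pi$ and $\bar\Theta$. Applying $\Fr\otimes\Fr$ to $\bar\Pi_\eps=\prod_{\l,\l'}\eps^{-(\l,\l')}1_\l\otimes 1_{\l'}$: since $\Fr(1_\l)=1_{\l/d}$ when $\l\in d\ZZ^n$ and $0$ otherwise, only the terms indexed by $\l,\l'\in d\ZZ^n$ survive, and writing $\l=d\mu$, $\l'=d\mu'$ one gets $\eps^{-d^2(\mu,\mu')}=((-1)^d)^{-(\mu,\mu')}$ because $\eps^{d^2}=(-1)^d$; this is exactly $\bar\Pi_{(-1)^d}$. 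For the quasi-$R$-matrix factor, $\bar\Theta_\eps$ is the canonical element built from the $E_i^{(m)}$ and dual PBW bases in $\dot\Ub_\eps^+\otimes\dot\Ub_\eps^-$; by the definition of $\Fr$ the divided powers $E_i^{(m)}$ with $d\nmid m$ map to $0$, and those with $m=dm'$ map to $E_i^{(m')}$, and similarly for the $F$'s, so $(\Fr\otimes\Fr)$ kills all PBW monomials of non-$\ell$-divisible weight and sends the surviving ones precisely onto the corresponding generators of $\bar\Theta_{(-1)^d}$. Here I would invoke the compatibility $\Delta\circ\Fr=\Fr\circ\Delta$ quoted in the excerpt, which guarantees that $(\Fr\otimes\Fr)$ intertwines the two quasi-$R$-matrices rather than merely matching bases; alternatively one cites directly the description of $\bar\Theta$ in \cite[sec.~32, 35]{Lu} and the explicit behaviour of the PBW basis under the Frobenius map.

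The main obstacle I anticipate is the bookkeeping in the $\bar\Theta$ step: one must be careful that the quasi-$R$-matrix is an infinite product (or a product over weight-graded pieces) and that $\Fr\otimes\Fr$ is applied termwise in a way that is legitimate, and that the normalizations of the PBW/canonical bases chosen in \cite{Lu} are exactly the ones for which $\Fr$ sends basis to basis with no extra scalar. If there is a sign or power-of-$v$ discrepancy it will surface here. I expect that once the product $\bar\Pi\bar\Theta$ is reassembled, the $\bar\Theta$ part contributes nothing beyond $\sum 1_\l\otimes 1_{\l'}$ (by the vanishing $E_i^2=F_i^2=0$ at $v=(-1)^d$ noted above), so the whole content of the identity reduces to the elementary computation $\eps^{d^2}=(-1)^d$ on the $\bar\Pi$ factor, and the proof closes.
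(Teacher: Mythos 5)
Your skeleton — factor $R=\bar\Pi\bar\Theta$, evaluate each factor, and reduce the $\bar\Pi$ part to $\eps^{d^2}=(-1)^d$ — is the paper's skeleton, and your $\bar\Pi$ computation is correct. The gaps are all in the $\bar\Theta$ factor. First, your reason for $\bar\Theta_{(-1)^d}=\prod_{\l,\l'}1_\l\otimes 1_{\l'}$ is wrong: at $v=(-1)^d$ the modified algebra is essentially the classical one, $[2]_{(-1)^d}=2(-1)^d\neq 0$, and it is false that $E_i^2=F_i^2=0$ on integrable modules. What actually kills every term with $k\geqslant 1$ is the scalar in Lusztig's rank-one formula
$$\bar\Theta=\prod_{\l,\l'}\sum_{k\geqslant 0}(-1)^k v^{-k(k-1)/2}\{k\}\,F^{(k)}1_\l\otimes E^{(k)}1_{\l'},\qquad \{k\}=\prod_{i=1}^k(v^i-v^{-i}),$$
since at $v=(-1)^d$ each factor $(-1)^{di}-(-1)^{-di}$ vanishes. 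Second, your claim that $(\Fr\otimes\Fr)$ sends the surviving PBW terms ``onto the corresponding generators of $\bar\Theta_{(-1)^d}$'' cannot be right as stated, because $\bar\Theta_{(-1)^d}$ has no higher generators: if the terms with $k\in d\ZZ$, $k\geqslant d$, survived with nonzero coefficient, the identity would fail. The resolution is again the scalar: for $k\geqslant d$ the coefficient $\{k\}_\eps$ contains the factor $\eps^d-\eps^{-d}=0$ (as $\eps^{2d}=1$), so every $k\geqslant 1$ term dies either via $\Fr$ (for $0<k<d$) or via the coefficient (for $k\geqslant d$). This is exactly the ``sign or power-of-$v$ discrepancy'' you flagged as the main obstacle and left open; it is where the content of the proposition lives.

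Two further points. Your fallback appeal to $\Delta\circ\Fr=\Fr\circ\Delta$ does not by itself intertwine the quasi-$R$-matrices: $\bar\Theta$ is characterized by intertwining $\Delta$ with the bar-conjugate comultiplication plus a normalization, so you would also need compatibility of $\Fr$ with the bar involution and a uniqueness argument in the completed tensor product — none of which is carried out. And you never actually reduce general rank to rank one. The paper does this with the Kirillov--Reshetikhin multiplicative formula, writing $\bar\Theta_\eps$ as an ordered product of rank-one quasi-$R$-matrices twisted by the braid-group operators $T''_{i,1}$ along a reduced word for the longest element, and then uses that these operators commute with the quantum Frobenius homomorphism (\cite[sec.~41.1.9]{Lu}). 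Some such device is needed to close your argument; as written, the proposal establishes the $\bar\Pi$ half and the statement of what should happen to $\bar\Theta$, but not the proof of it.
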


\begin{proof}
To avoid confusions we'll write
$\bar\Theta_\eps$, $\bar\Pi_\eps$ for
$\bar\Theta$, $\bar\Pi$.
If $n=2$ the proposition follows from the formula \cite[sec.~4.1.4]{Lu}.
More precisely, since
$$\bar\Theta_\eps=\prod_{\l,\l'}\sum_{k\geqslant 0}(-1)^k\eps^{-k(k-1)/2}
\{k\}_\eps F^{(k)}1_\l\otimes E^{(k)}1_{\l'},\quad
\{k\}_\eps=\prod_{i=1}^k(\eps^i-\eps^{-i}),$$
we have the following formula
\begin{equation}
\label{F1}
(\Fr\otimes\Fr)(\bar\Theta_\eps)=
\prod_{\l,\l'}(1_\l\otimes 1_{\l'}).
\end{equation}
Further, in $\dot\Ub_{(-1)^d}(n)\otimes\dot\Ub_{(-1)^d}(n)$ we have also
\begin{equation}\label{B2}
(\Fr\otimes\Fr)(\bar\Pi_\eps)=
\prod_{\l,\l'}(-1)^{d(\l,\l')}(1_\l\otimes 1_{\l'}),
\end{equation}
and
\begin{equation}
\label{F2}
\bar\Theta_{(-1)^d}=
\prod_{\l,\l'}(1_\l\otimes 1_{\l'}),\quad
\bar\Pi_{(-1)^d}=
\prod_{\l,\l'}(-1)^{d(\l,\l')}(1_\l\otimes 1_{\l'}).
\end{equation}
This proves the formula for $n=2$.
Now, let $n$ be any integer $\geqslant 2$.
The braid  group of $\Sen_n$ acts on $\dot\Ub_\eps(n)$
via the operators $T''_{1,1},T''_{2,1},\dots,T''_{n-1,1}$
in \cite[sec.~41]{Lu}.
For $i=1,2,\dots,n-1$ we set 
$$S_i=T''_{i,1}\otimes T''_{i,1},\quad
\bar\theta_{i,\eps}=\sum_{k\geqslant 0}(-1)^k\eps^{-k(k-1)/2}
\{k\}_\eps F_i^{(k)}\otimes E_i^{(k)}.$$
For a reduced decomposition $s_{i_1}s_{i_2}\cdots s_{i_r}$ of the longuest
element in $\Sen_n$, the universal R-matrix is given by the following formula,
see \cite[thm.~3]{KR},
$$\gathered
\bar\Theta_\eps=\prod_{\l,\l'}\bar\theta_\eps(1_\l\otimes 1_{\l'}),\quad
\bar\theta_\eps=
S_{i_r}^{-1}\cdots S_{i_3}^{-1}S_{i_2}^{-1}(\bar\theta_{i_1,\eps})
\cdots S_{i_r}^{-1}(\bar\theta_{i_{r-1},\eps})
\bar\theta_{i_{r},\eps}.
\endgathered$$
Thus (\ref{F2}) yields
$$\bar\Theta_{(-1)^d}=\prod_{\l,\l'}(1_\l\otimes 1_{\l'}),$$
Since the braid group action is compatible with the
quantum Frobenius homomorphism, see \cite[sec.~41.1.9]{Lu}, by
(\ref{F1}) we have also
$$(\Fr\otimes \Fr)(\bar\Theta_\eps)
=\prod_{\l,\l'}(1_\l\otimes 1_{\l'}).$$
Finally, a direct computation yields
$$
(\Fr\otimes\Fr)(\bar\Pi_\eps)=
\prod_{\l,\l'}(-1)^{d(\l,\l')}(1_\l\otimes 1_{\l'})=
\bar\Pi_{(-1)^d}.
$$
This proves the proposition.
\end{proof}

\begin{rk}
\label{rk:B2}
It is proved in \cite[prop.~33.2.3]{Lu} that the assignment
$$E_i1_\l\mapsto (-1)^{id(\l_i-\l_{i+1})}E_i1_\l,\quad
F_i1_\l\mapsto (-1)^{(i+1)d(\l_i-\l_{i+1})+d}E_i1_\l,\quad
1_\l\mapsto 1_\l$$ yields a
$\CC$-algebra isomorphism
$\dot\Ub_{1}(n)\to\dot\Ub_{(-1)^d}(n)$.
Thus we can regard $\Fr$ as a map $\dot\Ub_\eps(n)\to\dot\Ub_{1}(n)$.
Note that the isomorphism above does not commute with the comultiplication.
\end{rk}

\subsection{The $\zeta$-Schur algebra}
The $v$-Schur algebra $\Sb(n,m)$
is the associative
$\QQ(v)$-algebra with 1 generated by $E_i$, $F_i$
where $i=1,\dots,n-1$ and by
$1_\l$ where $\l\in\Lambda(n,m)$,
modulo the defining relations
\cite[thm.~2.4]{Dot}
\begin{itemize}
\item
$1_\l 1_\mu=\delta_{\l,\mu}1_\l$,
$\sum_\l 1_\l=1$,

\item
$E_iF_j-F_jE_i=\delta_{ij}\sum_\l[\l_i-\l_{i+1}]1_\l$,

\item
$E_i1_\l=1_{\l+\a_i}E_i$
if $\l+\a_i\in\Lambda(n,m)$,
$0$ else,

\item
$1_\l E_i=E_i 1_{\l-\a_i}$
if $\l-\a_i\in\Lambda(n,m)$,
$0$ else,

\item
$F_i1_\l=1_{\l-\a_i}F_i$
if $\l-\a_i\in\Lambda(n,m)$,
$0$ else,

\item
$1_\l F_i=F_i 1_{\l+\a_i}$
if $\l+\a_i\in\Lambda(n,m)$,
$0$ else,

\item $E_iE_j=E_jE_i$ if $i\neq j\pm 1$,
$E_i^2E_j-(v+v^{-1})E_iE_jE_i+E_jE_i^2=0$ else,

\item $F_iF_j=F_jF_i$ if $i\neq j\pm 1$,
$F_i^2F_j-(v+v^{-1})F_iF_jF_i+F_jF_i^2=0$ else.
\end{itemize}
The integral $v$-Schur algebra is the
$\Ac$-subalgebra
$\Sb_\Ac(n,m)\subset\Sb(n,m)$
generated by the $1_\l$'s and all quantum divided powers
$E_i^{(d)}$,
$F_i^{(d)}$.
In other words, we have a canonical isomorphism
$$\Sb_\Ac(n,m)=
1_m\dot\Ub_\Ac(n)1_m,\quad
1_m=\sum_{\l\in\Lambda(n,m)}1_\l.$$
The comultiplication of $\dot\Ub_\Ac(n)$ factors through an $\Ac$-algebra
homomorphism
\begin{equation}\label{delta}
\Delta:\Sb_\Ac(n,m)\to\bigoplus_{m=m'+m''}
\Sb_\Ac(n,m')\otimes \Sb_\Ac(n,m'').
\end{equation}
For $\zeta,\eps\in\CC^\times$ with $\zeta=\eps^2$ we consider the $\CC$-algebra
$$\aligned
\Sb_\zeta(n,m)
&=\Sb_\Ac(n,m)\otimes_\Ac\CC[v,v^{-1}]/(v-\eps)
\cr
&=1_m\dot\Ub_\eps(n) 1_m.
\endaligned$$
Indeed $\Sb_\zeta(n,m)$ depends only on $\zeta$ and not on the choice of $\eps$.
If $\zeta$ is a primitive $d$-th root of 1,
we choose $\eps$ to be a primitive $2d$-th root of 1.
Then the {\it quantum Frobenius homomorphism}
$\Fr:\dot\Ub_\eps(n)\to\dot\Ub_{1}(n)$ factors through a
$\CC$-algebra homomorphism
\begin{equation}\label{frob}\Fr:\Sb_\zeta(n,dm)\to\Sb_1(n,m).
\end{equation}
Note that we have used the identification 
$\dot\Ub_{(-1)^d}(n)=\dot\Ub_{1}(n)$ in Remark \ref{rk:B2}.

\subsection{The module category of $\Sb_\zeta(n,m)$}
For $\l\in\ZZ^n_+$
let $\Delta_\l^U, L_\l^U\in\Rep(\dot\Ub_\eps(n))$
denote the Weyl module and the simple module
with highest weight $\l$.
Set
$$\Lambda(n,m)_+=\Lambda(n,m)\cap\ZZ^n_+.$$
The category
$\Rep(\Sb_\zeta(n,m))$ is equivalent to the full subcategory of
$\Rep(\dot\Ub_\eps(n))$
consisting of the modules such that all
constituents have a highest weight in the set
$\Lambda(n,m)_+.$
It is
quasi-hereditary with respect to the dominance order,
the standard objects being the modules $\Delta^S_\l$
with $\l\in\Lambda(n,m)_+$. Here, for
$\l\in\Lambda(n,m)_+$, we write
$$\Delta_\l^S=\Delta_\l^U, \quad L_\l^S=L_\l^U,$$
regarded as objects in $\Rep(\Sb_\zeta(n,m)).$

\subsection{The Schur functor}
Assume that  $n\geqslant m$. There is a $\CC$-algebra isomorphism
\cite[sec.~11]{Dot}
$$\Hb_\zeta(m)=f\,\Sb_\zeta(n,m)\,f,\quad f=1_{(1^m0^{n-m})}.$$
Thus the vector space $\Tb_\zeta(n,m)=\Sb_\zeta(n,m)f$ is a $(\Sb_\zeta(n,m),\Hb_\zeta(m))$-bimodule,
and $\Vb_\zeta(n,m)=f\Sb_\zeta(n,m)$ is a $(\Hb_\zeta(m),\Sb_\zeta(n,m))$-bimodule.
Consider the triple of adjoint functors $(\Phi_!,\Phi^*,\Phi_*)$
$$\begin{aligned}
&\Phi^*:\Rep(\Sb_\zeta(n,m))\to\Rep(\Hb_\zeta(m)),\quad M\mapsto fM,\cr
&\Phi_*:\Rep(\Hb_\zeta(m))\to\Rep(\Sb_\zeta(n,m)),\quad
N\mapsto\Hom_{\Hb_\zeta(m)}(\Vb_\zeta(n,m),N),\cr
&\Phi_!:\Rep(\Hb_\zeta(m))\to\Rep(\Sb_\zeta(n,m)),\quad M\mapsto
\Tb_\zeta(n,m)\otimes_{\Hb_\zeta(m)}M.
\end{aligned}$$
We call $\Phi^*$ the {\it Schur functor}.
It is a {\it quotient functor}, i.e., it is exact and the
counit $\Phi^*\,\Phi_*\to 1$ is invertible.
The {\it double centralizer property} holds, i.e., we have
$$\Sb_\zeta(n,m)=\End_{\Hb_\zeta(m)}(\Vb_\zeta(n,m)).$$
Equivalently, the functor $\Phi^*$ is fully faithful on projectives, or, equivalently again,
the unit $P\to\Phi_*\,\Phi^*(P)$ is invertible whenever $P$ is  projective.
See \cite[prop.~4.33]{R} for details.
Since $\Phi^*$ is a quotient functor, the functor
$\Phi_!$ takes projectives to projectives and
the unit $1\to\Phi^*\Phi_!$ is an isomorphism of functors.
For $m=m'+m''$ the comultiplication (\ref{delta}) yields a functor
\begin{equation}\label{tensorschur}
\dot\otimes:
\Rep(\Sb_\zeta(n,m'))\otimes\Rep(\Sb_\zeta(n,m''))\to\Rep(\Sb_\zeta(n,m)).
\end{equation}
We'll abbreviate
${}^\Hb\Ind_{m',m''}=\Ind^{\Hb_\zeta(m)}_{\Hb_\zeta(m')\otimes\Hb_\zeta(m'')}$.

\begin{prop} \label{prop:intertwinestensor}
(a) We have a
$(\Sb_\zeta(n,m),\Hb_\zeta(m')\otimes\Hb_\zeta(m''))$-bimodules isomorphism
$\can:\Tb_\zeta(n,m')\dot\otimes\Tb_\zeta(n,m'')\to
\Tb_\zeta(n,m).$
For $M'\in\Rep(\Hb_\zeta(m'))$, $M''\in\Rep(\Hb_\zeta(m''))$ the map $\can$ yields an isomorphism
\begin{equation*}
\can:\Phi_!\bigl({}^\Hb\Ind_{m',m''}
(M'\otimes M'')\bigr)
\to
\Phi_!(M')\dot\otimes \Phi_!(M'').
\end{equation*}

(b) We have an isomorphism of
$(\Hb_\zeta(m')\otimes\Hb_\zeta(m''),\Sb_\zeta(n,m))$-bimodules 
$\can:\Vb_\zeta(n,m')\dot\otimes\Vb_\zeta(n,m'')\to
\Vb_\zeta(n,m).$
For $M'\in\Rep(\Hb_\zeta(m'))$, $M''\in\Rep(\Hb_\zeta(m''))$ the map $\can$ yields an isomorphism
\begin{equation*}
\label{tensor*}
\can:\Phi_*\bigl({}^\Hb\Ind_{m',m''}
(M'\otimes M'')\bigr)
\to
\Phi_*(M')\dot\otimes \Phi_*(M'').
\end{equation*}
\end{prop}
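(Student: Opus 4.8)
The plan is to reduce both parts to a single bimodule isomorphism built from quantum tensor space, and then to deduce the two functorial statements by formal manipulations of $\otimes$ and $\Hom$. The input is the classical realization of the $\zeta$-Schur algebra. Let $V=\CC^n$ be the natural $n$-dimensional $\dot\Ub_\eps(n)$-module with standard basis $u_1,\dots,u_n$ (with each $1_\l$ acting by the projection onto the corresponding weight line), and equip $V^{\otimes m}$ with the $\dot\Ub_\eps(n)$-action coming from the iterated comultiplication $\Delta$. All constituents of $V^{\otimes m}$ have highest weight in $\Lambda(n,m)_+$, so $V^{\otimes m}$ is an $\Sb_\zeta(n,m)$-module, and it carries a commuting right $\Hb_\zeta(m)$-action in which $T_i$ acts through a suitably normalized braiding $\Rc$ on the $i$-th and $(i+1)$-st factors and $T_0$ through $X_1$ on the first factor. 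Then $s\mapsto s\cdot(u_1\otimes\dots\otimes u_m)$ is an isomorphism of $(\Sb_\zeta(n,m),\Hb_\zeta(m))$-bimodules $\Tb_\zeta(n,m)=\Sb_\zeta(n,m)f\cong V^{\otimes m}$, and, since $f V^{\otimes m}$ is free of rank one as a right $\Hb_\zeta(m)$-module, one gets dually an isomorphism of $(\Hb_\zeta(m),\Sb_\zeta(n,m))$-bimodules $\Vb_\zeta(n,m)=f\Sb_\zeta(n,m)\cong\Hom_{\Hb_\zeta(m)}(V^{\otimes m},\Hb_\zeta(m))$. These are standard facts, for which I would quote \cite[sec.~27--32]{Lu} (for the $R$-matrix and comultiplication conventions) and \cite{Dot} (for the $\zeta$-Schur algebra and the Schur functor).

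Granting this, the bimodule isomorphism $\can$ of part $(a)$ is immediate. By definition of $\dot\otimes$, the module $\Tb_\zeta(n,m')\dot\otimes\Tb_\zeta(n,m'')$ is the space $V^{\otimes m'}\otimes_\CC V^{\otimes m''}=V^{\otimes m}$ with $\Sb_\zeta(n,m)$ acting through $\Delta\colon\Sb_\zeta(n,m)\to\Sb_\zeta(n,m')\otimes\Sb_\zeta(n,m'')$ and with $\Hb_\zeta(m')\otimes\Hb_\zeta(m'')$ acting factorwise. Coassociativity of $\Delta$ identifies this $\Sb_\zeta(n,m)$-action with the one on $V^{\otimes m}$, and since $T_i$ touches only the factors $i,i+1$, the right action of the parabolic subalgebra $\Hb_\zeta(m')\otimes\Hb_\zeta(m'')\subset\Hb_\zeta(m)$ on $V^{\otimes m}$ is precisely the tensor product of the two actions on $V^{\otimes m'}$ and $V^{\otimes m''}$. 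This yields the required $(\Sb_\zeta(n,m),\Hb_\zeta(m')\otimes\Hb_\zeta(m''))$-bimodule isomorphism for part $(a)$. Part $(b)$ is obtained in the same way from the realization $\Vb_\zeta(n,m)\cong\Hom_{\Hb_\zeta(m)}(V^{\otimes m},\Hb_\zeta(m))$, using in addition that $\Hb_\zeta(m)$ is a symmetric algebra, free over its parabolic subalgebra, so that induction and coinduction from that subalgebra agree.

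To finish part $(a)$: since ${}^\Hb\Ind_{m',m''}(M'\otimes M'')=\Hb_\zeta(m)\otimes_{\Hb_\zeta(m')\otimes\Hb_\zeta(m'')}(M'\otimes M'')$, associativity of the tensor product gives $\Phi_!({}^\Hb\Ind_{m',m''}(M'\otimes M''))=\Tb_\zeta(n,m)\otimes_{\Hb_\zeta(m')\otimes\Hb_\zeta(m'')}(M'\otimes M'')$; applying $\can$ and regrouping the exact functor $\otimes_\CC$ (which commutes with all colimits in sight, the $\Sb_\zeta(n,m)$-action remaining the $\Delta$-action on the two tensor-space factors throughout) identifies this with $(\Tb_\zeta(n,m')\otimes_{\Hb_\zeta(m')}M')\otimes_\CC(\Tb_\zeta(n,m'')\otimes_{\Hb_\zeta(m'')}M'')=\Phi_!(M')\dot\otimes\Phi_!(M'')$. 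For part $(b)$: using that induction equals coinduction one gets $\Phi_*({}^\Hb\Ind_{m',m''}(M'\otimes M''))=\Hom_{\Hb_\zeta(m')\otimes\Hb_\zeta(m'')}({}^\Hb\Res(\Vb_\zeta(n,m)),M'\otimes M'')$; inserting $\can$ for $\Vb$ and iterating the adjunction $\Hom_{B'\otimes B''}(X'\otimes X'',-)=\Hom_{B'}(X',\Hom_{B''}(X'',-))$ — at each step the relevant "cross term" is a module for the other Hecke factor only, so that $\Hom_{B''}(X'',M'\otimes M'')\cong\Hom_{B''}(X'',M'')\otimes_\CC M'$ because all modules involved are finite dimensional — one arrives at $\Hom_{\Hb_\zeta(m')}(\Vb_\zeta(n,m'),M')\otimes_\CC\Hom_{\Hb_\zeta(m'')}(\Vb_\zeta(n,m''),M'')=\Phi_*(M')\dot\otimes\Phi_*(M'')$.

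The main obstacle will be the precise bookkeeping of the first paragraph rather than any deep point: one must fix the $R$-matrix and comultiplication conventions so that the locality of the Hecke action on $V^{\otimes m}$ and the coassociativity of $\Delta$ literally exhibit $\can$ as a morphism of $(\Sb_\zeta(n,m),\Hb_\zeta(m')\otimes\Hb_\zeta(m''))$-bimodules, and likewise for $\Vb$. Once the tensor-space model and these conventions are in place, the remaining steps are formal; in particular the finiteness conditions invoked for $\Phi_*$ cost nothing, since $\Rep$ denotes finite-dimensional representations throughout.
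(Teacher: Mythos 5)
Your proposal is correct and follows essentially the same route as the paper, which also identifies $\Tb_\zeta(n,m)$ with the $m$-th tensor power of the natural representation (citing Dipper--Donkin and Jimbo) so that the Hecke action comes from the R-matrix, and obtains $(b)$ by dualizing; you merely spell out the formal tensor--Hom bookkeeping that the paper leaves implicit. One cosmetic slip: in Appendix B the algebra $\Hb_\zeta(m)$ is the finite Hecke algebra of $GL_m$, so there is no generator $T_0$ or $X_1$ acting on the first tensor factor — that clause should be dropped, but it does not affect the argument.
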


\begin{proof}
By definition $\Tb_\zeta(n,m)$ is the {\it v-tensor space} in \cite[def.~2.6]{DJ}.
According to \cite[sec.~3.3, 4.4]{DD} it is identified with the $m$-th tensor power of the natural representation
of the (modified) quantized enveloping algebra of $\gen\len_n$, in such a way that the $\Hb_\zeta(m)$-action
comes from the R-matrix, see also \cite{Ji}. This proves part $(a)$. 
Part $(b)$ follows also by taking the dual spaces.
\end{proof}

\begin{cor}\label{cor:tensor*}
We have
an isomorphism
\begin{equation*}
\can:{}^\Hb\Ind_{m',m''}
(\Phi^*M'\otimes \Phi^*M'')
\to
\Phi^*(M'\dot\otimes M'')
\end{equation*}
for $M'\in\Rep(\Sb_\zeta(n,m'))$
and $M''\in\Rep(\Sb_\zeta(n,m''))$.
\end{cor}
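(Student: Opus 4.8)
The plan is to obtain this corollary as the ``$\Phi^*$ version'' of Proposition \ref{prop:intertwinestensor}, by passing to the bimodule descriptions of the functors. Write $m=m'+m''$ and recall that $\Phi^*(N)=fN=f\Sb_\zeta(n,m)\otimes_{\Sb_\zeta(n,m)}N=\Vb_\zeta(n,m)\otimes_{\Sb_\zeta(n,m)}N$ for $N\in\Rep(\Sb_\zeta(n,m))$. First I would rewrite
$$\Phi^*(M'\dot\otimes M'')=\Vb_\zeta(n,m)\otimes_{\Sb_\zeta(n,m)}(M'\dot\otimes M''),$$
where by definition the right-hand $\Sb_\zeta(n,m)$-module $M'\dot\otimes M''$ is $M'\otimes M''$ with its action restricted along the comultiplication $\Delta$ of (\ref{delta}) from $\Sb_\zeta(n,m')\otimes\Sb_\zeta(n,m'')$. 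Then I would substitute the $(\Hb_\zeta(m')\otimes\Hb_\zeta(m''),\Sb_\zeta(n,m))$-bimodule isomorphism $\can:\Vb_\zeta(n,m')\dot\otimes\Vb_\zeta(n,m'')\to\Vb_\zeta(n,m)$ of Proposition \ref{prop:intertwinestensor}(b); its source carries the left $\Hb_\zeta(m')\otimes\Hb_\zeta(m'')$-action factorwise and the right $\Sb_\zeta(n,m)$-action again restricted along $\Delta$.

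Once both sides of the tensor product over $\Sb_\zeta(n,m)$ have their module structure pulled back along $\Delta$ from external tensor products of $\Sb_\zeta(n,m')$- and $\Sb_\zeta(n,m'')$-modules, the tensor product over $\Sb_\zeta(n,m)$ collapses to the external tensor product of the two tensor products over $\Sb_\zeta(n,m')$ and $\Sb_\zeta(n,m'')$ separately, giving, as a left $\Hb_\zeta(m')\otimes\Hb_\zeta(m'')$-module,
$$\Vb_\zeta(n,m)\otimes_{\Sb_\zeta(n,m)}(M'\dot\otimes M'')\ \cong\ \bigl(\Vb_\zeta(n,m')\otimes_{\Sb_\zeta(n,m')}M'\bigr)\otimes\bigl(\Vb_\zeta(n,m'')\otimes_{\Sb_\zeta(n,m'')}M''\bigr)=\Phi^*M'\otimes\Phi^*M''.$$
Finally I would upgrade this to a left $\Hb_\zeta(m)$-module isomorphism with ${}^\Hb\Ind_{m',m''}(\Phi^*M'\otimes\Phi^*M'')$: this is forced because $\Vb_\zeta(n,m)$ is an $(\Hb_\zeta(m),\Sb_\zeta(n,m))$-bimodule whose restriction along $\Hb_\zeta(m')\otimes\Hb_\zeta(m'')\hookrightarrow\Hb_\zeta(m)$ is precisely the source of $\can$, so the $\Hb_\zeta(m)$-module on the left-hand side of the corollary is the one generated from the factorwise action, i.e.\ the induced one. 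An essentially equivalent route, avoiding the collapsing step, is a Yoneda argument: for $N\in\Rep(\Hb_\zeta(m))$ chain the adjunction $(\Phi^*,\Phi_*)$, Frobenius reciprocity for $\bigl({}^\Hb\Ind_{m',m''},\Res^{\Hb_\zeta(m)}_{\Hb_\zeta(m')\otimes\Hb_\zeta(m'')}\bigr)$, the adjunction along $\Delta$, and Proposition \ref{prop:intertwinestensor}(b) in the form $\Phi_*\circ{}^\Hb\Ind_{m',m''}\cong\dot\otimes\circ(\Phi_*\otimes\Phi_*)$, to identify $\Hom_{\Hb_\zeta(m)}\bigl({}^\Hb\Ind_{m',m''}(\Phi^*M'\otimes\Phi^*M''),N\bigr)$ with $\Hom_{\Hb_\zeta(m)}\bigl(\Phi^*(M'\dot\otimes M''),N\bigr)$ naturally in $N$.

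The main obstacle is bookkeeping rather than mathematics: one must check that the bare vector-space identification $\Phi^*(M'\dot\otimes M'')\cong\Phi^*M'\otimes\Phi^*M''$ is compatible with the $\Hb_\zeta(m)$-actions and natural in $M'$ and $M''$, which amounts to matching the combinatorics of $\Delta(1_{(1^m0^{n-m})})$ with the minimal coset representatives realizing ${}^\Hb\Ind_{m',m''}$. No input beyond Proposition \ref{prop:intertwinestensor} is needed, since that proposition, via the identification of $\Tb_\zeta(n,m)$ (and dually $\Vb_\zeta(n,m)$) with the $m$-th tensor power of the natural module, already packages exactly this compatibility.
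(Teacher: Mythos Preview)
Your ``collapsing'' step contains a genuine error. The comultiplication $\Delta:\Sb_\zeta(n,m)\to\Sb_\zeta(n,m')\otimes\Sb_\zeta(n,m'')$ is almost never surjective (already for $n=m=2$, $m'=m''=1$ one has $\dim\Sb_\zeta(2,2)=10<16=\dim\Sb_\zeta(2,1)^{\otimes 2}$), so when both factors are pulled back along $\Delta$ the tensor product over $\Sb_\zeta(n,m)$ only surjects onto, and is in general strictly larger than, the tensor product over $\Sb_\zeta(n,m')\otimes\Sb_\zeta(n,m'')$. Concretely, with $n=2$, $m'=m''=1$ and $M'=M''=\Tb_\zeta(2,1)$, one has $M'\dot\otimes M''\cong\Tb_\zeta(2,2)$ by Proposition~\ref{prop:intertwinestensor}(a), hence
\[
\Vb_\zeta(2,2)\otimes_{\Sb_\zeta(2,2)}(M'\dot\otimes M'')=f\Sb_\zeta(2,2)f=\Hb_\zeta(2)
\]
is $2$-dimensional, whereas your collapsed expression gives $\Hb_\zeta(1)\otimes\Hb_\zeta(1)=\CC$. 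So the displayed isomorphism in your argument is false already as vector spaces, and no ``upgrade'' can repair it. Your Yoneda sketch does not close up either: tracing the listed adjunctions reduces the question to identifying the right adjoints of $\Phi^*\circ\dot\otimes$ and of ${}^\Hb\Ind_{m',m''}\circ(\Phi^*\otimes\Phi^*)$, which is precisely the statement to be proved; Proposition~\ref{prop:intertwinestensor}(b) is an isomorphism of functors \emph{out of} $\Rep(\Hb_\zeta(m'))\times\Rep(\Hb_\zeta(m''))$, and no adjunction on your list flips it into the required form.

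The paper's argument is different and sidesteps this. Apply Proposition~\ref{prop:intertwinestensor}(b) with $\Phi^*M',\Phi^*M''$ in place of $M',M''$, then apply $\Phi^*$ and use $\Phi^*\Phi_*\cong 1$ to obtain
\[
{}^\Hb\Ind_{m',m''}(\Phi^*M'\otimes\Phi^*M'')\ \cong\ \Phi^*\bigl(\Phi_*\Phi^*M'\dot\otimes\Phi_*\Phi^*M''\bigr).
\]
The unit $1\to\Phi_*\Phi^*$ then furnishes a natural map $\Phi^*(M'\dot\otimes M'')\to{}^\Hb\Ind_{m',m''}(\Phi^*M'\otimes\Phi^*M'')$. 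By the double centralizer property this unit is invertible on projectives, so the map is an isomorphism whenever $M',M''$ are projective; since $\Phi^*$, $\dot\otimes$ and ${}^\Hb\Ind_{m',m''}$ are all exact and there are enough projectives, it is an isomorphism in general.
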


\begin{proof}
For $M'\in\Rep(\Sb_\zeta(n,m'))$
and $M''\in\Rep(\Sb_\zeta(n,m''))$,
Proposition \ref{prop:intertwinestensor} yields an isomorphism
$$\Phi_*{}^\Hb\Ind_{m',m''}
(\Phi^*M'\otimes \Phi^*M'')
=
\Phi_*\Phi^*M'\dot\otimes \Phi_*\Phi^*M''.$$
Composing it with $\Phi^*$ we get an isomorphism
$${}^\Hb\Ind_{m',m''}
(\Phi^*M'\otimes \Phi^*M'')
=
\Phi^*\bigl(\Phi_*\Phi^*M'\dot\otimes \Phi_*\Phi^*M''\bigr).$$
Composing it with the unit $1\to\Phi_*\Phi^*$ we get a functorial map
$$
\Phi^*(M'\dot\otimes M'')
\to{}^\Hb\Ind_{m',m''}
(\Phi^*M'\otimes \Phi^*M'')
$$
which is invertible whenever $M'$, $M''$ are projectives, because the unit is invertible on projective modules.
Thus it is always invertible, because $\Phi^*$ and ${}^\Hb\Ind_{m',m''}$ are exact and because
there are enough projectives in  $\Rep(\Sb_\zeta(n,m))$.
\end{proof}

\subsection{The braiding and the Schur functor}
For $M'\in\Rep(\Hb_\zeta(m'))$
and $M''\in\Rep(\Hb_\zeta(m''))$ the R-matrix yields an isomorphism
of $\Sb_\zeta(n,m)$-modules
$$\Rc_{\Phi_*M',\Phi_*M''}:
\Phi_*M'\dot\otimes\Phi_*M''
\to
\Phi_*M''\dot\otimes\Phi_*M'.
$$
Let $\tau\in\Sen_m$ be the unique element such that
\begin{itemize}
\item $\tau$ is minimal in the coset
$(\Sen_{m'}\times\Sen_{m''})\tau(\Sen_{m''}\times\Sen_{m'})$,
\item we have
$\tau^{-1}(\Sen_{m'}\times\Sen_{m''})\tau=\Sen_{m''}\times\Sen_{m'}.$
\end{itemize}
We have the following formula in $\Hb_\zeta(m)$
\begin{equation}\label{formuleutile}T_\tau(h''\otimes h')=(h'\otimes h'')\,T_\tau,
\quad
h'\in\Hb_\zeta(m'),\,
h''\in\Hb_\zeta(m'').
\end{equation}
Thus there is a unique functorial $\Hb_\zeta(m)$-module isomorphism
$$\Sc_{M',M''}:
{}^\Hb\Ind_{m',m''}(M'\otimes M'')\to
{}^\Hb\Ind_{m'',m'}(M''\otimes M')$$
given by
$$\Sc_{M',M''}(h\otimes(v'\otimes v''))=
hT_\tau\otimes(v''\otimes v'),\quad
h\in\Hb_\zeta(m),\, v'\in M',\, v''\in M''.$$

\begin{prop}
For $M'\in\Rep(\Hb_\zeta(m'))$, $M''\in\Rep(\Hb_\zeta(m''))$ the
following square is commutative
$$\xymatrix{
\Phi_*{}^\Hb\Ind_{m',m''}
(M'\otimes M'')
\ar[d]_\can
\ar[rr]^
{\Phi_*(\Sc_{M',M''})}&&
\Phi_*{}^\Hb\Ind_{m'',m'}(M''\otimes M')
\ar[d]_\can\cr
\Phi_*M'\dot\otimes\Phi_*M''
\ar[rr]^
{\Rc_{\Phi_*M',\Phi_*M''}}&&
\Phi_*M''\dot\otimes\Phi_*M'.
}$$
\end{prop}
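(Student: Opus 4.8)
The plan is to unwind everything in terms of the representing bimodule $\Vb_\zeta(n,m)=f\Sb_\zeta(n,m)$ and reduce the commutativity of the square to a single identity of $\Hb_\zeta(m)$-module homomorphisms. Recall that $\Phi_*=\Hom_{\Hb_\zeta(m)}(\Vb_\zeta(n,m),-)$, that the $\Sb_\zeta(n,m)$-module structure on $\Phi_*N$ comes from the right $\Sb_\zeta(n,m)$-action on $\Vb_\zeta(n,m)$ by precomposition, and that the two vertical isomorphisms in the square are induced, through the standard $\Hom$--$\dot\otimes$ identification, by the bimodule isomorphisms of Proposition \ref{prop:intertwinestensor}(b); write $\can_{(m',m'')}\colon\Vb_\zeta(n,m')\dot\otimes\Vb_\zeta(n,m'')\to\Vb_\zeta(n,m)$ and $\can_{(m'',m')}\colon\Vb_\zeta(n,m'')\dot\otimes\Vb_\zeta(n,m')\to\Vb_\zeta(n,m)$ for the two instances. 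Under these identifications $\Phi_*(\Sc_{M',M''})$ is postcomposition with $\Sc_{M',M''}$, that is, with left multiplication by $T_\tau$ on $\Vb_\zeta(n,m)$ (followed by the tensor-factor swap built into the target), while $\Rc_{\Phi_*M',\Phi_*M''}$ is the braiding of right $\dot\Ub_\eps(n)$-modules. Chasing an element $\phi\in\Hom_{\Hb_\zeta(m)}(\Vb_\zeta(n,m),{}^\Hb\Ind_{m',m''}(M'\otimes M''))$ around the square, the commutativity then reduces to the single identity of $\Hb_\zeta(m)$-module maps
$$\can_{(m'',m')}^{-1}\circ\,(T_\tau\,\cdot)\,\circ\can_{(m',m'')}
=\Rc_{\Vb_\zeta(n,m'),\,\Vb_\zeta(n,m'')}\colon\
\Vb_\zeta(n,m')\dot\otimes\Vb_\zeta(n,m'')\longrightarrow\Vb_\zeta(n,m'')\dot\otimes\Vb_\zeta(n,m'),$$
together with the functoriality of $\Rc$ in its two arguments, which is what lets this identity, established once on the representing objects, propagate to arbitrary $M'$, $M''$ by pushing forward along the structure maps of $M'\otimes M''$.

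To prove the displayed identity I would invoke the description of the tensor space used already in the proof of Proposition \ref{prop:intertwinestensor}: by \cite{Ji}, \cite{DJ}, \cite{DD} the bimodule $\Tb_\zeta(n,m)$ is the $m$-th tensor power of the natural $\dot\Ub_\eps(n)$-module, with $\Sb_\zeta(n,m)=1_m\dot\Ub_\eps(n)1_m$ acting in the standard way and with the right $\Hb_\zeta(m)$-action such that each $T_{s_i}$ acts as the elementary braiding $\Rc$ on the $i$-th and $(i+1)$-st factors; dualizing gives the corresponding statement for $\Vb_\zeta(n,m)$ (here one must track the inverse-transpose that enters on the dual, but that is exactly what the convention for $\Rc$ on right modules records). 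Under this identification $\can$ becomes the canonical associativity isomorphism $\Vb_\zeta(n,m')\dot\otimes\Vb_\zeta(n,m'')=\Vb_\zeta(n,m)=\Vb_\zeta(n,m'')\dot\otimes\Vb_\zeta(n,m')$ of iterated tensor powers, and, since $\tau$ is by definition the shortest permutation with $\tau^{-1}(\Sen_{m'}\times\Sen_{m''})\tau=\Sen_{m''}\times\Sen_{m'}$, the element $T_\tau$ is the positive braid lift of the block transposition; writing it as a reduced product of the $T_{s_i}$ and applying the coherence theorem for braidings in a braided monoidal category (the iterated hexagon axioms) identifies left multiplication by $T_\tau$ with the block braiding $\Rc_{\Vb_\zeta(n,m'),\Vb_\zeta(n,m'')}$. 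The compatibility with the left $\Hb_\zeta(m)$-actions that is needed for this to be an identity of $\Hb_\zeta(m)$-module maps is exactly formula (\ref{formuleutile}).

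I expect the main obstacle to be precisely that second step, the explicit identification of left multiplication by $T_\tau$ on $\Vb_\zeta(n,m)$ with the block R-matrix: one has to keep the two commuting module structures straight, handle the duality relating $\Vb_\zeta(n,m)$ to $\Tb_\zeta(n,m)$ along with the attendant sign and transpose conventions for $\Rc$, and be careful about which reduced word for $\tau$ is used and why the coherence theorem applies to it; everything on either side of this step is formal bookkeeping with adjunctions and functoriality of the R-matrix. As an alternative one could instead pass to a projective generator and deduce the statement from Corollary \ref{cor:tensor*} for $\Phi^*$ together with the hexagon axioms, but the route through the representing bimodule $\Vb_\zeta(n,m)$ seems the most transparent.
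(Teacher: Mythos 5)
Your proposal is correct and follows essentially the same route as the paper's proof: both reduce the square to the single identity on the representing bimodule $\Vb_\zeta(n,m)$ identifying left multiplication by $T_\tau$ (transported through the $\can$ isomorphisms) with the block braiding $\Rc$ on $\Vb_\zeta(n,m')\dot\otimes\Vb_\zeta(n,m'')$, justified via the tensor-space description of \cite{Ji}, \cite{DJ}, \cite{DD}, and then conclude by viewing $\Phi_*$ as $\Hom_{\Hb_\zeta(m)}(\Vb_\zeta(n,m),-)$ together with Frobenius reciprocity. The paper simply cites \cite{Ji} for that key square where you sketch the reduced-word/hexagon argument, but the structure of the argument is the same.
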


\begin{proof}
We abbreviate
$\Hb=\Hb_\zeta(m)$,
$\Hb'=\Hb_\zeta(m')$,
$\Hb''=\Hb_\zeta(m'')$,
$\Vb=\Vb_\zeta(n,m)$,
$\Vb'=\Vb_\zeta(n,m')$ and
$\Vb''=\Vb_\zeta(n,m'')$.
First, we have a commutative square
\begin{equation}\label{square}\begin{split}
\xymatrix{
\Vb''\dot\otimes\Vb'\ar[d]_\can\ar[rr]^{\Rc_{\Vb'',\Vb'}}&&
\Vb'\dot\otimes\Vb''\ar[d]_\can\cr
\Vb\ar[rr]^{T_\tau}&&\Vb
}
\end{split}
\end{equation}
where the lower map is the left multiplication with $T_\tau$.
See \cite{Ji} and the discussion in the proof of Proposition \ref{prop:intertwinestensor}.
In particular, we have
$$
\Rc_{\Vb'',\Vb'}(h''v''\otimes h'v')=
(h''\otimes h')\Rc_{\Vb'',\Vb'}(v'\otimes v''),\quad
v'\in\Vb',\, v''\in\Vb'',\,
h'\in\Hb',\, h''\in\Hb''.$$
Therefore, the composition by $\Rc_{\Vb'',\Vb'}$ yields a linear map
$$\gathered
\Hom_{\Hb'\otimes\Hb''}(\Vb,M'\otimes M'')=
\Phi_*{}^\Hb\Ind_{m',m''}(M'\otimes M'')
\to\cr
\to\Hom_{\Hb''\otimes\Hb'}(\Vb,M''\otimes M')=
\Phi_*{}^\Hb\Ind_{m'',m'}(M''\otimes M').
\endgathered$$
The commutativity of the square (\ref{square})
implies that this map is equal to
$\Phi_*(\Sc_{M',M''})$.
It is easy to see that this map coincides also with
$\Rc_{\Phi_*M',\Phi_*M''}$.
\end{proof}

\begin{cor}
For $M'\in\Rep(\Sb_\zeta(n,m'))$, $M''\in\Rep(\Sb_\zeta(n,m''))$ the
following square is commutative
$$\xymatrix{
{}^\Hb\Ind_{m',m''}
(\Phi^*M'\otimes \Phi^*M'')
\ar[d]_\can
\ar[rr]^
{\Sc_{\Phi^*M',\Phi^*M''}}&&
{}^\Hb\Ind_{m'',m'}(\Phi^*M''\otimes \Phi^*M')
\ar[d]_\can\cr
\Phi^*(M'\dot\otimes M'')
\ar[rr]^
{\Phi^*(\Rc_{M',M''})}&&
\Phi^*(M''\dot\otimes M')
}$$
\end{cor}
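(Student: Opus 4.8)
The plan is to deduce this corollary from the preceding proposition by the same adjunction-and-exactness argument that was used to pass from Proposition \ref{prop:intertwinestensor} to Corollary \ref{cor:tensor*}, feeding in as the only extra ingredient the naturality of the braiding $\Rc$ and of the bifunctor $\dot\otimes$. I will use the relevant formal properties of the Schur adjunction $(\Phi^*,\Phi_*)$ recorded earlier in the appendix: the counit $\Phi^*\Phi_*\to\oneb$ is invertible because $\Phi^*$ is a quotient functor, the unit $\oneb\to\Phi_*\Phi^*$ is invertible on projective objects by the double centralizer property, and $\Phi^*$, ${}^\Hb\Ind_{m',m''}$ and $\dot\otimes$ are all exact, while $\Rep(\Sb_\zeta(n,m))$ and its analogues have enough projectives.

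First I would apply the proposition with $M'$ replaced by $\Phi^*M'$ and $M''$ replaced by $\Phi^*M''$, obtaining a commutative square whose horizontal arrows are $\Phi_*(\Sc_{\Phi^*M',\Phi^*M''})$ and $\Rc_{\Phi_*\Phi^*M',\Phi_*\Phi^*M''}$ and whose vertical arrows are the $\can$ isomorphisms of the proposition. Applying the exact functor $\Phi^*$ and using the counit isomorphism together with its naturality, the left and right columns are identified with ${}^\Hb\Ind_{m',m''}(\Phi^*M'\otimes\Phi^*M'')$ and ${}^\Hb\Ind_{m'',m'}(\Phi^*M''\otimes\Phi^*M')$, so one gets a commutative square with top arrow $\Sc_{\Phi^*M',\Phi^*M''}$ and bottom arrow $\Phi^*(\Rc_{\Phi_*\Phi^*M',\Phi_*\Phi^*M''})$. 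Next, naturality of $\dot\otimes$ and of $\Rc$ with respect to the unit maps $M'\to\Phi_*\Phi^*M'$ and $M''\to\Phi_*\Phi^*M''$ produces a commutative square relating $M'\dot\otimes M''$ and $\Phi_*\Phi^*M'\dot\otimes\Phi_*\Phi^*M''$ with horizontal maps $\Rc_{M',M''}$ and $\Rc_{\Phi_*\Phi^*M',\Phi_*\Phi^*M''}$; applying the exact functor $\Phi^*$ gives a square whose bottom edge is $\Phi^*(\Rc_{M',M''})$. Stacking this square beneath the previous one, the two composed vertical arrows are, by the very definition given in the proof of Corollary \ref{cor:tensor*}, exactly the $\can$ isomorphisms ${}^\Hb\Ind_{m',m''}(\Phi^*M'\otimes\Phi^*M'')\to\Phi^*(M'\dot\otimes M'')$ and ${}^\Hb\Ind_{m'',m'}(\Phi^*M''\otimes\Phi^*M')\to\Phi^*(M''\dot\otimes M')$, so the outer rectangle is precisely the asserted square.

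As in the proof of Corollary \ref{cor:tensor*}, the clean way to organise this is to first verify commutativity for projective $M'$ and $M''$, where the unit maps are isomorphisms and every arrow in sight is invertible, so that the identity is a transparent composition of commutativities already established; one then extends to arbitrary $M'$, $M''$ by the standard fact that two natural transformations between exact functors agreeing on projective objects agree everywhere. I expect the main obstacle to be purely the diagram bookkeeping — in particular confirming that the composed vertical arrow genuinely coincides with the $\can$ map of Corollary \ref{cor:tensor*}, and spelling out the naturality of the unit, the counit, $\dot\otimes$ and $\Rc$ in each variable — rather than anything structural; no new input beyond the preceding proposition and the formal properties of the Schur adjunction is required.
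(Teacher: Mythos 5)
Your proposal is correct and is exactly what the paper intends: its entire proof of this corollary is the one-line instruction to ``use the same argument as in the proof of Corollary~\ref{cor:tensor*}'', and you have carried out precisely that argument (specialize the proposition to $\Phi^*M'$, $\Phi^*M''$, apply $\Phi^*$ and the counit, use naturality of $\Rc$ and $\dot\otimes$ along the unit, check on projectives, extend by exactness and enough projectives). No discrepancy with the paper's route.
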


\begin{proof} Use the same argument as in the proof of
Corollary \ref{cor:tensor*}.
\end{proof}

\vskip3mm

\noindent
Let $r\geqslant 1$ and $i=1,2,\dots ,r-1$.
For $M\in\Rep(\Hb_\zeta(m))$ we consider the
automorphism of the $\Hb_\zeta(mr)$-module
${}^\Hb\Ind_{(m^r)}(M^{\otimes r})$ given by
\begin{equation}
\label{Sc}
\gathered
\Sc_{M,i}=
{}^\Hb\Ind_\Hb^{\Hb_\zeta(mr)}
(\oneb^{\otimes i-1}\otimes
\Sc_{M,M}\otimes\oneb^{\otimes r-i-1}),\cr
\Hb=\Hb_\zeta(m)^{\otimes i-1}\otimes\Hb_\zeta(2m)
\otimes\Hb_\zeta(m)^{\otimes r-i-1}.
\endgathered
\end{equation}
For $M\in\Rep(\Sb_\zeta(n,m))$ we consider the
automorphism of the $\Sb_\zeta(n,mr)$-module
$M^{\dot\otimes r}$ given by
\begin{equation}
\label{Rc}
\Rc_{M,i}=\oneb^{\dot\otimes i-1}\dot\otimes
\Rc_{M,M}\dot\otimes \oneb^{\dot\otimes r-i-1}.
\end{equation}

\begin{cor}
\label{cor:B7}
For $M\in\Rep(\Sb_\zeta(n,m))$, $r\geqslant 1$ and $i=1,2,\dots ,r-1$ we
have a commutative square with invertible vertical maps
$$\xymatrix{
{}^\Hb\Ind_{(m^r)}
\Phi^*(M)^{\otimes r}
\ar[d]
\ar[rr]^
{\Sc_{\Phi^*(M),i}}&&
{}^\Hb\Ind_{(m^r)}\Phi^*(M)^{\otimes r}
\ar[d]\cr
\Phi^*(M^{\dot\otimes r})
\ar[rr]^
{\Phi^*(\Rc_{M,i})}&&
\Phi^*(M^{\dot\otimes r})
}$$
\end{cor}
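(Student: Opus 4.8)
The plan is to deduce Corollary \ref{cor:B7} from the previous corollary (the two-factor statement) by grouping the $r$ tensor factors into three blocks: the first $i-1$ factors, the pair in positions $i$ and $i+1$, and the last $r-i-1$ factors.

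First I would construct the vertical maps. Iterating the isomorphism $\can$ of Corollary \ref{cor:tensor*} and using the transitivity of ${}^\Hb\Ind$ together with the coassociativity of the comultiplication $\Delta$ defining $\dot\otimes$ (see (\ref{delta}), (\ref{tensorschur})), one obtains for any $N_1,\dots,N_r\in\Rep(\Sb_\zeta(n,m))$ a canonical isomorphism
\[
\can:\ {}^\Hb\Ind_{(m^r)}\bigl(\Phi^*(N_1)\otimes\cdots\otimes\Phi^*(N_r)\bigr)\to
\Phi^*\bigl(N_1\dot\otimes\cdots\dot\otimes N_r\bigr),
\]
which is independent of the chosen bracketing, since the underlying bimodule isomorphism $\can:\Tb_\zeta(n,m')\dot\otimes\Tb_\zeta(n,m'')\to\Tb_\zeta(n,m)$ of Proposition \ref{prop:intertwinestensor} is itself associative, coming from the tensor-space identification. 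Taking $N_1=\cdots=N_r=M$ gives the two vertical maps of the square, and compatibility of $\can$ with induction in stages yields a factorization of each vertical map through the three-block decomposition above.

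Next I would compare the horizontal maps. By the definitions (\ref{Sc}) and (\ref{Rc}), the operator $\Sc_{\Phi^*(M),i}$ is ${}^\Hb\Ind$ of $\oneb^{\otimes i-1}\otimes\Sc_{\Phi^*(M),\Phi^*(M)}\otimes\oneb^{\otimes r-i-1}$, while $\Rc_{M,i}$ is $\oneb^{\dot\otimes i-1}\dot\otimes\Rc_{M,M}\dot\otimes\oneb^{\dot\otimes r-i-1}$. Since $\can$ is a morphism of functors, it commutes with $\oneb\otimes(-)\otimes\oneb$ and with induction of morphisms; thus under the three-block factorization of the vertical maps the two outer blocks contribute identities, the middle block contributes exactly the commutative square of the previous corollary (applied with $m'=m''=m$ and $M'=M''=M$), and assembling these three squares gives the square claimed.

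The main obstacle is the bookkeeping for the iterated $\can$: one must verify that $\can$ is compatible with induction in stages and with the coassociativity of $\Delta$, so that applying $\can$ blockwise and then globally agrees with applying it all at once, and so that the vertical maps in the statement are unambiguously defined. This is routine once one observes that both ${}^\Hb\Ind$ and $\dot\otimes$ are coherently associative and that $\can$ is built from the associative isomorphism of Proposition \ref{prop:intertwinestensor}; no new input beyond Corollary \ref{cor:tensor*} and the previous corollary is required.
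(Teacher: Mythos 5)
Your proposal is correct and is exactly the argument the paper intends: the paper states Corollary \ref{cor:B7} with no proof, treating it as an immediate consequence of the two-factor square, and your three-block decomposition (identities on the outer blocks, the previous corollary on the middle block, assembled via the associativity of $\can$ and transitivity of induction) is the standard way to fill in that omitted verification. No gaps.
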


\subsection{The braiding and the quantum Frobenius homomorphism}
Recall that if $\zeta$ is a primitive $d$-th root of 1
then the quantum Frobenius homomorphism (\ref{frob}) yields a functor
$$\Fr^*:\Rep(\Sb_1(n,m))=\Rep(\Sb_{(-1)^d}(n,m))\to\Rep(\Sb_\zeta(n,dm)).$$
Here we have identified
$\Sb_{(-1)^d}(n,m)$ and $\Sb_1(n,m)$ as in Remark \ref{rk:B2}.
Let $m',m''>0$ with $m=m'+m''$.
By Proposition \ref{prop:B1}, for $M\in\Rep(\Sb_{(-1)^d}(n,m)),$
$M'\in\Rep(\Sb_{(-1)^d}(n,m'))$
the braiding operator
$$\Rc_{M,M'}:M\dot\otimes M'\to M'\dot\otimes M$$
is  the composition of the permutation
$\sb_{M,M'}$ and of the operator $$R_{M,M'}=\prod_{\l,\l'}(-1)^{d(\l,\l')}(1_\l\dot\otimes 1_{\l'}).$$

\begin{prop}
\label{prop:B8bis}
For $r\geqslant 1$, $i,j=1,2,\dots r-1$, and
$M\in\Rep(\Sb_{(-1)^d}(n,m))$ the following relations hold
in $\End_{\Sb_{(-1)^d}(n,mr)}(M^{\dot\otimes r})$
\begin{itemize}
\item $\Rc_{M,i}^2=1,$
\item
$\Rc_{M,i}\Rc_{M,j}=\Rc_{M,j}\Rc_{M,i}\ \text{if}\ j\neq i-1,i+1,$
\item
$\Rc_{M,i}\Rc_{M,i+1}\Rc_{M,i}=\Rc_{M,i+1}\Rc_{M,i}\Rc_{M,i+1}\ \text{if}\ i\neq r-1.$
\end{itemize}
\end{prop}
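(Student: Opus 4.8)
The plan is to reduce the three relations to the explicit description of the braiding recalled just above the statement: for modules $M$, $M'$ in $\Rep(\Sb_{(-1)^d}(n,m))$, $\Rep(\Sb_{(-1)^d}(n,m'))$ the map $\Rc_{M,M'}$ is the composite $\sb_{M,M'}\circ R_{M,M'}$, where $R_{M,M'}=\prod_{\l,\l'}(-1)^{d(\l,\l')}(1_\l\dot\otimes1_{\l'})$ acts diagonally on weight components. Thus $\Rc_{M,M}$ is the \emph{sign-twisted flip}: it sends $v\otimes v'$, with $v\in 1_\l M$ and $v'\in 1_{\l'}M$, to $(-1)^{d(\l,\l')}v'\otimes v$. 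Since $\Rc_{M,i}=\oneb^{\dot\otimes i-1}\dot\otimes\Rc_{M,M}\dot\otimes\oneb^{\dot\otimes r-i-1}$ by definition, it acts on $M^{\dot\otimes r}$ by interchanging the $i$-th and $(i+1)$-th tensor legs and multiplying by the sign $(-1)^{d(\mu_i,\mu_{i+1})}$ read off from the weights of those two legs.

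With this in hand all three identities become elementary. First, $\Rc_{M,i}$ and $\Rc_{M,j}$ touch disjoint pairs of tensor factors when $j\neq i\pm1$, so they commute. Second, $\Rc_{M,i}^{2}=1$ follows from $\Rc_{M,M}^{2}=\oneb_{M\dot\otimes M}$, which in turn follows from the identity $\sb_{M,M}\circ R_{M,M}\circ\sb_{M,M}=R_{M,M}$ (valid because the bilinear form $(\cdot,\cdot)$ is symmetric, so the sign $(-1)^{d(\l,\l')}$ is invariant under $\l\leftrightarrow\l'$) together with $R_{M,M}^{2}=\prod_{\l,\l'}(-1)^{2d(\l,\l')}(1_\l\dot\otimes1_{\l'})=\oneb$. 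Third, for the braid relation one evaluates both sides on a vector $v_1\otimes v_2\otimes v_3$ supported in weights $\mu_1,\mu_2,\mu_3$ on the three consecutive legs involved (the remaining legs are untouched): a short bookkeeping of the signs accumulated after each of the three flips shows that $\Rc_{M,i}\Rc_{M,i+1}\Rc_{M,i}$ and $\Rc_{M,i+1}\Rc_{M,i}\Rc_{M,i+1}$ both send it to $(-1)^{d(\mu_1,\mu_2)+d(\mu_1,\mu_3)+d(\mu_2,\mu_3)}\,v_3\otimes v_2\otimes v_1$, the exponent depending only on the multiset $\{\mu_1,\mu_2,\mu_3\}$. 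Equivalently, this last point is just the Yang--Baxter property of the braiding $\Rc$ coming from the quasi-triangular structure on $\dot\Ub_{(-1)^d}(n)$.

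I do not expect a genuine obstacle: the only care needed is the sign bookkeeping in the braid computation, and the appeal to Proposition~\ref{prop:B1} to guarantee that at parameter $(-1)^d$ the $R$-matrix has really collapsed to the diagonal sign operator. Conceptually the whole proposition can be packaged as follows: by Proposition~\ref{prop:B1} the braiding squares to the identity, so $\Rep(\Sb_{(-1)^d}(n,m))$ is symmetric (not merely braided) monoidal, and the braidings on $M^{\dot\otimes r}$ therefore define an action of $\Sen_r$, whose defining relations are precisely the three displayed identities.
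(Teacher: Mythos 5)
Your proof is correct and follows essentially the same route as the paper, which simply notes that $\Rc_{M,i}^2=1$ is immediate from the explicit form of the braiding at parameter $(-1)^d$ (Proposition \ref{prop:B1}) and that the remaining two relations are general properties of any braiding. Your extra sign bookkeeping and the remark that the category is in fact symmetric monoidal are correct elaborations of the same argument.
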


\begin{proof}
The first relation is obvious by definition of the braiding operator, see above.
The other relations are consequences of the general properties of a braiding.
\end{proof}

\vskip2mm

\noindent
Further, the functor $\Fr^*$ is a braided tensor functor, i.e., we have
the following.

\begin{prop}
\label{prop:B8}
For 
$M\in\Rep(\Sb_{(-1)^d}(n,m')),$
$M'\in\Rep(\Sb_{(-1)^d}(n,m''))$
we have a functorial isomorphism
$\Fr^*(M\dot\otimes M')=\Fr^*(M)\dot\otimes\Fr^*(M')$
in $\Rep(\Sb_\zeta(n,dm))$ such that
$\Fr^*(\Rc_{M,M'})=\Rc_{\Fr^*(M),\Fr^*(M')}.$
\end{prop}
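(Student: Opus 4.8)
The plan is to reduce the statement to the two structural facts already in hand: that the quantum Frobenius homomorphism intertwines the comultiplications, $\Delta\circ\Fr=\Fr\circ\Delta$, and that it transports the universal $R$-matrix as in Proposition \ref{prop:B1}, namely $(\Fr\otimes\Fr)(R_\eps)=R_{(-1)^d}$. Throughout I would work with $\dot\Ub_{(-1)^d}(n)$ as the target of $\Fr$ (rather than with $\dot\Ub_1(n)$), since it is for this target that the compatibility $\Delta\circ\Fr=\Fr\circ\Delta$ was recorded; the twisted identification of Remark \ref{rk:B2} does not intervene. Recall also that the bifunctor $\dot\otimes$ on $\zeta$-Schur modules is the one induced by the comultiplication, see (\ref{delta}) and (\ref{tensorschur}), and that passing from $\dot\Ub_\eps(n)$-modules to $\Sb_\zeta(n,m)=1_m\dot\Ub_\eps(n)1_m$-modules is harmless here because both $\Delta$ and $\Fr$ on the $\Sb$'s are restrictions of the corresponding maps on the $\dot\Ub$'s; in particular $R_\eps$ and the comultiplications, though living in completions, act through finitely many terms on any module in $\Rep(\Sb_\bullet(n,\bullet))$, so all the identities below are literal identities of operators.

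First I would construct the tensor isomorphism on underlying spaces. For $M\in\Rep(\Sb_{(-1)^d}(n,m'))$ and $M'\in\Rep(\Sb_{(-1)^d}(n,m''))$, both $\Fr^*(M\dot\otimes M')$ and $\Fr^*(M)\dot\otimes\Fr^*(M')$ have underlying vector space $M\otimes M'$. On the former, an element $u$ of $\dot\Ub_\eps(n)$ acts by $\Delta(\Fr(u))$ (comultiplication of $\dot\Ub_{(-1)^d}(n)$), while on the latter it acts by $(\Fr\otimes\Fr)(\Delta(u))$ (comultiplication of $\dot\Ub_\eps(n)$). By $\Delta\circ\Fr=(\Fr\otimes\Fr)\circ\Delta$ these agree, so the identity map of $M\otimes M'$ is an isomorphism of $\Sb_\zeta(n,dm)$-modules $\Fr^*(M\dot\otimes M')\to\Fr^*(M)\dot\otimes\Fr^*(M')$, and it is visibly natural in $M$ and $M'$.

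Next I would match the braidings. By definition $\Rc_{M,M'}=\sb_{M,M'}\circ R_{M,M'}$, where $R_{M,M'}$ is the action of $R_{(-1)^d}$ on $M\otimes M'$; likewise $\Rc_{\Fr^*(M),\Fr^*(M')}=\sb\circ R_{\Fr^*(M),\Fr^*(M')}$, where $R_{\Fr^*(M),\Fr^*(M')}$ is the action of $R_\eps$, hence, since $\Fr^*$ is restriction along $\Fr$, the action of $(\Fr\otimes\Fr)(R_\eps)$ on $M\otimes M'$. Proposition \ref{prop:B1} gives $(\Fr\otimes\Fr)(R_\eps)=R_{(-1)^d}$, so $R_{\Fr^*(M),\Fr^*(M')}=R_{M,M'}$ as operators on $M\otimes M'$; and the flip $\sb$ is literally the same map on the two sides under the identification of the previous step. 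Therefore $\Fr^*(\Rc_{M,M'})=\Rc_{\Fr^*(M),\Fr^*(M')}$, which is the claim. (The same bookkeeping, applied to the operators $\Rc_{M,i}$ of (\ref{Rc}), is what is invoked in Lemma \ref{lem:L5}.)

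I do not expect a real obstacle: Proposition \ref{prop:B1} has done the essential work. The only points needing care are purely organizational — keeping the two comultiplications and the two flavours of $R$-matrix straight, using $\dot\Ub_{(-1)^d}(n)$ as the target of $\Fr$ so that $\Delta\circ\Fr=\Fr\circ\Delta$ applies on the nose, and checking that the completions cause no trouble because only finitely many components act on objects of $\Rep(\Sb_\bullet(n,\bullet))$.
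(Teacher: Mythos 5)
Your argument is correct and is exactly the one the paper intends: the published proof reads ``Obvious by Proposition \ref{prop:B1},'' and your two steps (the tensor isomorphism from $\Delta\circ\Fr=\Fr\circ\Delta$, the braiding match from $(\Fr\otimes\Fr)(R_\eps)=R_{(-1)^d}$) are precisely the routine details being suppressed there. Your care in working with $\dot\Ub_{(-1)^d}(n)$ as the target of $\Fr$, so that the comultiplication compatibility holds on the nose, is the right bookkeeping.
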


\begin{proof}
Obvious by Proposition \ref{prop:B1}.
\end{proof}

\subsection{The algebra $\Sb_\zeta(m)$}
\label{sec:B7}
We'll abbreviate $\Sb_\zeta(m)=\Sb_\zeta(m,m)$.
If $n\geqslant m$
the algebra $\Sb_\zeta(n,m)$ is Morita equivalent to
$\Sb_\zeta(m)$, see e.g., \cite[lem.~1.3]{DJ}.
Thus $\dot\otimes$
can be viewed as a functor (choosing $n\geqslant m=m'+m''$)
$$\dot\otimes:\Rep(\Sb_\zeta(m'))\otimes
\Rep(\Sb_\zeta(m''))\to\Rep(\Sb_\zeta(m)).$$
If $\zeta$ is a primitive $d$-th root of 1
then the quantum Frobenius homomorphism can be viewed as a functor
(choosing $n\geqslant dm$)
$$\Fr^*:\Rep(\Sb_1(m))=\Rep(\Sb_{(-1)^d}(m))\to\Rep(\Sb_\zeta(dm)).$$

\vskip3cm

\end{document}